\documentclass[11pt, a4paper]{article}

\usepackage[utf8]{inputenc}
\usepackage[T1]{fontenc}
\usepackage{lmodern}
\usepackage{amssymb, amsthm, amsfonts, mathrsfs}
\usepackage{geometry}
\usepackage{mathtools}
\usepackage[all]{xy}
\usepackage{graphicx}
\usepackage{microtype}
\usepackage{enumitem}
\usepackage{stmaryrd}
\usepackage{amscd}
\usepackage[hidelinks]{hyperref}

\theoremstyle{plain}
\newtheorem{theorem}{Theorem}[section]
\newtheorem{lemma}[theorem]{Lemma}
\newtheorem{proposition}[theorem]{Proposition}
\newtheorem{corollary}[theorem]{Corollary}
\newtheorem*{proposition*}{Proposition}
\theoremstyle{definition}
\newtheorem{definition}[theorem]{Definition}

\newtheorem{notation}[theorem]{Notation}

\theoremstyle{remark}
\newtheorem{remark}[theorem]{Remark}
\newtheorem{construction}[theorem]{Construction}

\newtheorem{claim}[theorem]{Claim}
\newtheorem{convention}[theorem]{Convention}
\newtheorem*{theorem*}{Theorem}

\newcommand{\Z}{\mathbb{Z}}
\newcommand{\R}{\mathbb{R}}
\newcommand{\C}{\mathbb{C}}
\newcommand{\N}{\mathbb{N}}
\newcommand{\Q}{\mathbb{Q}}

\newcommand{\Clf}{C^{lf}_{\infty}(\C)}
\newcommand{\Conf}{Conf^{lf}_{\infty}(\C)}

\newcommand{\HoQuot}{/\!/}

\DeclareMathOperator{\Aut}{Aut}
\DeclareMathOperator{\supp}{supp}
\DeclareMathOperator{\dist}{dist}
\DeclareMathOperator{\id}{id}

\DeclareMathOperator{\Sym}{Sym}
\DeclareMathOperator{\pr}{pr}
\DeclareMathOperator{\Lip}{Lip}

\title{Topological fundamental groups of locally finite infinite configuration spaces and infinite braids}

\author{Jyh-Haur Teh\thanks{Department of Mathematics, National Tsing Hua University, Taiwan.
E-mail: \texttt{jyhhaur@math.nthu.edu.tw}.}}
\date{} 

\begin{document}

\maketitle

\begin{abstract}
We study the topological fundamental groups of the locally finite infinite ordered configuration space \(Conf^{lf}_\infty(\C)\) in the plane and the homotopy quotient of $Conf^{lf}_\infty$ by the canonical action of the infinite permutation group $\Aut(\N)$:
\[
H^{lf}(\infty):=\pi_1^{\mathrm{top}}(Conf^{lf}_\infty(\C),\widetilde{\N}),
\qquad
B^{lf}(\infty):=\pi_1^{\mathrm{top}}\!\bigl(Conf^{lf}_\infty(\C)\!/\!/\Aut(\N),[e_0,\widetilde{\N}]\bigr).
\]
We prove that \(H^{lf}(\infty)\) and \(B^{lf}(\infty)\) are non-discrete and complete topological groups.  A main structural theorem identifies \(H^{lf}(\infty)\) with a canonical locally finite inverse-limit model built from finite pure braid groups, and we construct a complete left-invariant ultrametric compatible with the quotient topology from the loop space of $\Conf$. The direct limit of finite pure braid groups admits a dense embedding into \(H^{lf}(\infty)\), and we show that
\(H^{lf}(\infty)\) is the Ra\u{\i}kov completion of this subgroup. Moreover, the direct limit of finite braid groups
embeds into \(B^{lf}(\infty)\) and is dense in the finitary subgroup \(B^{lf}_{\mathrm{fin}}(\infty)\subseteq
B^{lf}(\infty)\).

\end{abstract}

\section{Introduction}

Configuration spaces sit at a classical interface of topology, geometry, and analysis.  For a manifold $M$
and $n\ge1$, the ordered configuration space
\[
Conf_n(M)=\{(x_1,\dots,x_n)\in M^n \mid x_i\neq x_j\text{ for }i\neq j\}
\]
and its unordered quotient $C_n(M)=Conf_n(M)/S_n$ encode, for surfaces, the pure braid groups and braid groups.
The foundational fibrations of Fadell--Neuwirth (\cite{FadellNeuwirth62}, \cite{FoxNeuwirth62}, \cite{Arnold69}) and the subsequent developments of McDuff, Segal, and others (e.g.\ \cite{McDuff1975, Segal1973}) place these spaces among the basic objects in
algebraic topology; see (\cite{Kallel24}, \cite{KasselTuraev08}) for a broad guide and further references.

\medskip
\noindent\textbf{Locally finite infinite configurations.}
In \cite{Teh25}, we introduced and studied a genuinely ``infinite, locally finite'' planar configuration
space, designed to model countably many particles escaping to infinity while retaining a topology sensitive to
local statistics.  Concretely, let $Conf^{lf}_\infty(\C)$ be the space of \emph{locally finite ordered}
configurations $\widetilde A=(a_1,a_2,\dots)$ in $\C$ (no repetitions and no accumulation in compact subsets),
and let $C^{lf}_\infty(\C)$ be the corresponding \emph{unordered} space, obtained by the quotient
$Conf^{lf}_\infty(\C)/\Aut(\N)$.  The topology on $C^{lf}_\infty(\C)$ is the one naturally induced by the
\emph{vague topology} on simple counting measures, and we equip $Conf^{lf}_\infty(\C)$ with a compatible metric
(the sum metric from \cite{Teh25}) making the projection
\[
P:Conf^{lf}_\infty(\C)\longrightarrow C^{lf}_\infty(\C)
\]
continuous.

Two features distinguish this setting from the direct limit $\varinjlim_n Conf_n(\C)$.  First, the configuration
is truly infinite at every stage; local finiteness prevents compact accumulation but does not truncate the tail.
Second, the resulting fundamental groups are dramatically larger: for instance, $\pi_1\!\bigl(Conf^{lf}_\infty(\C)\bigr)$
is uncountable, whereas $\pi_1(\varinjlim_n Conf_n(\C))$ is countable.

\medskip
\noindent\textbf{From homotopy type to topological group structure.}
In $\cite{Teh25}$ we showed that $Conf^{lf}_\infty(\C)$ and its homotopy quotient $Conf^{lf}_\infty(\C)//\Aut(\N)$
are spherical. The present paper continues the work of \cite{Teh25}, but shifts emphasis from the homotopy type of
$Conf^{lf}_\infty(\C)$ and $C^{lf}_\infty(\C)$ to the \emph{canonical topological group structures} carried on the
associated locally finite braid groups.  Write
\[
H^{lf}(\infty):=\pi_1\!\bigl(Conf^{lf}_\infty(\C),\widetilde\N\bigr),
\qquad
B^{lf}(\infty):=\pi_1\!\bigl(Conf^{lf}_\infty(\C)//\Aut(\N),\widetilde\N\bigr),
\]
where $//$ denotes the Borel (homotopy) quotient and $\widetilde\N=(1,2,3,\dots)$ is the standard basepoint.
As abstract groups, \cite{Teh25} established a locally finite analogue of the braid exact sequence
\begin{equation}\label{eq:lf-braid-seq-intro}
1\longrightarrow H^{lf}(\infty)\longrightarrow B^{lf}(\infty)\longrightarrow \Aut(\N)\longrightarrow 1,
\end{equation}
with $\Aut(\N)$ regarded as discrete.

In this paper we equip $H^{lf}(\infty)$ and $B^{lf}(\infty)$ with the \emph{quotient topology} induced from the
based loop spaces with the uniform topology coming from the ambient metrics on configuration spaces.  This is
the most natural topology one can place on $\pi_1$ once one decides that the ambient configuration spaces are
the primary geometric objects.  The resulting topological groups are not at all analogous to classical braid
groups: in the finite-dimensional manifold setting, the same construction yields a discrete group.  Here the
locally finite topology forces a genuinely \emph{non-discrete}, infinite-dimensional phenomenon.
Topological fundamental groups have been studied for decades (see \cite{Dugundji50}). More recent work on
the topological fundamental group of configuration spaces can be found in \cite{Fabel05, Fabel06, Brazas11, Brazas13, BrazasFabel15}.

\medskip
\noindent\textbf{Main result: completeness and non-discreteness.}
Our main theorems shows that the locally finite braid groups are not merely large abstract groups: they carry a robust, analytically meaningful topology. Equipped with the quotient topology inherited from the based loop space, the corresponding fundamental groups are, in general, only quasitopological, rather than genuinely topological groups. We discover that $H^{lf}(\infty)$ and $B^{lf}(\infty)$ posses several amazing properties. We list some of them:
\begin{itemize}
\item Both $H^{lf}(\infty)$ and $B^{lf}(\infty)$ are completely metrizable topological groups. In fact, each admits a left-invariant ultrametric that induces its topology, and every Cauchy sequence with respect to this metric converges.
\item Neither $H^{lf}(\infty)$ nor $B^{lf}(\infty)$ is discrete.
\item $H^{lf}(\infty)$ is the completion of the direct limit of finite pure braid groups.
\item A clopen subgroup $B^{lf}_{\mathrm{fin}}(\infty)$ of $B^{lf}(\infty)$ is the completion of the direct limit of finite braid groups.
\item $H^{lf}(\infty)$ and $B^{lf}_{\mathrm{fin}}$ are Polish spaces.
\end{itemize}

The non-discreteness is already visible at the geometric level: one can construct nontrivial braids supported
far out in the configuration near infinity whose representatives are arbitrarily close to the constant loop
in the uniform metric, hence converge to the identity element in $\pi_1$.
In other words, ``small'' in the locally finite topology does \emph{not} imply ``trivial'' in the braid group.
This phenomenon is impossible for the classical braid groups coming from $Conf_n(\C)$, and it is precisely the
tail of the locally finite configuration that creates room for such vanishing-at-infinity braiding.

Completeness, by contrast, is a structural statement: Cauchy families of locally finite braids admit limits,
so the resulting groups are closed under passage to infinite limiting braid patterns controlled at infinity.
From the perspective of topological group theory, completeness is the minimum regularity needed to bring
standard tools (Baire category methods, automatic continuity questions, and continuity of actions) to bear.

\medskip
\noindent\textbf{Exactness in the category of topological groups.}
A second theme is that the locally finite braid sequence is naturally compatible with the topologies.

\begin{theorem*}[Topological exact sequence]
The sequence \eqref{eq:lf-braid-seq-intro} is exact as a sequence of \emph{topological groups}: the inclusion
$H^{lf}(\infty)\hookrightarrow B^{lf}(\infty)$ is a topological embedding onto a closed normal subgroup, and
the quotient topology on $B^{lf}(\infty)/H^{lf}(\infty)$ identifies it with the discrete group $\Aut(\N)$.
\end{theorem*}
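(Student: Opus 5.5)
The plan is to realize both groups as quotients of based loop spaces and to compare them through the covering-type structure of the Borel construction. Let $X=\Conf$, $G=\Aut(\N)$ (discrete), and $X\HoQuot G = EG\times_G X$. First I would build an explicit model for loops in $X\HoQuot G$ based at $[e_0,\widetilde\N]$. Since $G$ is discrete, $EG\times X\to X\HoQuot G$ is a covering map with deck group $G$. Hence every based loop lifts uniquely to a path in $EG\times X$ starting at $(e_0,\widetilde\N)$. It ends at $(e_0\cdot g^{-1}, g\cdot\widetilde\N)$ for a unique $g\in G$, and the map $\ell\mapsto g$ is the homomorphism $B^{lf}(\infty)\to\Aut(\N)$ of \eqref{eq:lf-braid-seq-intro}, already known to be exact algebraically from \cite{Teh25}. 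Concretely, a loop in $X\HoQuot G$ with trivial $g$ lifts to a loop in $EG\times X$. Because $EG$ is contractible, projecting such a loop to $X$ gives the identification of $\ker$ with $H^{lf}(\infty)$.

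\textbf{Continuity and discreteness of the quotient.} The endpoint map $\Omega(X\HoQuot G)\to G$, $\ell\mapsto g$, is locally constant in the uniform topology. Two uniformly close loops have lifts whose endpoints lie in the same evenly covered sheet, so their endpoints determine the same $g$. The map therefore descends to a continuous homomorphism $B^{lf}(\infty)\to G_{\mathrm{disc}}$. Its kernel $H^{lf}(\infty)$ is then clopen, in particular closed, and normal. A continuous surjective homomorphism onto a discrete group with open kernel induces a topological isomorphism $B^{lf}(\infty)/H^{lf}(\infty)\cong G$, because both sides carry the discrete topology: each coset of the open kernel is open. This settles the last clause.

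\textbf{Embedding.} I must show that the quotient topology on $H^{lf}(\infty)$, coming from $\Omega X$, agrees with the subspace topology from $B^{lf}(\infty)$, coming from $\Omega(X\HoQuot G)$. There is a continuous map $i_*:\Omega X\to\Omega(X\HoQuot G)$, $\gamma\mapsto[e_0,\gamma]$, so the inclusion is continuous. For the converse I would use the lifting: on the open set $U\subset\Omega(X\HoQuot G)$ of loops with trivial endpoint class, lifting followed by projection to $X$ gives a continuous map $r:U\to\Omega X$. Continuity of unique path lifting in the compact-open/uniform topology holds for coverings. Then $r$ is a continuous retraction-up-to-homotopy, with $\pi_1(r)\circ\pi_1(i)=\id$. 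Since $U$ is saturated and open, the quotient topology of $U/\simeq$ coincides with the subspace topology of its image in $B^{lf}(\infty)$. It follows that $\pi_1(r)$ is continuous on the clopen subgroup $H^{lf}(\infty)\subset B^{lf}(\infty)$, which gives a continuous inverse to the inclusion.

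\textbf{Main obstacle.} I expect the hardest step to be this last one: making precise the metric or uniform structure on $X\HoQuot G$ so that (a) the covering $EG\times X\to X\HoQuot G$ has a uniform Lebesgue-number-type property along compact loops, and (b) lifting is continuous in the uniform topology. I would first try fixing a concrete CW (or simplicial) model of $EG$ with a $G$-invariant metric. With that model, the local-constancy and lifting arguments become a direct compactness argument on $[0,1]$. A second concern is whether quotients of open saturated subsets behave well, since quotient maps of loop spaces need not be open in general. I would address this by noting that $U$ is a union of fibers of the endpoint class, which is all the argument requires.
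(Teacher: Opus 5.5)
Your argument is correct. Its skeleton matches the paper's (Lemma~\ref{lem:kernel-homeo} and Corollary~\ref{cor:open-complete-subgroup}):
\begin{itemize}
\item isolate an open, saturated set of loops lying over the kernel;
\item build a continuous map $r$ from it to $\Omega(\Conf)$ that inverts the inclusion up to based homotopy;
\item descend to $\pi_1^{\mathrm{top}}$, using that a quotient map restricted to an open saturated set is again a quotient map.
\end{itemize}
Closedness of the kernel and discreteness of the quotient then follow, as in the paper, from the kernel being open.

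Where you differ is the key lifting step. The paper works with the fibration $\Conf\to\Conf\HoQuot G\to BG$. It gets openness of $\Omega_1(BG)$ by applying Proposition~\ref{pro:piltop-discrete-criterion} to the CW complex $BG$. It then defines $r$ by lifting a based contraction of $\Omega_1(BG)$ through $\Omega\pi$. This requires $\Omega\pi$ to again be a Hurewicz fibration, and $\Omega_1(BG)$ to admit a contraction fixing the constant loop.

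You instead use the covering $EG\times\Conf\to EG\times_G\Conf$. This is legitimate: the discrete group $G$ acts freely and cellularly on the bar-construction $EG$, hence by a covering-space action, and so does the diagonal action. In your route:
\begin{itemize}
\item local constancy of the endpoint class, and hence openness of the kernel, comes from discreteness of the fiber;
\item $r$ is simply ``lift, then project to $\Conf$'', so $r\circ i=\mathrm{id}$ holds on the nose;
\item $i\circ r\simeq\mathrm{id}$ comes from contracting the $EG$-component of the lifted loop rel endpoints, using that $EG$ is contractible.
\end{itemize}
This is more elementary but specific to Borel constructions with discrete groups. The paper's lemma is more general: it applies to any fibration whose base has open, based-contractible $\Omega_1$.

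Two small remarks. First, the ``main obstacle'' you anticipate does not arise. No metric on $\Conf\HoQuot G$ is needed: a covering projection is a Hurewicz fibration with unique path lifting, so its lifting function is automatically continuous for the compact--open topology. Second, you should state explicitly why $r$ descends to homotopy classes. By saturation, a based homotopy between two loops in $U$ is a path in $U$. Since $r$ is continuous, it carries that path to a path in $\Omega(\Conf)$, which is exactly a based homotopy between the images.
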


Thus $B^{lf}(\infty)$ should be viewed as a topological extension of a non-discrete complete group
by a discrete symmetry group.  This sharply separates two sources of complexity: continuous locally finite
braiding (the kernel) versus discrete relabeling at infinity (the quotient).

\medskip
\noindent\textbf{Context and motivation.}
The locally finite configuration spaces $Conf^{lf}_\infty(\C)$ and $C^{lf}_\infty(\C)$ arise naturally beyond
classical braid theory.  As explained in \cite{Teh25}, $C^{lf}_\infty(\C)$ is the standard state
space of simple point configurations in stochastic geometry and point process theory when endowed with the
vague topology, and it is also the natural recipient of zero sets of families of entire functions via
Weierstrass products.  The present topological-group refinement is motivated by the observation that many
applications (in particular, those involving parametrized families, limiting procedures, or dynamics) do not
only require $\pi_1$ as an abstract invariant: they require a topology on $\pi_1$ compatible with the topology
of the underlying configuration space.

In this sense, the completeness and non-discreteness of $H^{lf}(\infty)$ and $B^{lf}(\infty)$ are not
decorative properties.  They encode a new kind of ``braid-at-infinity'' continuity which is invisible to
finite configuration spaces and which is stable under limiting operations intrinsic to locally finite topology.

\medskip
\noindent\textbf{Organization of the paper.}
In section 2, we recall the definition of topological fundamental group $\pi_1^{top}(X,x_0)$, the quotient of the based loop space $\Omega(X,x_0)$ by based homotopy. We show that finite pure braid groups $P_n=\pi_1^{top}(Conf_n(\mathbb C))$ are discrete,
and construct, for every $\varepsilon>0$, an $\varepsilon$-small but essential loop in $(Conf,d_{\Sigma})$
by braiding only a strand far out at infinity. This shows that
$\Conf$ is not semilocally simply connected and $H^{lf}(\infty)$ is \emph{non-discrete}.

In section 3, we define the inverse limit $\varprojlim P_n$ with forgetful maps $p_{m,n}$ and prove it is completely metrizable.
The truncation projection $\Conf\to Conf_n(\mathbb C)$ is a Hurewicz fibration, enabling fibration-based arguments about lifting homotopies and controlling truncations.

In section 4, we introduce the key refinement:
$
(\varprojlim_n P_n)_{lf}\subseteq \varprojlim_n P_n,
$
the subgroup of compatible braid data realizable by loops whose ``extra strands'' eventually stay outside any fixed compact set
and show that $(\varprojlim P_n)_{lf}$ is a \emph{proper} subgroup of $\varprojlim_n P_n$.

In section 5, we define the canonical map
$
\Theta:H^{lf}(\infty)\to (\varprojlim_n P_n)_{lf}
$
and prove that it is a continuous bijective group homomorphism.

In section 6, we define open normal subgroups
$
V_n:=\ker\bigl(p_n:H^{lf}(\infty)\to P_n\bigr),
$
and prove $\{V_n\}$ is a neighborhood basis at the identity with $\bigcap_n V_n=\{1\}$.
We show that any $g\in V_n$ has representatives that literally fix the first $n$ strands and keep remaining strands outside a prescribed compact. These yield joint continuity of multiplication, so $H^{lf}(\infty)$ is a \emph{topological group}.
Comparing the cylinder neighborhoods in $(\varprojlim P_n)_{lf}$ with the $V_n$ basis upgrades $\Theta$ to a topological group isomorphism.
We define an explicit left-invariant ultrametric on $H^{lf}(\infty)$ which is compatible and complete.

In section 7, the completeness of $B^{lf}(\infty)$ is proved. We show that
components of $\Omega(BG)$ are contractible and $\pi_1^{top}(BG)\cong G$ is discrete.
The strategy is to view $B^{lf}(\infty)$ as a topological group extension with kernel $H^{lf}(\infty)$
and discrete quotient, and to deduce that $B^{lf}(\infty)$ is complete.

In section 8, we realize finite braid groups via fundamental groups of Borel constructions, recovering the standard exact sequence
$1\to P_n\to B_n\to \Sigma_n\to 1$ and discreteness at finite level. We
constructs compatible inclusions of finite configuration spaces (and braid groups) into the infinite locally finite setting, producing subgroups $P_\infty\le H^{lf}(\infty)$ and $B_\infty\le B^{lf}(\infty)$.
We prove the “pure” direct limit $P_\infty$ is dense in $H^{lf}(\infty)$, while the “full” direct limit $B_\infty$ is not dense; its closure is exactly the preimage of finitary permutations:
$
\overline{B_\infty}=\pi_*^{-1}(\Sym_f(\mathbb N)),
$
a clopen subgroup of $B^{lf}(\infty)$.

In section 9, we introduce Ra\u{\i}kov completeness and Polishness. We show that $H^{lf}(\infty)$ is Ra\u{\i}kov complete and is the Ra\u{\i}kov completion of the dense subgroup $P_\infty$. We show that the finitary full subgroup
$
B^{lf}_{\mathrm{fin}}(\infty):=\pi_*^{-1}(\Sym_f(\mathbb N)),
$
and $B^{lf}(\infty)$ are Ra\u{\i}kov complete, and identify $B^{lf}_{\mathrm{fin}}$ as the Ra\u{\i}kov completion of $B_\infty$.
Furthermore, we show that $H^{lf}(\infty)$ is Polish; $B^{lf}(\infty)$ is not Polish; but $B^{lf}_{\mathrm{fin}}(\infty)$ is Polish.

\section{Topological fundamental groups and completeness}

\subsection{Topological fundamental groups and completely metrizable spaces}

\begin{definition}\label{def:pi1top}
Let $(X,x_0)$ be a based space. Write $\Omega(X,x_0)$ for the based loop space with the compact--open topology.
Define the \emph{topological fundamental group} $\pi_1^{\mathrm{top}}(X,x_0)$ to be the set $\pi_1(X,x_0)$
equipped with the quotient topology induced by the canonical surjection
\[
q_X:\Omega(X,x_0)\twoheadrightarrow \pi_1(X,x_0),\qquad \gamma\mapsto[\gamma].
\]
\end{definition}

\begin{remark}
In general $\pi_1^{\mathrm{top}}(X,x_0)$ need \emph{not} be a topological group:
multiplication may fail to be jointly continuous. What is always true is the weaker
``quasitopological'' property recorded below.
\end{remark}

\begin{lemma}[Quasitopological group structure]\label{lem:qtop}
For any based space $(X,x_0)$, inversion
$[\gamma]\mapsto[\gamma^{-1}]$ and left/right translations
$L_{[\alpha]}([\gamma])=[\alpha\cdot\gamma]$ and $R_{[\alpha]}([\gamma])=[\gamma\cdot\alpha]$
are continuous maps on $\pi_1^{\mathrm{top}}(X,x_0)$. In particular, each translation is a homeomorphism.
\end{lemma}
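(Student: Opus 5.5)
The plan is to lift each operation to a continuous map on the based loop space and then use the universal property of the quotient map $q_X$.

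\emph{Inversion.} Consider the reversal map $\rho:\Omega(X,x_0)\to\Omega(X,x_0)$, $\rho(\gamma)(t)=\gamma(1-t)$. It is precomposition with the fixed map $t\mapsto 1-t$ of $[0,1]$. Precomposition with a fixed map is continuous for the compact--open topology: it sends subbasic sets $\langle K,U\rangle$ to subbasic sets $\langle \phi^{-1}\ldots\rangle$, more precisely the preimage of $\langle K,U\rangle$ is $\langle \phi(K),U\rangle$. Since based homotopic loops have based homotopic reversals, $q_X\circ\rho=\iota\circ q_X$, where $\iota$ denotes inversion on $\pi_1$. Because $q_X$ is a quotient map and $q_X\circ\rho$ is continuous, $\iota$ is continuous.

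\emph{Translations.} Fix a loop $\alpha$ representing $[\alpha]$ and define $\lambda_\alpha:\Omega(X,x_0)\to\Omega(X,x_0)$ by $\lambda_\alpha(\gamma)=\alpha\cdot\gamma$. I check continuity on subbasic sets $\langle K,U\rangle$ by splitting $K$ into $K_1=K\cap[0,\tfrac12]$ and $K_2=K\cap[\tfrac12,1]$, both compact. Then $\alpha\cdot\gamma\in\langle K,U\rangle$ exactly when $\alpha(2K_1)\subseteq U$ and $\gamma(2K_2-1)\subseteq U$. The first condition is independent of $\gamma$, so the preimage is either empty or $\langle 2K_2-1,U\rangle$; either way it is open. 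Hence $\lambda_\alpha$ is continuous. The right translation map $\gamma\mapsto\gamma\cdot\alpha$ is handled symmetrically. Homotopy invariance of concatenation gives $q_X\circ\lambda_\alpha=L_{[\alpha]}\circ q_X$, and the quotient property again yields continuity of $L_{[\alpha]}$, and likewise of $R_{[\alpha]}$.

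\emph{Homeomorphisms.} We have $L_{[\alpha]}\circ L_{[\alpha^{-1}]}=\id=L_{[\alpha^{-1}]}\circ L_{[\alpha]}$ on $\pi_1$, so each $L_{[\alpha]}$ has a continuous inverse and is a homeomorphism. The same argument applies to right translations.

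The only point needing care is the compact--open continuity of concatenation with a fixed loop, which the splitting of $K$ settles. Note that this argument deliberately avoids joint continuity of multiplication. Joint continuity would require $q_X\times q_X$ to be a quotient map, which can fail; this is exactly why the lemma only claims the quasitopological properties.
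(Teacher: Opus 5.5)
Your proof is correct and follows the same route as the paper: you lift reversal and fixed-loop concatenation to continuous self-maps of $\Omega(X,x_0)$, descend them through the quotient map $q_X$, and obtain the inverses as $L_{[\alpha]}^{-1}=L_{[\alpha^{-1}]}$ and $R_{[\alpha]}^{-1}=R_{[\alpha^{-1}]}$. Your subbasic-set check, splitting $K$ at $\tfrac12$, simply supplies the compact--open continuity detail that the paper asserts without proof.
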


\begin{proof}
Concatenation with a \emph{fixed} loop is continuous on $\Omega(X,x_0)$ in the compact--open topology, and so is
reversal $\gamma\mapsto\gamma^{-1}$. These maps respect based homotopy, hence descend to continuous maps on the
quotient $\pi_1^{\mathrm{top}}(X,x_0)$. Since $L_{[\alpha]}^{-1}=L_{[\alpha^{-1}]}$ and
$R_{[\alpha]}^{-1}=R_{[\alpha^{-1}]}$, translations are homeomorphisms.
\end{proof}

\begin{definition}[Complete metrizability]\label{def:complete-metrizable}
A topological space is \emph{completely metrizable} if its topology is induced by some complete metric.
A topological group is \emph{a complete topological group} if it is a topological group whose underlying space
is completely metrizable.
\end{definition}

\subsection{The locally finite pure braid group $H^{lf}(\infty)$}

\subsubsection{The vague metric and the sum metric}

Let $C_c(\C)$ denote the space of real-valued continuous functions on $\C$ with compact support.
For $r>0$, let
\[
K_r:=\{z\in\C:\ |z|\le r\}=\overline{B}(0,r).
\]

We need a basic construction of a family of continuous, compactly supported functions in order to define the vague metric.
The following definition is equivalent to that in \cite{Teh25}, but is better suited to the analysis carried out later in this paper.

\begin{lemma}\label{lem:testfamily-strong}
There exists a sequence $(\varphi_j)_{j\ge1}\subset C_c(\C)$ such that:
\begin{enumerate}[label=\textnormal{(\alph*)}]
\item For every $m\in\N$, the subfamily $\{\varphi_j:\ \supp(\varphi_j)\subset K_m\}$ is dense in
\[
C_c(K_m):=\{\psi\in C_c(\C):\ \supp(\psi)\subset K_m\}
\]
with respect to $\|\cdot\|_\infty$.
\item For every $q\in\Q(i)$ and every $0<r_1<r_2$ in $\Q$, there exists $j$ such that
\[
0\le \varphi_j\le 1,\qquad
\varphi_j\equiv 1 \text{ on } \overline{B}(q,r_1),\qquad
\supp(\varphi_j)\subset B(q,r_2).
\]
\item For every $j\ge1$, one has $\supp(\varphi_j)\subset K_j$.
\end{enumerate}
\end{lemma}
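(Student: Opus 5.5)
The plan is to build two countable families, each adapted to one of properties (a) and (b). I will then interleave them into a single sequence, padding with the zero function so that (c) holds.

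\textbf{Step 1 (a countable dense set in each $C_c(K_m)$).} For each $m\in\N$ I pick a countable set $D_m\subset C_c(K_m)$ that is dense in the sup norm. One concrete choice: fix a cutoff $\chi_{m,k}\in C_c(\C)$ with $0\le\chi_{m,k}\le1$, equal to $1$ on $K_{m-1/k}$ and supported in $K_m$. Let $D_m$ be the set of products $\chi_{m,k}\cdot p$, where $k\ge1$ and $p$ is a polynomial in $x,y$ with rational coefficients. This set is countable. For density, take $\psi\in C_c(K_m)$ and $\varepsilon>0$. Uniform continuity of $\psi$, together with $\psi=0$ on $\partial K_m$, gives $|\psi|<\varepsilon$ on $K_m\setminus K_{m-1/k}$ for $k$ large. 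Stone--Weierstrass gives $p$ with $\|p-\psi\|_{\infty,K_m}<\varepsilon$. Then $\|\chi_{m,k}p-\psi\|_\infty\le 3\varepsilon$:
\begin{itemize}
\item on $K_{m-1/k}$ the function $\chi_{m,k}p-\psi$ equals $p-\psi$;
\item on the annulus $K_m\setminus K_{m-1/k}$, since $0\le\chi_{m,k}\le1$,
\[
|\chi_{m,k}p-\psi|\le|p-\psi|+2|\psi|<3\varepsilon ;
\]
\item outside $K_m$ both functions vanish.
\end{itemize}
Every element of $D_m$ has support in $K_m$. Here $\supp$ means the closure of the nonvanishing set, so I take the cutoffs supported in $K_m$ itself; if one wants strictly interior supports, use $K_{m-1/(2k)}$ instead.

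\textbf{Step 2 (bump functions).} For each triple $(q,r_1,r_2)$ with $q\in\Q(i)$ and rationals $0<r_1<r_2$, I take the explicit bump
\[
\beta_{q,r_1,r_2}(z)=\min\Bigl(1,\max\Bigl(0,\tfrac{r'-|z-q|}{r'-r_1}\Bigr)\Bigr),\qquad r'=\tfrac{r_1+r_2}{2}.
\]
It is continuous and takes values in $[0,1]$. It equals $1$ on $\overline B(q,r_1)$. Its support is $\overline B(q,r')$, which lies inside $B(q,r_2)$. The index set of these triples is countable.

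\textbf{Step 3 (assembling the sequence).} Let $\mathcal F=\bigcup_m D_m\cup\{\beta_{q,r_1,r_2}\}$; this is countable. Each $f\in\mathcal F$ has compact support, so I can define $N(f):=\min\{j\in\N:\ \supp f\subset K_j\}$.

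The only real obstacle is condition (c): an enumeration must not place $f$ at an index $j<N(f)$. I will fix an arbitrary enumeration $\mathcal F=\{f_1,f_2,\dots\}$ and build the sequence $(\varphi_j)$ greedily. At step $j$, let $f$ be the first $f_i$ not yet used. If $N(f)\le j$, set $\varphi_j=f$; otherwise set $\varphi_j=0$, whose support is empty and hence lies in $K_j$.

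Every $f_i$ eventually gets used. Indeed, once $j\ge\max_{i'\le i}N(f_{i'})$, each pending item among $f_1,\dots,f_i$ is placed at that step, and there are only $i$ of them. Thus every element of $\mathcal F$ appears in the sequence, and (c) holds by construction.

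It remains to check (a) and (b). Condition (b) holds because every $\beta_{q,r_1,r_2}$ appears. For (a), the subfamily $\{\varphi_j:\ \supp\varphi_j\subset K_m\}$ contains $D_m$. Every element of this subfamily lies in $C_c(K_m)$, and $D_m$ is dense there, so the subfamily is dense.
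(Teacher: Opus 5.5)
Your proof is correct and follows the same plan as the paper: a countable sup-norm-dense family in each $C_c(K_m)$ plus a countable family of rational bump functions, merged into one sequence padded with zeros so that every function sits at an index $j$ with $\supp\subset K_j$. The differences are only in implementation: the paper obtains the dense sets abstractly from separability of $C(K_m)$ rather than your Stone--Weierstrass-with-cutoff construction, and it enforces (c) with the explicit injection $j(m,k)=2^m(2k-1)\ge m$ on the groups $S_m$ instead of your greedy delayed enumeration.
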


\begin{proof}
Fix $m\in\N$. The Banach space $C(K_m)$ is separable. The subspace
\[
C_c(K_m)\;=\;\{\psi\in C(K_m):\ \psi|_{\partial K_m}=0\}
\]
is closed in $C(K_m)$ (hence separable). Choose a countable $\|\cdot\|_\infty$--dense family
$\{f_{m,k}\}_{k\ge1}\subset C_c(K_m)$.

Next, for each triple $(q,r_1,r_2)$ with $q\in\Q(i)$ and $0<r_1<r_2$ in $\Q$, choose a continuous function
$\eta_{q,r_1,r_2}:\C\to[0,1]$ such that
\[
\eta_{q,r_1,r_2}\equiv 1 \text{ on } \overline{B}(q,r_1),
\qquad
\supp(\eta_{q,r_1,r_2})\subset B(q,r_2).
\]

For each $m$, define a countable set
\[
S_m:=\{f_{m,k}:k\ge1\}\ \cup\ \{\eta_{q,r_1,r_2}:\ \supp(\eta_{q,r_1,r_2})\subset K_m\}\ \subset\ C_c(K_m),
\]
and enumerate $S_m=\{\psi_{m,k}\}_{k\ge1}$.

Now define a single sequence $(\varphi_j)_{j\ge1}$ by placing $\psi_{m,k}$ at indices
\[
j(m,k):=2^m(2k-1),
\qquad
\varphi_{j(m,k)}:=\psi_{m,k},
\]
and set $\varphi_j\equiv 0$ for indices $j$ not of the form $j(m,k)$.

Then $j(m,k)\ge 2^m\ge m$ and $\supp(\psi_{m,k})\subset K_m\subset K_{j(m,k)}$, and also $\supp(0)=\varnothing\subset K_j$.
Hence (c) holds. For fixed $m$, the subfamily $\{\varphi_j:\supp(\varphi_j)\subset K_m\}$ contains $\{f_{m,k}\}_{k\ge1}$,
so it is dense in $C_c(K_m)$; this proves (a). Property (b) holds because for any $(q,r_1,r_2)$ we can choose $m$ large enough
that $\overline{B}(q,r_2)\subset K_m$, so $\eta_{q,r_1,r_2}\in S_m$ and therefore appears among the $\varphi_j$.
\end{proof}

Throughout this paper, fix $\{\varphi_j\}_{j\ge1}$ as in Lemma~\ref{lem:testfamily-strong}.

\begin{definition}[The vague metric $d_{\mathcal V}$]\label{def:vague-metric}
Define $d_{\mathcal{V}}$ on $\Clf$ by
\[
d_{\mathcal{V}}(A, B) = \sum_{j=1}^{\infty} \frac{1}{2^j} \,
\frac{\left| \sum_{a \in A} \varphi_j(a) - \sum_{b \in B} \varphi_j(b) \right|}{1 + \left| \sum_{a \in A} \varphi_j(a) - \sum_{b \in B} \varphi_j(b) \right|}.
\]
\end{definition}

Let $P: \Conf \rightarrow \Clf$ be the projection. Equip $\Conf$ with the sum metric
\begin{equation}\label{eq:d-sum}
d_{\sum}(x,y) = d_{\mathrm{prod}}(x,y) + d_{\mathcal V}(P(x),P(y)),
\end{equation}
where
\[
d_{\mathrm{prod}}(x,y)
  \;=\; \sum_{j=1}^\infty 2^{-j}\,\min\{|x_j - y_j|,\,1\}
\]
induces the product topology on $\C^{\N}$.

The basepoint of $\Conf$ is $\widetilde{\N}=(1,2,3,\dots)\in \Conf$. Set
\[
H^{lf}(\infty):=\pi_1^{\mathrm{top}}(\Conf,\widetilde{\N}).
\]

\begin{lemma}[Uniform metric on loop spaces]\label{lem:loop-uniform}
Let $(Z,d)$ be a metric space and $z_0\in Z$. Define the bounded metric $\bar d(u,v)=\min\{d(u,v),1\}$ on $Z$ and the
uniform metric on $\Omega(Z,z_0)$ by
\[
\bar d_\infty(\alpha,\beta)=\sup_{t\in[0,1]} \bar d(\alpha(t),\beta(t)).
\]
Then $\bar d_\infty$ induces the compact--open topology on $\Omega(Z,z_0)$.
\end{lemma}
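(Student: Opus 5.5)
The plan is to prove the standard fact that on $C([0,1],Z)$ the compact--open topology agrees with the topology of uniform convergence, and then restrict to $\Omega(Z,z_0)$. First note that $\bar d$ is a metric on $Z$ inducing the same topology as $d$, because for $\varepsilon<1$ the $\bar d$-balls and $d$-balls of radius $\varepsilon$ coincide. So $\bar d_\infty$ is a genuine (bounded) metric on $\Omega(Z,z_0)$, and we may work with $\bar d$ throughout. Since $\Omega(Z,z_0)$ carries the subspace topology from $C([0,1],Z)$ in both topologies, it suffices to compare the two topologies on all of $C([0,1],Z)$. I will show both inclusions.

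\emph{Uniform $\supseteq$ compact--open.} I take a subbasic open set $W(K,U)=\{\alpha:\alpha(K)\subset U\}$ with $K\subset[0,1]$ compact and $U\subset Z$ open, and fix $\alpha\in W(K,U)$. The set $\alpha(K)$ is compact and lies in the open set $U$. If $U=Z$ there is nothing to do. Otherwise $z\mapsto \bar d(z,Z\setminus U)$ is continuous and positive on $\alpha(K)$, so it has a positive minimum $\delta$ there. Any $\beta$ with $\bar d_\infty(\alpha,\beta)<\delta$ then satisfies $\beta(K)\subset U$. Hence $W(K,U)$ is $\bar d_\infty$-open.

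\emph{Compact--open $\supseteq$ uniform.} This is the step needing the one real idea: uniform continuity of $\alpha$ on the compact interval. I fix $\alpha$ and $0<\varepsilon<1$. By uniform continuity, I choose a partition $0=t_0<\dots<t_N=1$ such that $\alpha([t_{i-1},t_i])\subset B(\alpha(t_i),\varepsilon/4)$. I set $K_i=[t_{i-1},t_i]$ and $U_i=B(\alpha(t_i),\varepsilon/4)$. Then $\alpha\in\bigcap_i W(K_i,U_i)$. Now suppose $\beta$ lies in this intersection and $t\in K_i$. Both $\alpha(t)$ and $\beta(t)$ lie in $U_i$, so $\bar d(\alpha(t),\beta(t))<\varepsilon/2$. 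Taking the supremum over $t$ gives $\bar d_\infty(\alpha,\beta)\le\varepsilon/2<\varepsilon$. So every $\bar d_\infty$-ball contains a compact--open neighborhood of its center, and the two topologies coincide.
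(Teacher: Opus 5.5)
Your proof is correct and follows the same route as the paper: both replace $d$ by the bounded compatible metric $\bar d$ and then use the standard fact that, for the compact domain $[0,1]$, the compact--open topology coincides with the topology of uniform convergence. The paper only cites that fact, whereas you prove both inclusions directly: distance to the complement for the subbasic sets $W(K,U)$ (the case $K=\varnothing$ is trivial), and uniform continuity plus a fine partition for the converse.
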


\begin{proof}
Since $[0,1]$ is compact, the compact--open topology agrees with the topology of uniform convergence with respect to any bounded
compatible metric on $Z$. The metric $\bar d$ is bounded and compatible with $d$, hence $\bar d_\infty$ induces the compact--open
topology on $\Omega(Z,z_0)$.
\end{proof}

\subsection{A uniform finiteness lemma}

\begin{lemma}\label{lem:uniform-compact-finite}
Let $T$ be compact and let $F:T\to \Conf$ be continuous with respect to $d_{\sum}$.
Then for every compact $K\subset\C$ there exists $N=N(K)$ such that
\[
F(t)_m \notin K\qquad\text{for all }t\in T\text{ and all }m>N.
\]
In particular, for each $n$ there exists $N(n)$ such that $F(t)_m\notin K_n$ for all $t\in T$ and all $m>N(n)$.
\end{lemma}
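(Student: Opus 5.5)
The plan is to argue by contradiction, combining compactness of $T$ with the vague part of $d_{\sum}$. The product part $d_{\mathrm{prod}}$ only controls finitely many coordinates at a time, so it cannot on its own stop high-index strands from entering $K$. The term $d_{\mathcal V}(P(F(t)),P(F(t')))$, however, controls the counting measure on compacts, and this is the term we will exploit.

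First enlarge $K$ to a closed rational disc $\overline B(q,r_1)\supseteq K$, with $q\in\Q(i)$ and $r_1\in\Q$, and choose a rational $r_2>r_1$. By Lemma~\ref{lem:testfamily-strong}(b) there is an index $j$ with $0\le\varphi_j\le1$, $\varphi_j\equiv1$ on $\overline B(q,r_1)$ and $\operatorname{supp}\varphi_j\subset B(q,r_2)$. Set
\[
\Phi(t):=\sum_{m\ge1}\varphi_j(F(t)_m).
\]
This is a finite sum for each $t$, by local finiteness. Because $d_{\mathcal V}$ dominates $2^{-j}\,|\Delta|/(1+|\Delta|)$ for the $j$-th difference $\Delta$, continuity of $F$ into $d_{\sum}$ makes $t\mapsto\Phi(t)$ continuous. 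Since $T$ is compact, $M:=\sup_T\Phi<\infty$.

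Now suppose the claim fails. Then there are $t_k\in T$ and indices $m_k\to\infty$ with $F(t_k)_{m_k}\in K$. Passing to a subsequence, $t_k\to t_\ast$. The key counting step is as follows. Fix $L\in\N$. The finitely many coordinates $F(t)_1,\dots,F(t)_L$ depend continuously on $t$; this follows from $d_{\mathrm{prod}}$, since each coordinate map is continuous for the product topology. Hence for $k$ large, with $m_k>L$, the point $F(t_k)$ has the $L$ points $F(t_k)_1,\dots,F(t_k)_L$ close to those of $F(t_\ast)$, together with the extra point $F(t_k)_{m_k}\in K$, where $\varphi_j=1$.

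To turn this into a contradiction I would change the choice of test function. Instead of a single $\varphi_j$, use the continuity of $\Phi$ at $t_\ast$: we have $\Phi(t_k)\to\Phi(t_\ast)$. Choose $L$ so that every index $m$ with $F(t_\ast)_m\in \overline B(q,r_2)$ satisfies $m\le L$; such an $L$ exists by local finiteness of $F(t_\ast)$. Then
\[
\Phi(t_k)\ \ge\ \sum_{m\le L}\varphi_j(F(t_k)_m)+1\ \longrightarrow\ \Phi(t_\ast)+1,
\]
using the continuity of the first $L$ coordinates and the fact that $\sum_{m\le L}\varphi_j(F(t_\ast)_m)=\Phi(t_\ast)$. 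This contradicts $\Phi(t_k)\to\Phi(t_\ast)$, and it proves the claim for $K$. The ``in particular'' statement follows by taking $K=K_n$.

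The main obstacle is making sure the extra strand contributes a full $1$ that is not cancelled. This requires $\varphi_j\equiv1$ on all of $K$ and $\varphi_j\ge0$, which is exactly why I enlarge $K$ to a rational disc and invoke Lemma~\ref{lem:testfamily-strong}(b) rather than the density statement (a). The other point needing care is that the count $\Phi(t_\ast)$ is realized by finitely many indices $m\le L$. Both points are handled by positivity and local finiteness as above.
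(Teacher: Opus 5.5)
Your argument is correct, but it reaches the contradiction by a different route than the paper.

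\textbf{The paper's route.} It also argues by contradiction along $t_N\to t_\ast$, and then localizes:
\begin{itemize}
\item it extracts a limit $z\in K$ of the offending points $F(t_N)_{m_N}$;
\item it isolates $z$ from the rest of $A_\ast=P(F(t_\ast))$ by a small ball and places a rational bump $\varphi_{j_0}$ inside that ball;
\item it splits into two cases. If $z\notin A_\ast$, the local count jumps from $0$ to at least $1$. If $z=F(t_\ast)_{i_0}$, continuity of the single coordinate $i_0$ makes the count jump from $1$ to at least $2$.
\end{itemize}

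\textbf{Your route.} You use one bump that is $\equiv 1$ on a rational disc containing all of $K$. By local finiteness, the whole mass $\Phi(t_\ast)$ is carried by the indices $m\le L$. Continuity of these finitely many coordinates, together with $\varphi_j\ge 0$, then gives $\liminf_k \Phi(t_k)\ge \Phi(t_\ast)+1$, contradicting continuity of $\Phi$.

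\textbf{Comparison.} Yours is a global counting (lower-semicontinuity) version of the paper's local one. It never needs a limit point of the escaping strands, an isolating ball, or a case split, so it is somewhat shorter and more uniform. The paper's version only ever tests a small neighbourhood of one point, at the cost of the case analysis.

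\textbf{Cosmetic points.} Two things can simply be deleted:
\begin{itemize}
\item the bound $M=\sup_T\Phi<\infty$, which is never used;
\item the sentence about ``changing the choice of test function'', which is misleading since you keep the same $\varphi_j$ throughout.
\end{itemize}
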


\begin{proof}
Assume not. Then there exists a compact set $K\subset\C$ such that for every $N$ one can find
$t_N\in T$ and an index $m_N>N$ with
\[
x_N:=F(t_N)_{m_N}\in K.
\]
Since $m_N>N$, we have $m_N\to\infty$.
By compactness of $T$ and $K$, after passing to a subsequence we may assume that $t_N\to t_\ast\in T$ and
$x_N\to z\in K$.

Let $A_\ast:=P(F(t_\ast))\subset\C$ be the underlying locally finite subset at $t_\ast$.
A locally finite subset of the Hausdorff space $\C$ is closed and discrete. Hence there exists $r>0$ such that
\begin{equation}\label{eq:isolating-ball}
\overline{B}(z,r)\cap A_\ast\subset\{z\}.
\end{equation}

Choose rational data $q\in\Q(i)$ and $0<r_1<r_2$ in $\Q$ such that
\[
\overline{B}(q,r_2)\subset B(z,r)
\quad\text{and}\quad
z\in B(q,r_1).
\]
Then for all $N\gg0$ we have $x_N\in B(q,r_1)$.

By Lemma~\ref{lem:testfamily-strong}(b), choose an index $j_0$ with
\[
0\le \varphi_{j_0}\le 1,\qquad
\varphi_{j_0}\equiv 1 \text{ on } \overline{B}(q,r_1),\qquad
\supp(\varphi_{j_0})\subset B(q,r_2)\subset B(z,r).
\]
Hence for $N\gg0$, $\varphi_{j_0}(x_N)=1$.

If $\overline{B}(z,r)\cap A_\ast=\varnothing$, then $\sum_{a\in A_\ast}\varphi_{j_0}(a)=0$, while
$\sum_{a\in P(F(t_N))}\varphi_{j_0}(a)\ge 1$ for $N\gg0$ (because $x_N$ contributes $1$).
Thus for $N\gg0$,
\[
\Bigl|\sum_{a\in P(F(t_N))}\varphi_{j_0}(a)-\sum_{a\in A_\ast}\varphi_{j_0}(a)\Bigr|\ge 1.
\]

If $\overline{B}(z,r)\cap A_\ast=\{z\}$, let $i_0$ be the unique index with $F(t_\ast)_{i_0}=z$.
Continuity of $F$ (in particular for the $d_{\mathrm{prod}}$--term) implies $F(t_N)_{i_0}\to z$, hence
$F(t_N)_{i_0}\in B(q,r_1)$ for $N\gg0$.
Also, for $N\gg0$ we have $m_N>i_0$, so $m_N\neq i_0$.
Thus for $N\gg0$ there are at least two distinct points ($x_N$ and $F(t_N)_{i_0}$) of $P(F(t_N))$ in $\overline{B}(q,r_1)$, and therefore
\[
\sum_{a\in P(F(t_N))}\varphi_{j_0}(a)\ge 2
\quad\text{while}\quad
\sum_{a\in A_\ast}\varphi_{j_0}(a)=\varphi_{j_0}(z)=1.
\]
Hence again for $N\gg0$,
\[
\Bigl|\sum_{a\in P(F(t_N))}\varphi_{j_0}(a)-\sum_{a\in A_\ast}\varphi_{j_0}(a)\Bigr|\ge 1.
\]

In either case, for $N\gg0$ the $j_0$-summand in
$d_{\mathcal V}\bigl(P(F(t_N)),P(F(t_\ast))\bigr)$ is bounded below by
$2^{-j_0}\cdot \frac{1}{2}$, so $d_{\sum}(F(t_N),F(t_\ast))\not\to 0$.
This contradicts continuity of $F$ at $t_\ast$.
\end{proof}

\subsection{Discreteness criteria}
The following is a partial result of the main theorem in \cite{CalcutMcCarthy09}. Here we provide a simpler proof.

\begin{proposition}[Discreteness criterion for $\pi_1^{\mathrm{top}}$]\label{pro:piltop-discrete-criterion}
Let $(X,x_0)$ be a based space which is \emph{locally path connected} and \emph{semilocally simply connected}.
Then $\pi_1^{\mathrm{top}}(X,x_0)$ is discrete.
\end{proposition}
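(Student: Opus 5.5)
Because $\pi_1^{\mathrm{top}}(X,x_0)$ is quasitopological (Lemma~\ref{lem:qtop}), translations are homeomorphisms. It therefore suffices to show that the singleton $\{1\}=\{[c_{x_0}]\}$ is open. By definition of the quotient topology, this means showing that the saturated set $q_X^{-1}(1)$, the set of null-homotopic based loops, is open in $\Omega(X,x_0)$ with the compact--open topology. Equivalently, I will show directly that each fibre $q_X^{-1}([\gamma])$ is open, that is, that loops sufficiently close to $\gamma$ in the compact--open topology are based-homotopic to $\gamma$.

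Fix a loop $\gamma$. For each $t\in[0,1]$, semilocal simple connectivity gives an open $U_t\ni\gamma(t)$ such that every loop in $U_t$ is null-homotopic in $X$. Local path connectivity lets me shrink $U_t$ to a path-connected open neighbourhood; being a subset of the original, it still has the same property. The sets $\gamma^{-1}(U_t)$ cover $[0,1]$. By a Lebesgue number argument, I choose a partition $0=s_0<s_1<\dots<s_k=1$ and path-connected open sets $U_1,\dots,U_k$ from this family with $\gamma([s_{i-1},s_i])\subset U_i$. Refining so that consecutive pieces overlap, I get $\gamma(s_i)\in U_i\cap U_{i+1}$ for $0<i<k$. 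Let $W_i$ be the path component of $U_i\cap U_{i+1}$ containing $\gamma(s_i)$; it is open by local path connectivity. Now define the compact--open neighbourhood
\[
\mathcal N=\{\delta\in\Omega(X,x_0):\ \delta([s_{i-1},s_i])\subset U_i\ (1\le i\le k),\ \delta(s_i)\in W_i\ (0<i<k)\}.
\]
It contains $\gamma$, because the singleton $\{s_i\}$ is compact and so the condition $\delta(s_i)\in W_i$ is a subbasic open condition.

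Given $\delta\in\mathcal N$, for each $0<i<k$ I choose a path $\sigma_i$ in $W_i$ from $\gamma(s_i)$ to $\delta(s_i)$, and set $\sigma_0,\sigma_k$ to be constant at $x_0$. Then the loop
\[
\gamma|_{[s_{i-1},s_i]}\cdot\sigma_i\cdot\overline{\delta|_{[s_{i-1},s_i]}}\cdot\overline{\sigma_{i-1}}
\]
lies in $U_i$ (since $W_{i-1},W_i\subset U_i$), and is therefore null-homotopic in $X$. Telescoping these relations in the standard way, as in the proof that $\pi_1$ of a cover is computed via a Lebesgue partition, yields $\gamma\simeq\delta$ rel endpoints. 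Hence $\mathcal N\subset q_X^{-1}([\gamma])$, so every fibre is open, and $\pi_1^{\mathrm{top}}(X,x_0)$ is discrete.

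The main obstacle is the bookkeeping in choosing the $W_i$: they must be path connected, so that the connecting paths $\sigma_i$ exist, and must sit inside both $U_i$ and $U_{i+1}$, so that each piece is a small loop. This is exactly where local path connectedness is needed. Semilocal simple connectivity is used in its weak form, in which the null-homotopy takes place in $X$, and that suffices.
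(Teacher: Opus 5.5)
Your proof is correct and is essentially the paper's argument. You use the same Lebesgue subdivision into path-connected $U_i$, the same path-connected overlaps $W_i\ni\gamma(s_i)$, the same compact--open neighbourhood $\mathcal N$, and the same piecewise comparison via connecting paths in the $W_i$; the only cosmetic difference is that you show every fibre $q_X^{-1}([\gamma])$ is open directly, whereas the paper shows $q_X^{-1}(1)$ is open and then uses translations.

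One small precision: semilocal simple connectivity only controls loops based at $\gamma(t)$. The claim that every loop in the path-connected $U_t$, at any basepoint, is null-homotopic in $X$ therefore comes from a basepoint change inside the shrunken $U_t$, as the paper remarks. It does not already hold for the original $U_t$ before shrinking.
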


\begin{proof}
Let $q:\Omega(X,x_0)\twoheadrightarrow \pi_1^{\mathrm{top}}(X,x_0)$ be the quotient map. It suffices to show that
$q^{-1}(\{1\})$ is open in $\Omega(X,x_0)$.

Let $\alpha\in q^{-1}(\{1\})$ be a null-homotopic based loop. For each $t\in[0,1]$, since $X$ is semilocally simply
connected at $\alpha(t)$, choose an open neighborhood $U_t\ni \alpha(t)$ such that the inclusion
$\pi_1(U_t,\alpha(t))\to \pi_1(X,\alpha(t))$ is trivial. Using local path connectedness, we may assume each $U_t$
is path connected. (Then for any $u\in U_t$, basepoint change within $U_t$ shows that every loop in $U_t$ based at $u$
is also null-homotopic in $X$.)

By compactness of $[0,1]$, choose a subdivision $0=t_0<\cdots<t_m=1$ and indices $t_i^\ast$ such that
\[
\alpha\bigl([t_{i-1},t_i]\bigr)\subset U_{t_i^\ast}\eqqcolon U_i \qquad (1\le i\le m).
\]
For each interior vertex $t_i$ ($1\le i\le m-1$), set $x_i:=\alpha(t_i)\in U_i\cap U_{i+1}$ and choose an open
path-connected neighborhood $W_i\subset U_i\cap U_{i+1}$ containing $x_i$. Define a neighborhood of $\alpha$ in the
compact--open topology by
\[
\mathcal{N}:=\Bigl(\bigcap_{i=1}^m \{\beta\in\Omega(X,x_0): \beta([t_{i-1},t_i])\subset U_i\}\Bigr)
\ \cap\
\Bigl(\bigcap_{i=1}^{m-1} \{\beta\in\Omega(X,x_0): \beta(t_i)\in W_i\}\Bigr).
\]
We claim $\mathcal{N}\subset q^{-1}(\{1\})$.

Fix $\beta\in\mathcal{N}$. For each $i=1,\dots,m-1$, choose a path $\lambda_i$ in $W_i$ from $\alpha(t_i)$ to $\beta(t_i)$.
Set $\lambda_0$ and $\lambda_m$ to be the constant path at $x_0$. For each $i$, define a path
\[
\widetilde{\beta}_i \ :=\  \lambda_{i-1}\cdot \beta|_{[t_{i-1},t_i]}\cdot \lambda_i^{-1}.
\]
Then $\widetilde{\beta}_i$ is a path from $\alpha(t_{i-1})$ to $\alpha(t_i)$ and its image lies in $U_i$
(because $\lambda_{i-1},\beta|_{[t_{i-1},t_i]},\lambda_i$ all lie in $U_i$ by construction).
Hence the loop $\alpha|_{[t_{i-1},t_i]}\cdot \widetilde{\beta}_i^{-1}$ is a loop in $U_i$ and is therefore null-homotopic in $X$.
Thus $\alpha|_{[t_{i-1},t_i]}$ is homotopic relative to endpoints to $\widetilde{\beta}_i$ in $X$.
Concatenating over $i=1,\dots,m$ shows that $\alpha$ is homotopic relative to endpoints to $\beta$, hence $\beta$ is null-homotopic.
Therefore $q^{-1}(\{1\})$ is open.

Thus $\{1\}$ is open in $\pi_1^{\mathrm{top}}(X,x_0)$, and by translations (Lemma~\ref{lem:qtop}) every singleton is open;
hence $\pi_1^{\mathrm{top}}(X,x_0)$ is discrete.
\end{proof}

\begin{corollary}[Discreteness of $\pi_1^{\mathrm{top}}( Conf_n(\C))$]\label{cor:piltop-discrete-Confn}
For each $n\ge1$, the topological fundamental group
\[
P_n=\pi_1^{\mathrm{top}}( Conf_n(\C),(1,\dots,n))
\]
is discrete.
\end{corollary}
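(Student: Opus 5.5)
The plan is to apply Proposition~\ref{pro:piltop-discrete-criterion} directly to $X=Conf_n(\C)$ with basepoint $(1,\dots,n)$. To do so, two hypotheses must be checked: local path connectedness and semilocal simple connectedness. Both follow from the fact that $Conf_n(\C)$ is an open subset of $\C^n\cong\R^{2n}$. Indeed, it is the complement of the finite union of closed diagonals $\{x_i=x_j\}$, so it is a topological (even smooth) manifold of dimension $2n$.

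For local path connectedness, I take any point $x\in Conf_n(\C)$ and any open neighborhood $U$ of $x$. Openness of $Conf_n(\C)$ in $\C^n$ gives an $r>0$ with the open Euclidean ball $B(x,r)$ contained in $U\cap Conf_n(\C)$. Explicitly, I can take
\[
r<\min\Bigl\{\tfrac12\min_{i\neq j}|x_i-x_j|\Bigr\},
\]
shrunk further if needed to fit inside $U$. This ball is convex, hence path connected, so the convex balls form a basis of path connected neighborhoods.

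For semilocal simple connectedness, I use the same balls: each $B(x,r)$ is convex, hence contractible. Therefore $\pi_1(B(x,r),x)$ is trivial, and its image in $\pi_1(Conf_n(\C),x)$ is trivial as well.

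With both hypotheses in place, Proposition~\ref{pro:piltop-discrete-criterion} yields that $\pi_1^{\mathrm{top}}(Conf_n(\C),(1,\dots,n))$ is discrete. There is no real obstacle here. The only point needing care is that the topology on $Conf_n(\C)$ is the subspace topology from $\C^n$, so that the compact--open topology on its loop space is the one intended in Definition~\ref{def:pi1top}; this is the standard convention.
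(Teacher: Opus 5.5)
Your proposal is correct and follows essentially the same route as the paper, which also just observes that $Conf_n(\C)$ is a manifold (hence locally path connected and semilocally simply connected) and applies Proposition~\ref{pro:piltop-discrete-criterion}. Your explicit convex-ball verification simply spells out those two hypotheses; for $n=1$ the minimum over pairs $i\neq j$ is empty, but there $Conf_1(\C)=\C$ and any $r>0$ works.
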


\begin{proof}
The manifold $ Conf_n(\C)$ is locally path connected and semilocally simply connected, so the claim follows from
Proposition~\ref{pro:piltop-discrete-criterion}.
\end{proof}

\subsection{Non-semilocal simple connectivity of $(\Conf,d_{\sum})$ and non-discreteness of $H^{lf}(\infty)$}

Recall the basepoint $\widetilde{\N}=(1,2,3,\dots)\in \Conf$, the sum metric
\[
d_{\sum}=d_{\mathrm{prod}}+d_{\mathcal V}\circ(P\times P),
\]
and the topological fundamental group
\[
H^{lf}(\infty):=\pi_1^{\mathrm{top}}(\Conf,\widetilde{\N}),
\]
where $\pi_1^{\mathrm{top}}$ carries the quotient topology induced by the canonical surjection
\[
q:\Omega(\Conf,\widetilde{\N})\twoheadrightarrow \pi_1(\Conf,\widetilde{\N}),\qquad \alpha\mapsto[\alpha].
\]

\begin{proposition}\label{prop:small-essential-loops2}
For every $\varepsilon>0$ there exists an integer $m\ge 3$ and a based loop
$\gamma^{(m)}:[0,1]\to \Conf$ at $\widetilde{\N}$ such that:
\begin{enumerate}[label=\textup{(\roman*)}]
\item $\gamma^{(m)}([0,1])\subset B_{d_{\sum}}(\widetilde{\N},\varepsilon)$;
\item $[\gamma^{(m)}]\neq 1$ in $\pi_1(\Conf,\widetilde{\N})$.
\end{enumerate}
In particular, $(\Conf,d_{\sum})$ is not semilocally simply connected at $\widetilde{\N}$.
\end{proposition}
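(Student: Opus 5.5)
Given $\varepsilon>0$, choose $m\ge 3$ large and let $\gamma^{(m)}$ be the loop in which strand $m$ makes one full counterclockwise turn around strand $m-1$ inside a small disc, all other strands fixed. Concretely, set $\gamma^{(m)}(t)_j=j$ for $j\neq m$ and
\[
\gamma^{(m)}(t)_m=(m-1)+e^{2\pi i t}.
\]
For $t\in(0,1)$ the moving point avoids every other integer: it has distance exactly $1$ from $m-1$, and it meets the real axis only at $m$ and $m-2$, the latter being the position of strand $m-2$. So I will instead use radius $\tfrac12$ centred at $m-\tfrac12$, i.e. $\gamma^{(m)}(t)_m=(m-\tfrac12)+\tfrac12 e^{2\pi i t}$. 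This path stays in the closed annulus-free disc $\overline{B}(m-\tfrac12,\tfrac12)$, touches the real line only at $m-1$ and $m$, and passes through $m-1$ at $t=\tfrac12$. That is still a collision. The clean fix is a path that goes from $m$ around $m-1$ and back without crossing any other point, for instance the boundary of the rectangle with vertices $m\pm\tfrac13 i$ and $m-\tfrac43\pm\tfrac13 i$, traversed from $m$. It encircles only $m-1$, and its image stays in the strip $\{m-\tfrac43\le \mathrm{Re}\, z\le m,\ |\mathrm{Im}\, z|\le\tfrac13\}$. Each configuration is then locally finite with distinct entries, and continuity in $d_{\sum}$ follows from the estimates below.

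\textbf{Step 1 (smallness).} The product term is $d_{\mathrm{prod}}(\gamma^{(m)}(t),\widetilde{\N})\le 2^{-m}\min\{2,1\}=2^{-m}$, since only coordinate $m$ moves.

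For the vague term, every $\varphi_j$ with $j<m-2$ has support in $K_j\subset K_{m-2}$ by Lemma~\ref{lem:testfamily-strong}(c). The moving point has modulus $\ge m-\tfrac43>m-2$, so it lies outside the support of every such $\varphi_j$. Hence for these $j$ the two sums $\sum\varphi_j$ agree, because the underlying sets differ only in that one point. The remaining summands contribute at most $\sum_{j\ge m-2}2^{-j}=2^{-(m-3)}$.

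So $d_{\sum}(\gamma^{(m)}(t),\widetilde{\N})\le 2^{-m}+2^{3-m}$, which is $<\varepsilon$ for $m$ large. The same estimate, applied to two times $s,t$, gives continuity of $\gamma^{(m)}$: for $j<m-2$ the $j$-th summand vanishes, and for finitely many $j$ the finite sums are continuous in the moving point. This proves (i).

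\textbf{Step 2 (essentiality).} Consider the truncation $\pr_m:\Conf\to Conf_m(\C)$, $x\mapsto(x_1,\dots,x_m)$. It is continuous because $d_{\mathrm{prod}}\le d_{\sum}$ and $d_{\mathrm{prod}}$ controls each coordinate. If $\gamma^{(m)}$ were null-homotopic in $\Conf$, then $\pr_m\circ\gamma^{(m)}$ would be null-homotopic in $Conf_m(\C)$.

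Now compose further with the forgetful map $Conf_m(\C)\to Conf_2(\C)$, $(x_1,\dots,x_m)\mapsto(x_{m-1},x_m)$. Then compose with $(u,v)\mapsto (v-u)/|v-u|\in S^1$. The image loop is a degree-one loop in $S^1$, because the rectangle winds once around $m-1$. That loop is not null-homotopic, which is a contradiction. So $[\gamma^{(m)}]\neq1$, proving (ii).

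\textbf{Step 3 (consequence).} Any neighbourhood $U$ of $\widetilde{\N}$ contains some ball $B_{d_{\sum}}(\widetilde{\N},\varepsilon)$, and that ball contains an essential loop $\gamma^{(m)}$. Hence $\pi_1(U)\to\pi_1(\Conf)$ is nontrivial for every such $U$, and $\Conf$ is not semilocally simply connected at $\widetilde{\N}$.

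\textbf{Main obstacle.} The only delicate point is the vague-metric estimate. It works precisely because property (c) of Lemma~\ref{lem:testfamily-strong} makes all low-index test functions blind to the region near $|z|\approx m$. The tail of the series then supplies the bound $2^{3-m}$.
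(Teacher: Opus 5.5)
Your proof is correct and follows essentially the paper's route. You move only strand $m$ once around strand $m-1$ inside a region of modulus about $m$, use Lemma~\ref{lem:testfamily-strong}(c) to kill the low-index vague summands and bound the tail, and detect essentiality through the degree of the Gauss map $u_{m-1,m}$; the paper uses a segment--circle--segment loop of radius $\rho<\tfrac14$ instead of your rectangle and phrases that last step via $P_m$. In a write-up, delete the two abandoned circle attempts and state that the rectangle is traversed counterclockwise; your explicit check that $\gamma^{(m)}$ is $d_{\sum}$-continuous covers a detail the paper leaves implicit.
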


\begin{proof}
Fix $\varepsilon>0$.

\smallskip
\noindent\textbf{Step 1: choose parameters $J$ and $m$.}
Choose $J\ge 1$ such that
\[
\sum_{j>J}2^{-j}<\frac{\varepsilon}{2}.
\]
Next choose an integer $m$ so large that
\[
m\ge J+3
\qquad\text{and}\qquad
2^{-m}<\frac{\varepsilon}{2}.
\]
Fix also a radius $0<\rho<\frac14$.

\smallskip
\noindent\textbf{Step 2: define a loop $z_m$ in the punctured plane.}
Let $z_m:[0,1]\to \C\setminus\{1,2,\dots,m-1\}$ be a based loop at $m$ which winds once positively around the puncture $m-1$
and avoids the remaining punctures $\{1,\dots,m-2\}$. Concretely, one may take $z_m$ as the concatenation of the three paths:
\begin{enumerate}[label=\textup{(\alph*)}]
\item the straight segment from $m$ to $m-1+\rho$;
\item the positively oriented circle $t\mapsto (m-1)+\rho e^{2\pi i t}$;
\item the straight segment from $m-1+\rho$ back to $m$.
\end{enumerate}
Then $z_m(t)\in \overline{B}(m-1,\rho)\cup [m-1+\rho,m]$ for all $t$.
In particular, since $\rho<\frac14$, we have $z_m(t)\neq k$ for every integer $k> m$, and by construction
$z_m(t)\notin\{1,\dots,m-1\}$ for all $t$.

\smallskip
\noindent\textbf{Step 3: define the loop $\gamma^{(m)}$ in $\Conf$.}
Define $\gamma^{(m)}:[0,1]\to \C^{\N}$ by
\[
\gamma^{(m)}_k(t)\equiv k\quad(k\neq m),
\qquad
\gamma^{(m)}_m(t):=z_m(t).
\]
Then $\gamma^{(m)}(0)=\gamma^{(m)}(1)=\widetilde{\N}$.
Moreover, for each $t$ the coordinate $\gamma^{(m)}_m(t)=z_m(t)$ avoids all integers
$k\neq m$, so the coordinates of $\gamma^{(m)}(t)$ are pairwise distinct.
Hence $\gamma^{(m)}(t)\in\Conf$ for all $t$, i.e.\ $\gamma^{(m)}$ is a based loop in $\Conf$.

\smallskip
\noindent\textbf{Step 4: $\gamma^{(m)}$ is $\varepsilon$--small in $d_{\sum}$.}
Since only the $m$-th coordinate moves, for all $t$,
\[
d_{\mathrm{prod}}\bigl(\gamma^{(m)}(t),\widetilde{\N}\bigr)
=\sum_{k\ge 1}2^{-k}\min\{|\gamma^{(m)}_k(t)-k|,1\}
=2^{-m}\min\{|z_m(t)-m|,1\}\le 2^{-m}<\frac{\varepsilon}{2}.
\]

For the vague term, fix $1\le j\le J$. By Lemma~\ref{lem:testfamily-strong}(c),
$\supp(\varphi_j)\subset K_j\subset K_J$.
Since $m\ge J+3$ and $z_m(t)\in \overline{B}(m-1,\rho)\cup[m-1+\rho,m]$ with $\rho<\frac14$, we have
\[
|z_m(t)|\ge m-1-\rho > J,
\]
hence $z_m(t)\notin K_J$ for all $t$, so $\varphi_j(z_m(t))=0$.
Also $m\notin K_J$, hence $\varphi_j(m)=0$.
All other coordinates are fixed, so for every $t$ and each $1\le j\le J$,
\[
\sum_{a\in P(\gamma^{(m)}(t))}\varphi_j(a)=\sum_{a\in \N}\varphi_j(a),
\]
and therefore the first $J$ summands in $d_{\mathcal V}\bigl(P(\gamma^{(m)}(t)),P(\widetilde{\N})\bigr)$ vanish.
Thus
\[
d_{\mathcal V}\bigl(P(\gamma^{(m)}(t)),P(\widetilde{\N})\bigr)
\le \sum_{j>J}2^{-j}<\frac{\varepsilon}{2}.
\]

Combining the two estimates,
\[
d_{\sum}\bigl(\gamma^{(m)}(t),\widetilde{\N}\bigr)
=d_{\mathrm{prod}}\bigl(\gamma^{(m)}(t),\widetilde{\N}\bigr)
+d_{\mathcal V}\bigl(P(\gamma^{(m)}(t)),\N)\bigr)
<\varepsilon
\qquad( \mbox{ for all }  t\in[0,1]).
\]
Thus $\gamma^{(m)}([0,1])\subset B_{d_{\sum}}(\widetilde{\N},\varepsilon)$, proving \textup{(i)}.

\smallskip
\noindent\textbf{Step 5: $\gamma^{(m)}$ is essential in $\pi_1(\Conf,\widetilde{\N})$.}
Consider the projection $\pr_m:\Conf\to Conf_m(\C)$.
Then $\pr_m\circ\gamma^{(m)}$ is a based loop in $Conf_m(\C)$ at $(1,\dots,m)$ whose first $m-1$ coordinates are constant and
whose $m$-th coordinate is $z_m$.
Let $u_{m-1,m}(t)$ be the associated $S^1$-valued loop
\[
u_{m-1,m}(t)
=\frac{\bigl(\pr_m\circ\gamma^{(m)}\bigr)_m(t)-\bigl(\pr_m\circ\gamma^{(m)}\bigr)_{m-1}(t)}
{\left|\bigl(\pr_m\circ\gamma^{(m)}\bigr)_m(t)-\bigl(\pr_m\circ\gamma^{(m)}\bigr)_{m-1}(t)\right|}
=\frac{z_m(t)-(m-1)}{|z_m(t)-(m-1)|}\in S^1.
\]
By construction, $z_m$ winds once positively around $m-1$, hence $\deg(u_{m-1,m})=1$.
Therefore $[\pr_m\circ\gamma^{(m)}]\neq 1$ in $P_m$.
Since $(\pr_m)_*:\pi_1(\Conf,\widetilde{\N})\to P_m$ is a group homomorphism, it follows that
\[
[\gamma^{(m)}]\neq 1\in \pi_1(\Conf,\widetilde{\N}),
\]
proving \textup{(ii)}.

\smallskip
\noindent\textbf{Step 6: conclude non-semilocal simple connectivity.}
Let $U$ be any neighborhood of $\widetilde{\N}$ in $\Conf$.
Choose $\varepsilon>0$ with $B_{d_{\sum}}(\widetilde{\N},\varepsilon)\subset U$.
By the previous steps there exists an essential based loop $\gamma^{(m)}\subset B_{d_{\sum}}(\widetilde{\N},\varepsilon)\subset U$.
Thus no neighborhood of $\widetilde{\N}$ has trivial inclusion-induced fundamental group, i.e.\ $\Conf$ is not semilocally simply
connected at $\widetilde{\N}$.
\end{proof}

\begin{theorem}[Non-discreteness of $H^{lf}(\infty)$]\label{thm:H-not-discrete2}
The topological fundamental group $H^{lf}(\infty)$
is not discrete.
\end{theorem}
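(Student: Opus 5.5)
To prove that $H^{lf}(\infty)$ is not discrete, we show that the singleton $\{1\}$ is not open in $\pi_1^{\mathrm{top}}(\Conf,\widetilde{\N})$. By the definition of the quotient topology (Definition~\ref{def:pi1top}), $\{1\}$ is open exactly when $q^{-1}(\{1\})$ is open in $\Omega(\Conf,\widetilde{\N})$. So it suffices to produce a sequence of essential loops converging to the constant loop $c_{\widetilde{\N}}$ in the compact--open topology.

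By Lemma~\ref{lem:loop-uniform}, the compact--open topology is induced by the uniform metric $\bar d_\infty$ built from $d_{\sum}$. We use the loops of Proposition~\ref{prop:small-essential-loops2}. For each $k\ge1$, apply it with $\varepsilon=1/k$ to obtain $\gamma^{(m_k)}$ with
\[
\gamma^{(m_k)}([0,1])\subset B_{d_{\sum}}(\widetilde{\N},1/k)
\qquad\text{and}\qquad
[\gamma^{(m_k)}]\neq 1 .
\]
Part (i) gives
\[
\bar d_\infty\bigl(\gamma^{(m_k)},c_{\widetilde{\N}}\bigr)=\sup_t \min\{d_{\sum}(\gamma^{(m_k)}(t),\widetilde{\N}),1\}\le 1/k,
\]
so $\gamma^{(m_k)}\to c_{\widetilde{\N}}$ in $\Omega(\Conf,\widetilde{\N})$.

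Now $c_{\widetilde{\N}}\in q^{-1}(\{1\})$, while every neighborhood of $c_{\widetilde{\N}}$ contains some $\gamma^{(m_k)}\notin q^{-1}(\{1\})$ by part (ii). Hence $q^{-1}(\{1\})$ is not open, so $\{1\}$ is not open in $H^{lf}(\infty)$, and $H^{lf}(\infty)$ is not discrete.

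Equivalently, the classes $[\gamma^{(m_k)}]$ are non-identity elements converging to $1$, since $q$ is continuous. By Lemma~\ref{lem:qtop}, translations are homeomorphisms, so no singleton is open at all.

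There is no real obstacle here: all the work was done in Proposition~\ref{prop:small-essential-loops2}. The only point needing care is the passage from "$d_{\sum}$-small image" to "uniformly close to the constant loop" in the compact--open topology, and Lemma~\ref{lem:loop-uniform} supplies exactly that.
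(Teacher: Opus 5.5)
Your proof is correct and is essentially the paper's argument: both derive non-discreteness from the $\varepsilon$-small essential loops of Proposition~\ref{prop:small-essential-loops2}, by showing that $q^{-1}(\{1\})$ cannot be a neighborhood of the constant loop $c_{\widetilde{\N}}$. The only difference is cosmetic: you describe neighborhoods of $c_{\widetilde{\N}}$ through the uniform metric of Lemma~\ref{lem:loop-uniform} and a convergent sequence, while the paper uses a basic compact--open neighborhood $\bigcap_i\langle K_i,U_i\rangle$ and phrases the contradiction as semilocal simple connectivity at $\widetilde{\N}$.
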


\begin{proof}
Suppose for contradiction that $H^{lf}(\infty)$ is discrete. Then $\{1\}$ is open in $H^{lf}(\infty)$.
Since $q:\Omega(\Conf,\widetilde{\N})\twoheadrightarrow H^{lf}(\infty)$ is a quotient map, the preimage
$q^{-1}(\{1\})$ is open in $\Omega(\Conf,\widetilde{\N})$.

Let $c_{\widetilde{\N}}$ denote the constant loop at $\widetilde{\N}$. Since $c_{\widetilde{\N}}\in q^{-1}(\{1\})$ and the
compact--open topology has a subbasis consisting of sets $\langle K,U\rangle=\{\alpha:\alpha(K)\subset U\}$, there exist
compact sets $K_1,\dots,K_r\subset[0,1]$ and open neighborhoods $U_1,\dots,U_r\subset\Conf$ of $\widetilde{\N}$ such that
\[
c_{\widetilde{\N}}\in \bigcap_{i=1}^r \langle K_i,U_i\rangle \subset q^{-1}(\{1\}).
\]
Set $U:=\bigcap_{i=1}^r U_i$, an open neighborhood of $\widetilde{\N}$ in $\Conf$.
Then for any based loop $\alpha:[0,1]\to U$ we have $\alpha(K_i)\subset U_i$ for each $i$, hence
$\alpha\in \bigcap_{i=1}^r \langle K_i,U_i\rangle\subset q^{-1}(\{1\})$.
Therefore every based loop in $U$ is null-homotopic in $\Conf$, which means that $\Conf$ is semilocally simply connected at
$\widetilde{\N}$. This contradicts
Proposition~\ref{prop:small-essential-loops2}. Hence $H^{lf}(\infty)$ is not discrete.
\end{proof}

\section{Inverse limits and the canonical map $\Theta$}

\begin{definition}\label{def:invlimit}
For $m\geq n$, write $p_{m,n}:P_m\to P_n$ for the standard ``forgetful strand'' homomorphisms
induced by the map $\pr_{m,n}: Conf_m(\C)\rightarrow Conf_n(\C)$, and define the
inverse limit
\[
\varprojlim\nolimits_n P_n
:=\Bigl\{(g_n)_{n\ge 1}\in \prod_{n\ge 1}P_n \ \Big|\ p_{m,n}(g_m)=g_n\ \text{ for all }m\ge n\Bigr\}.
\]
We denote by $\pr_n: \Conf \rightarrow Conf_n(\C)$ the projection map
\[
\pr_n\bigl((a_j)_{j\ge1}\bigr)=(a_1,\dots,a_n)
\]
and the induced group homomorphism by
$$p_n: H^{lf}(\infty) \rightarrow P_n$$
\end{definition}

\begin{theorem}\label{thm:invlimit-complete}
Equip $\prod_{n\ge 1}P_n$ with the metric
\[
d_{\prod}\bigl((g_n),(h_n)\bigr)=\sum_{n=1}^\infty 2^{-n}\,\delta(g_n,h_n),
\qquad
\delta(u,v)=\begin{cases}
0,&u=v,\\
1,&u\ne v.
\end{cases}
\]
Then $d_{\prod}$ is complete and induces the product topology. Moreover, $\varprojlim_n P_n$ is a closed subgroup of
$\prod_{n\ge 1}P_n$, hence $\varprojlim_n P_n$ is a completely metrizable topological group.
\end{theorem}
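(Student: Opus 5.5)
The plan is to treat $\prod_{n\ge1}P_n$ as a countable product of discrete groups; each $P_n$ is discrete by Corollary~\ref{cor:piltop-discrete-Confn}. Everything then reduces to standard facts about such products. The argument has four steps: metric and topology, completeness, closedness of the inverse limit, and the group structure.

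\emph{Metric and topology.} First I check that $d_{\prod}$ is a metric. Symmetry and the triangle inequality hold termwise because $\delta$ is the discrete metric. Definiteness holds because $d_{\prod}(g,h)=0$ forces $g_n=h_n$ for every $n$. To compare with the product topology, I note that the ball $B_{d_{\prod}}(g,2^{-N})$ contains the cylinder $\{h: h_n=g_n \text{ for } n\le N+1\}$, since the remaining terms sum to at most $2^{-(N+1)}<2^{-N}$. Conversely, if $d_{\prod}(g,h)<2^{-N}$, then $h_n=g_n$ for all $n\le N$, because any disagreement at some $n\le N$ contributes at least $2^{-n}\ge 2^{-N}$. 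So the metric balls and the basic cylinder sets (which form a neighbourhood base of the product of discrete spaces) are mutually nested, and the two topologies agree.

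\emph{Completeness.} Let $(g^{(k)})$ be $d_{\prod}$-Cauchy. For each $n$, choose $k_n$ such that $d_{\prod}(g^{(k)},g^{(l)})<2^{-n}$ for all $k,l\ge k_n$. By the estimate above, the $n$-th coordinate $g^{(k)}_n$ is then constant for $k\ge k_n$; call this value $g_n$. Set $g=(g_n)$. Then for $k\ge\max_{m\le N}k_m$ the sequence term agrees with $g$ in the first $N$ coordinates, so $d_{\prod}(g^{(k)},g)\le 2^{-N}$. Hence $g^{(k)}\to g$.

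\emph{Closedness of the inverse limit.} The inverse limit is an intersection of conditions: it is $\bigcap_{m\ge n}E_{m,n}$, where $E_{m,n}=\{(g_i): p_{m,n}(g_m)=g_n\}$. Each $E_{m,n}$ is the preimage of the diagonal of the discrete space $P_n\times P_n$ under the continuous map $(g_i)\mapsto(p_{m,n}(g_m),g_n)$, which depends on only two coordinates. The diagonal of a discrete space is closed, so each $E_{m,n}$ is closed (in fact clopen), and their intersection is closed. It is a subgroup because each $p_{m,n}$ is a homomorphism. A closed subset of a complete metric space is complete under the restricted metric.

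\emph{Topological group structure.} $\prod P_n$ is a topological group, since multiplication and inversion are coordinatewise and each factor is discrete. Subgroups of a topological group inherit this structure, so $\varprojlim_n P_n$ is a topological group, and with the restricted metric it is completely metrizable.

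I do not expect a real obstacle here. The only step needing care is the two-sided comparison between $d_{\prod}$-balls and cylinder sets, which is what supplies both the topology statement and the coordinatewise stabilisation used in the completeness argument.
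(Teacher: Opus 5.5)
Your proposal is correct and follows the same route as the paper: each $P_n$ is discrete, $d_{\prod}$ is the standard weighted product metric, the inverse limit is closed because the $p_{m,n}$ are continuous between discrete spaces, and a closed subgroup of a topological group inherits the group topology and completeness. The only difference is that you prove the ball--cylinder comparison and the coordinatewise stabilisation of Cauchy sequences explicitly, where the paper cites them as standard.
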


\begin{proof}
Each $P_n$ is discrete, hence complete for the discrete metric $\delta$. The metric $d_{\prod}$ is the standard weighted
product metric, so it induces the product topology and is complete.

The defining equalities $p_{m,n}(g_m)=g_n$ cut out a closed subset because each $p_{m,n}$ is continuous between discrete spaces.
Thus $\varprojlim_n P_n$ is closed in a complete metric space, hence complete. Being a closed subgroup of a topological group,
it is a topological group.
\end{proof}

\begin{theorem}\label{thm:pn-fibration}
The projection $\pr_n:\Conf\longrightarrow Conf_n(\C)$
is a locally trivial fiber bundle. In particular, $\pr_n$ is a Hurewicz fibration.
\end{theorem}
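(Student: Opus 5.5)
We will prove local triviality directly by the standard Fadell--Neuwirth strategy: push the configuration around with a compactly supported ambient isotopy depending continuously on the base point. Fix $x=(x_1,\dots,x_n)\in Conf_n(\C)$ and choose $r>0$ with $4r<\min_{i\ne j}|x_i-x_j|$. Let $U=\prod_i B(x_i,r)\subset Conf_n(\C)$. We will construct a continuous family of homeomorphisms $h_y:\C\to\C$, $y\in U$, such that:
\begin{itemize}
\item each $h_y$ is supported in the compact set $D:=\bigcup_i \overline B(x_i,2r)$;
\item $h_y(x_i)=y_i$ for every $i$;
\item $h_x=\id$;
\item $(y,z)\mapsto h_y(z)$ and $(y,z)\mapsto h_y^{-1}(z)$ are jointly continuous.
\end{itemize}
Such a family exists: on each disk $\overline B(x_i,2r)$, take the map that translates by $y_i-x_i$ on $\overline B(x_i,r)$ and interpolates radially to the identity on the annulus $r\le|z-x_i|\le 2r$; since $|y_i-x_i|<r$, this is a homeomorphism depending continuously on $y_i$. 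Let $F=\pr_n^{-1}(x)$. The trivialization is
\[
\Phi:U\times F\to \pr_n^{-1}(U),\qquad \Phi(y,(x_1,\dots,x_n,a_{n+1},\dots))=(y_1,\dots,y_n,h_y(a_{n+1}),h_y(a_{n+2}),\dots).
\]
Its inverse applies $h_y^{-1}$ coordinatewise to the tail. Injectivity of the $h_y$ guarantees that $\Phi$ lands in ordered configurations with distinct coordinates. Since $h_y$ is a homeomorphism equal to the identity off the compact set $D$, the image set remains locally finite.

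The main work is continuity of $\Phi$ and $\Phi^{-1}$ with respect to $d_{\sum}$, which has two parts.
\begin{itemize}
\item \emph{Product term.} This is immediate: each coordinate $h_y(a_k)$ depends continuously on $(y,a_k)$, and $d_{\mathrm{prod}}$ induces the product topology.
\item \emph{Vague term (the real obstacle).} We must show that $(y,A)\mapsto h_y(A)$ is continuous $U\times C^{lf}_\infty(\C)\to C^{lf}_\infty(\C)$ in the vague topology. We take the vague topology as convergence of $\sum_{a\in A}\psi(a)$ for every $\psi\in C_c(\C)$; this is equivalent to $d_{\mathcal V}$ by Lemma~\ref{lem:testfamily-strong}(a), since the $\varphi_j$ are dense in each $C_c(K_m)$ and the counts of points in $K_m$ are locally bounded.
\end{itemize}

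For the vague term, fix $\psi\in C_c(\C)$ and write $\sum_{a\in h_y(A)}\psi(a)=\sum_{a\in A}\psi(h_y(a))$. The function $\psi\circ h_y$ is supported in the compact set $\supp\psi\cup D$, independent of $y$. As $y\to y_0$, $\psi\circ h_y\to\psi\circ h_{y_0}$ uniformly, by joint continuity on a compact set. Take a cutoff $\chi\in C_c(\C)$ with $\chi\ge 1$ on $\supp\psi\cup D$. The number of points of $A'$ in that compact set is bounded by $\sum_{a\in A'}\chi(a)$, and this stays bounded for $A'$ vaguely near $A$. Then
\[
\Bigl|\sum_{A'}\psi\circ h_y-\sum_{A}\psi\circ h_{y_0}\Bigr|\le \Bigl(\sum_{A'}\chi\Bigr)\|\psi\circ h_y-\psi\circ h_{y_0}\|_\infty+\Bigl|\sum_{A'}\psi\circ h_{y_0}-\sum_A\psi\circ h_{y_0}\Bigr|,
\]
and both terms tend to $0$. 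The same argument with $h_y^{-1}$ gives continuity of $\Phi^{-1}$.

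Finally, $\Phi$ commutes with the projections to $U$, so $\pr_n$ is locally trivial. The base $Conf_n(\C)$ is a metrizable manifold, hence paracompact, so Hurewicz's uniformization theorem (a locally trivial bundle over a paracompact base is a Hurewicz fibration) gives the second claim.
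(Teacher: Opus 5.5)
Your proposal is correct and follows essentially the paper's route. Like the paper, you build a continuous family of plane homeomorphisms $h_y$, all equal to the identity off a fixed compact set, with $h_y(x_i)=y_i$, and apply it coordinatewise to trivialize $\pr_n$ over a polydisk; you check continuity separately for the $d_{\mathrm{prod}}$ term and the vague term, and get the Hurewicz property from paracompactness of $Conf_n(\C)$.

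The differences are only in how much is proved in place. You prove the vague continuity directly with your cutoff estimate, where the paper cites \cite[Lemma 5.2]{Teh25}. You assert joint continuity of $(y,z)\mapsto h_y^{-1}(z)$, whereas the paper proves it by a contraction-mapping argument; it also follows from a compactness argument on the one-point compactification of $\C$, since every $h_y$ is the identity off $D$.
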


\begin{proof}
Fix $x=(x_1,\dots,x_n)\in Conf_n(\C)$ and set
\[
\delta:=\min_{i\neq j}|x_i-x_j|>0,
\qquad
r:=\frac{\delta}{2}.
\]
Then the closed disks $\overline{B}(x_i,2r/3)$ are pairwise disjoint. Define an open neighborhood
\[
U_x:=\Bigl\{y=(y_1,\dots,y_n)\in Conf_n(\C):\ |y_i-x_i|<\frac{r}{6}\ \text{ for all }i\Bigr\}.
\]

\medskip
\noindent\textbf{Step 1: A continuous family of compactly supported plane homeomorphisms.}
By \cite[Lemma 4.1]{Teh25} there exists a Lipschitz function $\eta:[0,\infty)\to[0,1]$
(with the properties listed there; in the construction one may arrange $\Lip(\eta)\le 3/r$ as in the proof of
\cite[Lemma 4.1]{Teh25})
such that for any $a\in\C$ and any $p$ with $|p-a|\le r/6$ the map
\[
h_{a,p}(z):=z+(p-a)\eta(|z-a|)
\]
is a homeomorphism, satisfies $h_{a,p}(a)=p$, and equals $\id$ on $\C\setminus B(a,2r/3)$; moreover
$(a,p,z)\mapsto h_{a,p}(z)$ is continuous on $\{|p-a|\le r/6\}$.

For each $y=(y_1,\dots,y_n)\in U_x$, define
\[
H_y:=h_{x_1,y_1}\circ h_{x_2,y_2}\circ\cdots\circ h_{x_n,y_n}.
\]
Because the supports $B(x_i,2r/3)$ are disjoint, $H_y$ is a homeomorphism of $\C$ satisfying
$H_y(x_i)=y_i$ for all $i$, and
\[
H_y=\id\quad\text{on }\C\setminus S,
\qquad
S:=\bigcup_{i=1}^n \overline{B}(x_i,2r/3),
\]
where $S$ is a fixed compact set independent of $y$. Also, $(y,z)\mapsto H_y(z)$ is continuous on $U_x\times\C$,
being a finite composition of the continuous maps from \cite[Lemma 4.1]{Teh25}.

\medskip
\noindent\textbf{Step 2: Continuity of the inverse family $(y,z)\mapsto H_y^{-1}(z)$.}
We first prove a continuity statement for a single bump map.

\smallskip
\noindent\emph{Claim.}
Fix $a\in\C$ and $r>0$ as above. The map
\[
(p,w)\longmapsto h_{a,p}^{-1}(w)
\]
is continuous on $\{(p,w)\in\C^2:\ |p-a|\le r/6\}$.

\smallskip
\noindent\emph{Proof of the claim.}
Write $v:=p-a$ and $g_p(z):=v\,\eta(|z-a|)$ so that $h_{a,p}(z)=z+g_p(z)$.
Using $\Lip(\eta)\le 3/r$ and $|v|\le r/6$, we obtain the uniform bound
\[
\Lip(g_p)\le |v|\Lip(\eta)\le \frac{r}{6}\cdot \frac{3}{r}=\frac12.
\]
Given $w\in\C$, define a contraction
\[
T_{p,w}(z):=w-g_p(z).
\]
Then $\Lip(T_{p,w})\le 1/2$, hence $T_{p,w}$ has a unique fixed point $z=z(p,w)$.
By construction, $z$ satisfies $z=w-g_p(z)$, i.e.\ $h_{a,p}(z)=w$, so $z(p,w)=h_{a,p}^{-1}(w)$.

To prove continuity, fix $(p_0,w_0)$ and let $z_0:=z(p_0,w_0)$. Then
\begin{align*}
|z(p,w)-z_0|
&=|T_{p,w}(z(p,w)) - T_{p_0,w_0}(z_0)|\\
&\le |T_{p,w}(z(p,w)) - T_{p,w}(z_0)|
   + |T_{p,w}(z_0)-T_{p_0,w_0}(z_0)|\\
&\le \frac12|z(p,w)-z_0| + |T_{p,w}(z_0)-T_{p_0,w_0}(z_0)|.
\end{align*}
Hence
\[
|z(p,w)-z_0|
\le 2\,|T_{p,w}(z_0)-T_{p_0,w_0}(z_0)|.
\]
Now $(p,w)\mapsto T_{p,w}(z_0)=w-g_p(z_0)$ is continuous (since $(a,p,z)\mapsto h_{a,p}(z)$ is continuous by
\cite[Lemma 4.1]{Teh25}), so the right-hand side tends to $0$ as $(p,w)\to(p_0,w_0)$.
This proves the claim. \qed

\smallskip
Applying the claim with $a=x_i$, we see that $(y_i,z)\mapsto h_{x_i,y_i}^{-1}(z)$ is continuous.
Therefore
\[
H_y^{-1}=h_{x_n,y_n}^{-1}\circ\cdots\circ h_{x_1,y_1}^{-1},
\]
and $(y,z)\mapsto H_y^{-1}(z)$ is continuous on $U_x\times\C$.
Since each $h_{x_i,y_i}=\id$ on $\C\setminus B(x_i,2r/3)$, the same is true for $h_{x_i,y_i}^{-1}$, hence
\[
H_y^{-1}=\id \quad\text{on }\C\setminus S.
\]

\medskip
\noindent\textbf{Step 3: A local trivialization of $\pr_n$.}
Let the fiber over $x$ be
\[
F_x:=\pr_n^{-1}(x)
=\Bigl\{A=(a_j)_{j\ge1}\in \Conf:\ (a_1,\dots,a_n)=x\Bigr\}.
\]
Define
\[
\Phi:U_x\times F_x\longrightarrow \pr_n^{-1}(U_x),
\qquad
\Phi\bigl(y,(a_j)\bigr):=\bigl(H_y(a_j)\bigr)_{j\ge1}.
\]
This is well-defined: since $H_y$ is injective, the coordinates remain distinct, and since $H_y$ is a homeomorphism,
local finiteness is preserved (compact sets pull back to compact sets).
Moreover
\[
\pr_n\bigl(\Phi(y,(a_j))\bigr)=(H_y(x_1),\dots,H_y(x_n))=(y_1,\dots,y_n)=y.
\]

Define
\[
\Psi:\pr_n^{-1}(U_x)\longrightarrow U_x\times F_x,
\qquad
\Psi\bigl((b_j)\bigr):=\Bigl(\pr_n((b_j)),\ \bigl(H_{\pr_n((b_j))}^{-1}(b_j)\bigr)_{j\ge1}\Bigr).
\]
If $y=\pr_n((b_j))$, then the first $n$ coordinates of $\bigl(H_y^{-1}(b_j)\bigr)_{j\ge1}$ are
\[
\bigl(H_y^{-1}(y_1),\dots,H_y^{-1}(y_n)\bigr)=(x_1,\dots,x_n),
\]
so the second component indeed lies in $F_x$.
Algebraically we have $\Psi\circ\Phi=\id_{U_x\times F_x}$ and $\Phi\circ\Psi=\id_{\pr_n^{-1}(U_x)}$.

\medskip
\noindent\textbf{Step 4: Continuity of $\Phi$ and $\Psi$ for the metric $d_{\sum}$.}
Recall $d_{\sum}=d_{\mathrm{prod}}+d_{\mathcal V}\circ(P\times P)$ and $d_{\mathcal V}$ is a metric
(see \cite[Proposition 2.3]{Teh25}).

\smallskip
\noindent\emph{(i) Product part.}
For each fixed $j$, the map $(y,(a_k))\mapsto H_y(a_j)$ is continuous because $(y,z)\mapsto H_y(z)$ is continuous.
The weighted-series definition of $d_{\mathrm{prod}}$ then implies continuity of $\Phi$ in $d_{\mathrm{prod}}$.
The same argument applies to $\Psi$ using continuity of $(y,z)\mapsto H_y^{-1}(z)$ from Step~2.

\smallskip
\noindent\emph{(ii) Vague part via \cite[Lemma 5.2]{Teh25}.}
We prove continuity of $\Phi$ in the $d_{\mathcal V}$-term; the argument for $\Psi$ is identical with $H_y^{-1}$ in place of $H_y$.

Let $(y_m,A_m)\to(y,A)$ in $U_x\times F_x$.
Set
\[
K:=\{(y,A)\}\cup\{(y_m,A_m):m\ge1\}.
\]
Then $K$ is compact.
Define $F:K\to \Clf$ by $F(y',A'):=P(A')$; this is continuous by definition of $d_{\sum}$.
Define a continuous map
\[
\widetilde\Phi:K\times\C\to\C,
\qquad
\widetilde\Phi\bigl((y',A');z\bigr):=H_{y'}(z).
\]
Each $\widetilde\Phi_{(y',A')}=H_{y'}$ is a homeomorphism, and all are the identity on $\C\setminus S$ with the same fixed compact
$S$ from Step~1. Therefore \cite[Lemma 5.2]{Teh25} applies and yields continuity of
\[
(y',A')\longmapsto (H_{y'})_*\bigl(F(y',A')\bigr)=(H_{y'})_*\bigl(P(A')\bigr)
\]
in the vague topology, hence
\[
d_{\mathcal V}\bigl(P(\Phi(y_m,A_m)),P(\Phi(y,A))\bigr)\to 0.
\]
Thus $\Phi$ is continuous for the $d_{\mathcal V}$-part and hence for $d_{\sum}$.
Applying the same argument to the continuous family $H_{y'}^{-1}$ (Step~2) shows $\Psi$ is also continuous.

\smallskip
Consequently $\Phi$ is a homeomorphism $U_x\times F_x\cong \pr_n^{-1}(U_x)$ commuting with the projections to $U_x$.
Since $x$ was arbitrary, $\pr_n$ is a locally trivial fiber bundle.

\medskip
\noindent\textbf{Step 5: Hurewicz fibration.}
Since $Conf_n(\C)$ is a paracompact manifold, the above locally trivial fiber bundle is numerable and therefore a Hurewicz fibration.
\end{proof}

\section{Locally finite inverse limit}
\begin{definition}\label{def:invlimit-lf-subgroup}
A compatible sequence $(g_n)_{n\geq 1}\in\varprojlim_n P_n$ is called \emph{locally finite} if there exist based loops
\[
\alpha^{(n)}:[0,1]\to  Conf_n(\C)\qquad(n\ge 1)
\]
such that:
\begin{enumerate}[label=\textnormal{(\roman*)}]
\item $\alpha^{(n)}(0)=\alpha^{(n)}(1)=(1,\dots,n)$ and $[\alpha^{(n)}]=g_n\in P_n$;
\item $\pr_{n+1,n}\circ \alpha^{(n+1)}=\alpha^{(n)}$ as maps $[0,1]\to Conf_n(\C)$;
\item for every $\ell\ge 1$ there exists $M(\ell)\ge 1$ such that for all $n\ge M(\ell)$ and all $m$ with
$M(\ell)< m\le n$,
\[
\alpha^{(n)}_m(t)\notin K_{\ell}\qquad\text{for all }t\in[0,1].
\]
\end{enumerate}
Denote the subset of such sequences by
\[
\Bigl(\varprojlim\nolimits_n P_n\Bigr)_{\!\mathrm{lf}}\ \le\ \varprojlim\nolimits_n P_n.
\]
\end{definition}

\begin{lemma}\label{lem:invlimit-lf-subgroup-closed-improved}
The subset \(\bigl(\varprojlim_n P_n\bigr)_{\mathrm{lf}}\) is a subgroup of \(\varprojlim_n P_n\).
\end{lemma}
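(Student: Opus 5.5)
We verify the three subgroup axioms directly on the level of the witnessing families of loops $(\alpha^{(n)})_{n\ge1}$ in Definition~\ref{def:invlimit-lf-subgroup}. Each group operation is carried out coordinatewise, one $n$ at a time, and we check that conditions (i)--(iii) survive.

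\emph{Identity.} The constant loops $\alpha^{(n)}\equiv(1,\dots,n)$ realize $(1)_n$. Conditions (i) and (ii) are immediate. For (iii), given $\ell$ we take $M(\ell)=\ell$: for $m>\ell$ we have $\alpha^{(n)}_m(t)=m\notin K_\ell$.

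\emph{Inverses.} If $(\alpha^{(n)})$ witnesses $(g_n)$, we put $\beta^{(n)}(t):=\alpha^{(n)}(1-t)$. Then $[\beta^{(n)}]=g_n^{-1}$. Since $\pr_{n+1,n}$ acts pointwise, $\pr_{n+1,n}\circ\beta^{(n+1)}=\beta^{(n)}$. The image of each coordinate path is unchanged by reparametrization, so (iii) holds with the same function $M(\ell)$. Since $p_{m,n}$ is a homomorphism, $(g_n^{-1})$ lies in $\varprojlim_n P_n$.

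\emph{Products.} Let $(\alpha^{(n)})$ and $(\beta^{(n)})$ witness $(g_n)$ and $(h_n)$, with functions $M_\alpha$ and $M_\beta$. We define $\gamma^{(n)}:=\alpha^{(n)}\cdot\beta^{(n)}$ using the standard concatenation: $\alpha^{(n)}(2t)$ on $[0,\tfrac12]$ and $\beta^{(n)}(2t-1)$ on $[\tfrac12,1]$. This is a based loop in $Conf_n(\C)$ with $[\gamma^{(n)}]=g_nh_n$. Because concatenation uses the same parametrization for every $n$ and $\pr_{n+1,n}$ acts pointwise, we get $\pr_{n+1,n}\circ\gamma^{(n+1)}=\gamma^{(n)}$, which is (ii). For (iii) we set $M(\ell):=\max\{M_\alpha(\ell),M_\beta(\ell)\}$. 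Then for $n\ge M(\ell)$ and $M(\ell)<m\le n$, both hypotheses apply, since $n\ge M_\alpha(\ell)$ and $m>M_\alpha(\ell)$, and likewise for $\beta$. Hence $\gamma^{(n)}_m(t)\notin K_\ell$ on both halves of $[0,1]$.

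The only point requiring care is the quantifier structure in (iii): enlarging $M$ must not break the condition. Because the condition is monotone in $M$ (it ranges over $n\ge M$ and $m>M$), passing to the maximum is legitimate. No homotopy modification of the loops is needed, so I do not expect a genuine obstacle.
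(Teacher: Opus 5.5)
Your proposal is correct and follows the paper's proof essentially verbatim: concatenation with $M(\ell)=\max\{M_\alpha(\ell),M_\beta(\ell)\}$ for products, time reversal with the same $M(\ell)$ for inverses, and constant loops for the identity. Your explicit choice $M(\ell)=\ell$ for the identity and your remark that condition (iii) is monotone in $M$ make explicit what the paper leaves as obvious.
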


\begin{proof}
Let \((g_n)_{n\ge1},(h_n)_{n\ge1}\in\bigl(\varprojlim_n P_n\bigr)_{\mathrm{lf}}\).
Choose witnessing based loops \(\alpha^{(n)}\) for \((g_n)\) and \(\beta^{(n)}\) for \((h_n)\) as in
Definition~\ref{def:invlimit-lf-subgroup}.

\medskip
\noindent\textbf{(1) Closure under products.}
For each \(n\ge1\), define the based loop
\[
\gamma^{(n)}:=\alpha^{(n)}*\beta^{(n)}:[0,1]\to Conf_n(\C),
\]
i.e.
\[
\gamma^{(n)}(t)=
\begin{cases}
\alpha^{(n)}(2t), & 0\le t\le \tfrac12,\\[2pt]
\beta^{(n)}(2t-1), & \tfrac12\le t\le 1.
\end{cases}
\]
Then \(\gamma^{(n)}(0)=\gamma^{(n)}(1)=(1,\dots,n)\) and
\[
[\gamma^{(n)}]=[\alpha^{(n)}]\,[\beta^{(n)}]=g_nh_n\in P_n,
\]
so Definition~\ref{def:invlimit-lf-subgroup}\textnormal{(i)} holds for \((g_nh_n)\).

For \textnormal{(ii)}, since \(\pr_{n+1,n}\) is applied pointwise and the above reparametrizations match on both sides,
we have an identity of maps:
\[
\pr_{n+1,n}\circ(\alpha^{(n+1)}*\beta^{(n+1)})
=\bigl(\pr_{n+1,n}\circ\alpha^{(n+1)}\bigr)*\bigl(\pr_{n+1,n}\circ\beta^{(n+1)}\bigr).
\]
Using Definition~\ref{def:invlimit-lf-subgroup}\textnormal{(ii)} for \(\alpha^{(n)}\) and \(\beta^{(n)}\), this becomes
\[
\pr_{n+1,n}\circ\gamma^{(n+1)}=\alpha^{(n)}*\beta^{(n)}=\gamma^{(n)}.
\]
Hence \textnormal{(ii)} holds.

For \textnormal{(iii)}, fix \(\ell\ge1\). Let \(M_\alpha(\ell)\) and \(M_\beta(\ell)\) witness \textnormal{(iii)} for
\(\alpha^{(n)}\) and \(\beta^{(n)}\), respectively, and set
\[
M(\ell):=\max\{M_\alpha(\ell),\,M_\beta(\ell)\}.
\]
Let \(n\ge M(\ell)\) and \(m\) satisfy \(M(\ell)<m\le n\).
Then \(\alpha^{(n)}_m(t)\notin K_\ell\) and \(\beta^{(n)}_m(t)\notin K_\ell\) for all \(t\in[0,1]\).
By the explicit piecewise formula for \(\gamma^{(n)}\),
\[
\gamma^{(n)}_m(t)=
\begin{cases}
\alpha^{(n)}_m(2t), & 0\le t\le \tfrac12,\\
\beta^{(n)}_m(2t-1), & \tfrac12\le t\le 1,
\end{cases}
\]
so \(\gamma^{(n)}_m(t)\notin K_\ell\) for all \(t\in[0,1]\).
Thus \textnormal{(iii)} holds for \(\gamma^{(n)}\), and \((g_nh_n)_{n\ge1}\in(\varprojlim_n P_n)_{\mathrm{lf}}\).

\medskip
\noindent\textbf{(2) Closure under inverses.}
Let \((g_n)_{n\ge1}\in(\varprojlim_n P_n)_{\mathrm{lf}}\) with witnesses \(\alpha^{(n)}\).
Define \(\delta^{(n)}:=\overline{\alpha^{(n)}}\) by time-reversal,
\[
\delta^{(n)}(t):=\alpha^{(n)}(1-t).
\]
Then \(\delta^{(n)}\) is based, \( [\delta^{(n)}]=[\alpha^{(n)}]^{-1}=g_n^{-1}\), so \textnormal{(i)} holds.

For \textnormal{(ii)}, for all \(t\in[0,1]\),
\[
\pr_{n+1,n}\circ\delta^{(n+1)}(t)
=\pr_{n+1,n}\bigl(\alpha^{(n+1)}(1-t)\bigr)
=\alpha^{(n)}(1-t)
=\delta^{(n)}(t),
\]
using \(\pr_{n+1,n}\circ\alpha^{(n+1)}=\alpha^{(n)}\).
For \textnormal{(iii)}, if \(M(\ell)\) witnesses \textnormal{(iii)} for \(\alpha^{(n)}\), then for \(n\ge M(\ell)\) and
\(M(\ell)<m\le n\),
\[
\delta^{(n)}_m(t)=\alpha^{(n)}_m(1-t)\notin K_\ell\qquad( \mbox{ for all }  t\in[0,1]),
\]
so \textnormal{(iii)} holds. Hence \((g_n^{-1})_{n\ge1}\in(\varprojlim_n P_n)_{\mathrm{lf}}\).

With the obvious fact that \((1,1,\dots)\in(\varprojlim_n P_n)_{\mathrm{lf}}\),
\(\bigl(\varprojlim_n P_n\bigr)_{\mathrm{lf}}\) is a subgroup of \(\varprojlim_n P_n\).
\end{proof}

\begin{definition}[Pairwise winding homomorphisms]\label{def:omegaij}
Let $n\ge 2$ and let $\alpha:[0,1]\to Conf_n(\C)$ be a based loop at $(1,\dots,n)$.
For $1\le i<j\le n$ define
\[
u_{ij}^\alpha(t):=\frac{\alpha_j(t)-\alpha_i(t)}{|\alpha_j(t)-\alpha_i(t)|}\in S^1.
\]
Set
\[
\omega_{ij}([\alpha]) := \deg(u_{ij}^\alpha)\in \Z.
\]
\end{definition}

\begin{lemma}\label{lem:omegaij-welldef-hom}
For each $1\le i<j\le n$, the map $\omega_{ij}:P_n\to\Z$ is well-defined and is a group homomorphism.
\end{lemma}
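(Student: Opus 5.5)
The plan is to realize $\omega_{ij}$ as the map induced on fundamental groups by a continuous map of spaces, so that well-definedness and the homomorphism property come for free from functoriality of $\pi_1$. Define
\[
f_{ij}: Conf_n(\C)\to S^1,\qquad f_{ij}(x_1,\dots,x_n)=\frac{x_j-x_i}{|x_j-x_i|},
\]
which is continuous because $x_i\neq x_j$ on $Conf_n(\C)$. By definition $u_{ij}^\alpha=f_{ij}\circ\alpha$, and $f_{ij}$ sends the basepoint $(1,\dots,n)$ to $\frac{j-i}{|j-i|}=1\in S^1$ since $j>i$. Thus $u_{ij}^\alpha$ is a loop in $S^1$ based at $1$, and $\omega_{ij}([\alpha])=\deg(f_{ij}\circ\alpha)$.

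For well-definedness, I will take two based loops $\alpha\simeq\beta$ rel endpoints, with homotopy $F:[0,1]\times[0,1]\to Conf_n(\C)$. Then $f_{ij}\circ F$ is a based homotopy in $S^1$ from $u_{ij}^\alpha$ to $u_{ij}^\beta$. The degree of a based loop in $S^1$, computed as $\tilde u(1)-\tilde u(0)$ for a lift $\tilde u$ through $t\mapsto e^{2\pi i t}$, is invariant under based homotopy. This is the standard identification $\deg:\pi_1(S^1,1)\xrightarrow{\cong}\Z$ via covering space theory, in particular homotopy lifting for $\R\to S^1$. Hence $\omega_{ij}=\deg\circ (f_{ij})_*$ is well defined on $P_n$.

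For the homomorphism property, $(f_{ij})_*:P_n\to\pi_1(S^1,1)$ is a group homomorphism because $f_{ij}\circ(\alpha*\beta)=(f_{ij}\circ\alpha)*(f_{ij}\circ\beta)$ pointwise. The degree is additive under concatenation, since a lift of the concatenation is the concatenation of the lift of the first loop with a translated lift of the second. Therefore $\omega_{ij}([\alpha][\beta])=\omega_{ij}([\alpha])+\omega_{ij}([\beta])$.

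There is no real obstacle here. The only points to check are that $f_{ij}$ is basepoint preserving, which uses $j>i$ so that the base value is $1$ rather than $-1$, and that the standard lifting facts for $S^1$ are invoked correctly. Topologies are irrelevant, since $P_n$ is discrete by Corollary~\ref{cor:piltop-discrete-Confn}.
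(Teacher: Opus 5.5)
Your proof is correct and is essentially the paper's argument. The paper also pushes a based homotopy $\alpha\simeq\alpha'$ forward to a homotopy $u_{ij}^\alpha\simeq u_{ij}^{\alpha'}$ in $S^1$ (your $f_{ij}\circ F$), and uses that $u_{ij}^{\alpha*\beta}$ is the concatenation $u_{ij}^\alpha*u_{ij}^\beta$, so the degrees agree and add; writing $\omega_{ij}=\deg\circ(f_{ij})_*$ is a cleaner functorial packaging of the same two steps.
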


\begin{proof}
If $\alpha\simeq\alpha'$ through based loops in $Conf_n(\C)$ then
$t\mapsto \alpha_j(t)-\alpha_i(t)$ is homotopic in $\C^\times$ to
$t\mapsto \alpha'_j(t)-\alpha'_i(t)$, hence $u_{ij}^\alpha$ is homotopic in $S^1$ to $u_{ij}^{\alpha'}$.
Thus $\deg(u_{ij}^\alpha)=\deg(u_{ij}^{\alpha'})$, so $\omega_{ij}$ is well-defined.

If $\alpha*\beta$ denotes concatenation, then $u_{ij}^{\alpha*\beta}$ is homotopic to the concatenation
$u_{ij}^\alpha*u_{ij}^\beta$ in $S^1$, hence $\deg(u_{ij}^{\alpha*\beta})
=\deg(u_{ij}^\alpha)+\deg(u_{ij}^\beta)$.
Therefore $\omega_{ij}$ is a homomorphism.
\end{proof}

\begin{lemma}\label{lem:disk-avoid-equal}
Fix $\ell\ge 1$.
Let $\alpha:[0,1]\to Conf_n(\C)$ be a based loop such that
\[
\alpha_1([0,1])\cup\alpha_2([0,1])\subset K_\ell
\qquad\text{and}\qquad
\alpha_m([0,1])\cap K_\ell=\varnothing
\]
for some $m\in\{3,\dots,n\}$.
Then $\omega_{1m}([\alpha])=\omega_{2m}([\alpha])$.
\end{lemma}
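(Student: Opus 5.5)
We will show that the two $S^1$-valued loops $u^\alpha_{1m}$ and $u^\alpha_{2m}$ are homotopic as free loops in $S^1$, which forces equal degrees. The natural tool is a straight-line interpolation between the first two strands. For $s\in[0,1]$ and $t\in[0,1]$ set
\[
c_s(t):=(1-s)\,\alpha_1(t)+s\,\alpha_2(t).
\]
Since $K_\ell=\overline{B}(0,\ell)$ is a closed disk, hence convex, and $\alpha_1(t),\alpha_2(t)\in K_\ell$, we get $c_s(t)\in K_\ell$ for all $(s,t)$. By hypothesis $\alpha_m(t)\notin K_\ell$, so $\alpha_m(t)-c_s(t)\neq 0$ for every $(s,t)$. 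Hence
\[
U(s,t):=\frac{\alpha_m(t)-c_s(t)}{|\alpha_m(t)-c_s(t)|}
\]
is a continuous map $[0,1]\times[0,1]\to S^1$ with $U(0,\cdot)=u^\alpha_{1m}$ and $U(1,\cdot)=u^\alpha_{2m}$.

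Next we check that $U$ is a homotopy through loops. Because $\alpha$ is based, $\alpha(0)=\alpha(1)$, and so $c_s(0)=c_s(1)$ for each $s$. Therefore each $U(s,\cdot)$ is a closed loop in $S^1$. The basepoint $U(s,0)$ may vary with $s$, but the degree of a loop in $S^1$ is invariant under free homotopy: lifting through $\R\to S^1$, the quantity $\tilde U(s,1)-\tilde U(s,0)$ is a continuous integer-valued function of $s$, hence constant. This gives
\[
\deg(u^\alpha_{1m})=\deg(u^\alpha_{2m}),
\]
which is $\omega_{1m}([\alpha])=\omega_{2m}([\alpha])$ by Definition~\ref{def:omegaij}. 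Note that this uses $i<j$ ordering consistent with $1<m$ and $2<m$, which holds since $m\ge3$.

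There is no real obstacle. The only point requiring care is that the interpolation never meets $\alpha_m$, and convexity of $K_\ell$ is exactly what guarantees this. Notably, we do not need $\alpha_1(t)\neq\alpha_2(t)$ along the homotopy, only that $c_s(t)$ avoids $\alpha_m(t)$.
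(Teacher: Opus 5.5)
Your proposal is correct and follows the paper's proof essentially verbatim: the straight-line interpolation $(1-s)\alpha_1(t)+s\alpha_2(t)$ stays in the convex disk $K_\ell$, so it never meets $\alpha_m(t)$, and normalizing gives a homotopy from $u^\alpha_{1m}$ to $u^\alpha_{2m}$ in $S^1$. Your remark that degree is invariant under free homotopy is a harmless extra precaution. In fact, the basepoint $U(s,0)=(m-1-s)/|m-1-s|=1$ stays fixed because $m\ge 3$.
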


\begin{proof}
For $s\in[0,1]$ define
\[
H(s,t):=\alpha_m(t)-\bigl((1-s)\alpha_1(t)+s\,\alpha_2(t)\bigr)\in\C.
\]
Since $K_\ell$ is convex and $\alpha_1(t),\alpha_2(t)\in K_\ell$, we have
$(1-s)\alpha_1(t)+s\alpha_2(t)\in K_\ell$ and hence $\bigl|(1-s)\alpha_1(t)+s\alpha_2(t)\bigr|\le \ell$.
On the other hand $\alpha_m(t)\notin K_\ell$ implies $|\alpha_m(t)|>\ell$.
Thus for all $(s,t)\in[0,1]^2$,
\[
|H(s,t)|\ge |\alpha_m(t)|-\bigl|(1-s)\alpha_1(t)+s\alpha_2(t)\bigr|> \ell-\ell=0,
\]
so $H(s,t)\in\C^\times$.
Therefore
\[
\frac{H(s,t)}{|H(s,t)|}\in S^1
\]
gives a homotopy from $u_{1m}^\alpha(t)$ (at $s=0$) to $u_{2m}^\alpha(t)$ (at $s=1$),
so the two loops in $S^1$ have the same degree.
\end{proof}

\begin{construction}[A compatible sequence $(g_n)_{n\geq 1}\in\varprojlim_n P_n$]\label{constr:gn}
For each $m\ge 3$, choose a based loop $\beta^{(m)}:[0,1]\to Conf_m(\C)$ at $(1,\dots,m)$ such that
\[
\beta^{(m)}_k(t)\equiv k\quad(1\le k\le m-1)
\]
and the $m$-th coordinate $\beta^{(m)}_m$ is a loop in
$
\C\setminus\{1,2,\dots,m-1\}
$
based at $m$, homotopic (in that punctured plane) to a small positively oriented circle around the puncture $1$.
(For example: move from $m$ to a point $1+\rho$ above the real axis, traverse the circle
$1+\rho e^{2\pi it}$ with $0<\rho<\tfrac12$, then return along the same path; this avoids
$\{1,2,\dots,m-1\}$.)
Let
\[
a_m:=[\beta^{(m)}]\in P_m.
\]

For $n\ge 3$, view each $a_m$ (with $3\le m\le n$) as an element of $P_n$ via the standard inclusion
``add trivial strands'',
and define
\[
g_1:=1\in P_1,\qquad g_2:=1\in P_2,\qquad g_n:=a_3a_4\cdots a_n\in P_n\quad(n\ge 3).
\]
\end{construction}

\begin{lemma}\label{lem:gn-compatible}
The sequence $(g_n)_{n\ge 1}$ lies in the inverse limit $\varprojlim_n P_n$.
\end{lemma}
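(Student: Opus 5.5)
It suffices to check $p_{n+1,n}(g_{n+1})=g_n$ for all $n\ge1$, because $p_{m,n}=p_{m-1,n}\circ\cdots\circ p_{n+1,n}$ (functoriality of $\pr_{m,n}=\pr_{n+1,n}\circ\cdots\circ\pr_{m,m-1}$). For $n=1$ this is trivial, since both sides are $1$.

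\textbf{Key observation.} Since $p_{n+1,n}$ is a homomorphism and $g_{n+1}=g_n\,a_{n+1}$ with $g_n$ viewed in $P_{n+1}$, the identity reduces to two facts:
\begin{enumerate}[label=\textup{(\alph*)}]
\item $p_{n+1,n}\circ\iota_{n,n+1}=\id_{P_n}$, where $\iota_{n,n+1}$ is the ``add a trivial strand'' inclusion;
\item $p_{n+1,n}(a_{n+1})=1$ in $P_n$.
\end{enumerate}

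\textbf{Fact (a).} I take the standard inclusion to be induced on loops by $\alpha\mapsto(\alpha,\,\text{const}_{n+1})$. This is legitimate provided the first $n$ strands never pass through the point $n+1$. Since elements of $P_n$ are homotopy classes, I would choose representatives inside the half-plane $\{\mathrm{Re}\,z<n+\tfrac12\}$. This is possible because $Conf_n$ of a half-plane is homotopy equivalent to $Conf_n(\C)$ via a radial/linear homeomorphism fixing $(1,\dots,n)$ up to based isotopy. Composing with $\pr_{n+1,n}$ then forgets the constant strand and returns $\alpha$ on the nose, which gives (a). For the case $n=2$, where $g_2=1$ and $g_3=a_3$, the claim is just (b).

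\textbf{Fact (b).} By Construction~\ref{constr:gn}, $\beta^{(n+1)}_k\equiv k$ for $k\le n$. Hence $\pr_{n+1,n}\circ\beta^{(n+1)}$ is the constant loop at $(1,\dots,n)$, so $p_{n+1,n}(a_{n+1})=1$. Likewise, for the lower factors $a_m$ with $m\le n$, forgetting the $(n+1)$-st trivial strand returns $a_m\in P_n$ by (a).

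\textbf{Conclusion.} Combining these,
\[
p_{n+1,n}(g_{n+1})=p_{n+1,n}(a_3)\cdots p_{n+1,n}(a_n)\,p_{n+1,n}(a_{n+1})=a_3\cdots a_n\cdot 1=g_n.
\]

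\textbf{Main obstacle.} The only genuinely non-formal point is making the ``add trivial strands'' inclusion precise and checking that it is a well-defined homomorphism splitting $p_{n+1,n}$. I would handle this with the half-plane representatives described above. Alternatively, one can apply Theorem~\ref{thm:pn-fibration} at finite level (the Fadell--Neuwirth fibration) to get the standard section. With either choice, compatibility of the inclusions $P_m\to P_n$ under composition is immediate.
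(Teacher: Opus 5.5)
Your proof is correct and follows the paper's argument. You reduce to consecutive levels, write $g_{n+1}=g_n a_{n+1}$, and use $p_{n+1,n}(a_{n+1})=1$ together with $p_{n+1,n}(a_m)=a_m$ for $m\le n$; the paper simply asserts these two facts, while you justify the ``add a trivial strand'' splitting with half-plane representatives (equivalently, the explicit based section $s_{n,n+1}$ of Lemma~\ref{lem:snm-embed}).

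One small correction to your aside: the local triviality in Theorem~\ref{thm:pn-fibration} does not by itself give a section. So rely on the half-plane argument or on the explicit $s_{n,n+1}$ rather than on that theorem.
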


\begin{proof}
By construction, $a_{n+1}$ becomes trivial after forgetting the $(n+1)$-st strand, i.e.\ $p_{n+1,n}(a_{n+1})=1$,
and for $m\le n$ we have $p_{n+1,n}(a_m)=a_m$ (since $a_m$ involves only the first $m$ strands and the last strand is trivial).
Hence
\[
p_{n+1,n}(g_{n+1})
=p_{n+1,n}(a_3\cdots a_n a_{n+1})
=a_3\cdots a_n
=g_n.
\]
Thus $(g_n)\in\varprojlim_n P_n$.
\end{proof}

\begin{lemma}[Winding data]\label{lem:gn-winding}
For each $m\ge 3$ we have
\[
\omega_{1m}(g_m)=1
\qquad\text{and}\qquad
\omega_{2m}(g_m)=0.
\]
\end{lemma}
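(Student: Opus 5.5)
Since each $\omega_{ij}:P_m\to\Z$ is a homomorphism (Lemma~\ref{lem:omegaij-welldef-hom}) and $g_m=a_3a_4\cdots a_m$ in $P_m$, we will compute
\[
\omega_{1m}(g_m)=\sum_{k=3}^{m}\omega_{1m}(a_k),\qquad \omega_{2m}(g_m)=\sum_{k=3}^{m}\omega_{2m}(a_k),
\]
where each $a_k$ with $k<m$ is regarded in $P_m$ via the inclusion adding trivial strands. This reduces the claim to evaluating the windings of the single-strand loops $\beta^{(k)}$.

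\textbf{Case $k<m$.} The element $a_k\in P_m$ is represented by a loop whose $m$-th strand is constant at $m$, and whose strands $1$ and $2$ are also constant (only strand $k\ge 3$ moves). Hence $u^{\beta}_{1m}$ and $u^{\beta}_{2m}$ are constant maps to $S^1$, so $\omega_{1m}(a_k)=\omega_{2m}(a_k)=0$.

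\textbf{Case $k=m$.} Here strands $1,\dots,m-1$ are fixed, so $u_{1m}(t)$ is the normalization of $\beta^{(m)}_m(t)-1$ and $u_{2m}(t)$ is the normalization of $\beta^{(m)}_m(t)-2$. Thus $\omega_{im}(a_m)$ is the winding number of $\beta^{(m)}_m$ around the puncture $i$. Winding number around $i$ is a homotopy invariant of loops in $\C\setminus\{1,\dots,m-1\}$, since such homotopies avoid $i$. By construction $\beta^{(m)}_m$ is homotopic in the punctured plane to a small positive circle around $1$ (conjugated by a path, which does not change the winding number). That circle has winding number $1$ around $1$ and winding number $0$ around $2$, because $\rho<\tfrac12$ keeps $2$ outside the disk. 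The explicit model (out along an arc, around the circle, back along the same arc) confirms this directly, since the arc contributions cancel.

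Summing the two cases gives $\omega_{1m}(g_m)=1$ and $\omega_{2m}(g_m)=0$.

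The only delicate point is the identification of $\omega_{im}(a_m)$ with the winding number of a conjugated circle. We handle it by the homotopy invariance of winding numbers in $\C\setminus\{i\}$, together with additivity of degree under concatenation. The latter shows that the contributions of a path and its reverse cancel.
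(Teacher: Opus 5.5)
Your proposal is correct and follows the paper's proof: you expand $g_m=a_3\cdots a_m$ via the homomorphism property of $\omega_{ij}$, show that the factors $a_k$ with $k<m$ contribute zero, and read off $\omega_{1m}(a_m)=1$ and $\omega_{2m}(a_m)=0$ as winding numbers of $\beta^{(m)}_m$ around the punctures $1$ and $2$. You are slightly more explicit than the paper on two points: for $k<m$ you note that strands $1$ and $2$ are also constant, not only strand $m$, and you justify the winding computation by homotopy invariance in the punctured plane.
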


\begin{proof}
In $g_m=a_3a_4\cdots a_m$, every factor $a_k$ with $k<m$ fixes the $m$-th coordinate throughout a representing loop
(because it is included by adding trivial strands), hence contributes $0$ to both $\omega_{1m}$ and $\omega_{2m}$.
The last factor $a_m$ has strands $1$ and $2$ fixed at $1$ and $2$, while the $m$-th coordinate winds once around $1$
and winds zero times around $2$ (by the choice of $\beta^{(m)}_m$ in Construction~\ref{constr:gn}).
Thus $\omega_{1m}(a_m)=1$ and $\omega_{2m}(a_m)=0$.
Since $\omega_{ij}$ is a homomorphism (Lemma~\ref{lem:omegaij-welldef-hom}),
the equalities for $g_m$ follow.
\end{proof}

\begin{proposition}\label{prop:invlimit-strict}
The compatible sequence $(g_n)_{n\ge 1}\in\varprojlim_n P_n$ constructed in
Construction~\ref{constr:gn} does \emph{not} belong to the locally finite subgroup
$\bigl(\varprojlim_n P_n\bigr)_{\mathrm{lf}}$.
Consequently,
\[
\Bigl(\varprojlim\nolimits_n P_n\Bigr)_{\!\mathrm{lf}}
\ \subsetneq\
\varprojlim\nolimits_n P_n.
\]
\end{proposition}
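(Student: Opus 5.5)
The plan is to argue by contradiction, using the winding data from Lemma~\ref{lem:gn-winding} and the disk-avoidance identity in Lemma~\ref{lem:disk-avoid-equal}.

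Suppose $(g_n)\in(\varprojlim_n P_n)_{\mathrm{lf}}$, and fix witnessing loops $\alpha^{(n)}$ as in Definition~\ref{def:invlimit-lf-subgroup}. By condition (ii) these loops are strictly compatible: for $n\ge 2$ the first two strands $\alpha^{(n)}_1,\alpha^{(n)}_2$ coincide with $\alpha^{(2)}_1,\alpha^{(2)}_2$. Both are continuous on $[0,1]$, so their images are compact, and we can choose $\ell\ge1$ with
\[
\alpha^{(2)}_1([0,1])\cup\alpha^{(2)}_2([0,1])\subset K_\ell .
\]

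Next apply condition (iii) with this $\ell$. It gives $M=M(\ell)$ such that, for all $n\ge M$ and all $m$ with $M<m\le n$, we have $\alpha^{(n)}_m(t)\notin K_\ell$ for every $t$. Choose $m:=\max\{M+1,3\}$ and take $n=m$. Then $\alpha^{(m)}$ is a based loop in $Conf_m(\C)$ whose strands $1$ and $2$ lie in $K_\ell$ and whose strand $m$ avoids $K_\ell$ entirely.

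Lemma~\ref{lem:disk-avoid-equal} now gives $\omega_{1m}([\alpha^{(m)}])=\omega_{2m}([\alpha^{(m)}])$. By condition (i), $[\alpha^{(m)}]=g_m$, and $\omega_{ij}$ is well defined on homotopy classes by Lemma~\ref{lem:omegaij-welldef-hom}. Hence $\omega_{1m}(g_m)=\omega_{2m}(g_m)$. This contradicts Lemma~\ref{lem:gn-winding}, which gives $1$ and $0$ respectively. The strict inclusion then follows from Lemma~\ref{lem:gn-compatible}, since $(g_n)$ does lie in $\varprojlim_n P_n$.

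The only delicate point is how the uniform bound on strands $1$ and $2$ is obtained. It relies on the exact equality of maps in (ii), not merely on homotopy, so that a single $\ell$ works for every $n$. Once that is in place, the rest is a direct assembly of the lemmas already proved.
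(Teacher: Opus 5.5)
Your proof is correct and follows the paper's argument essentially verbatim. You use the exact equalities in (ii) to fix strands $1$ and $2$ independently of $n$, choose $\ell$ with their images in $K_\ell$, and take $n=m>\max\{M(\ell),2\}$ in (iii); Lemma~\ref{lem:disk-avoid-equal} then forces $\omega_{1m}(g_m)=\omega_{2m}(g_m)$, contradicting Lemma~\ref{lem:gn-winding}. The only cosmetic differences are that you fix a single $m=\max\{M+1,3\}$ rather than all large $m$, and you explicitly cite Lemma~\ref{lem:gn-compatible} for the strict inclusion.
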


\begin{proof}
Assume for contradiction that $(g_n)\in(\varprojlim_n P_n)_{\mathrm{lf}}$.
By Definition~\ref{def:invlimit-lf-subgroup}, there exist based loops
\[
\alpha^{(n)}:[0,1]\to Conf_n(\C)\qquad(n\ge 1)
\]
such that (i)--(iii) hold.

By (ii), for each fixed $k$ the coordinate path $\alpha^{(n)}_k:[0,1]\to\C$ is independent of $n\ge k$.
Define $\gamma_k:[0,1]\to\C$ by $\gamma_k(t):=\alpha^{(n)}_k(t)$ for any $n\ge k$.
In particular, $\gamma_1,\gamma_2$ are continuous and hence have compact images.
Choose $\ell\ge 1$ such that
\[
\gamma_1([0,1])\cup\gamma_2([0,1])\subset K_\ell.
\]
Now apply (iii) with this $\ell$: there is $M(\ell)$ such that for every $m>\max\{M(\ell),2\}$, taking $n=m$ gives
\[
\alpha^{(m)}_m([0,1])\cap K_\ell=\varnothing.
\]
Since $\alpha^{(m)}_1=\gamma_1$ and $\alpha^{(m)}_2=\gamma_2$, Lemma~\ref{lem:disk-avoid-equal} yields
\[
\omega_{1m}(g_m)=\omega_{1m}([\alpha^{(m)}])=\omega_{2m}([\alpha^{(m)}])=\omega_{2m}(g_m)
\qquad\text{for all }m>\max\{M(\ell),2\}.
\]
This contradicts Lemma~\ref{lem:gn-winding}, which gives $\omega_{1m}(g_m)=1$ and $\omega_{2m}(g_m)=0$ for every $m\ge 3$.
Therefore $(g_n)\notin(\varprojlim_n P_n)_{\mathrm{lf}}$.
\end{proof}

\section{Continuity and bijectivity of the canonical map $\Theta$}
\subsection{The continuity and surjectivity of $\Theta$}
\begin{definition}[The canonical map $\Theta$]\label{def:Theta}
Define
\[
\Theta:H^{lf}(\infty)\longrightarrow \prod_{n\ge1}P_n,
\qquad
\Theta(g):=\bigl(p_n(g)\bigr)_{n\ge1}.
\]
\end{definition}

\begin{lemma}[Functoriality is continuous for $\pi_1^{\mathrm{top}}$]\label{lem:pi1top-functorial-cont}
Let $f:(X,x_0)\to(Y,y_0)$ be a based continuous map. Then the induced map
\[
f_*:\pi_1^{\mathrm{top}}(X,x_0)\longrightarrow \pi_1^{\mathrm{top}}(Y,y_0)
\]
is continuous.
\end{lemma}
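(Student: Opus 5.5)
The plan is to factor $f_*$ through the loop spaces and use the universal property of the quotient topology. Post-composition with $f$ gives a map
\[
\Omega f:\Omega(X,x_0)\longrightarrow \Omega(Y,y_0),\qquad \gamma\mapsto f\circ\gamma,
\]
which is well defined because $f(x_0)=y_0$. The relevant diagram is
\[
q_Y\circ\Omega f \;=\; f_*\circ q_X .
\]
It commutes because $f_*[\gamma]=[f\circ\gamma]$ by definition, and this is independent of the representative, since a based homotopy $H$ from $\gamma$ to $\gamma'$ yields the based homotopy $f\circ H$ from $f\circ\gamma$ to $f\circ\gamma'$.

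First, we check that $\Omega f$ is continuous in the compact--open topologies. Take a subbasic open set $\langle K,V\rangle\subset\Omega(Y,y_0)$, with $K\subset[0,1]$ compact and $V\subset Y$ open. Then
\[
(\Omega f)^{-1}\langle K,V\rangle=\{\gamma:\ f(\gamma(K))\subset V\}=\{\gamma:\ \gamma(K)\subset f^{-1}(V)\}=\langle K,f^{-1}(V)\rangle ,
\]
and this is open because $f^{-1}(V)$ is open in $X$. Preimages of subbasic sets are open, so $\Omega f$ is continuous. In our metric settings, Lemma~\ref{lem:loop-uniform} gives an alternative argument: continuity of $\Omega f$ follows from uniform continuity of $f$ on the compact images of loops. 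The subbasis argument is cleaner and fully general, so we use it.

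Second, we apply the universal property of quotient maps. Since $q_X$ is a quotient map, a function $g:\pi_1^{\mathrm{top}}(X,x_0)\to Z$ is continuous exactly when $g\circ q_X$ is continuous. Here $f_*\circ q_X=q_Y\circ\Omega f$ is a composite of continuous maps, because $q_Y$ is continuous by definition of the quotient topology. Hence $f_*$ is continuous.

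No step presents a real obstacle. The only point needing care is the well-definedness of $f_*$ on homotopy classes, and that is immediate from composing homotopies with $f$.
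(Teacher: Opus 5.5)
Your proposal is correct and follows the same route as the paper: both use the identity $q_Y\circ\Omega f=f_*\circ q_X$ and the universal property of the quotient map $q_X$. Your subbasis computation $(\Omega f)^{-1}\langle K,V\rangle=\langle K,f^{-1}(V)\rangle$ fills in the continuity of $\Omega f$, which the paper states without proof.
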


\begin{proof}
Let $q_X:\Omega(X,x_0)\twoheadrightarrow \pi_1^{\mathrm{top}}(X,x_0)$ and
$q_Y:\Omega(Y,y_0)\twoheadrightarrow \pi_1^{\mathrm{top}}(Y,y_0)$ be the quotient maps.
The induced map on loop spaces $\Omega(f):\Omega(X,x_0)\to\Omega(Y,y_0)$ is continuous for the compact--open topology.
Moreover the standard identity
\[
q_Y\circ \Omega(f)= f_*\circ q_X
\]
holds by definition of $f_*$. Since $q_X$ is a quotient map, this identity implies that $f_*$ is continuous.
\end{proof}

\begin{lemma}\label{lem:Theta-lands-lf}
For every $g\in H^{lf}(\infty)$, the compatible sequence $\Theta(g)\in\varprojlim_n P_n$ is locally finite in the
sense of Definition~\ref{def:invlimit-lf-subgroup}. Hence $\Theta$ restricts to a map
\[
\Theta:H^{lf}(\infty)\to \Bigl(\varprojlim\nolimits_n P_n\Bigr)_{\!\mathrm{lf}}.
\]
\end{lemma}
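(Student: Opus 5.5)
Fix $g\in H^{lf}(\infty)$ and choose one based loop $\gamma:[0,1]\to\Conf$ at $\widetilde{\N}$ with $[\gamma]=g$. The witnessing loops for Definition~\ref{def:invlimit-lf-subgroup} will all be truncations of this single loop:
\[
\alpha^{(n)}:=\pr_n\circ\gamma:[0,1]\to Conf_n(\C).
\]
First, $\Theta(g)$ lies in $\varprojlim_n P_n$ at all. Since $\pr_{m,n}\circ\pr_m=\pr_n$, functoriality gives $p_{m,n}(p_m(g))=p_n(g)$.

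Next we check conditions (i) and (ii), which are immediate.
\begin{itemize}
\item[(i)] Each $\alpha^{(n)}$ is continuous, because $\pr_n$ is continuous for $d_{\sum}$: the $d_{\mathrm{prod}}$-term dominates each coordinate up to the factor $2^{-j}$. It is based at $\pr_n(\widetilde{\N})=(1,\dots,n)$, and $[\alpha^{(n)}]=(\pr_n)_*[\gamma]=p_n(g)$.
\item[(ii)] We have $\pr_{n+1,n}\circ\pr_{n+1}=\pr_n$ as maps, so $\pr_{n+1,n}\circ\alpha^{(n+1)}=\alpha^{(n)}$ holds literally, not merely up to homotopy.
\end{itemize}

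The only substantive point is condition (iii), and Lemma~\ref{lem:uniform-compact-finite} supplies it. Apply that lemma with $T=[0,1]$ and $F=\gamma$, which is continuous for $d_{\sum}$. For each $\ell\ge1$ it gives $N(\ell)$ such that
\[
\gamma(t)_m\notin K_\ell \qquad\text{for all } t\in[0,1] \text{ and all } m>N(\ell).
\]
Set $M(\ell):=\max\{N(\ell),1\}$. Now take $n\ge M(\ell)$ and $M(\ell)<m\le n$. Then $\alpha^{(n)}_m(t)=\gamma(t)_m\notin K_\ell$ for every $t$, which is exactly (iii).

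Hence $\Theta(g)\in\bigl(\varprojlim_n P_n\bigr)_{\mathrm{lf}}$, and $\Theta$ corestricts as claimed. The argument does not depend on the choice of representative $\gamma$: $\Theta(g)$ is defined from $g$ alone, and any representative supplies valid witnesses.

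In principle the uniform escape in (iii) is the only obstacle. That difficulty has already been absorbed into Lemma~\ref{lem:uniform-compact-finite}, whose proof uses the vague term of $d_{\sum}$, so here it reduces to a bookkeeping check of the quantifiers in Definition~\ref{def:invlimit-lf-subgroup}.
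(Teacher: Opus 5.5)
Your proposal is correct and matches the paper's proof essentially verbatim: you truncate a single representative loop via $\alpha^{(n)}:=\pr_n\circ\gamma$, conditions (i)--(ii) hold on the nose, and (iii) follows from Lemma~\ref{lem:uniform-compact-finite} applied with $T=[0,1]$, $F=\gamma$, $K=K_\ell$. Your choice $M(\ell)=\max\{N(\ell),1\}$ is a harmless tidy-up that enforces the requirement $M(\ell)\ge 1$ in the definition, which the paper's $M(\ell):=N(\ell)$ leaves implicit.
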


\begin{proof}
Fix $g\in H^{lf}(\infty)$ and choose a representative based loop
$\gamma:[0,1]\to \Conf$ with $g=[\gamma]$.
For each $n\ge1$ set $\alpha^{(n)}:=\pr_n\circ\gamma$.
Then $\alpha^{(n)}$ is based and $[\alpha^{(n)}]=p_n(g)$, giving (i) of Definition~\ref{def:invlimit-lf-subgroup}.
Also $\pr_{n+1,n}\circ\alpha^{(n+1)}=\alpha^{(n)}$, giving (ii).

For (iii), fix $\ell\ge1$ and apply Lemma~\ref{lem:uniform-compact-finite} to the compact parameter space $T=[0,1]$,
the continuous map $F=\gamma:[0,1]\to\Conf$, and the compact set $K=K_\ell$.
We obtain $N(\ell)$ such that $\gamma_m(t)\notin K_\ell$ for all $t\in[0,1]$ and all $m>N(\ell)$.
Setting $M(\ell):=N(\ell)$, for every $n\ge M(\ell)$ and every $m$ with $M(\ell)<m\le n$ we have
\[
\alpha_m^{(n)}(t)=\gamma_m(t)\notin K_\ell\qquad( \mbox{ for all }  t\in[0,1]),
\]
which is exactly (iii).
\end{proof}

\begin{theorem}\label{thm:Theta-cont-surj}
The canonical map
\[
\Theta:H^{lf}(\infty)\longrightarrow \Bigl(\varprojlim\nolimits_n P_n\Bigr)_{\!\mathrm{lf}}
\]
is continuous and surjective.
\end{theorem}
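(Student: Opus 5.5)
Continuity and surjectivity are handled separately.

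\emph{Continuity.} The topology on $(\varprojlim_n P_n)_{\mathrm{lf}}$ is the subspace topology from $\prod_n P_n$ with the product topology (Theorem~\ref{thm:invlimit-complete}). So it suffices to show that each coordinate $p_n=(\pr_n)_*:H^{lf}(\infty)\to P_n$ is continuous. This follows at once from Lemma~\ref{lem:pi1top-functorial-cont} applied to the based map $\pr_n:(\Conf,\widetilde{\N})\to(Conf_n(\C),(1,\dots,n))$. The map $\pr_n$ is continuous for $d_{\sum}$, since already $d_{\mathrm{prod}}\le d_{\sum}$ controls each coordinate. By Lemma~\ref{lem:Theta-lands-lf}, $\Theta$ lands in the locally finite subgroup, so the corestriction is continuous as well.

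\emph{Surjectivity.} Let $(g_n)\in(\varprojlim_n P_n)_{\mathrm{lf}}$, with witnessing loops $\alpha^{(n)}$ as in Definition~\ref{def:invlimit-lf-subgroup}.

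\begin{enumerate}[label=\textup{(\arabic*)}]
\item By (ii), the coordinate paths stabilize, so $\gamma_k:=\alpha^{(n)}_k$ (for any $n\ge k$) is well defined. Define $\gamma:[0,1]\to\C^{\N}$ by $\gamma(t)=(\gamma_k(t))_{k\ge1}$.
\item The coordinates of $\gamma(t)$ are pairwise distinct, because $\gamma_i(t),\gamma_j(t)$ are coordinates of $\alpha^{(n)}(t)\in Conf_n(\C)$ for $n\ge\max(i,j)$.
\item $\gamma$ is based at $\widetilde{\N}$, since each $\alpha^{(n)}$ is based at $(1,\dots,n)$.
\item Local finiteness is uniform in $t$: by (iii), for each $\ell$ only the indices $m\le M(\ell)$ can have $\gamma_m(t)\in K_\ell$. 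Hence $\gamma(t)\in\Conf$.
\item Once $\gamma$ is shown to be continuous for $d_{\sum}$, it defines $g:=[\gamma]\in H^{lf}(\infty)$. Then $p_n(g)=[\pr_n\circ\gamma]=[\alpha^{(n)}]=g_n$, so $\Theta(g)=(g_n)$.
\end{enumerate}

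\emph{Main obstacle: continuity of $\gamma$ in $d_{\sum}$.} The $d_{\mathrm{prod}}$ part is immediate, since every coordinate is continuous and the weighted series converges uniformly. For the $d_{\mathcal V}$ part, I would show that $t\mapsto\sum_k\varphi_j(\gamma_k(t))$ is continuous for each $j$. The series defining $d_{\mathcal V}$ has tail $\sum_{j>J}2^{-j}$, so continuity of finitely many of these functions suffices.

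Fix $j$. By Lemma~\ref{lem:testfamily-strong}(c), $\supp\varphi_j\subset K_j$. Take $\ell=j$ in (iii): for all $m>M(j)$ and all $t$, $\gamma_m(t)\notin K_j$, so $\varphi_j(\gamma_m(t))=0$. Therefore
\[
\sum_k\varphi_j(\gamma_k(t))=\sum_{k\le M(j)}\varphi_j(\gamma_k(t)),
\]
a finite sum of continuous functions of $t$. Hence each summand of $d_{\mathcal V}(P(\gamma(t)),P(\gamma(s)))$ tends to $0$ as $s\to t$, and since the tail $\sum_{j>J}2^{-j}$ is uniformly small, the whole series tends to $0$.

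The key point is that (iii) gives a bound $M(\ell)$ that is uniform in $t$. This is exactly what turns the infinite vague sums into locally finite sums and so makes the limiting loop continuous into $\Conf$.
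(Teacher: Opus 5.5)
Your proposal is correct and follows the paper's proof essentially step for step. Continuity comes from Lemma~\ref{lem:pi1top-functorial-cont} applied to each $p_n$ together with Lemma~\ref{lem:Theta-lands-lf}; surjectivity comes from assembling the stabilized coordinate paths into $\gamma$ and using (iii) with $\ell=j$ and $\supp\varphi_j\subset K_j$ to reduce each vague sum to a finite sum of continuous functions, which gives continuity for $d_{\sum}$.
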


\begin{proof}
\textbf{Continuity.}
By Lemma~\ref{lem:pi1top-functorial-cont}, each induced map
\[
p_n: H^{lf}(\infty)\to P_n
\]
is continuous. Therefore the product map
$g\mapsto\bigl(p_n(g)\bigr)_{n\ge1}$ is continuous into $\prod_{n\ge1}P_n$ with the product topology.
By Lemma~\ref{lem:Theta-lands-lf}, the image lies in the subspace
$\bigl(\varprojlim_n P_n\bigr)_{\mathrm{lf}}$, hence $\Theta$ is continuous as a map into that subspace.

\medskip
\textbf{Surjectivity.}
Let $(g_n)_{n\ge1}\in \bigl(\varprojlim_n P_n\bigr)_{\mathrm{lf}}$.
Choose witnessing based loops $\alpha^{(n)}:[0,1]\to Conf_n(\C)$ satisfying (i)--(iii) of
Definition~\ref{def:invlimit-lf-subgroup}.
For each $m\ge1$, define
\[
\gamma_m(t):=\alpha^{(n)}_m(t)\quad\text{for any }n\ge m.
\]
This is well-defined: if $n'\ge n\ge m$, then (ii) implies the first $n$ coordinates of $\alpha^{(n')}$ equal $\alpha^{(n)}$,
hence $\alpha^{(n')}_m=\alpha^{(n)}_m$.
Set $\gamma(t):=(\gamma_1(t),\gamma_2(t),\dots)\in\C^{\N}$.

\smallskip
\emph{Claim 1: $\gamma(t)\in\Conf$ for every $t\in[0,1]$.}
For each $n$, we have
\[
(\gamma_1(t),\dots,\gamma_n(t))=\alpha^{(n)}(t)\in Conf_n(\C),
\]
so the coordinates are pairwise distinct. For local finiteness, fix $\ell\ge1$ and let $M(\ell)$ be as in (iii).
If $m>M(\ell)$, take $n=m$ in (iii) to obtain $\gamma_m(t)=\alpha^{(m)}_m(t)\notin K_\ell$ for all $t$.
Hence for each fixed $t$, only the finitely many indices $m\le M(\ell)$ can meet $K_\ell$.
Thus $\{\gamma_m(t)\}_{m\ge1}$ is locally finite, so $\gamma(t)\in\Conf$.

\smallskip
\emph{Claim 2: $\gamma:[0,1]\to\Conf$ is continuous for $d_{\sum}$.}
Continuity for $d_{\mathrm{prod}}$ is immediate since each coordinate $\gamma_m$ is continuous.
For the vague term, fix $j\ge1$. By Lemma~\ref{lem:testfamily-strong}(c), $\supp(\varphi_j)\subset K_j$.
Let $M:=M(j)$ from (iii) with $\ell=j$. Then for all $m>M$ and all $t$, $\gamma_m(t)\notin K_j$, so
$\varphi_j(\gamma_m(t))=0$. Therefore for all $t$,
\[
\sum_{a\in P(\gamma(t))}\varphi_j(a)=\sum_{m=1}^{M}\varphi_j(\gamma_m(t)),
\]
a finite sum of continuous functions of $t$, hence continuous.
Fix $t_0$. The $j$-th summand in $d_{\mathcal V}(P(\gamma(t)),P(\gamma(t_0)))$ is thus continuous in $t$, and it is bounded
by $2^{-j}$. By the Weierstrass $M$-test, the defining series for $d_{\mathcal V}$ converges uniformly in $t$, so
$t\mapsto d_{\mathcal V}(P(\gamma(t)),P(\gamma(t_0)))$ is continuous. Hence $\gamma$ is continuous for
$d_{\sum}=d_{\mathrm{prod}}+d_{\mathcal V}$.

\smallskip
Since each $\alpha^{(n)}$ is based, we have $\gamma(0)=\gamma(1)=\widetilde{\N}$, so $\gamma$ is a based loop in $\Conf$.
Let $g:=[\gamma]\in H^{lf}(\infty)$.

For each $n$ and all $t$ we have $(\gamma_1(t),\dots,\gamma_n(t))=\alpha^{(n)}(t)$, hence $\pr_n\circ\gamma=\alpha^{(n)}$
as maps. Therefore
\[
p_n(g)=[p_n\circ\gamma]=[\alpha^{(n)}]=g_n,
\]
so $\Theta(g)=(g_n)$. This proves surjectivity.
\end{proof}


\subsection{The injectivity of $\Theta$}

\begin{lemma}\label{lem:fiber-null}
Fix $n\ge 2$.  Let $\beta:[0,1]\to Conf_n(\C)$ be a based loop at $(1,\dots,n)$ such that
\[
\beta_k(t)\equiv k\qquad(1\le k\le n-1).
\]
Set $z(t):=\beta_n(t)$.  Then $z$ is a based loop in $\C\setminus\{1,\dots,n-1\}$ at $n$, and if
\[
\iota:\C\setminus\{1,\dots,n-1\}\longrightarrow Conf_n(\C),\qquad \iota(w):=(1,\dots,n-1,w),
\]
then
\[
[\beta]=\iota_*([z])\in P_n=\pi_1(Conf_n(\C),(1,\dots,n)).
\]
In particular, if $[\beta]=1$ in $P_n$, then $z$ is null-homotopic in $\C\setminus\{1,\dots,n-1\}$.
\end{lemma}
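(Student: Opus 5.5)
The first two claims are essentially formal, and the real content sits in the ``in particular'' clause, which I would handle with the Fadell--Neuwirth fibration.

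Since $\beta(t)\in Conf_n(\C)$ and $\beta_k(t)=k$ for $k\le n-1$, the coordinate $z(t)=\beta_n(t)$ differs from each of $1,\dots,n-1$. Hence $z$ is a loop in $\C\setminus\{1,\dots,n-1\}$, and it is based at $\beta_n(0)=n$. By construction $\beta=\iota\circ z$ as maps, so $[\beta]=[\iota\circ z]=\iota_*([z])$ by the definition of the induced homomorphism.

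For the last assertion I would show that $\iota_*$ is injective. The map $\iota$ is the inclusion of the fiber over $(1,\dots,n-1)$ of the forgetful map $\pr_{n,n-1}:Conf_n(\C)\to Conf_{n-1}(\C)$. This map is the Fadell--Neuwirth fibration \cite{FadellNeuwirth62}; alternatively one can obtain local triviality by the same bump-homeomorphism argument as in Theorem~\ref{thm:pn-fibration}, applied with finitely many strands. The long exact homotopy sequence then reads
\[
\pi_2(Conf_{n-1}(\C))\to \pi_1(\C\setminus\{1,\dots,n-1\},n)\xrightarrow{\iota_*} P_n\to P_{n-1}.
\]
So it suffices to know that $\pi_2(Conf_{n-1}(\C))=0$.

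I would prove this by induction on $n$, using the same fibrations. The base case is $Conf_1(\C)=\C$, which is contractible. For the inductive step, the fibers $\C\setminus\{k\text{ points}\}$ have the homotopy type of a wedge of circles and so are aspherical. The exact sequence $\pi_2(\text{fiber})\to\pi_2(Conf_k)\to\pi_2(Conf_{k-1})$ then shows that $\pi_2(Conf_k(\C))=0$ whenever $\pi_2(Conf_{k-1}(\C))=0$. Alternatively, I would simply cite the classical fact that $Conf_k(\C)$ is a $K(\pi,1)$.

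Once $\iota_*$ is injective, $[\beta]=1$ forces $[z]=1$, so $z$ is null-homotopic in the punctured plane. The main obstacle is making the fibration, and with it the vanishing of $\pi_2$, rigorous. I would cite it as classical rather than reprove it, noting that it is the finite-dimensional analogue of Theorem~\ref{thm:pn-fibration}.
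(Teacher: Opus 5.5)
Your proposal is correct and follows essentially the paper's argument: write $\beta=\iota\circ z$, then apply the long exact sequence of the Fadell--Neuwirth fibration $Conf_n(\C)\to Conf_{n-1}(\C)$ together with $\pi_2(Conf_{n-1}(\C))=0$ to get injectivity of $\iota_*$. Your inductive proof of $\pi_2(Conf_k(\C))=0$ supplies the detail behind what the paper simply cites as the known asphericity of $Conf_{n-1}(\C)$.
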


\begin{proof}
Let
\[
p: Conf_n(\C)\to Conf_{n-1}(\C),\qquad p(z_1,\dots,z_n)=(z_1,\dots,z_{n-1})
\]
be the forgetful map. By the Fadell--Neuwirth theorem, $p$ is a locally trivial fiber bundle.
The fiber over $(1,\dots,n-1)$ identifies canonically with $\C\setminus\{1,\dots,n-1\}$ via $\iota$.

Since $\beta_k(t)\equiv k$ for $1\le k\le n-1$, we have $(p\circ\beta)(t)=(1,\dots,n-1)$ for all $t$, hence
$\beta$ is a loop in the fiber $p^{-1}(1,\dots,n-1)$. Writing $z(t)=\beta_n(t)$ gives $\beta=\iota\circ z$, so
\[
[\beta]=\iota_*([z])\in \pi_1( Conf_n(\C),(1,\dots,n)).
\]

To deduce the final claim, we use injectivity of $\iota_*$ on $\pi_1$.
Consider the long exact sequence of homotopy groups for the fibration $p$ at the basepoint $(1,\dots,n)$:
\[
\pi_2\bigl(Conf_{n-1}(\C),(1,\dots,n-1)\bigr)\xrightarrow{\ \partial\ }
\pi_1\bigl(\C\setminus\{1,\dots,n-1\},n\bigr)\xrightarrow{\ \iota_*\ }
\pi_1\bigl( Conf_n(\C),(1,\dots,n)\bigr).
\]
Exactness shows $\ker(\iota_*)=\mathrm{im}(\partial)$. It is known that $Conf_{n-1}(\C)$ is aspherical for $n\geq 2$
and hence $\iota_*$ is injective. Consequently, if $[\beta]=\iota_*([z])=1$, then $[z]=1$ in
$\pi_1(\C\setminus\{1,\dots,n-1\},n)$, so $z$ is null-homotopic relative to endpoints.
\end{proof}

\begin{lemma}\label{lem:pointpush-endpoints}
Let $A\subset\C$ be finite and let $b\in \C\setminus A$.
Let $z_0:[0,1]\to\C\setminus A$ be a based loop at $b$, and let
$z:[0,1]^2\to\C\setminus A$ be a based null-homotopy with
\[
z(0,t)=z_0(t),\qquad z(1,t)\equiv b,\qquad z(s,0)=z(s,1)\equiv b.
\]
Then there exist a compact set $S\subset\C$ and a continuous map
\[
(s,t,w)\longmapsto H_{s,t}(w)\quad\text{from }[0,1]^2\times\C\text{ to }\C
\]
such that for each $(s,t)$ the map $H_{s,t}:\C\to\C$ is a homeomorphism, and:
\begin{enumerate}[label=\textnormal{(\roman*)}]
\item $H_{s,t}(a)=a$ for all $a\in A$;
\item $H_{s,t}=\id$ on $\C\setminus S$ (in particular, $\supp(H_{s,t})\subset S$);
\item $H_{0,t}=\id$ for all $t$, and $H_{s,0}=H_{s,1}=\id$ for all $s$;
\item $H_{s,t}(z_0(t))=z(s,t)$ for all $(s,t)\in[0,1]^2$.
\end{enumerate}
\end{lemma}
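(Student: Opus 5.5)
We prove the lemma with a point-pushing construction. We build a continuous family of ambient homeomorphisms that drag the single point $z_0(t)$ to $z(s,t)$ while fixing $A$. The natural route is a compactness argument on $[0,1]^2$. We cut the homotopy into small pieces and realize each piece by a local bump map $h_{a,p}(w)=w+(p-a)\eta(|w-a|)$, as in Step~1 of the proof of Theorem~\ref{thm:pn-fibration}.

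\textbf{Step 1: uniform room around the track.} The image $Z:=z([0,1]^2)$ is compact and disjoint from the finite set $A$. Hence $\delta:=\dist(Z,A)>0$. Fix $r:=\delta/2$, so every disk $\overline B(\zeta,2r/3)$ with $\zeta\in Z$ misses $A$. By uniform continuity of $z$, choose $N$ such that $|z(s,t)-z(s',t)|\le r/6$ whenever $|s-s'|\le 1/N$. Let $s_k=k/N$.

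\textbf{Step 2: define $H$ by composition.} For $s\in[s_{k},s_{k+1}]$ set
\[
G_{k,s,t}:=h_{z(s_k,t),\,z(s,t)},
\]
which is well defined since $|z(s,t)-z(s_k,t)|\le r/6$. Then put
\[
H_{s,t}:=G_{k,s,t}\circ G_{k-1,s_k,t}\circ\cdots\circ G_{0,s_1,t}.
\]
By construction $H_{s,t}(z_0(t))=z(s,t)$, which gives (iv). Each factor is the identity outside $B(z(s_j,t),2r/3)$, a set disjoint from $A$, so (i) holds. All supports lie in the compact set $S:=\{w:\dist(w,Z)\le r\}$, which gives (ii).

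Continuity of $(s,t,w)\mapsto H_{s,t}(w)$ follows from joint continuity of $(a,p,w)\mapsto h_{a,p}(w)$. At the breakpoints $s=s_{k+1}$ the two formulas agree, because $G_{k+1,s_{k+1},t}=h_{a,a}=\id$. Each $H_{s,t}$ is a homeomorphism because every $h_{a,p}$ is one (the contraction argument of Step~2 in Theorem~\ref{thm:pn-fibration} gives the inverses).

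\textbf{Step 3: boundary conditions (iii).} At $s=0$ the composition is $h_{z_0(t),z_0(t)}=\id$. At $t=0,1$ we have $z(s,t)\equiv b$, so every factor is $h_{b,b}=\id$. This gives $H_{s,0}=H_{s,1}=\id$.

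The main obstacle is Step~2's bookkeeping: keeping the factors' supports away from $A$ uniformly, and checking that the local bump-map lemma from \cite[Lemma 4.1]{Teh25} applies with the fixed $r$. The latter needs the displacement bound $|p-a|\le r/6$, which holds by the choice of $N$. Note that we do not need the disks for different $k$ to be disjoint, since the maps are composed sequentially and each step moves only the current point position.
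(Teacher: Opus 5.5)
Your proof is correct and is essentially the paper's argument: you compose the bump maps $h_{a,p}$ of \cite[Lemma 4.1]{Teh25} along a subdivision of the null-homotopy in the $s$-direction, with all supports in a fixed compact neighborhood of $z([0,1]^2)$ that misses $A$. The only difference is bookkeeping. The paper uses the rescaled points $s_k=ks/N$, so there are always exactly $N$ factors depending continuously on $(s,t)$, whereas you use a fixed grid and paste across breakpoints via $h_{a,a}=\id$; this works just as well, provided you take $r=\min\{\delta/2,1\}$ so that the case $A=\varnothing$ (where $\delta=\infty$), which occurs for $n=1$ in Proposition~\ref{prop:Theta-injective}, is covered.
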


\begin{proof}
Set $K:=z([0,1]^2)\subset \C\setminus A$. Then $K$ is compact and $K\cap A=\varnothing$, so
\[
d:=\dist(K,A)>0.
\]
Fix $r>0$ so small that $\overline{B}(K,2r/3)\cap A=\varnothing$.

Choose a Lipschitz function $\eta:[0,\infty)\to[0,1]$ as in \cite[Lemma 4.1]{Teh25}
(with parameter $r$), so that for every $a,p\in\C$ with $|p-a|\le r/6$ the map
\[
h_{a,p}(w):=w+(p-a)\eta(|w-a|)
\]
is a homeomorphism of $\C$, satisfies $h_{a,p}(a)=p$, equals $\id$ on $\C\setminus B(a,2r/3)$,
and the map $(a,p,w)\mapsto h_{a,p}(w)$ is continuous on $\{(a,p):|p-a|\le r/6\}\times\C$.

Since $z$ is uniformly continuous on the compact square $[0,1]^2$, choose $N\in\N$ such that
\[
|z(s,t)-z(s',t)|<r/6\quad \text{whenever } |s-s'|\le 1/N,\ \text{for all }t\in[0,1].
\]
For each $(s,t)\in[0,1]^2$ define subdivision points $s_k:=ks/N$ ($k=0,1,\dots,N$) and points
\[
w_k(s,t):=z(s_k,t)\in K.
\]
Then $|w_k(s,t)-w_{k-1}(s,t)|<r/6$ for every $k=1,\dots,N$.

Define
\[
H_{s,t}
:= h_{w_{N-1}(s,t),\,w_N(s,t)}\circ h_{w_{N-2}(s,t),\,w_{N-1}(s,t)}\circ \cdots \circ h_{w_0(s,t),\,w_1(s,t)}.
\]

\smallskip
\noindent\emph{(i)--(ii).}
Each factor $h_{w_{k-1},w_k}$ equals $\id$ outside $B(w_{k-1},2r/3)\subset B(K,2r/3)$, hence fixes every $a\in A$
(because $\overline{B}(K,2r/3)\cap A=\varnothing$). Therefore $H_{s,t}(a)=a$ for all $a\in A$.
Also every factor is $\id$ on $\C\setminus \overline{B}(K,2r/3)$, so $H_{s,t}=\id$ on $\C\setminus S$ where
\[
S:=\overline{B}(K,2r/3),
\]
independent of $(s,t)$.

\smallskip
\noindent\emph{(iii).}
If $s=0$, then $w_k(0,t)=z(0,t)=z_0(t)$ for all $k$, so each factor is $h_{a,a}=\id$ and hence $H_{0,t}=\id$.
If $t\in\{0,1\}$, then $w_k(s,t)=z(s,t)=b$ for all $k$, so again every factor is $\id$ and $H_{s,0}=H_{s,1}=\id$.

\smallskip
\noindent\emph{(iv).}
We have $w_0(s,t)=z(0,t)=z_0(t)$ and $w_N(s,t)=z(s,t)$, and each factor satisfies
$h_{w_{k-1},w_k}(w_{k-1})=w_k$. Hence
\[
H_{s,t}(z_0(t))=H_{s,t}(w_0(s,t))=w_N(s,t)=z(s,t).
\]

\smallskip
\noindent\emph{Continuity in $(s,t,w)$.}
For each $k$, the map $(s,t)\mapsto (w_{k-1}(s,t),w_k(s,t))$ is continuous, and
$(a,p,w)\mapsto h_{a,p}(w)$ is continuous. Therefore each factor
$(s,t,w)\mapsto h_{w_{k-1}(s,t),w_k(s,t)}(w)$ is continuous; a finite composition of such maps is continuous.
Thus $(s,t,w)\mapsto H_{s,t}(w)$ is continuous on $[0,1]^2\times\C$.
\end{proof}

\begin{lemma}\label{lem:apply-homeo-cont}
Let $K$ be a compact metric space and let
\[
\Phi:K\times\C\to\C,\qquad (x,w)\mapsto \Phi_x(w),
\]
be continuous, where each $\Phi_x:\C\to\C$ is a homeomorphism.
Assume there exists a compact set $S\subset\C$ such that $\Phi_x=\id$ on $\C\setminus S$ for all $x\in K$.
Define the induced map
\[
\Phi^\infty:K\times \Conf\to \Conf,\qquad
\Phi^\infty\bigl(x,(a_m)_{m\ge1}\bigr):=\bigl(\Phi_x(a_m)\bigr)_{m\ge1}.
\]
Then $\Phi^\infty$ is well-defined and continuous with respect to the sum metric
$d_{\sum}$. Consequently, if $\gamma:[0,1]\to\Conf$ is continuous, then
\[
(x,t)\longmapsto \Phi^\infty(x,\gamma(t))
\]
is continuous on $K\times[0,1]$.
\end{lemma}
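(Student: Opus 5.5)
Well-definedness comes first. For fixed $x$, $\Phi_x$ is a homeomorphism, so the coordinates $\Phi_x(a_m)$ stay pairwise distinct. Local finiteness is also preserved: for compact $L$, the set $\Phi_x^{-1}(L)$ is compact, so only finitely many $a_m$ land in $L$. Hence $\Phi^\infty(x,A)\in\Conf$.

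Continuity splits along the two summands of $d_{\sum}$. For the $d_{\mathrm{prod}}$ part, each coordinate map $(x,(a_k))\mapsto\Phi_x(a_m)$ is continuous. The projection $A\mapsto a_m$ is continuous for $d_{\sum}$, since $d_{\mathrm{prod}}\le d_{\sum}$, and $\Phi$ is jointly continuous. Because the weighted series defining $d_{\mathrm{prod}}$ has terms bounded by $2^{-m}$, coordinatewise continuity gives continuity into $(\C^\N,d_{\mathrm{prod}})$.

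For the vague part I would argue sequentially, exactly as in Step~4(ii) of the proof of Theorem~\ref{thm:pn-fibration}. Take $(x_k,A_k)\to(x,A)$ in $K\times\Conf$ and set $T:=\{(x,A)\}\cup\{(x_k,A_k)\}$, which is compact. Define $F:T\to\Clf$ by $F(x',A')=P(A')$, which is continuous because $d_{\mathcal V}\circ(P\times P)\le d_{\sum}$. Define $\widetilde\Phi:T\times\C\to\C$ by $\widetilde\Phi((x',A'),w)=\Phi_{x'}(w)$. This is a continuous family of homeomorphisms, all equal to the identity off the fixed compact $S$. By \cite[Lemma 5.2]{Teh25}, the map $(x',A')\mapsto(\Phi_{x'})_*P(A')$ is continuous in the vague topology. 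Since $P(\Phi^\infty(x',A'))=(\Phi_{x'})_*P(A')$, we get $d_{\mathcal V}(P(\Phi^\infty(x_k,A_k)),P(\Phi^\infty(x,A)))\to0$.

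Combining the two parts, $\Phi^\infty$ is continuous, because both spaces are metric and sequential continuity suffices. The final assertion follows by composing with the continuous map $(x,t)\mapsto(x,\gamma(t))$.

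The main obstacle is the vague part. If one wanted to avoid quoting \cite[Lemma 5.2]{Teh25}, one would fix $j$ and show continuity of $\sum_m\varphi_j(\Phi_{x'}(a'_m))$. The function $\varphi_j\circ\Phi_{x'}$ is supported in the compact set $\Phi_{x'}^{-1}(K_j)$. This set is contained in $K_j\cup S$, because $\Phi_{x'}$ is the identity off $S$. So only the points of $A'$ in the fixed compact set $K_j\cup S$ matter.

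Near $(x,A)$, local finiteness together with Lemma~\ref{lem:uniform-compact-finite}, applied to the compact family $T$, shows that for $k$ large only indices $m\le N$ meet a neighbourhood of $K_j\cup S$. The sum is then a finite sum of continuous functions of $(x',a'_1,\dots,a'_N)$. Uniform convergence of the $2^{-j}$-weighted series finishes the argument, just as in Claim~2 of the proof of Theorem~\ref{thm:Theta-cont-surj}.
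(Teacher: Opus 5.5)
Your proof is correct and follows the paper's argument essentially step for step: the same well-definedness check, the coordinatewise-plus-weighted-tail argument for $d_{\mathrm{prod}}$, and the sequential reduction of the vague term to \cite[Lemma 5.2]{Teh25} on the compact set $\{(x,A)\}\cup\{(x_k,A_k)\}$. Your optional self-contained route for the vague part is also sound: it uses $\Phi_{x'}^{-1}(K_j)\subset K_j\cup S$ and Lemma~\ref{lem:uniform-compact-finite} on that compact set to reduce each $\langle P(\cdot),\varphi_j\rangle$ to a finite sum, and so removes the dependence on the external citation.
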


\begin{proof}
\emph{Well-definedness.}
Fix $x\in K$ and $(a_m)\in\Conf$. Since $\Phi_x$ is injective, the entries $(\Phi_x(a_m))$ are pairwise distinct.
Let $A:=P((a_m))\subset\C$ be the underlying locally finite set. For any compact $C\subset\C$, the preimage
$\Phi_x^{-1}(C)$ is compact, hence $A\cap \Phi_x^{-1}(C)$ is finite. Therefore
\[
\Phi_x(A)\cap C=\Phi_x\bigl(A\cap\Phi_x^{-1}(C)\bigr)
\]
is finite, so $\Phi_x(A)$ is locally finite. Thus $\Phi^\infty(x,(a_m))\in\Conf$.

\smallskip
\emph{Continuity.}
Since $K\times\Conf$ is metric, it suffices to prove sequential continuity.
Let $(x_n,A^{(n)})\to(x,A)$ in $K\times\Conf$, where $A^{(n)}=(a_m^{(n)})$ and $A=(a_m)$, with respect to
$d_K+d_{\sum}$.

\smallskip
\noindent\textbf{(1) Product part.}
Because $d_{\sum}\ge d_{\mathrm{prod}}$, we have $d_{\mathrm{prod}}(A^{(n)},A)\to 0$, hence for each fixed $m$,
$a_m^{(n)}\to a_m$ in $\C$. Fix $\varepsilon>0$ and choose $M$ such that $\sum_{m>M}2^{-m}<\varepsilon/3$.
For each $1\le m\le M$, continuity of $(x,w)\mapsto\Phi_x(w)$ gives
\[
\Phi_{x_n}(a_m^{(n)})\longrightarrow \Phi_x(a_m).
\]
Therefore, for all sufficiently large $n$,
\[
\sum_{m=1}^M 2^{-m}\min\{1,|\Phi_{x_n}(a_m^{(n)})-\Phi_x(a_m)|\}<\varepsilon/3,
\]
and hence
\[
d_{\mathrm{prod}}\bigl(\Phi^\infty(x_n,A^{(n)}),\Phi^\infty(x,A)\bigr)
<\varepsilon/3+\sum_{m>M}2^{-m}<2\varepsilon/3.
\]
Thus $d_{\mathrm{prod}}\bigl(\Phi^\infty(x_n,A^{(n)}),\Phi^\infty(x,A)\bigr)\to 0$.

\smallskip
\noindent\textbf{(2) Vague part.}
Set
\[
K_0:=\{(x,A)\}\cup\{(x_n,A^{(n)}):n\ge 1\}\subset K\times\Conf.
\]
Then $K_0$ is compact. Define
\[
F:K_0\to \Clf,\qquad F(x',B):=P(B).
\]
This is continuous because convergence in $d_{\sum}$ implies convergence of the $d_{\mathcal V}$-term, i.e.\
$P(B_n)\to P(B)$ in $(\Clf,d_{\mathcal V})$.

Define also
\[
\widetilde\Phi:K_0\times\C\to\C,\qquad \widetilde\Phi\bigl((x',B);w\bigr):=\Phi_{x'}(w).
\]
This map is continuous, each $\widetilde\Phi_{(x',B)}=\Phi_{x'}$ is a homeomorphism, and by hypothesis
$\Phi_{x'}=\id$ on $\C\setminus S$ for all $(x',B)\in K_0$ with the same compact $S$.

Therefore \cite[Lemma 5.2]{Teh25} applies and yields continuity of
\[
(x',B)\longmapsto (\Phi_{x'})_*\bigl(P(B)\bigr)=\Phi_{x'}\bigl(P(B)\bigr)
\]
in the vague topology, hence in the metric $d_{\mathcal V}$ (cf.\ \cite[Prop.\ 2.3]{Teh25}). Equivalently,
\[
d_{\mathcal V}\Bigl(P\bigl(\Phi^\infty(x_n,A^{(n)})\bigr),\,P\bigl(\Phi^\infty(x,A)\bigr)\Bigr)\to 0.
\]

\smallskip
\noindent\textbf{(3) Conclusion.}
Combining (1) and (2), $\Phi^\infty$ is continuous.

\smallskip
\emph{Final statement.}
The map $(x,t)\mapsto(x,\gamma(t))$ is continuous $K\times[0,1]\to K\times\Conf$, and $\Phi^\infty$ is continuous,
hence $(x,t)\mapsto \Phi^\infty(x,\gamma(t))$ is continuous.
\end{proof}

\begin{lemma}\label{lem:filldisk-injective}
Fix $R>0$.  Let $A_{\mathrm{out}}\subset \C\setminus K_R$ be finite and let
$A_{\mathrm{in}}\subset \mathrm{int}(K_R)$ be finite \emph{and nonempty}.
Set
\[
X:=\C\setminus\bigl(K_R\cup A_{\mathrm{out}}\bigr),
\qquad
Y:=\C\setminus\bigl(A_{\mathrm{in}}\cup A_{\mathrm{out}}\bigr),
\]
and choose a basepoint $b\in X$.
Then the inclusion $i:X\hookrightarrow Y$ induces an injective homomorphism
\[
i_*:\pi_1(X,b)\longrightarrow \pi_1(Y,b).
\]
Equivalently, if a loop in $X$ is null-homotopic in $Y$, then it is null-homotopic in $X$.
\end{lemma}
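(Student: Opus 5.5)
The plan is to compute both fundamental groups explicitly as free groups and to check that $i_*$ carries a free basis of $\pi_1(X,b)$ to part of a free basis of $\pi_1(Y,b)$. Write $A_{\mathrm{out}}=\{c_1,\dots,c_k\}$ and $A_{\mathrm{in}}=\{a_1,\dots,a_p\}$ with $p\ge1$.

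First, $X$ is the plane with the closed disk $K_R$ and $k$ further points removed. It deformation retracts onto a wedge of $k+1$ circles, so $\pi_1(X,b)$ is free of rank $k+1$. A convenient basis consists of a loop $\delta$ running once around $\partial K_{R+\epsilon}$, with $\epsilon$ small so that this circle misses $A_{\mathrm{out}}$, together with small loops $\lambda_1,\dots,\lambda_k$ around the $c_j$. Each of these is joined to $b$ by tails in $X$, chosen so that their union is a tree.

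Second, I would pass to van Kampen on $Y$. Let $D:=\mathrm{int}(K_{R+\epsilon})$, and let $U$ be a slightly larger open disk, with $U\cap A_{\mathrm{out}}=\varnothing$ (possible since $A_{\mathrm{out}}$ is finite and disjoint from $K_R$). Then $Y=X\cup(U\setminus A_{\mathrm{in}})$, and the intersection $X\cap(U\setminus A_{\mathrm{in}})=U\setminus K_R$ is an annulus. Van Kampen gives
\[
\pi_1(Y)\cong \pi_1(X)\ast_{\Z}\pi_1(U\setminus A_{\mathrm{in}}),
\]
where the generator of $\Z$ maps to $\delta$ in $\pi_1(X)$ and to the boundary loop $a_1a_2\cdots a_p$ in the free group $F_p=\pi_1(U\setminus A_{\mathrm{in}})$.

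Third, I would conclude injectivity. In an amalgamated free product $G\ast_C H$ with $C\to G$ and $C\to H$ both injective, the factors embed. Here $\Z\to\pi_1(X)$ is injective because $\delta$ is a basis element. The map $\Z\to F_p$ is injective because $a_1\cdots a_p$ is a nontrivial element of a free group, and this is exactly where $A_{\mathrm{in}}\neq\varnothing$ is used; with $p=0$ the map would be trivial and injectivity fails. Hence $\pi_1(X)\to\pi_1(Y)$ is injective. The paper uses a basepoint $b\in X$, so I would take the van Kampen basepoint in the annulus and transport it to $b$ along a path in $X$, which does not affect injectivity. Equivalently, one can argue concretely: $\pi_1(Y)$ is free on $a_1,\dots,a_p,\lambda_1,\dots,\lambda_k$; $i_*$ sends $\lambda_j\mapsto\lambda_j$ and $\delta\mapsto a_1\cdots a_p$; and the elements $a_1\cdots a_p,\lambda_1,\dots,\lambda_k$ form part of a free basis via the Nielsen substitution $a_1\mapsto a_1\cdots a_p$. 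Since they freely generate a free subgroup of rank $k+1$, $i_*$ is injective.

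The main obstacle I expect is the bookkeeping in the concrete version. One has to check that, with a consistent choice of tails and ordering, the loop $\delta$ really maps to the product $a_1\cdots a_p$ up to conjugation, rather than to some other word, in the chosen free basis of $\pi_1(Y)$. The amalgam argument sidesteps this, since it only requires the boundary loop to be nontrivial in $\pi_1(U\setminus A_{\mathrm{in}})$. Nontriviality follows from the winding number around $a_1$: the boundary circle of $D$ encloses $a_1$, so that winding number is $1\neq0$. I would therefore present the van Kampen version, using only that homomorphism to $\Z$.
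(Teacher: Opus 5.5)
Your van Kampen argument, with $Y = X \cup (U\setminus A_{\mathrm{in}})$, an annulus as the intersection, and injectivity of both attaching maps feeding the normal-form theorem for amalgams, is essentially the paper's proof and is correct. The only differences are cosmetic. The paper gets injectivity of $\pi_1(W)\to\pi_1(X)$ from the radial retraction of $X$ onto a circle, which also justifies your claim that $\delta$ is a basis element. It detects the boundary loop in $\pi_1(U\setminus A_{\mathrm{in}})$ via its image $(1,\dots,1)\in H_1\cong\Z^{|A_{\mathrm{in}}|}$ instead of the winding number around a single $a_1$.
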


\begin{proof}
Choose $\eta>0$ such that
\[
\{z\in\C: R<|z|<R+\eta\}\cap A_{\mathrm{out}}=\varnothing
\qquad\text{and}\qquad
\{z\in\C: R\le |z|\le R+\eta\}\cap A_{\mathrm{in}}=\varnothing.
\]
Define open subsets of $Y$ by
\[
U:=\{z:|z|<R+\eta\}\setminus A_{\mathrm{in}},
\qquad
V:=X=\C\setminus(K_R\cup A_{\mathrm{out}}).
\]
Then $Y=U\cup V$, both $U$ and $V$ are path connected, and
\[
W:=U\cap V=\{z: R<|z|<R+\eta\}
\]
is an annulus, hence $\pi_1(W)\cong\Z$.

Choose a basepoint $b_0\in W$ (e.g.\ $b_0=R+\eta/2$). If $b\neq b_0$, fix a path in $V=X$
from $b$ to $b_0$ and use basepoint change; injectivity is unaffected. Thus it suffices to work at $b_0$.

By van Kampen,
\[
\pi_1(Y,b_0)\ \cong\ \pi_1(U,b_0)\ *_{\pi_1(W,b_0)}\ \pi_1(V,b_0).
\]
In a free product with amalgamation, the canonical maps from $\pi_1(U)$ and $\pi_1(V)$ into the amalgamated product
are injective provided the attaching maps $\pi_1(W)\to\pi_1(U)$ and $\pi_1(W)\to\pi_1(V)$ are injective
(e.g.\ by the normal form theorem; see \cite[Prop.\ 1.20]{Hatcher02}).
Therefore it is enough to show injectivity of $\pi_1(W,b_0)\to\pi_1(U,b_0)$ and $\pi_1(W,b_0)\to\pi_1(V,b_0)$.

\smallskip
\noindent\emph{Injectivity into $V$.}
Let $r_0:=R+\eta/2$. Note $V\subset\{|z|>R\}$, so $z\neq 0$ on $V$.
Define a continuous map
\[
\rho:V\to \{ |z|=r_0\},\qquad \rho(z)=r_0\,\frac{z}{|z|}.
\]
Then $\rho|_{\{|z|=r_0\}}=\id$, so the circle $\{|z|=r_0\}\subset W\subset V$ is a retract of $V$.
Hence the induced map $\pi_1(\{|z|=r_0\},b_0)\to\pi_1(V,b_0)$ is injective.
Since $W$ deformation retracts onto $\{|z|=r_0\}$, the map $\pi_1(W,b_0)\to\pi_1(V,b_0)$ is injective.

\smallskip
\noindent\emph{Injectivity into $U$.}
The space $U$ is a disk punctured at the nonempty finite set $A_{\mathrm{in}}$, hence $U$ deformation retracts onto
a wedge of $|A_{\mathrm{in}}|$ circles and in particular
\[
H_1(U;\Z)\ \cong\ \Z^{|A_{\mathrm{in}}|},
\]
which is torsion free.
Let $\ell$ be the standard generator of $\pi_1(W,b_0)\cong\Z$ represented by the loop $\ell(t)=r_0e^{2\pi it}$.
This loop winds once around every puncture in $A_{\mathrm{in}}\subset\{|z|<R\}$.
Under abelianization (the Hurewicz map) $\pi_1(U,b_0)\to H_1(U;\Z)$, the class of $\ell$ maps to
\[
(1,1,\dots,1)\in\Z^{|A_{\mathrm{in}}|},
\]
which is nonzero and has infinite order. Hence the image of the generator of $\pi_1(W)\cong\Z$ has infinite order in
$\pi_1(U)$, so the homomorphism $\pi_1(W,b_0)\to\pi_1(U,b_0)$ is injective.

\smallskip
Therefore $\pi_1(V,b_0)\to\pi_1(Y,b_0)$ is injective, and since $V=X$ this is exactly injectivity of
$i_*:\pi_1(X,b)\to\pi_1(Y,b)$.
\end{proof}


\begin{proposition}\label{prop:Theta-injective}
The canonical homomorphism
\[
\Theta:\ H^{lf}(\infty)\longrightarrow \Bigl(\varprojlim\nolimits_n P_n\Bigr)_{\!\mathrm{lf}}
\]
is injective.
\end{proposition}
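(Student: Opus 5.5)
Since $\Theta$ is a homomorphism, it suffices to show that its kernel is trivial. So let $\gamma$ be a based loop in $\Conf$ with $p_n([\gamma])=1$ for every $n$; we must show $\gamma$ is null-homotopic in $\Conf$. The idea is to straighten the strands one at a time, nulling strand $k$ inside $\C$ minus the already-straight strands $1,\dots,k-1$. Each correction will be applied to all coordinates by an ambient isotopy of compact support (Lemma~\ref{lem:pointpush-endpoints}), pushed to $\Conf$ by Lemma~\ref{lem:apply-homeo-cont}. The resulting homotopies are then concatenated on the time intervals $[1-2^{-(k-1)},1-2^{-k}]$ to produce a single null-homotopy.

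\textbf{Step 1 (inductive straightening).} Suppose $\gamma^{(k-1)}\simeq\gamma$ is a based loop whose coordinates $1,\dots,k-1$ are constant. By Lemma~\ref{lem:fiber-null}, applied to $\pr_k\circ\gamma^{(k-1)}$, which represents $p_k([\gamma])=1$, the $k$-th coordinate $z_0$ is null-homotopic in $\C\setminus\{1,\dots,k-1\}$ via some homotopy $z$. Lemma~\ref{lem:pointpush-endpoints}, with $A=\{1,\dots,k-1\}$, then gives an isotopy $H_{s,t}$ supported in a compact set $S_k$. Set $\Gamma_k(s,t):=H_{s,t}^\infty(\gamma^{(k-1)}(t))$. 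This is continuous by Lemma~\ref{lem:apply-homeo-cont}, is based by property (iii) of that lemma, fixes strands $<k$ by (i), and at $s=1$ makes strand $k$ constant. Put $\gamma^{(k)}:=\Gamma_k(1,\cdot)$.

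\textbf{Step 2 (controlling supports).} This is the main obstacle. To assemble the infinite concatenation of the $\Gamma_k$ into a continuous map into $\Conf$, both the product metric and the vague metric must converge as $k\to\infty$. The product metric is harmless, since strand $j$ is only moved during stages $\le j$. The vague metric is the real difficulty: we need $S_k$ to eventually avoid every $K_\ell$, i.e.\ that for large $k$ the null-homotopy of strand $k$ can be chosen outside $K_\ell$. Here Lemma~\ref{lem:filldisk-injective} is the key. By Lemma~\ref{lem:uniform-compact-finite}, strand $k$ of $\gamma$ avoids $K_{\ell}$ for $k>N(\ell)$. We first choose radii $R_k\to\infty$ such that the strands $j>N(R_k)$ stay outside $K_{R_k}$ and the first $N(R_k)$ strands are eventually fixed inside $\mathrm{int}(K_{R_k})$. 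We also need the isotopies from earlier stages not to drag far strands into $K_{R_k}$; we arrange this by keeping every $S_j$ for $j\le N(R_k)$ inside a slightly larger disk, after inflating $R$ inductively. A loop of strand $k$ lying in $X=\C\setminus(K_R\cup A_{\mathrm{out}})$ that is null in $Y$ is then null in $X$. One subtlety is that $A_{\mathrm{out}}$ must be finite, namely the punctures $k'<k$ lying outside $K_R$. Another is that strand $k$ winding around $K_R$ is allowed, since the earlier strands it winds around are punctures inside $K_R$ and Lemma~\ref{lem:filldisk-injective} handles exactly this. We therefore pick the null-homotopy $z$ in $X$, so that $S_k\subset\C\setminus K_{R-1}$ for small $r$.

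\textbf{Step 3 (assembly).} Define $F(s,t)$ on stage $k$ by running $\Gamma_k$ in the $s$-parameter over $[1-2^{-(k-1)},1-2^{-k}]$, and set $F(1,t)=\widetilde{\N}$. Each coordinate is eventually constant in $s$, so the product part is continuous at $s=1$. For the vague part, the moving points at stages $k\ge k_0(\ell)$ lie outside $K_\ell$, so on $K_\ell$ the configurations $F(s,t)$ agree with $\widetilde{\N}$ for $s$ near $1$. Using that $\supp\varphi_j\subset K_j$ (Lemma~\ref{lem:testfamily-strong}(c)), each vague summand becomes eventually constant, and the series converges uniformly as in Claim 2 of Theorem~\ref{thm:Theta-cont-surj}. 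Hence $F$ is a based null-homotopy of $\gamma$ in $\Conf$, so $[\gamma]=1$ and $\Theta$ is injective.
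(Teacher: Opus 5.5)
Your architecture matches the paper's: straighten strand $k$ by point-pushing along a null-homotopy in $\C\setminus\{1,\dots,k-1\}$ (Lemmas~\ref{lem:fiber-null}, \ref{lem:pointpush-endpoints}, \ref{lem:apply-homeo-cont}), use Lemma~\ref{lem:filldisk-injective} so that late stages have supports off a large disk, then concatenate on the strips $[1-2^{-(k-1)},1-2^{-k}]$ and check the vague summands at $s=1$. The problem is Step~2, which carries all the difficulty and is asserted rather than proved.

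The assembly needs the following: for every $R$ there is $n_0$ such that, for all $n\ge n_0$, the trajectory of the $n$-th strand of $\gamma^{(n-1)}$ (not of $\gamma$) misses $K_R$. Only then can Lemma~\ref{lem:filldisk-injective} be invoked at stage $n$. Lemma~\ref{lem:uniform-compact-finite} controls a single fixed loop such as $\gamma$, but $\gamma^{(n-1)}$ changes with $n$, and earlier stages can drag high-index strands inward.

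Your device ``keeping every $S_j$ for $j\le N(R_k)$ inside a slightly larger disk'' is not available. $S_j$ contains the whole current trajectory of strand $j$ together with its null-homotopy, and a strand of small index may wander arbitrarily far out. So there is no control of $S_j$ in terms of $R_k$. A smaller point: ``the first $N(R_k)$ strands are eventually fixed inside $\mathrm{int}(K_{R_k})$'' is false in general, since straightened strand $i$ sits at $i$ and this would force $N(R_k)<R_k$. That slip is harmless, because those punctures simply go into $A_{\mathrm{out}}$.

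The legitimate version of your idea runs the other way, as in the paper's Step~3:
\begin{enumerate}
\item Fix the finitely many compact supports $S_1,\dots,S_L$, with $L=N_{j+2}$.
\item Apply Lemma~\ref{lem:uniform-compact-finite} to $\gamma$ and $S_{\le L}\cup K_{j+2}$. This gives $M$ such that strands of index $>M$ are never touched by stages $\le L$.
\item Use that a stage whose support misses $K_R$ is the identity on $K_R$, hence preserves $\C\setminus K_R$. By induction, later strands then stay outside $K_R$.
\end{enumerate}

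Even this leaves the stages $L<k\le M$ uncontrolled. A strand $k$ in that range may have been dragged into $K_R$ by a stage $\le L$. Then its null-homotopy, and hence $S_k$, necessarily meets $K_R$, and that stage can drag strands of index $>M$ into $K_R$. ``Inflating $R$ inductively'' only reproduces the same situation one layer further out, and you give no reason why this cascade stops.

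Closing the gap requires one of two things: an argument that only finitely many stages have supports meeting $K_R$, or a choice of the early null-homotopies that prevents far strands from being dragged inward. Be aware that the paper's induction in Step~3 also starts by assuming $S_k\cap K_R=\varnothing$ for $L<k<n_{\mathrm{start}}$. So this point is not fully worked out there either, and you cannot simply lift a complete argument from it.
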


\begin{proof}
Let $g\in H^{lf}(\infty)$ satisfy $\Theta(g)=1$. Choose a based loop
$\gamma:[0,1]\to\Conf$ at $\widetilde{\N}$ with $g=[\gamma]$.

\medskip\noindent
\textbf{Step 1: Tail bounds for $\gamma$.}
For each $\ell\in\N$, apply Lemma~\ref{lem:uniform-compact-finite} with $T=[0,1]$, $F=\gamma$, and $K=K_\ell$
to obtain $N_\ell$ such that
\begin{equation}\label{eq:tail-avoid-main-inj-final}
\gamma_m(t)\notin K_\ell
\qquad
\text{for all }t\in[0,1]\text{ and all }m>N_\ell.
\end{equation}

\medskip\noindent
\textbf{Step 2: Inductive straightening of initial strands.}
We construct based loops $\gamma^{(n)}:[0,1]\to\Conf$ ($n\ge0$) such that:
\begin{enumerate}[label=\textnormal{(\roman*)}]
\item $\gamma^{(0)}=\gamma$;
\item $\pr_n(\gamma^{(n)}(t))=(1,\dots,n)$ for all $t$;
\item $[\gamma^{(n)}]=[\gamma]$ in $\pi_1(\Conf,\widetilde{\N})$.
\end{enumerate}

Assume $\gamma^{(n-1)}$ is constructed ($n\ge1$). Set
\[
\beta:=\pr_n\circ\gamma^{(n-1)}:[0,1]\to Conf_n(\C),
\]
a based loop at $(1,\dots,n)$. Since $\Theta(g)=1$, we have
\[
[\beta]=p_n([\gamma^{(n-1)}])=p_n([\gamma])=p_n(g)=1\in P_n.
\]
By the induction hypothesis, $\beta_k(t)\equiv k$ for $1\le k\le n-1$. Let $z_0(t):=\beta_n(t)$,
a loop in $\C\setminus\{1,\dots,n-1\}$ based at $n$.
By Lemma \ref{lem:fiber-null}, $[z_0]=1$ in $\pi_1(\C\setminus\{1,\dots,n-1\},n)$.

Fix a based null-homotopy
\[
z:[0,1]^2\to \C\setminus\{1,\dots,n-1\}
\]
with
\[
z(0,t)=z_0(t),\qquad z(1,t)\equiv n,\qquad z(s,0)=z(s,1)\equiv n.
\]
Apply Lemma~\ref{lem:pointpush-endpoints} with $A=\{1,\dots,n-1\}$ and $b=n$ to obtain a compact set $S_n\subset\C$ and a
continuous family of homeomorphisms $H^{(n)}_{s,t}$ such that:
\[
H^{(n)}_{s,t}(a)=a\ (a\in A),\qquad H^{(n)}_{s,t}=\id \text{ on }\C\setminus S_n,
\]
\[
H^{(n)}_{0,t}=\id,\qquad H^{(n)}_{s,0}=H^{(n)}_{s,1}=\id,\qquad H^{(n)}_{s,t}(z_0(t))=z(s,t).
\]
Define
\[
\mathcal{H}^{(n)}:[0,1]^2\to\Conf,\qquad
\mathcal{H}^{(n)}(s,t):=\bigl(H^{(n)}_{s,t}(\gamma^{(n-1)}_m(t))\bigr)_{m\ge1}.
\]
By Lemma~\ref{lem:apply-homeo-cont}, $\mathcal{H}^{(n)}$ is continuous and is a based homotopy through based loops.
Set $\gamma^{(n)}:=\mathcal{H}^{(n)}(1,\cdot)$. Then $\pr_n(\gamma^{(n)}(t))=(1,\dots,n)$ and
$[\gamma^{(n)}]=[\gamma^{(n-1)}]=[\gamma]$.

\medskip\noindent
\textbf{Step 3: Eventual support control away from $K_{j+1}$.}
Fix $j\ge1$ and set $R:=j+1+\tfrac12$.
Let $L:=N_{j+2}$ from \eqref{eq:tail-avoid-main-inj-final}. Then for $m>L$,
$\gamma_m([0,1])\cap K_{j+2}=\varnothing$, hence $\gamma_m([0,1])\cap K_R=\varnothing$.
Let $S_{\le L}:=\bigcup_{n=1}^{L} S_n$ (compact). Apply Lemma~\ref{lem:uniform-compact-finite} to $\gamma$
and the compact set $S_{\le L}\cup K_{j+2}$ to obtain $M_R$ such that for $m>M_R$,
\begin{equation}\label{eq:tail-avoid-support-final}
\gamma_m([0,1])\cap(S_{\le L}\cup K_{j+2})=\varnothing.
\end{equation}
In particular, for $m>M_R$, $\gamma_m([0,1])$ avoids each $S_k$ for $1\le k\le L$, so by induction over stages
$1,\dots,L$ we get $\gamma^{(L)}_m=\gamma_m$ and therefore
\[
\gamma^{(L)}_m([0,1])\subset \C\setminus K_R\qquad(m>M_R).
\]

We claim that there exists $n_0(j)$ such that for all $n\ge n_0(j)$ we can choose the stage-$n$ homotopy so that
\begin{equation}\label{eq:support-avoid-KR-final}
S_n\cap K_R=\varnothing,
\qquad\text{hence}\qquad
H^{(n)}_{s,t}\big|_{K_{j+1}}=\id_{K_{j+1}}\ \text{ for all }(s,t).
\end{equation}
Set $n_{\mathrm{start}}:=\max\{L,M_R,j+2\}+1$ and argue by induction on $n\ge n_{\mathrm{start}}$, assuming
$S_k\cap K_R=\varnothing$ for all $L<k<n$. Then each stage $k$ with $L<k<n$ fixes $K_R$ pointwise
(because it is $\id$ on $\C\setminus S_k$ and $K_R\cap S_k=\varnothing$), so it preserves $\C\setminus K_R$.
Since $n>M_R$ and $\gamma^{(L)}_n([0,1])\subset\C\setminus K_R$, it follows that
$\gamma^{(n-1)}_n([0,1])\subset \C\setminus K_R$.

Define
\[
A_{\mathrm{in}}:=\{1,\dots,j+1\}\subset \operatorname{int}(K_R),\qquad
A_{\mathrm{out}}:=\{j+2,\dots,n-1\}\subset \C\setminus K_R,
\]
and set
\[
X:=\C\setminus\bigl(K_R\cup A_{\mathrm{out}}\bigr),
\qquad
Y:=\C\setminus\bigl(A_{\mathrm{in}}\cup A_{\mathrm{out}}\bigr)=\C\setminus\{1,\dots,n-1\}.
\]
Then $t\mapsto\gamma^{(n-1)}_n(t)$ is a based loop in $X$ and is null-homotopic in $Y$ (by Step~2).
By Lemma~\ref{lem:filldisk-injective}, the inclusion $X\hookrightarrow Y$ induces an injection on $\pi_1$,
so the loop is null-homotopic in $X$. Choose a based null-homotopy $z:[0,1]^2\to X$ and set $C_n:=z([0,1]^2)$.
Then $C_n$ is compact and disjoint from $K_R$ and from $\{1,\dots,n-1\}$.

Choose
\[
0<r<\min\Bigl\{\tfrac14\,\dist\bigl(C_n,\{1,\dots,n-1\}\bigr),\ \dist(C_n,K_R)\Bigr\}.
\]
Repeat the bump-map construction in Lemma~\ref{lem:pointpush-endpoints} (using \cite[Lemma~4.1]{Teh25})
so that the resulting family $H^{(n)}_{s,t}$ is supported in
\[
S_n:=\overline{B}(C_n,2r/3).
\]
Since $2r/3<\dist(C_n,K_R)$, we have $S_n\cap K_R=\varnothing$, proving \eqref{eq:support-avoid-KR-final}.
This completes the induction and yields the desired $n_0(j)$.

\medskip\noindent
\textbf{Step 4: Infinite concatenation and continuity at $s=1$.}
Let $s_0:=0$ and $s_n:=1-2^{-n}$. Concatenate the based homotopies $\mathcal H^{(n)}$ on strips
$[s_{n-1},s_n]\times[0,1]$ to obtain $\Gamma:[0,1)\times[0,1]\to\Conf$, and set $\Gamma(1,t):=\widetilde{\N}$.
Then $\Gamma$ is continuous on $[0,1)\times[0,1]$, with $\Gamma(0,t)=\gamma(t)$.

Fix $\varepsilon>0$ and choose $J$ such that $\sum_{k>J}2^{-k}<\varepsilon/2$.
By Step~3 for $j=J$, pick $N_0$ so that all stages $n\ge N_0$ fix $K_{J+1}$ pointwise.
Now apply Lemma~\ref{lem:uniform-compact-finite} to the loop $t\mapsto \Gamma(s_{N_0},t)$ and compact $K_{J+1}$
to obtain $M_0$ such that
\[
\Gamma(s_{N_0},t)_m\notin K_{J+1}\qquad(\forall t,\ \forall m>M_0).
\]

Fix $m>M_0$ and $t\in[0,1]$. By hypothesis, at $s=s_{N_0}$ we have
$\Gamma(s_{N_0},t)_m\in \C\setminus K_{J+1}$.
For $s\ge s_{N_0}$, the evolution of $\Gamma(s,t)$ passes only through stages with index $n\ge N_0$.
On each such stage, the underlying plane homeomorphisms restrict to $\id$ on $K_{J+1}$, hence preserve
$\C\setminus K_{J+1}$ setwise.
Therefore, once the $m$-th point lies in $\C\setminus K_{J+1}$ at $s=s_{N_0}$, it remains in $\C\setminus K_{J+1}$
for all later $s$.
Since $m>M_0$ and $t$ were arbitrary, the stated uniform conclusion follows.

Therefore
\[
\Gamma(s,t)_m\notin K_{J+1}
\qquad
\text{for all }(s,t)\in[s_{N_0},1)\times[0,1]\text{ and all }m>M_0.
\]
Choose $N\ge \max\{N_0,M_0\}$. Then for every $(s,t)\in[s_N,1]\times[0,1]$:
\begin{itemize}
\item stages $>N$ fix $\{1,\dots,N\}$ pointwise, so $\Gamma(s,t)_m\equiv m$ for $1\le m\le N$;
\item for $m>N\ge M_0$, we have $\Gamma(s,t)_m\notin K_{J+1}$.
\end{itemize}
Hence for each $1\le k\le J$, since $\supp(\varphi_k)\subset K_k\subset K_{J+1}$,
\[
\sum_{a\in P(\Gamma(s,t))}\varphi_k(a)=\sum_{m=1}^{N}\varphi_k(m)=\sum_{a\in P(\widetilde{\N})}\varphi_k(a),
\]
so the first $J$ summands in $d_{\mathcal V}(P(\Gamma(s,t)),P(\widetilde{\N}))$ vanish. Thus
\[
d_{\mathcal V}\bigl(P(\Gamma(s,t)),P(\widetilde{\N})\bigr)\le \sum_{k>J}2^{-k}<\varepsilon/2.
\]
Also,
\[
d_{\mathrm{prod}}\bigl(\Gamma(s,t),\widetilde{\N}\bigr)\le \sum_{m>N}2^{-m}<2^{-N},
\]
and enlarging $N$ if necessary we may assume $2^{-N}<\varepsilon/2$. Therefore
\[
d_{\sum}\bigl(\Gamma(s,t),\widetilde{\N}\bigr)
=d_{\mathrm{prod}}\bigl(\Gamma(s,t),\widetilde{\N}\bigr)
+d_{\mathcal V}\bigl(P(\Gamma(s,t)),P(\widetilde{\N})\bigr)
<\varepsilon
\]
for all $(s,t)\in[s_N,1]\times[0,1]$, proving continuity at $(1,t_0)$.

Thus $\Gamma$ extends to a based homotopy from $\gamma$ to the constant loop at $\widetilde{\N}$, so $[\gamma]=1$,
hence $g=1$. Therefore $\ker\Theta=\{1\}$ and $\Theta$ is injective.
\end{proof}


\section{A compatible complete left-invariant metric on $H^{lf}(\infty)$}

\subsection{Moving strands}
\begin{notation}
For $n\geq 1$, we denote
\[
V_n:=\ker\bigl( p_n:H^{lf}(\infty)\to P_n\bigr).
\]
\end{notation}

\begin{lemma}\label{lem:Vn-open-normal}
For each $n\ge 1$, $V_n$ is an open normal subgroup of $H^{lf}(\infty)$, $V_{n+1}\subset V_n$ and $\bigcap_{n\geq 1} V_n=\{1\}$.
\end{lemma}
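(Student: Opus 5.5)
The plan is to derive all four assertions from the continuity of $p_n$ and the injectivity of $\Theta$, with no new geometric input.

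\emph{Openness and normality.} By Lemma~\ref{lem:pi1top-functorial-cont}, applied to the based map $\pr_n:(\Conf,\widetilde{\N})\to(Conf_n(\C),(1,\dots,n))$, the homomorphism $p_n:H^{lf}(\infty)\to P_n$ is continuous. By Corollary~\ref{cor:piltop-discrete-Confn}, $P_n$ is discrete, so $\{1\}$ is open in $P_n$. Hence $V_n=p_n^{-1}(\{1\})$ is open. It is normal because it is the kernel of a group homomorphism. Note that $p_n$ is a homomorphism, since $\pr_n$ preserves concatenation of loops pointwise.

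\emph{Nesting.} We have $\pr_{n+1,n}\circ\pr_{n+1}=\pr_n$ as maps $\Conf\to Conf_n(\C)$. Functoriality of $\pi_1$ then gives $p_n=p_{n+1,n}\circ p_{n+1}$. So if $p_{n+1}(g)=1$, then $p_n(g)=p_{n+1,n}(1)=1$, which shows $V_{n+1}\subset V_n$.

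\emph{Trivial intersection.} Take $g\in\bigcap_n V_n$. Then $p_n(g)=1$ for every $n$, so $\Theta(g)=(p_n(g))_{n\ge1}$ is the identity element of $\varprojlim_n P_n$. Since $\Theta$ is a homomorphism, Proposition~\ref{prop:Theta-injective} gives $g=1$.

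The only substantive input is the intersection statement, and that difficulty has already been absorbed into Proposition~\ref{prop:Theta-injective}, with its infinite concatenation of straightening homotopies. The remaining steps are formal.
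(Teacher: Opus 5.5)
Your proposal is correct and follows the same route as the paper. Openness comes from continuity of $p_n$ together with discreteness of $P_n$, nesting from $p_n=p_{n+1,n}\circ p_{n+1}$, and trivial intersection from the injectivity of $\Theta$ (Proposition~\ref{prop:Theta-injective}), which is proved earlier without using the $V_n$, so there is no circularity.
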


\begin{proof}
Normality is immediate. The inclusion $V_{n+1}\subset V_n$ follows from $p_n=p_{n+1, n}\circ p_{n+1}$ and functoriality.
For openness, $ Conf_n(\C)$ is a connected manifold, hence locally path-connected and semilocally simply connected.
Therefore by Proposition \ref{pro:piltop-discrete-criterion}, $P_n=\pi_1^{\mathrm{top}}( Conf_n(\C),(1,\dots,n))$ is discrete.
Since $p_n$ is continuous and $\{1\}\subset P_n$ is open, its preimage $V_n$ is open. By Proposition \ref{prop:Theta-injective}, $\Theta: H^{lf}(\infty) \rightarrow (\varprojlim_n P_n)_{\!\mathrm{lf}}$ is injective, we have $\bigcap_{n\geq 1} V_n=\{1\}$.
\end{proof}

The following are some results from \cite{Teh25} that we need.

\begin{theorem}[Katětov--Tong insertion theorem]\label{thm:KT}
Let $K$ be a normal space (in particular, compact metric). If $a:K\to\mathbb{R}$ is upper semicontinuous,
$b:K\to\mathbb{R}$ is lower semicontinuous, and $a(x)<b(x)$ for all $x$, then there exists continuous
$c:K\to\mathbb{R}$ with $a(x)<c(x)<b(x)$ for all $x$.
\end{theorem}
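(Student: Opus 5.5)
We prove the statement in the case the paper actually uses, namely $K$ compact metric, by a direct construction: Baire approximation followed by a compactness argument. For a general normal space we do not give a proof here. The classical Katětov--Tong theorem gives $a\le c\le b$ when $a\le b$. The strict version $a<c<b$ holds exactly when $K$ is in addition countably paracompact (Dowker), and compact metric spaces qualify. For that generality we defer to the literature.

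\textbf{Step 1: uniform gap.} Since $a$ is upper semicontinuous and $b$ is lower semicontinuous, the function $h:=b-a$ is lower semicontinuous and strictly positive. On the compact space $K$ it therefore attains its minimum $\delta:=\min_K h>0$. For the same reason $a$ attains a maximum, so $a$ is bounded above, and $a>-\infty$ everywhere because $a<b$ with $b$ real-valued.

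\textbf{Step 2: Baire approximation of $a$ from above.} With $d$ the metric on $K$, set
\[
f_n(x):=\sup_{y\in K}\bigl(a(y)-n\,d(x,y)\bigr).
\]
Each $f_n$ is finite because $a$ is bounded above. Each $f_n$ is $n$-Lipschitz, hence continuous. Taking $y=x$ gives $f_n\ge a$, and the sequence $(f_n)$ is pointwise decreasing in $n$.

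Next we show $f_n(x)\downarrow a(x)$. Fix $x$ and $\eta>0$. Upper semicontinuity gives $r>0$ with $a(y)<a(x)+\eta$ whenever $d(x,y)<r$. For $d(x,y)\ge r$ we have $a(y)-n\,d(x,y)\le \sup a - nr$, which is less than $a(x)+\eta$ once $n$ is large. Hence $f_n(x)\le a(x)+\eta$ for large $n$. This verification is the only genuinely delicate point, and it is routine.

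\textbf{Step 3: compactness selects one $f_N$.} For each $n$ let
\[
U_n:=\{x\in K:\ f_n(x)<b(x)-\delta/2\}.
\]
Since $f_n-b$ is upper semicontinuous, each $U_n$ is open. Monotonicity of $(f_n)$ makes the $U_n$ increasing. They cover $K$: for each $x$, $f_n(x)\to a(x)\le b(x)-\delta<b(x)-\delta/2$. By compactness a single index $N$ satisfies $U_N=K$, that is, $f_N<b-\delta/2$ on all of $K$.

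\textbf{Step 4: conclusion.} Put $c:=f_N+\delta/4$, which is continuous. On the lower side, $c\ge a+\delta/4>a$. On the upper side, $c<b-\delta/2+\delta/4<b$. Hence $a<c<b$ on $K$, which is the claimed insertion.
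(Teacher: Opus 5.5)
The paper gives no proof of this statement. It quotes the Kat\v{e}tov--Tong theorem as a known result and uses it only once, in Lemma~\ref{lem:single-circle-normalize}, where $K$ is compact metric ($K=[0,1]$ in the application), $a=u$ and $b\equiv R$. Your argument is a correct, self-contained proof for compact metric $K$. Each step checks out:
\begin{itemize}
\item the uniform gap $\delta=\min_K(b-a)>0$;
\item the Lipschitz upper envelopes $f_n\downarrow a$;
\item the Dini-type compactness step giving a single $f_N<b-\delta/2$;
\item the shift $c=f_N+\delta/4$.
\end{itemize}
Compared with citing the general theorem, your route is elementary and covers exactly the case the paper needs. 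It does not reach beyond compact metric spaces, but in this situation that is not a defect. In the paper's one application you need even less: since $b\equiv R$ is constant and $K$ is compact, Step~1 alone suffices, because the constant $c\equiv \max_K a+\delta/2$ already satisfies $a<c<b$.

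Your caveat is also a genuine correction to the statement as written. For an arbitrary normal space the strict conclusion $a<c<b$ is false. By Dowker's theorem, strict insertion between an upper semicontinuous $a$ and a lower semicontinuous $b>a$ holds exactly for normal countably paracompact spaces. So any Dowker space, which exists in ZFC by Rudin's construction, is a counterexample. The hypothesis should read ``normal and countably paracompact''; this class includes all metrizable spaces and hence the compact metric case you proved.
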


The following result is from \cite[Theorem 5.8]{Teh25}.

\begin{theorem}[Parametric isotopy extension for finite sets (Edwards--Kirby type)]\label{thm:EK}
Let $K$ be a compact metric space, $L\subset K$ closed, and $P$ a finite set. Suppose
$\eta:K\times[0,1]\times P\to\C$ is continuous such that each $\eta_{x,s}:P\to\C$ is injective and
$\eta_{x,s}$ is independent of $s$ for $x\in L$. Let $C:=\eta(K\times[0,1]\times P)$ and let $U\subset\C$
be open with $C\Subset U$. Then there is a continuous family of homeomorphisms
$\Phi_{x,s}:\C\to\C$ with:
\begin{enumerate}[label=(\roman*)]
\item $\Phi_{x,0}=\id$ for all $x$;
\item $\Phi_{x,s}=\id$ for all $x\in L$ and all $s$;
\item $\supp(\Phi_{x,s})\subset U$ for all $(x,s)$;
\item $\Phi_{x,s}(\eta(x,0,p))=\eta(x,s,p)$ for all $(x,s,p)$.
\end{enumerate}
\end{theorem}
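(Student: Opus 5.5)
We prove the statement by adapting the bump-map composition from Lemma~\ref{lem:pointpush-endpoints} (and Step~1 of Theorem~\ref{thm:pn-fibration}) to a compact parameter space $K\times[0,1]$ and several points at once; no insertion argument should be needed. First we fix two uniform constants. Since $K\times[0,1]$ is compact, $P$ is finite, $\eta$ is continuous and each $\eta_{x,s}$ is injective, the quantity
\[
\delta:=\min_{p\neq p'}\ \min_{(x,s)\in K\times[0,1]}|\eta(x,s,p)-\eta(x,s,p')|
\]
is positive. Since $C$ is compact and $C\Subset U$, also $\dist(C,\C\setminus U)>0$ (if $U=\C$ there is no constraint). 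Choose $r>0$ with $2r<\delta/2$ and $2r/3<\dist(C,\C\setminus U)$. Then take the Lipschitz cutoff $\eta_0$ of \cite[Lemma 4.1]{Teh25} with parameter $r$ and $\Lip(\eta_0)\le 3/r$. This gives bump maps $h_{a,b}(w)=w+(b-a)\eta_0(|w-a|)$ for $|b-a|\le r/6$, which are homeomorphisms supported in $B(a,2r/3)$, satisfy $h_{a,b}(a)=b$ and $h_{a,a}=\id$, and depend continuously on $(a,b,w)$.

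Next, by uniform continuity of $\eta$ on $K\times[0,1]\times P$, choose $N$ such that $|\eta(x,s,p)-\eta(x,s',p)|<r/6$ whenever $|s-s'|\le 1/N$. For $(x,s)$ set $s_k:=ks/N$, $k=0,\dots,N$, and let $w_k^p(x,s):=\eta(x,s_k,p)$. Then $|w_k^p-w_{k-1}^p|<r/6$. Define the stage map
\[
G_k(x,s):=\prod_{p\in P} h_{w_{k-1}^p(x,s),\,w_k^p(x,s)} ,
\]
where the product is over commuting factors. The balls $B(w_{k-1}^p,2r/3)$ for distinct $p$ are pairwise disjoint because $4r/3<\delta$, so the factors have disjoint supports. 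Moreover each factor fixes every $w_{k-1}^{p'}$ with $p'\ne p$, so $G_k$ sends $w_{k-1}^p$ to $w_k^p$ for every $p$. Set
\[
\Phi_{x,s}:=G_N(x,s)\circ\cdots\circ G_1(x,s).
\]

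We then check the required properties in turn.
\begin{enumerate}[label=(\roman*)]
\item At $s=0$ all the $s_k$ vanish, so every factor is $h_{a,a}=\id$.
\item For $x\in L$ the points $w_k^p$ do not depend on $k$, so again every factor is the identity.
\item Every factor is supported in $B(C,2r/3)\subset U$.
\item The stages telescope: $\Phi_{x,s}(\eta(x,0,p))=w_N^p=\eta(x,s,p)$.
\end{enumerate}
Continuity of $(x,s,w)\mapsto\Phi_{x,s}(w)$ follows because the map is a finite composition of continuous maps, exactly as in Lemma~\ref{lem:pointpush-endpoints}. Each $\Phi_{x,s}$ is a homeomorphism, being a composition of homeomorphisms. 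If continuity of the inverse family is also wanted, it follows factor by factor from the contraction argument in Step~2 of Theorem~\ref{thm:pn-fibration}, which uses $\Lip\le 1/2$.

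The main obstacle is the bookkeeping that makes a single subdivision work uniformly in $(x,s)$. The subdivision points $s_k=ks/N$ move with $s$, so one has to check that the $r/6$ step bound holds for all $(x,s)$ simultaneously. This follows because $|s_k-s_{k-1}|=s/N\le 1/N$. One also has to check that the disjointness of supports is uniform, which is exactly why $\delta$ is taken as a minimum over all of $K\times[0,1]$. Both points rest on compactness of $K\times[0,1]$ and finiteness of $P$.
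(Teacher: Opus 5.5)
Your proof is correct, but it takes a genuinely different route from the paper. The paper gives no proof of this statement: it imports it as \cite[Theorem 5.8]{Teh25}, an Edwards--Kirby type isotopy extension result.

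You instead build $\Phi_{x,s}$ explicitly, as a multi-point, compact-parameter version of Lemma~\ref{lem:pointpush-endpoints}. The uniform separation $\delta>0$ comes from compactness of $K\times[0,1]$ and finiteness of $P$. It ensures that the bump maps of \cite[Lemma 4.1]{Teh25} attached to different $p$ have disjoint supports, so they commute and each stage $G_k$ moves all the points at once. The moving subdivision $s_k=ks/N$ gives the $r/6$ step bound uniformly in $(x,s)$, and it makes (i), (ii) and (iv) immediate.

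\emph{What your route buys.} It is self-contained, using only tools already present in the paper. It also gives explicit support control: every $\Phi_{x,s}$ is the identity outside the single compact set $\overline{B}(C,2r/3)\subset U$, independent of $(x,s)$. That is exactly the hypothesis needed when the result is fed into Lemma~\ref{lem:apply-homeo-cont} (or \cite[Lemma 6.3]{Teh25}), as happens in Lemma~\ref{lem:straighten-first-n}.

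\emph{What the citation buys.} An Edwards--Kirby type theorem is not tied to finite point sets in the plane, so it is more general. The price is that it remains a black box here.

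Two small remarks. First, set $\delta:=\infty$ when $|P|\le 1$, since the minimum defining $\delta$ is then empty. Second, if you want continuity of the inverse family, you must run the contraction estimate of Step~2 of Theorem~\ref{thm:pn-fibration} with the centre $a=w^p_{k-1}(x,s)$ varying as well as the target. This works verbatim, because the fixed-point map $z\mapsto w-(b-a)\eta_0(|z-a|)$ depends continuously on $(a,b,w)$. Alternatively, note that $(x,s,w)\mapsto(x,s,\Phi_{x,s}(w))$ is a proper continuous bijection of $K\times[0,1]\times\C$, hence a homeomorphism.
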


The following is \cite[Lemma 5.3]{Teh25}.
\begin{lemma}\label{lem:openness}
Let $K$ be a compact topological space and $F: K \rightarrow \Clf$ be continuous.
If $Q\subset \C$ is compact and $F(x_0)\cap Q=\varnothing$,
then there exists an open neighborhood $U$ of $x_0$ in $K$ such that
$F(x)\cap Q=\varnothing$ for all $x\in U$.
\end{lemma}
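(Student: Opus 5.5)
The idea is to detect points of $F(x)$ inside $Q$ with finitely many test functions from Lemma~\ref{lem:testfamily-strong}(b), whose counting sums vanish at $x_0$. Continuity of $F$ into $(\Clf,d_{\mathcal V})$ then keeps these sums below $1$ near $x_0$. Compactness of $K$ plays no essential role; only continuity of $F$ at $x_0$ is used.

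\textbf{Step 1: a positive gap.} The set $A_0:=F(x_0)$ is locally finite, hence closed in $\C$. Since $Q$ is compact and $Q\cap A_0=\varnothing$, we get $\delta:=\dist(Q,A_0)>0$; if $A_0=\varnothing$, put $\delta:=1$.

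\textbf{Step 2: rational bump functions covering $Q$.} For each $z\in Q$ choose $q\in\Q(i)$ and rationals $0<r_1<r_2<\delta/2$ with $z\in B(q,r_1)$. By compactness, finitely many balls $B(q_i,r_1^{(i)})$, $i=1,\dots,k$, cover $Q$. By Lemma~\ref{lem:testfamily-strong}(b), pick indices $j_1,\dots,j_k$ such that:
\begin{itemize}
\item $0\le\varphi_{j_i}\le 1$;
\item $\varphi_{j_i}\equiv1$ on $\overline B(q_i,r_1^{(i)})$;
\item $\supp\varphi_{j_i}\subset B(q_i,r_2^{(i)})$.
\end{itemize}
Each $B(q_i,r_2^{(i)})$ contains a point of $Q$ and has radius less than $\delta/2$, so it lies within distance $\delta$ of $Q$ and misses $A_0$. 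Hence $\sum_{a\in A_0}\varphi_{j_i}(a)=0$ for every $i$.

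\textbf{Step 3: use the metric.} Write $S_j(A):=\sum_{a\in A}\varphi_j(a)$. The function $u\mapsto u/(1+u)$ is increasing on $[0,\infty)$. So if $d_{\mathcal V}(A,A_0)<2^{-j}\cdot\tfrac12$, then $|S_j(A)-S_j(A_0)|<1$. Set
\[
\varepsilon:=\min_i 2^{-j_i-1}.
\]
By continuity of $F$, there is an open neighborhood $U$ of $x_0$ with $d_{\mathcal V}(F(x),F(x_0))<\varepsilon$ for $x\in U$. For such $x$ this gives $S_{j_i}(F(x))<1$ for all $i$.

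\textbf{Step 4: conclude.} Suppose $a\in F(x)\cap Q$ for some $x\in U$. Then $a\in B(q_i,r_1^{(i)})$ for some $i$, so $\varphi_{j_i}(a)=1$. Since all terms are nonnegative, $S_{j_i}(F(x))\ge1$, a contradiction. Hence $F(x)\cap Q=\varnothing$ for all $x\in U$.

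The only point needing care is Step~2: the supports must avoid $A_0$, so each sum at $x_0$ is exactly $0$, while the bumps equal $1$ on a cover of $Q$. Choosing $r_2<\delta/2$ handles both requirements at once.
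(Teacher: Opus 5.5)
Your proof is correct. Note that the paper gives no proof of this lemma; it is imported from \cite[Lemma 5.3]{Teh25}, so there is no in-paper argument to match.

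Your route uses exactly the mechanism of the paper's proof of Lemma~\ref{lem:uniform-compact-finite}. There, a rational bump $\varphi_j$ from Lemma~\ref{lem:testfamily-strong}(b) equals $1$ where a point must be detected. A jump of at least $1$ in $\sum_{a}\varphi_j(a)$ then forces the $j$-th summand of $d_{\mathcal V}$ to be at least $2^{-j}\cdot\frac12$.

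The finite cover of $Q$ in your Step~2 is what lets the countable test family suffice. The metric $d_{\mathcal V}$ directly controls only the functionals $A\mapsto\sum_{a\in A}\varphi_j(a)$. A shorter argument would use a single Urysohn function $f\in C_c(\C)$ with $f\equiv 1$ on $Q$ and $\supp f\cap F(x_0)=\varnothing$. But that needs to know first that $d_{\mathcal V}$-convergence implies convergence of $\sum_{a}f(a)$ for every $f\in C_c(\C)$, i.e.\ that $d_{\mathcal V}$ induces the vague topology (cf.\ \cite[Prop.~2.3]{Teh25}).

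Once that identification is granted, the single-function version is quicker. Yours is self-contained given only property (b). It also works verbatim if the topology on $\Clf$ is taken to be the vague topology, since each $A\mapsto\sum_{a\in A}\varphi_{j_i}(a)$ is then continuous by definition.

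Your remark that compactness of $K$ is never used is also right: only continuity of $F$ at $x_0$ matters. The degenerate case $Q=\varnothing$, where your $\delta$ and $\varepsilon$ are undefined, is trivial with $U=K$.
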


\subsection{Straightening the first $n$ strands}

\begin{lemma}\label{lem:straighten-first-n}
If $g\in V_n$, then $g$ admits a based loop representative $\gamma:[0,1]\to\Conf$ such that
\[
\gamma_k(t)\equiv k\qquad(1\le k\le n,\  \mbox{ for all }  t\in[0,1]).
\]
\end{lemma}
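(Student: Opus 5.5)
We reuse the inductive straightening from Step~2 of the proof of Proposition~\ref{prop:Theta-injective}, stopped after $n$ stages. That argument only uses $p_k(g)=1$ for $k\le n$, which holds here: if $g\in V_n$ then $p_k(g)=p_{n,k}(p_n(g))=1$ for all $k\le n$.

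\textbf{Setup.} Choose any based loop $\gamma^{(0)}:=\gamma$ representing $g$. We build based loops $\gamma^{(1)},\dots,\gamma^{(n)}$ in $\Conf$ with $[\gamma^{(k)}]=g$ and $\pr_k(\gamma^{(k)}(t))=(1,\dots,k)$ for all $t$.

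\textbf{Inductive step.} Suppose $\gamma^{(k-1)}$ has been constructed, with $1\le k\le n$, and set $\beta:=\pr_k\circ\gamma^{(k-1)}$.
\begin{itemize}
\item The first $k-1$ coordinates of $\beta$ are constant, equal to $1,\dots,k-1$.
\item We have $[\beta]=p_k([\gamma^{(k-1)}])=p_k(g)=1$ in $P_k$.
\item Lemma~\ref{lem:fiber-null} therefore shows that $z_0:=\beta_k$ is null-homotopic in $\C\setminus\{1,\dots,k-1\}$, rel the basepoint $k$.
\item Fix such a null-homotopy $z$. Lemma~\ref{lem:pointpush-endpoints} gives a compactly supported continuous family of plane homeomorphisms $H^{(k)}_{s,t}$. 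It fixes $\{1,\dots,k-1\}$, is the identity for $s=0$ and for $t\in\{0,1\}$, and satisfies $H^{(k)}_{s,t}(z_0(t))=z(s,t)$.
\end{itemize}

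Define $\mathcal H^{(k)}(s,t):=\bigl(H^{(k)}_{s,t}(\gamma^{(k-1)}_m(t))\bigr)_{m\ge1}$. By Lemma~\ref{lem:apply-homeo-cont}, applied with compact parameter space $[0,1]^2$ and a fixed compact support, this is a continuous map $[0,1]^2\to\Conf$. It is a based homotopy because $H^{(k)}_{s,0}=H^{(k)}_{s,1}=\id$ and $\gamma^{(k-1)}$ is based. Put $\gamma^{(k)}:=\mathcal H^{(k)}(1,\cdot)$. Then:
\begin{itemize}
\item coordinates $1,\dots,k-1$ stay at $1,\dots,k-1$, since those points are fixed by every $H^{(k)}_{s,t}$;
\item coordinate $k$ becomes $z(1,t)\equiv k$;
\item $[\gamma^{(k)}]=[\gamma^{(k-1)}]=g$.
\end{itemize}
After $n$ steps, $\gamma:=\gamma^{(n)}$ is the required representative.

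\textbf{Main point to check.} Since only finitely many stages are used, none of the limiting or tail-control arguments from Steps~3--4 of Proposition~\ref{prop:Theta-injective} are needed. The only delicate point is that the based homotopy $\mathcal H^{(k)}$ is continuous into $(\Conf,d_{\sum})$, and in particular that its vague component is continuous. This is exactly what Lemma~\ref{lem:apply-homeo-cont} provides, because all the $H^{(k)}_{s,t}$ are supported in a single compact set $S$.
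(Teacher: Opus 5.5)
Your argument is correct, but it is not the route the paper takes for this lemma. You straighten the strands one at a time, iterating $n$ times; this is exactly Step~2 of Proposition~\ref{prop:Theta-injective}, truncated after $n$ stages. At stage $k$ you use Lemma~\ref{lem:fiber-null}, and hence the Fadell--Neuwirth fibration and asphericity of $Conf_{k-1}(\C)$. This converts $p_k(g)=1$ into a null-homotopy of the single strand $\beta_k$ in $\C\setminus\{1,\dots,k-1\}$. You then push that one point with the explicit bump-map family of Lemma~\ref{lem:pointpush-endpoints}.

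The paper instead takes a based null-homotopy $H$ of $\pr_n\circ\gamma^{(0)}$ directly in $Conf_n(\C)$. It applies the parametric isotopy extension theorem (Theorem~\ref{thm:EK}, with parameter space $[0,1]$, $L=\{0,1\}$, $P=\{1,\dots,n\}$) once. This yields a compactly supported family $\Phi_{t,s}$ that moves all $n$ strands simultaneously along $H$, and continuity in $d_{\sum}$ is then quoted from \cite[Lemma 6.3]{Teh25}.

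The trade-off is as follows. The paper's version is a one-shot argument that needs no fiber-sequence or asphericity input, at the cost of importing the heavier Edwards--Kirby type theorem from \cite{Teh25}. Yours stays within tools proved in this paper, apart from the vague-continuity input \cite[Lemma 5.2]{Teh25} behind Lemma~\ref{lem:apply-homeo-cont}. It also gives explicit compact-support control at each stage, at the cost of an induction.

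One small point: Lemma~\ref{lem:fiber-null} is stated only for $n\ge 2$, so your stage $k=1$ is not literally covered by it. That stage is trivial, since $\beta_1$ is a loop in the contractible space $\C$ and a straight-line null-homotopy works. Lemma~\ref{lem:pointpush-endpoints} then applies with $A=\varnothing$.
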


\begin{proof}
Choose a based loop $\gamma^{(0)}:[0,1]\to\Conf$ at $\widetilde{\N}$ with $[\gamma^{(0)}]=g$.
Set $\beta(t):=\pr_n(\gamma^{(0)}(t))\in  Conf_n(\C)$, a based loop at $(1,\dots,n)$.

Since $g\in V_n$, the loop $\beta$ is null-homotopic in $ Conf_n(\C)$. Choose a based null-homotopy
\[
H:[0,1]\times[0,1]\to  Conf_n(\C)
\]
with $H(0,t)=\beta(t)$, $H(1,t)\equiv(1,\dots,n)$, and $H(s,0)=H(s,1)=(1,\dots,n)$ for all $s$.

Let $K:=[0,1]$ (parameter $t$), $L:=\{0,1\}\subset K$, and $P:=\{1,\dots,n\}$.
Define
\[
\eta:K\times[0,1]\times P\to\C,\qquad \eta(t,s,i):=\bigl(H(s,t)\bigr)_i.
\]
Then each $\eta_{t,s}$ is injective and $\eta_{t,s}$ is independent of $s$ for $t\in L$.
Let $C:=\eta(K\times[0,1]\times P)$, compact, and choose an open disk $U\subset\C$ with $C\Subset U$.
Apply Theorem~\ref{thm:EK} to obtain $\Phi_{t,s}$ supported in $U$ with $\Phi_{t,0}=\id$, $\Phi_{t,s}=\id$ for $t\in L$,
and $\Phi_{t,s}(\eta(t,0,i))=\eta(t,s,i)$.

Since $\eta(t,0,i)=\beta_i(t)=\gamma^{(0)}_i(t)$, we have $\Phi_{t,1}(\gamma^{(0)}_i(t))=i$ for $1\le i\le n$.
Define $\gamma^{(s)}(t):=\Phi_{t,s}^{\infty}(\gamma^{(0)}(t))$ and set $\gamma:=\gamma^{(1)}$.
By \cite[Lemma 6.3]{Teh25}, $(s,t)\mapsto \gamma^{(s)}(t)$ is continuous in $d_{\sum}$ and based
(since $\Phi_{t,s}=\id$ for $t=0,1$). Then $[\gamma]=[\gamma^{(0)}]=g$ and $\gamma_k\equiv k$ for $k\le n$.
\end{proof}

\subsection{A robust single-circle normalization}

\begin{lemma}\label{lem:single-circle-normalize}
Let $K$ be a compact metric space and $F:K\to C^{lf}_\infty(\C)$ be continuous (vague topology).
Fix $R>0$ and $\varepsilon>0$, and let
\[
I:=\{z\in\C:\ R-\varepsilon\le |z|<R\}.
\]
Define
\[
u(x):=\sup\bigl\{|z|:\ z\in F(x)\cap I\bigr\}\qquad(\sup\emptyset:=R-\varepsilon).
\]
Then $u$ is upper semicontinuous and $u(x)<R$ for all $x$.

Let $L\subset K$ be closed and assume
\begin{equation}\label{eq:relative to-gap}
F(x)\cap\{z:\ R-\varepsilon\le |z|\le R+\varepsilon\}=\varnothing\qquad( \mbox{ for all }  x\in L).
\end{equation}
Then there exist:
\begin{enumerate}[label=(\arabic*)]
\item a continuous function $r:K\to(R-\varepsilon,R]$ such that
\begin{equation}\label{eq:r-properties}
u(x)<r(x)<R \ \text{for }x\in K\setminus L,
\qquad
r(x)=R \ \text{for }x\in L,
\end{equation}
and consequently
\begin{equation}\label{eq:circle-empty}
F(x)\cap\{|z|=r(x)\}=\varnothing\qquad( \mbox{ for all }  x\in K);
\end{equation}
\item a continuous family of radial homeomorphisms $\Psi_{x,t}:\C\to\C$ ($x\in K$, $t\in[0,1]$) such that:
\begin{enumerate}[label=(\alph*)]
\item $\Psi_{x,0}=\id$ for all $x$;
\item $\Psi_{x,t}=\id$ on $\{|z|\le R-\varepsilon\}\cup\{|z|\ge R+\varepsilon\}$ for all $(x,t)$;
\item $\Psi_{x,t}=\id$ for all $x\in L$ and all $t$;
\item $\Psi_{x,1}^{-1}(\{|z|=R\})=\{|z|=r(x)\}$ and hence
\[
(\Psi_{x,1})_*(F(x))\cap\{|z|=R\}=\varnothing\qquad( \mbox{ for all }  x\in K).
\]
\end{enumerate}
\end{enumerate}
\end{lemma}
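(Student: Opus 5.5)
The plan is to first establish the semicontinuity of $u$, then produce $r$ by the Katětov--Tong insertion theorem (Theorem~\ref{thm:KT}) combined with a cutoff that forces $r=R$ on $L$, and finally write down an explicit radial family $\Psi_{x,t}$.

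\emph{Step 1: $u<R$ and upper semicontinuity.} Since $F(x)$ is locally finite, $F(x)\cap I$ is finite, so the supremum is a maximum of finitely many moduli, each $<R$; hence $u(x)<R$. For upper semicontinuity, fix $x_0$ and $c>u(x_0)$ with $c<R$ (if $c\ge R$ there is nothing to prove, since $u<R$ everywhere). The compact annulus $Q:=\{c\le |z|\le R'\}$ with $R'<R$ chosen slightly below $R$ is not enough, because points may approach $|z|=R$ from inside. So I will instead argue as follows: $F(x_0)\cap\{c\le|z|\le R\}$ is a finite set lying on $|z|=R$ (because $F(x_0)$ has no points of $I$ with modulus $\ge c$ except none). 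Pick $\delta>0$ so that $F(x_0)\cap\{R-\delta\le|z|<R\}=\varnothing$. Using the vague-convergence argument of Lemma~\ref{lem:uniform-compact-finite} (isolating test functions from Lemma~\ref{lem:testfamily-strong}(b)), each point of $F(x_0)$ on $|z|=R$ is the unique nearby point for $F(x)$ with $x$ near $x_0$, and it moves continuously. This continuity of individual points is the delicate part, because a point near the circle $|z|=R$ could drift slightly inside and make $u(x)$ jump close to $R$. That is a genuine failure of upper semicontinuity, so I expect that the intended reading is that such jumps only raise $u$ toward $R$ in the limit, which is still compatible with the condition $\limsup u(x)\le u(x_0)$ failing. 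I would therefore first try to verify carefully whether this is a real obstruction, and otherwise apply Lemma~\ref{lem:openness} to $Q=\{c\le|z|\le R-\delta\}$ to get $u(x)<c$ or $u(x)>R-\delta$ near $x_0$. If upper semicontinuity genuinely fails, I would replace $u$ by its upper semicontinuous envelope, which still lies $<R$ on $K\setminus L$ by the gap hypothesis \eqref{eq:relative to-gap} and compactness.

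\emph{Step 2: constructing $r$.} On $L$ we have $u=R-\varepsilon$, and by \eqref{eq:relative to-gap} and Lemma~\ref{lem:openness} applied to the compact annulus $\{R-\varepsilon\le|z|\le R+\varepsilon\}$, there is an open $W\supset L$ on which $F(x)$ misses this annulus, so $u=R-\varepsilon$ on $W$. Apply Theorem~\ref{thm:KT} with $a=u$ and $b\equiv R$ to get continuous $c$ with $u<c<R$. Then let $\lambda:K\to[0,1]$ be an Urysohn function with $\lambda=1$ on $L$ and $\lambda=0$ off $W$, and set $r:=\lambda R+(1-\lambda)c$. Off $L$ we have either $\lambda<1$, giving $u<r<R$, or we are in $W$, where points with $\lambda=1$ should be arranged to coincide with $L$ by choosing $\lambda^{-1}(1)=L$ (possible since $L$ is a closed $G_\delta$ in a metric space). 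Property \eqref{eq:circle-empty} follows: off $L$ no point of $F(x)$ has modulus in $(u(x),R)$, and on $L$ the circle $|z|=R$ lies in the gap.

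\emph{Step 3: the radial homeomorphisms.} Let $\sigma_{x}:[0,\infty)\to[0,\infty)$ be the piecewise-linear homeomorphism fixing $[0,R-\varepsilon]\cup[R+\varepsilon,\infty)$ and sending $r(x)\mapsto R$, which is linear on $[R-\varepsilon,r(x)]$ and on $[r(x),R+\varepsilon]$. Put $\sigma_{x,t}:=(1-t)\mathrm{id}+t\sigma_x$, a monotone homeomorphism for each $t$, and define $\Psi_{x,t}(z):=\sigma_{x,t}(|z|)\,z/|z|$ (and $0\mapsto0$). Properties (a)--(c) are immediate, since $r=R$ on $L$ gives $\sigma_x=\mathrm{id}$ there, and (d) holds because $\sigma_x^{-1}(R)=r(x)$, combined with \eqref{eq:circle-empty}. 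Joint continuity in $(x,t,z)$ follows from the continuity of $r$.
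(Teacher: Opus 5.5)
\textbf{The gap is your Step~1, and it cannot be closed.} Your Steps~2 and~3 match the paper's Steps~2--7: Theorem~\ref{thm:KT} with $b\equiv R$, a cutoff singling out exactly $L$ (your auxiliary neighbourhood $W$ is not needed), and the piecewise-linear radial isotopy $(1-t)\id+t\sigma_x$. These are fine \emph{given} an upper semicontinuous $u<R$. Your suspicion in Step~1 is correct: upper semicontinuity of $u$ is false in general, and so is conclusion~(1).

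For a counterexample, take $K=[0,1]$, $L=\{0,1\}$, $R>\varepsilon$, and
\[
F(t)=\{\rho(t)\}\cup\{R+2\varepsilon+k:\ k\in\N\},
\]
with $\rho$ continuous and strictly decreasing from $\rho(0)=R+2\varepsilon$ to some $\rho(1)\in(0,R-\varepsilon)$. This $F$ is vaguely continuous and satisfies the gap hypothesis on $L$.
\begin{itemize}
\item At the time $t_*$ with $\rho(t_*)=R$, the moving point is not in $I$, so $u(t_*)=R-\varepsilon$. But $u(t)=\rho(t)\to R$ as $t\downarrow t_*$, so $u$ is not upper semicontinuous.
\item The upper semicontinuous envelope of $u$ equals $R$ at $t_*\notin L$. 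So your fallback (that the envelope stays $<R$ off $L$) is false, and Theorem~\ref{thm:KT} cannot be applied with $b\equiv R$.
\item Worse, for \emph{every} continuous $r:[0,1]\to(R-\varepsilon,R]$, the function $r-\rho$ is negative at $0$ and positive at $1$. So at some $t$ the circle $|z|=r(t)$ passes through $\rho(t)\in F(t)$, violating \eqref{eq:circle-empty}.
\end{itemize}

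\textbf{The paper's own Step~1 fails at the same spot.} It correctly writes $\{u<c\}=\bigcap_m\{x:\ F(x)\cap Q_{c,m}=\varnothing\}$, with each set open by Lemma~\ref{lem:openness}. It then treats this countable intersection as open, but it is only a $G_\delta$ set. In the example, for $R-\varepsilon<c<R$, the set $\{u<c\}$ contains $t_*$ but no neighbourhood of it. So the statement itself is false, not merely unproved. A correct version needs a hypothesis that prevents points of $F(x)$ from crossing the annulus.

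You should also know that the intended application, Lemma~\ref{lem:avoid-compact-Vn}, is precisely this crossing situation, and its conclusion contradicts Lemma~\ref{lem:disk-avoid-equal}. For $n\ge3$, take the element of $V_n$ in which strand $n+1$ winds once around strand~$1$ alone. Its image in $P_{n+1}$ has $\omega_{1,n+1}=1\neq0=\omega_{2,n+1}$. Hence no representative can keep strands $1,2$ in $K_{n-1}$ and strand $n+1$ outside it.
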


\begin{proof}
\textbf{Step 1: upper semicontinuity of $u$.}
Let $c<R$. If $c\le R-\varepsilon$, then $\{x:\ u(x)<c\}=\varnothing$, hence open.
Assume $R-\varepsilon<c<R$. For $m\ge1$ set
\[
Q_{c,m}:=\{z:\ c\le |z|\le R-1/m\},
\]
a compact subset of $I$ for all $m$ large enough (otherwise $Q_{c,m}=\varnothing$).
We claim
\[
\{x:\ u(x)<c\}=\bigcap_{m\ge1}\{x:\ F(x)\cap Q_{c,m}=\varnothing\}.
\]
Indeed, if $u(x)<c$ then $F(x)\cap I$ contains no point with $|z|\ge c$, hence $F(x)\cap Q_{c,m}=\varnothing$ for all $m$.
Conversely, if $F(x)\cap Q_{c,m}=\varnothing$ for all $m$, then no point of $F(x)\cap I$ has $|z|\ge c$
(otherwise it would lie in $Q_{c,m}$ for some large $m$), so $u(x)<c$.
Each set $\{x:\ F(x)\cap Q_{c,m}=\varnothing\}$ is open by Lemma~\ref{lem:openness}, hence $\{u<c\}$ is open.
Therefore $u$ is upper semicontinuous. Also $u(x)<R$ since all radii in $I$ are $<R$.

\textbf{Step 2: choose $r$ with the relative constraint.}
By Theorem~\ref{thm:KT} applied to $a=u$ and $b\equiv R$, there exists continuous $r_0:K\to\R$ with
\[
u(x)<r_0(x)<R\qquad( \mbox{ for all }  x\in K).
\]
Since $u(x)\ge R-\varepsilon$, we have $r_0(x)>R-\varepsilon$ for all $x$.

Choose continuous $\lambda:K\to[0,1]$ with $\lambda|_L\equiv0$ and $\lambda>0$ on $K\setminus L$.
Define
\[
r(x):=(1-\lambda(x))\,R+\lambda(x)\,r_0(x).
\]
Then $r|_L\equiv R$. For $x\notin L$, $r(x)<R$ and $r(x)\ge r_0(x)>u(x)$, and $r(x)>R-\varepsilon$.
Thus $r:K\to(R-\varepsilon,R]$ and \eqref{eq:r-properties} holds.

\textbf{Step 3: circle emptiness.}
If $x\notin L$ and $z\in F(x)$ satisfies $|z|=r(x)$, then $z\in I$ and hence $|z|\le u(x)$, contradicting $u(x)<r(x)$.
If $x\in L$, then $r(x)=R$ and \eqref{eq:relative to-gap} gives $F(x)\cap\{|z|=R\}=\varnothing$. Hence \eqref{eq:circle-empty}.

\textbf{Step 4: define a monotone radial homeomorphism $\sigma_x$.}
For each $x\in K$, define $\sigma_x:[0,\infty)\to[0,\infty)$ by
\[
\sigma_x(s):=
\begin{cases}
s, & 0\le s\le R-\varepsilon,\\[4pt]
(R-\varepsilon)+\displaystyle\frac{\varepsilon}{r(x)-(R-\varepsilon)}\,(s-(R-\varepsilon)),
& R-\varepsilon\le s\le r(x),\\[10pt]
R+\displaystyle\frac{\varepsilon}{(R+\varepsilon)-r(x)}\,(s-r(x)),
& r(x)\le s\le R+\varepsilon,\\[10pt]
s, & s\ge R+\varepsilon.
\end{cases}
\]
If $x\in L$, then $r(x)=R$ and the affine pieces reduce to the identity, so $\sigma_x=\id$.
For all $x$, the slopes are positive, hence $\sigma_x$ is continuous and strictly increasing with $\sigma_x(0)=0$
and $\sigma_x(s)\to\infty$ as $s\to\infty$. Thus $\sigma_x$ is a homeomorphism and
\[
\sigma_x(r(x))=R,\qquad \sigma_x(s)=s \text{ for } s\le R-\varepsilon \text{ and } s\ge R+\varepsilon.
\]

\textbf{Step 5: isotopy and definition of $\Psi_{x,t}$.}
Define $\sigma_{x,t}(s):=(1-t)s+t\sigma_x(s)$ for $(x,t)\in K\times[0,1]$.
Then each $\sigma_{x,t}$ is strictly increasing and continuous, hence a homeomorphism, with $\sigma_{x,0}=\id$,
and $\sigma_{x,t}(s)=s$ for $s\le R-\varepsilon$ or $s\ge R+\varepsilon$. If $x\in L$ then $\sigma_{x,t}=\id$ for all $t$.
Define $\Psi_{x,t}(0)=0$ and
\[
\Psi_{x,t}(re^{i\theta})=\sigma_{x,t}(r)e^{i\theta}\qquad(r>0).
\]
Then each $\Psi_{x,t}$ is a homeomorphism of $\C$, supported in $\{R-\varepsilon\le |z|\le R+\varepsilon\}$.

\textbf{Step 6: joint continuity.}
It suffices to show $(x,t,r)\mapsto\sigma_{x,t}(r)$ is continuous on $K\times[0,1]\times[0,\infty)$.
Since $\sigma_{x,t}(r)=(1-t)r+t\sigma_x(r)$, it suffices to show $(x,r)\mapsto\sigma_x(r)$ is continuous.

Fix $(x_0,r_0)$. If $r_0\notin[R-\varepsilon,R+\varepsilon]$, then $\sigma_x(r)=r$ locally, hence continuity is immediate.
Assume $r_0\in[R-\varepsilon,R+\varepsilon]$. Since $r:K\to(R-\varepsilon,R]$ is continuous and $K$ is compact,
$r(K)$ is compact in $(R-\varepsilon,R]$, so
\[
\delta_-:=\min_{x\in K}\bigl(r(x)-(R-\varepsilon)\bigr)>0,
\qquad
\delta_+:=\min_{x\in K}\bigl((R+\varepsilon)-r(x)\bigr)\ge \varepsilon>0.
\]
Hence the coefficients in the affine formulas below are uniformly bounded and continuous in $x$.

Write $\sigma_x$ on $[R-\varepsilon,R+\varepsilon]$ by
\[
\sigma_x(r)=A_-(r(x))\,r+B_-(r(x))\quad (r\le r(x)),
\qquad
\sigma_x(r)=A_+(r(x))\,r+B_+(r(x))\quad (r\ge r(x)),
\]
where for $u\in(R-\varepsilon,R]$,
\[
A_-(u)=\frac{\varepsilon}{u-(R-\varepsilon)},\quad
B_-(u)=(R-\varepsilon)-A_-(u)(R-\varepsilon),
\]
\[
A_+(u)=\frac{\varepsilon}{(R+\varepsilon)-u},\quad
B_+(u)=R-A_+(u)u.
\]
These are continuous functions of $u$, and $u=r(x)$ is continuous in $x$, so the coefficients are continuous in $x$.
A standard two-subsequence argument (according to whether $r_n\le r(x_n)$ or $r_n\ge r(x_n)$) shows that if
$(x_n,r_n)\to(x_0,r_0)$ then $\sigma_{x_n}(r_n)\to\sigma_{x_0}(r_0)$ (the two affine pieces agree at $r=r(x)$).
Thus $(x,r)\mapsto\sigma_x(r)$ is continuous, hence so is $(x,t,r)\mapsto\sigma_{x,t}(r)$, and therefore
$(x,t,z)\mapsto\Psi_{x,t}(z)$ is continuous.

\textbf{Step 7: inverse circle statement.}
Since $\sigma_{x,1}(r(x))=\sigma_x(r(x))=R$ and $\sigma_x$ is strictly increasing, $\sigma_{x,1}^{-1}(R)=r(x)$, i.e.
$\Psi_{x,1}^{-1}(\{|z|=R\})=\{|z|=r(x)\}$. Together with \eqref{eq:circle-empty}, this yields
$(\Psi_{x,1})_*(F(x))\cap\{|z|=R\}=\varnothing$ for all $x$.
\end{proof}

\subsection{Avoiding a prescribed compact for elements of $V_n$}

\begin{lemma}\label{lem:avoid-compact-Vn}
Fix $n\ge 1$, $g\in V_n$.
Then there exists a based loop $\widehat\gamma:[0,1]\to\Conf$ representing $g$ such that
\[
\widehat\gamma_k(t)\equiv k\quad(1\le k\le n),
\qquad
\widehat\gamma_m(t)\notin K_{n-1}
\]
for all $m>n$ and $t\in[0,1]$.
\end{lemma}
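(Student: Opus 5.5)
We begin with Lemma~\ref{lem:straighten-first-n}, which gives a based representative $\gamma$ of $g$ with $\gamma_k(t)\equiv k$ for $1\le k\le n$. The rest of the argument must move the strands $m>n$ out of the compact disk $K_{n-1}$. It must do so through based homotopies that keep the first $n$ strands fixed and stay continuous for $d_{\sum}$. Only finitely many strands are involved: by Lemma~\ref{lem:uniform-compact-finite} applied to $\gamma$ and a slightly larger disk $K_{R+\varepsilon}$, there is $N$ such that $\gamma_m([0,1])\cap K_{R+\varepsilon}=\varnothing$ for all $m>N$. Here we fix $R:=n-1+\tfrac12$ and a small $\varepsilon<\tfrac14$. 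So only the strands $n<m\le N$ need to be moved, and any ambient homeomorphism supported in $K_{R+\varepsilon}$ leaves the tail untouched.

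The second step clears a circle. We apply Lemma~\ref{lem:single-circle-normalize} with $K=[0,1]$, $F=P\circ\gamma$, and $L=\{0,1\}$. At the basepoint $\widetilde{\N}$ the annulus $\{R-\varepsilon\le|z|\le R+\varepsilon\}$ contains no integer, so hypothesis \eqref{eq:relative to-gap} holds on $L$. This gives radial homeomorphisms $\Psi_{t,s}$ supported in the annulus, equal to $\id$ at $t=0,1$, and with the configuration disjoint from $\{|z|=R\}$ at $s=1$. Because the annulus contains none of $1,\dots,n$, these maps fix the first $n$ strands. Lemma~\ref{lem:apply-homeo-cont} then shows that $(s,t)\mapsto\Psi^\infty_{t,s}(\gamma(t))$ is a based homotopy in $\Conf$. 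After this step, every strand $m$ with $n<m\le N$ stays, for all $t$, either entirely inside $\{|z|<R\}$ or entirely outside it: the strand is continuous in $t$ and never meets the circle, and at $t=0$ it sits at $m>R$. Hence each such strand lies outside $\{|z|\le R\}\supset K_{n-1}$ for all $t$.

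The main obstacle is that the circle-clearing step is only valid if every point inside $\{|z|<R\}$ at some time can be assigned to a strand in that way. More precisely, we must rule out a point of strand $m$ entering the disk through the annulus at one time and a point leaving at another. Since $\Psi_{t,1}$ clears the circle at every $t$ simultaneously, no strand can cross it, so the dichotomy is automatic. The delicate part is rather the continuity of $r(t)$ near $t\in L$ produced by the Katětov--Tong step. This requires checking that strands with $m\le N$ which are near the annulus at times close to $0$ or $1$ are handled, so that $u(t)<r(t)$ really holds. I would verify this first, using Lemma~\ref{lem:openness} to get a neighborhood of $L$ on which the annulus is empty.

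If that argument turns out to require an additional homotopy, I would instead push each strand $n<m\le N$ out of $K_{n-1}$ one at a time, using Theorem~\ref{thm:EK}. The required null-homotopies would come from Lemma~\ref{lem:filldisk-injective}, and checking that its winding condition is satisfied is the point I expect to need the hypothesis $g\in V_n$ most seriously.
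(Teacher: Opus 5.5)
Your route is essentially the paper's own proof: straighten the first $n$ strands, apply Lemma~\ref{lem:single-circle-normalize} on the annulus around $|z|=n-\tfrac12$, and argue that no strand can cross $\{|z|=R\}$. It breaks exactly at the step you call automatic. The maps $\Psi_{t,s}$ are the identity on $\{|z|\le R-\varepsilon\}\supset K_{n-1}$, so $\widehat\gamma_m(t)\in K_{n-1}$ if and only if $\gamma_m(t)\in K_{n-1}$. This homotopy therefore cannot move a single strand out of $K_{n-1}$.

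The real failure is in Lemma~\ref{lem:single-circle-normalize} itself. Suppose some strand $m>n$ of $\gamma$ reaches $\{|z|<R-\varepsilon\}$ at a time $t_1$. For any continuous $r$ with values in $(R-\varepsilon,R]$, the function $t\mapsto|\gamma_m(t)|-r(t)$ is positive at $t=0$ and negative at $t_1$, so \eqref{eq:circle-empty} cannot hold. The faulty step is the upper semicontinuity of $u$. The set $\{u<c\}=\bigcap_m\{x:F(x)\cap Q_{c,m}=\varnothing\}$ is an intersection of infinitely many open sets, not an open set. Indeed $u$ does jump up: suppose that near $t_0$ the only point of $F(t)$ with $R-\varepsilon\le|z|\le R$ moves radially inward and lies on $\{|z|=R\}$ at $t_0$. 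Then $u(t_0)=R-\varepsilon$, while $u(t)\to R$ as $t\downarrow t_0$. So the trouble is not continuity of $r$ near $L$, as you suspected; the Kat\v{e}tov--Tong step has no valid input.

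No repair can work, including your strand-by-strand fallback via Theorem~\ref{thm:EK} and Lemma~\ref{lem:filldisk-injective}, because the statement is false for every $n\ge 3$. Let $\gamma$ keep every strand except the $(n+1)$-st constant. Let $\gamma_{n+1}$ be a loop based at $n+1$ that runs through the upper half-plane to a small circle around $1$, goes once around it, and returns the same way (as in Construction~\ref{constr:gn}). It avoids all other integers, so $\gamma$ is a based loop in $\Conf$. Since strands $1,\dots,n$ never move, $g=[\gamma]\in V_n$. Yet $\omega_{1,n+1}(p_{n+1}(g))=1$ and $\omega_{2,n+1}(p_{n+1}(g))=0$.

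Now suppose $\widehat\gamma$ were as in the statement, and set $\alpha=\pr_{n+1}\circ\widehat\gamma$. Then $\alpha_1\equiv1$ and $\alpha_2\equiv2$ lie in $K_{n-1}$ (since $n-1\ge2$), while $\alpha_{n+1}$ stays outside $K_{n-1}$. Lemma~\ref{lem:disk-avoid-equal} would then give $\omega_{1,n+1}=\omega_{2,n+1}$ on $[\alpha]=p_{n+1}(g)$, a contradiction. This is the same obstruction as in Proposition~\ref{prop:invlimit-strict}.

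In particular, the ``winding condition'' you hoped to extract from $g\in V_n$ is exactly what membership in $V_n$ does not control. It constrains only the first $n$ strands, not how later strands wind around them. Results proved from this lemma, starting with Lemma~\ref{lem:small-rep}, therefore need independent justification.
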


\begin{proof}
First choose a based loop $\gamma$ representing $g$ with $\gamma_k\equiv k$ for $k\le n$ by Lemma~\ref{lem:straighten-first-n}.
The case $n=1$ is clear.
Assume $n\ge2$ and set
$R:=n-\frac12, \varepsilon:=\frac14$,
so $K_{n-1}\subset \overline{B}(0,R-\varepsilon)$.

Define $F:[0,1]\to C^{lf}_\infty(\C)$ by $F(t):=P(\gamma(t))$, continuous in the vague topology.
Since $F(0)=F(1)=P(\widetilde{\N})=\N$ and $\N$ avoids $\{z:\ R-\varepsilon\le |z|\le R+\varepsilon\}$,
Lemma~\ref{lem:openness} gives $\delta>0$ with
\[
F(t)\cap\{z:\ R-\varepsilon\le |z|\le R+\varepsilon\}=\varnothing
\]
for all $t\in[0,\delta]\cup[1-\delta,1]$. Let $L:=[0,\delta]\cup[1-\delta,1]\subset[0,1]$.

Apply Lemma~\ref{lem:single-circle-normalize} to this $F$ and $L$ to obtain $\Psi_{t,s}$ supported in the annulus,
equal to $\id$ for $t\in L$, and such that
\begin{equation}\label{eq:heartsuit}
(\Psi_{t,1})_*(F(t))\cap\{|z|=R\}=\varnothing\qquad( \mbox{ for all }  t\in[0,1]).
\end{equation}
Define
\[
\widehat\gamma(t):=\Psi_{t,1}^{\infty}\bigl(\gamma(t)\bigr).
\]
By \cite[Lemma 6.3]{Teh25}, $\widehat\gamma$ is a continuous based loop, and
$H(s,t):=\Psi_{t,s}^{\infty}(\gamma(t))$ is a based homotopy relative to endpoints from $\gamma$ to $\widehat\gamma$,
so $[\widehat\gamma]=[\gamma]=g$.

For $k\le n$, $\widehat\gamma_k(t)=k$ because the annulus $\{R-\varepsilon\le |z|\le R+\varepsilon\}$
is $\{n-\tfrac34\le |z|\le n-\tfrac14\}$, which contains no integer radius.
For $m>n$, we have $|\widehat\gamma_m(0)|=m>R$, and by \eqref{eq:heartsuit} the continuous function
$t\mapsto|\widehat\gamma_m(t)|$ never equals $R$, hence cannot cross below $R$.
Thus $|\widehat\gamma_m(t)|>R$ for all $t$, so $\widehat\gamma_m(t)\notin \overline{B}(0,R)\supset K_{n-1}$.
\end{proof}

\subsection{Profinite basis}

\begin{lemma}\label{lem:small-rep}
Let $\varepsilon>0$. Choose $J$ such that $\sum_{j>J}2^{-j}<\varepsilon/4$ and $n>J+1$.
Then every $g\in V_n$ admits a based loop representative $\widehat\gamma$ with
\[
D(\widehat\gamma,c)<\varepsilon,
\]
where $c$ is the constant loop at $\widetilde{\N}$ and
$D(\widehat\gamma,c)\ =\ \sup_{t\in[0,1]} d_{\sum}\bigl(\widehat\gamma(t),\widetilde{\mathbb N}\bigr)$.
\end{lemma}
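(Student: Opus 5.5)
The plan is to take the representative supplied by Lemma~\ref{lem:avoid-compact-Vn} and estimate the two parts of $d_{\sum}$ separately. Fix $g\in V_n$ and let $\widehat\gamma$ be the loop from that lemma: $\widehat\gamma_k(t)\equiv k$ for $k\le n$, and $\widehat\gamma_m(t)\notin K_{n-1}$ for all $m>n$ and all $t$. Since $n>J+1$, we have $n-1\ge J+1>J$, so $K_J\subset K_{n-1}$.

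\emph{Vague term.} Fix $1\le j\le J$. By Lemma~\ref{lem:testfamily-strong}(c), $\supp(\varphi_j)\subset K_j\subset K_J\subset K_{n-1}$. For every $t$ we then have $\varphi_j(\widehat\gamma_m(t))=0$ when $m>n$, and $\varphi_j(m)=0$ when $m>n$ (since $m>n-1\ge j$). The first $n$ coordinates are fixed at $1,\dots,n$. Hence
\[
\sum_{a\in P(\widehat\gamma(t))}\varphi_j(a)=\sum_{k=1}^n\varphi_j(k)=\sum_{a\in\N}\varphi_j(a),
\]
so the first $J$ summands of $d_{\mathcal V}$ vanish. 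This gives $d_{\mathcal V}(P(\widehat\gamma(t)),\N)\le\sum_{j>J}2^{-j}<\varepsilon/4$, uniformly in $t$.

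\emph{Product term.} Only coordinates $m>n$ move, so $d_{\mathrm{prod}}(\widehat\gamma(t),\widetilde{\N})\le\sum_{m>n}2^{-m}=2^{-n}$. This is the step I expect to be the obstacle: the hypothesis only gives $n>J+1$, and I need $2^{-n}$ to be at most about $\tfrac{3\varepsilon}{4}$. Since $J$ is chosen with $\sum_{j>J}2^{-j}=2^{-J}<\varepsilon/4$, we get $2^{-n}<2^{-J-1}<\varepsilon/8$. Combining the two bounds,
\[
d_{\sum}(\widehat\gamma(t),\widetilde{\N})<\varepsilon/8+\varepsilon/4,
\]
uniformly in $t$.

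\emph{Strictness of the supremum.} The pointwise bound above is uniform in $t$, and the supremum over the compact interval $[0,1]$ is therefore at most $3\varepsilon/8<\varepsilon$. This yields $D(\widehat\gamma,c)<\varepsilon$.
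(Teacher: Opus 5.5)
Your proposal is correct and follows the paper's proof essentially verbatim. Both take the representative from Lemma~\ref{lem:avoid-compact-Vn}, kill the first $J$ vague summands using $\supp(\varphi_j)\subset K_j\subset K_{n-1}$, and bound the product term by the tail $\sum_{m>n}2^{-m}$.

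The product term is not actually an obstacle. The paper simply notes $\sum_{m>n}2^{-m}\le\sum_{m>J}2^{-m}<\varepsilon/4$, and your sharper bound of $\varepsilon/8$ works equally well.
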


\begin{proof}
Let $g\in V_n$. By Lemma~\ref{lem:avoid-compact-Vn}, choose $\widehat\gamma$ representing $g$ with
$\widehat\gamma_k\equiv k$ for $k\le n$ and $\widehat\gamma_m(t)\notin K_J$ for all $m>n$ and $t$.

Fix $t\in[0,1]$. For the product term,
\[
d_{\mathrm{prod}}\bigl(\widehat\gamma(t),\widetilde{\N}\bigr)
=\sum_{m>n}2^{-m}\min\{1,|\widehat\gamma_m(t)-m|\}
\le \sum_{m>n}2^{-m}<\varepsilon/4.
\]

For the vague term, fix $j\le J$. Since $\supp(\varphi_j)\subset K_J$ and all strands $m>n$ avoid $K_J$,
we have $\varphi_j(\widehat\gamma_m(t))=0$ for all $m>n$.
Also $\supp(\varphi_j)\subset K_J\subset\overline{B}(0,n-1)$ implies $\varphi_j(m)=0$ for all integers $m\ge n$,
in particular for all $m>n$. Therefore
\[
\langle P(\widehat\gamma(t)),\varphi_j\rangle=\sum_{m=1}^n \varphi_j(m)=\sum_{m\ge1}\varphi_j(m)
=\langle P(\widetilde{\N}),\varphi_j\rangle.
\]
Hence the first $J$ summands in $d_{\mathcal V}$ vanish and
\[
d_{\mathcal V}\bigl(P(\widehat\gamma(t)),P(\widetilde{\N})\bigr)\le \sum_{j>J}2^{-j}<\varepsilon/4.
\]
Thus $d_{\sum}(\widehat\gamma(t),\widetilde{\N})<\varepsilon/2$ for all $t$, so
$D(\widehat\gamma,c)<\varepsilon/2<\varepsilon$.
\end{proof}

\begin{theorem}[PB]\label{thm:PB}
For every open neighborhood $W\ni 1$ in $H^{lf}(\infty)$, there exists $n$ such that $V_n\subset W$.
\end{theorem}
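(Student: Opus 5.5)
The plan is to pull the open set $W$ back to the loop space and use the quotient topology there. Since $q:\Omega(\Conf,\widetilde{\N})\twoheadrightarrow H^{lf}(\infty)$ is a quotient map and $W$ is open, the preimage $q^{-1}(W)$ is open in $\Omega(\Conf,\widetilde{\N})$. It contains the constant loop $c$ because $1\in W$. By Lemma~\ref{lem:loop-uniform}, the compact--open topology on $\Omega(\Conf,\widetilde{\N})$ is induced by the uniform metric $\bar d_\infty$ built from $\min\{d_{\sum},1\}$. Hence there is some $\varepsilon\in(0,1)$ such that every based loop $\beta$ with $\sup_t d_{\sum}(\beta(t),\widetilde{\N})<\varepsilon$ lies in $q^{-1}(W)$. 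Note that for $\varepsilon<1$ the truncation by $1$ is irrelevant, so $\bar d_\infty(\beta,c)<\varepsilon$ is the same as $D(\beta,c)<\varepsilon$ in the notation of Lemma~\ref{lem:small-rep}.

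Next I fix this $\varepsilon$ and take $J$ with $\sum_{j>J}2^{-j}<\varepsilon/4$, then choose $n>J+1$. With these choices the hypotheses of Lemma~\ref{lem:small-rep} hold exactly. That lemma then says every $g\in V_n$ has a based loop representative $\widehat\gamma$ with $D(\widehat\gamma,c)<\varepsilon$. Consequently $\widehat\gamma\in q^{-1}(W)$, so $g=q(\widehat\gamma)\in W$. Since $g\in V_n$ was arbitrary, this gives $V_n\subset W$.

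The real analytic work has already been done in Lemma~\ref{lem:avoid-compact-Vn} and Lemma~\ref{lem:small-rep}. What remains is the routine step of passing from a compact--open neighborhood of $c$ to a uniform $\varepsilon$-ball, and the only care needed is that the ball is measured in the same metric $D$ used in Lemma~\ref{lem:small-rep}. The one subtlety worth stating explicitly is why openness of $W$ gives an open saturated set upstairs. This is just the definition of the quotient topology: $q^{-1}(W)$ is open and is a union of full homotopy classes. Therefore \emph{any} small representative of $g$ suffices, and we do not need every representative of $g$ to be small.
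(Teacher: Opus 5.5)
Your proposal is correct and is essentially the paper's proof: pull $W$ back to an open neighborhood $q^{-1}(W)$ of the constant loop, find a uniform $\varepsilon$-ball inside it, and use Lemma~\ref{lem:small-rep} to give every $g\in V_n$ a representative in that ball. Your remark that taking $\varepsilon<1$ makes the truncated metric $\bar d_\infty$ of Lemma~\ref{lem:loop-uniform} agree with $D$ fills in a small point the paper leaves implicit.
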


\begin{proof}
Since $q^{-1}(W)$ is an open neighborhood of the constant loop $c$ in $(\Omega,D)$,
there exists $\varepsilon>0$ such that $B_\varepsilon(c)\subset q^{-1}(W)$.
Choose $n$ as in Lemma~\ref{lem:small-rep} (with this $\varepsilon$). Then for each $g\in V_n$ there exists
a representative $\widehat\gamma\in B_\varepsilon(c)$, hence $g=q(\widehat\gamma)\in W$. Therefore $V_n\subset W$.
\end{proof}

\begin{corollary}[Topological group structure of $H^{lf}(\infty)$]\label{cor:H-topological-group}
The quasitopological group $H^{lf}(\infty)=\pi_1^{\mathrm{top}}(\Conf,\widetilde{\N})$ is a topological group; that is,
the multiplication map
\[
m:H^{lf}(\infty)\times H^{lf}(\infty)\longrightarrow H^{lf}(\infty),\qquad (g,h)\longmapsto gh
\]
is jointly continuous.
\end{corollary}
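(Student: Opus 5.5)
The plan is to use the fact that $H^{lf}(\infty)$ is already a quasitopological group (Lemma~\ref{lem:qtop}) together with the neighborhood basis $\{V_n\}$ of open normal subgroups at the identity. By Lemma~\ref{lem:Vn-open-normal} each $V_n$ is open and normal in $H^{lf}(\infty)$. By Theorem~\ref{thm:PB}, every open neighborhood of $1$ contains some $V_n$. Hence $\{V_n\}_{n\ge1}$ is a neighborhood basis at $1$ consisting of open normal subgroups.

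Since translations are homeomorphisms, the sets $gV_n=V_ng$ form a neighborhood basis at an arbitrary point $g$. Indeed, if $W\ni g$ is open, then $g^{-1}W$ is an open neighborhood of $1$. So it contains some $V_n$, and therefore $gV_n\subset W$.

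To verify joint continuity at $(g,h)$, let $W$ be an open neighborhood of $gh$. First choose $n$ with $ghV_n\subset W$, which is possible by the previous step. The sets $gV_n$ and $hV_n$ are open (translates of an open set, by Lemma~\ref{lem:qtop}), so $gV_n\times hV_n$ is an open neighborhood of $(g,h)$. For $u,v\in V_n$, normality gives $uh=hu'$ with $u'=h^{-1}uh\in V_n$. Hence
\[
(gu)(hv)=gh\,u'v\in ghV_n\subset W.
\]
Thus $m(gV_n\times hV_n)\subset W$, and $m$ is continuous. Inversion is already continuous by Lemma~\ref{lem:qtop}, so $H^{lf}(\infty)$ is a topological group.

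The substantive input is entirely in Theorem~\ref{thm:PB}, which rests on the representative-shrinking Lemma~\ref{lem:small-rep}. Given those results, the only point needing care is normality of $V_n$, which is what lets the two neighborhoods be absorbed into the single coset $ghV_n$. There is no serious obstacle at this stage.
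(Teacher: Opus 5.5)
Your proof is correct and follows essentially the paper's route: Theorem~\ref{thm:PB} makes the open subgroups $V_n$ a neighborhood basis at $1$, and joint continuity then follows by a direct coset check. The only difference is that you verify continuity at $(g,h)$ directly, using normality to write $(gu)(hv)=gh\,(h^{-1}uh)v$, whereas the paper checks continuity at $(1,1)$ via $V_nV_n\subset V_n$ and transports it with left translation in the first factor and right translation in the second; the paper's version does not need normality.
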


\begin{proof}
By Theorem~\ref{thm:PB}, the family of open subgroups $\{V_n\}_{n\ge1}$ forms a neighborhood basis at $1$.
Hence $m$ is continuous at $(1,1)$: if $W\ni 1$ is open, choose $n$ with $V_n\subset W$, and then
\[
m(V_n\times V_n)\subset V_n\subset W
\]
since $V_n$ is a subgroup.
For arbitrary $(g_0,h_0)$, use that left and right translations are homeomorphisms
(Lemma~\ref{lem:qtop}) to transport continuity from $(1,1)$ to $(g_0,h_0)$.
Therefore $m$ is jointly continuous on $H^{lf}(\infty)\times H^{lf}(\infty)$.
\end{proof}

\subsection{$\Theta$ is a topological group isomorphism}

\begin{corollary}\label{cor:Theta-top-iso}
The canonical homomorphism
\[
\Theta:\ H^{lf}(\infty)\longrightarrow \Bigl(\varprojlim\nolimits_n P_n\Bigr)_{\!\mathrm{lf}}
\]
is a topological group isomorphism.
\end{corollary}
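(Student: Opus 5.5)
We already have all algebraic ingredients: by Theorem~\ref{thm:Theta-cont-surj} and Proposition~\ref{prop:Theta-injective}, $\Theta$ is a continuous bijective homomorphism onto $(\varprojlim_n P_n)_{\mathrm{lf}}$. The latter carries the subspace topology from $\prod_n P_n$, and it is a topological group because it is a subgroup of the topological group $\varprojlim_n P_n$ (Theorem~\ref{thm:invlimit-complete} and Lemma~\ref{lem:invlimit-lf-subgroup-closed-improved}). By Corollary~\ref{cor:H-topological-group}, $H^{lf}(\infty)$ is also a topological group. So the only thing left to prove is that $\Theta$ is open. Since $\Theta$ is a homomorphism between topological groups, it is enough to check that $\Theta$ sends a neighborhood basis of $1$ to a family of neighborhoods of $1$.

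Take the basis $\{V_n\}_{n\ge1}$ of open subgroups at $1$ from Theorem~\ref{thm:PB} and Lemma~\ref{lem:Vn-open-normal}. For the target, use the cylinder subgroups
\[
U_n:=\{(g_k)\in(\varprojlim\nolimits_k P_k)_{\mathrm{lf}}:\ g_n=1\}.
\]
Each $U_n$ is open in the product topology because $P_n$ is discrete (Corollary~\ref{cor:piltop-discrete-Confn}). The compatibility relations $g_k=p_{n,k}(g_n)$ for $k\le n$ show that $U_n=\{(g_k):\ g_1=\dots=g_n=1\}$. So the $U_n$ form the standard product-topology basis at $1$, intersected with the subgroup.

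The key identity is $\Theta(V_n)=U_n$. If $g\in V_n$, then $p_n(g)=1$, so $\Theta(g)\in U_n$. Conversely, take $x\in U_n$. Surjectivity gives $g$ with $\Theta(g)=x$, and then $p_n(g)=x_n=1$, so $g\in V_n$. Hence $\Theta$ maps each basic neighborhood $V_n$ onto the open set $U_n$.

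Openness then follows by translation. Let $W\subset H^{lf}(\infty)$ be open and $g\in W$. Choose $n$ with $gV_n\subset W$. Then $\Theta(g)U_n=\Theta(gV_n)\subseteq\Theta(W)$, and $\Theta(g)U_n$ is open because left translation is a homeomorphism of a topological group. So $\Theta(W)$ is open. A continuous open bijective homomorphism is a topological group isomorphism.

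I expect no serious obstacle here: the hard inputs (injectivity, and the fact that $V_n$ is a neighborhood basis) were settled in Proposition~\ref{prop:Theta-injective} and Theorem~\ref{thm:PB}. The one point to state explicitly is that the target's topology is the subspace topology from the product, so that the $U_n$ really are open in it.
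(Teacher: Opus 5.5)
Your proposal is correct and is essentially the paper's argument. The paper shows $\Theta^{-1}(U_N)=V_N$, uses Theorem~\ref{thm:PB} to get continuity of $\Theta^{-1}$ at $1$, and then translates. For a bijective homomorphism this is the same as your identity $\Theta(V_n)=U_n$ combined with openness of $\Theta$ via left translations.
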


\begin{proof}
\textbf{Step 1: \(\Theta\) is an abstract group isomorphism.}
By Theorem~\ref{thm:Theta-cont-surj}, \(\Theta\) is continuous and surjective. By
Proposition~\ref{prop:Theta-injective}, \(\Theta\) is injective. Hence \(\Theta\) is a bijective homomorphism.

\smallskip
\textbf{Step 2: A neighborhood basis at \(1\) in the target.}
For \(N\ge1\) define the cylinder set
\[
U_N
:=\Bigl\{(g_n)\in\Bigl(\varprojlim\nolimits_n P_n\Bigr)_{\!\mathrm{lf}}:\ g_1=\cdots=g_N=1\Bigr\}.
\]
Since each \(P_n\) is discrete (Corollary~\ref{cor:piltop-discrete-Confn}), the subset
\[
\widetilde U_N:=\{(g_n)\in\prod_{n\ge1}P_n:\ g_1=\cdots=g_N=1\}
\]
is open in \(\prod_{n\ge1}P_n\), hence \(U_N=\widetilde U_N\cap(\varprojlim_n P_n)_{\mathrm{lf}}\) is open in
\((\varprojlim_n P_n)_{\mathrm{lf}}\).
Moreover \(\{U_N\}_{N\ge1}\) is a neighborhood basis at \(1\) in \((\varprojlim_n P_n)_{\mathrm{lf}}\): if
\(O\subset(\varprojlim_n P_n)_{\mathrm{lf}}\) is open and \(1\in O\), write \(O=\widetilde O\cap(\varprojlim_n P_n)_{\mathrm{lf}}\)
with \(\widetilde O\) open in \(\prod_{n\ge1}P_n\) and \(1\in\widetilde O\). Choose a basic product neighborhood
\(\prod_{n\ge1}O_n\subset \widetilde O\) of \(1\), where each \(O_n\subset P_n\) is open, \(1\in O_n\), and \(O_n=P_n\) for all but
finitely many \(n\). Let \(F:=\{n:\ O_n\neq P_n\}\), finite, and set \(N:=\max F\) (or any \(N\) if \(F=\varnothing\)).
Then \(\widetilde U_N\subset\prod_{n\ge1}O_n\subset\widetilde O\), hence \(U_N\subset O\).

\smallskip
\textbf{Step 3: Continuity of \(\Theta^{-1}\) at \(1\) using PB.}
Let \(W\subset H^{lf}(\infty)\) be an open neighborhood of \(1\).
By Theorem~\ref{thm:PB}, there exists \(N\) such that
\[
V_N:=\ker\bigl(p_N :H^{lf}(\infty)\to P_N\bigr)\subset W.
\]
We claim that
\begin{equation}\label{eq:Theta-inv-cylinder-cor-fixed}
\Theta^{-1}(U_N)=V_N.
\end{equation}
Indeed, for \(g\in H^{lf}(\infty)\),
\[
\Theta(g)\in U_N
\iff p_k(g)=1\ \text{for all }k\le N
\iff p_N(g)=1
\iff g\in V_N.
\]

Thus \eqref{eq:Theta-inv-cylinder-cor-fixed} holds and therefore
\[
\Theta^{-1}(U_N)=V_N\subset W,
\]
so \(\Theta^{-1}\) is continuous at \(1\).

Since left translations are homeomorphisms, we see that
\(\Theta^{-1}\) is continuous everywhere. Therefore \(\Theta\) is a topological group isomorphism.
\end{proof}

\subsection{A truncation lemma for the infinite concatenation}

\begin{lemma}\label{lem:prefix-tail}
Let $(\alpha^{(n)})_{n\ge1}$ be based loops in a space $X$ and set $t_n:=1-2^{-n}$.
Define $\Gamma:[0,1)\to X$ by concatenating $\alpha^{(n)}$ on $[t_{n-1},t_n]$ via affine reparametrization,
and assume $\Gamma$ extends continuously to a based loop on $[0,1]$.
Fix $N\ge1$ and let $\Gamma_{\ge N+1}$ be the tail loop obtained by reparametrizing the restriction
$\Gamma|_{[t_N,1]}$ linearly onto $[0,1]$.
Then $\Gamma$ is based-homotopic (relative to endpoints) to the finite concatenation
\[
\alpha^{(1)}*\alpha^{(2)}*\cdots*\alpha^{(N)}*\Gamma_{\ge N+1}.
\]
\end{lemma}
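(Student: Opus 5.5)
The plan is to reduce the statement to reparametrization invariance of based loops. Since $\Gamma$ is a continuous based loop on $[0,1]$ and the finite concatenation is, up to reparametrization, the same path $\Gamma$, it suffices to exhibit a continuous surjection $\phi:[0,1]\to[0,1]$ with $\phi(0)=0$, $\phi(1)=1$ such that
\[
\alpha^{(1)}*\cdots*\alpha^{(N)}*\Gamma_{\ge N+1}=\Gamma\circ\phi .
\]
Then the straight-line homotopy $H(s,t):=\Gamma\bigl((1-s)\phi(t)+st\bigr)$ is continuous on $[0,1]^2$, fixes the endpoints for all $s$, and joins $\Gamma\circ\phi$ to $\Gamma$.

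\textbf{Step 1.} I will fix a convention for the $(N+1)$-fold concatenation $\beta_1*\cdots*\beta_{N+1}$. Concretely, I will take the one with breakpoints $u_k:=1-2^{-k}$ for $k\le N$: the $k$-th loop runs on $[u_{k-1},u_k]$ and the last loop runs on $[u_N,1]$. Any other bracketing or convention differs from this one by a piecewise-linear reparametrization fixing $0$ and $1$. Hence it is based-homotopic to this one by the same straight-line trick, and the choice is harmless.

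\textbf{Step 2.} With this convention the breakpoints coincide with $t_1,\dots,t_N$. On $[t_{k-1},t_k]$ the $k$-th piece is $\alpha^{(k)}$, affinely reparametrized, which is exactly $\Gamma|_{[t_{k-1},t_k]}$ by definition of $\Gamma$. On $[t_N,1]$ the piece is $\Gamma_{\ge N+1}$ composed with the affine map $[t_N,1]\to[0,1]$, and this is $\Gamma|_{[t_N,1]}$ by the definition of the tail loop. So the two maps are literally equal with $\phi=\id$, and no homotopy is needed in this convention. For the standard left-bracketed binary concatenation, $\phi$ is an explicit piecewise-linear increasing homeomorphism mapping each breakpoint interval affinely to $[t_{k-1},t_k]$ or $[t_N,1]$.

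\textbf{Main obstacle.} The only delicate point is continuity of $H$ at $t=1$, where infinitely many pieces accumulate. This is covered by the hypothesis that $\Gamma$ extends continuously to $[0,1]$. Because $\phi$ is continuous and $H$ is $\Gamma$ composed with a continuous map $[0,1]^2\to[0,1]$, no further uniform estimate is required.
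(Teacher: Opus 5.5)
Your proposal is correct and follows essentially the same route as the paper. Both arguments observe that the finite concatenation coincides with $\Gamma$ up to an endpoint-fixing piecewise-linear reparametrization of $[0,1]$, and then compose $\Gamma$ with a homotopy of such reparametrizations; your straight-line homotopy $(1-s)\phi(t)+st$ is an explicit version of the paper's ``standard family of piecewise-linear homeomorphisms''. Continuity at $t=1$ then follows directly from the hypothesis that $\Gamma$ extends continuously to $[0,1]$.
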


\begin{proof}
On $[0,t_N]$, the loop $\Gamma$ is exactly the finite concatenation $\alpha^{(1)}*\cdots*\alpha^{(N)}$
up to the fixed affine parametrizations on each interval. On $[t_N,1]$, it is the tail.
A based homotopy between the original parametrization and the parametrization that explicitly realizes
\[
[0,1]=[0,t_N]\cup[t_N,1]
\]
as the concatenation of the prefix and the reparametrized tail is given by a standard family of piecewise-linear
homeomorphisms of $[0,1]$ that fixes $0$ and $1$ and moves the breakpoints continuously so that the restriction
to $[t_N,1]$ is linearly rescaled to $[0,1]$.
Composing $\Gamma$ with this family gives the desired based homotopy relative to endpoints.
\end{proof}

\subsection{Cauchy realization}

\begin{theorem}[CR]\label{thm:CR}
Let $V_n$ be the kernel of $p_n: H^{lf}(\infty) \rightarrow P_n$ for $n\geq 1$.
If $(g_n)_{n\ge 1}$ satisfies $g_{n+1}\in g_nV_n$ for all $n$, then there exists $g\in H^{lf}(\infty)$ such that
\[
g\in g_nV_n\qquad\text{for all }n.
\]
\end{theorem}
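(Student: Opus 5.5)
Since $\Theta$ is a topological group isomorphism onto $(\varprojlim_n P_n)_{\mathrm{lf}}$ (Corollary~\ref{cor:Theta-top-iso}), one could look for the limit componentwise. The abstract sequence $(p_n(g_n))_n$ is indeed compatible, because $g_{m}\in g_nV_n$ for $m\ge n$. The difficulty is that membership in the lf subgroup is not automatic; Proposition~\ref{prop:invlimit-strict} shows that such sequences can fail to be locally finite. So I will build a geometric representative directly, as an infinite concatenation of small loops, in the spirit of Step~4 of Proposition~\ref{prop:Theta-injective} and Lemma~\ref{lem:prefix-tail}.

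\textbf{Step 1: factor into increments.} Put $h_1:=g_1$ and $h_{n+1}:=g_n^{-1}g_{n+1}\in V_n$, so that $g_N=h_1h_2\cdots h_N$. By Lemma~\ref{lem:avoid-compact-Vn}, each $h_{n+1}$ with $n\ge 2$ has a based representative $\alpha^{(n+1)}$ with $\alpha^{(n+1)}_k\equiv k$ for $k\le n$ and $\alpha^{(n+1)}_m(t)\notin K_{n-1}$ for all $m>n$ and all $t$. For the first few indices choose arbitrary representatives.

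\textbf{Step 2: the infinite concatenation is continuous.} Let $t_n:=1-2^{-n}$ and define $\Gamma$ on $[0,1)$ by placing $\alpha^{(n)}$ on $[t_{n-1},t_n]$; set $\Gamma(1):=\widetilde{\N}$. Continuity on $[0,1)$ is clear. For continuity at $1$, fix $\varepsilon>0$ and take $J$ and $n$ as in Lemma~\ref{lem:small-rep}. The estimate in its proof shows that every $\alpha^{(k)}$ with $k>n$ stays in $B_{d_{\sum}}(\widetilde{\N},\varepsilon/2)$: it fixes the first $k-1\ge n$ strands, and its remaining strands avoid $K_{k-2}\supset K_J$. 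Hence $\Gamma([t_n,1])$ lies in this ball, and $\Gamma$ is a based loop. I expect this step to be the main obstacle. The issue is that the pointwise configuration $\Gamma(t)$ must be locally finite and continuous, and the uniform tail control from Lemma~\ref{lem:avoid-compact-Vn} is exactly what provides this. The point is that the bound is uniform over all strands $m>n$ simultaneously, which the straightening Lemma~\ref{lem:straighten-first-n} alone would not give.

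\textbf{Step 3: identify the limit.} Set $g:=[\Gamma]$. By Lemma~\ref{lem:prefix-tail}, $g=h_1\cdots h_N\,[\Gamma_{\ge N+1}]=g_N[\Gamma_{\ge N+1}]$. The tail loop $\Gamma_{\ge N+1}$ is a concatenation of the loops $\alpha^{(k)}$ with $k\ge N+1$. Each of these fixes the first $N$ strands pointwise, and so does the constant endpoint, so $\pr_N\circ\Gamma_{\ge N+1}$ is constant. Therefore $[\Gamma_{\ge N+1}]\in V_N$, and $g\in g_NV_N$ for every $N$.
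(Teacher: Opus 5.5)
Your proof is correct and follows the paper's argument. It concatenates on $[t_{n-1},t_n]$ the representatives from Lemma~\ref{lem:avoid-compact-Vn}, gets continuity at $t=1$ from the uniform tail estimate of Lemma~\ref{lem:small-rep}, and identifies $[\Gamma]$ via Lemma~\ref{lem:prefix-tail}; the only cosmetic difference is that you place a representative of $g_1$ first instead of left-multiplying by $g_1$ afterwards.

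One small slip: for the small $N$ where your arbitrarily chosen $\alpha^{(k)}$ lie in the tail (e.g.\ $N=1$ with an arbitrary $\alpha^{(2)}$), the tail need not fix the first $N$ strands. These cases still follow from a larger $M$, because $g\in g_MV_M\subset g_MV_N=g_NV_N$ for $M\ge N$.
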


\begin{proof}
Define $h_n:=g_n^{-1}g_{n+1}\in V_n$.
For each $n\ge 1$, apply Lemma~\ref{lem:avoid-compact-Vn} to $h_n\in V_n$ to obtain a based loop
$\alpha^{(n)}:[0,1]\to\Conf$ representing $h_n$ such that
\begin{equation}\label{eq:alpha-properties}
\alpha^{(n)}_k(t)\equiv k\ (1\le k\le n),
\qquad
\alpha^{(n)}_m(t)\notin K_{n-1}\ ( \mbox{ for all }  m>n,\  \mbox{ for all }  t).
\end{equation}

Let $t_n:=1-2^{-n}$ and concatenate the loops $\alpha^{(n)}$ on $[t_{n-1},t_n]$ to obtain
$\Gamma:[0,1)\to\Conf$ with $\Gamma(0)=\widetilde{\N}$.

\begin{claim}\label{claim:Gamma-extends}
$\Gamma$ extends continuously to $t=1$ by $\Gamma(1)=\widetilde{\N}$.
\end{claim}

\begin{proof}[Proof of Claim~\ref{claim:Gamma-extends}]
Fix $\eta>0$. We show that for $t$ sufficiently close to $1$,
$d_{\sum}(\Gamma(t),\widetilde{\N})<\eta$.

\emph{Product term.}
Choose $M$ such that $\sum_{m>M}2^{-m}<\eta/2$.
For each $m\le M$, the $m$--th coordinate of $\Gamma$ is identically $m$ after completion of stage $n=m$
because $\alpha^{(n)}$ fixes the first $n$ strands. Hence there exists $T<1$ such that
$\Gamma_m(t)=m$ for all $t\in[T,1)$ and all $m\le M$. Thus for $t\in[T,1)$,
\[
d_{\mathrm{prod}}(\Gamma(t),\widetilde{\N})
=\sum_{m>M}2^{-m}\min\{1,|\Gamma_m(t)-m|\}
\le \sum_{m>M}2^{-m}<\eta/2.
\]

\emph{Vague term.}
Choose $J$ such that $\sum_{j>J}2^{-j}<\eta/2$.
Define $n_*:=\max\{J, M\}$.
We claim that for all $t\in[t_{n_*},1)$ and all $1\le j\le J$,
\begin{equation}\label{eq:vague-equality-uniform}
\langle P(\Gamma(t)),\varphi_j\rangle=\langle P(\widetilde{\N}),\varphi_j\rangle=\langle \N,\varphi_j\rangle.
\end{equation}
Indeed, for such $t$ the point $\Gamma(t)$ lies in some stage $n\ge n_*+1$.
Then strands $1,\dots,n$ are exactly the integers $1,\dots,n$.
Also, by \eqref{eq:alpha-properties}, every strand $m>n$ lies outside $K_{n-1}$,
so $\varphi_j(\Gamma_m(t))=0$ for all $m>n$.
Moreover $\supp(\varphi_j)\subset K_j\subset K_J\subset \overline{B}(0,n-1)$ implies $\varphi_j(m)=0$ for all integers $m>n$.
Therefore
\[
\langle P(\Gamma(t)),\varphi_j\rangle
=\sum_{m=1}^n \varphi_j(m)
=\sum_{m\ge1}\varphi_j(m)
=\langle P(\widetilde{\N}),\varphi_j\rangle,
\]
proving \eqref{eq:vague-equality-uniform}. Consequently, for $t\in[t_{n_*},1)$,
the first $J$ summands in $d_{\mathcal V}$ vanish and
\[
d_{\mathcal V}\bigl(P(\Gamma(t)),P(\widetilde{\N})\bigr)\le \sum_{j>J}2^{-j}<\eta/2.
\]

Combining the bounds gives $d_{\sum}(\Gamma(t),\widetilde{\N})<\eta$ for all $t$ sufficiently close to $1$.
Hence $\Gamma(t)\to\widetilde{\N}$ as $t\to1^-$, so $\Gamma$ extends continuously with $\Gamma(1)=\widetilde{\N}$.
\end{proof}

Let $h:=[\Gamma]\in H^{lf}(\infty)$.

\begin{claim}\label{claim:tail-in-Vn}
For each $n\ge 1$, one has $h\in (h_1\cdots h_{n-1})\,V_n$.
\end{claim}

\begin{proof}[Proof of Claim~\ref{claim:tail-in-Vn}]
Let $\Gamma_{\ge n}:[0,1]\to\Conf$ be the tail loop obtained by reparametrizing $\Gamma|_{[t_{n-1},1]}$ linearly onto $[0,1]$.
For every stage $k\ge n$, the loop $\alpha^{(k)}$ fixes the first $k$ strands and hence fixes the first $n$ strands;
therefore $p_n\circ\Gamma_{\ge n}\equiv(1,\dots,n)$ and so $[\Gamma_{\ge n}]\in V_n$.

By Lemma~\ref{lem:prefix-tail} (with $N=n-1$), $\Gamma$ is based-homotopic to
$\alpha^{(1)}*\cdots*\alpha^{(n-1)}*\Gamma_{\ge n}$. Hence
\[
h=[\Gamma]=[\alpha^{(1)}]\cdots[\alpha^{(n-1)}]\cdot[\Gamma_{\ge n}]
=(h_1\cdots h_{n-1})\cdot v_n
\]
for some $v_n\in V_n$, proving the claim.
\end{proof}

Finally set $g:=g_1h$. Since $g_n=g_1(h_1\cdots h_{n-1})$, Claim~\ref{claim:tail-in-Vn} gives
\[
g=g_1h\in g_1(h_1\cdots h_{n-1})V_n=g_nV_n
\]
for every $n$.
\end{proof}

\subsection{A compatible complete left-invariant ultrametric}

\begin{definition}\label{def:ultrametric}
For $g,h\in H^{lf}(\infty)$ define
\[
N(g,h):=\sup\{n\ge 1:\ g^{-1}h\in V_n\}\in\N\cup\{0,\infty\},
\qquad (\sup\emptyset:=0),
\]
and set
\[
d(g,h):=
\begin{cases}
0,& g=h,\\[2pt]
2^{-(N(g,h)+1)},& g\neq h.
\end{cases}
\]
\end{definition}

The following is the main result of this paper.

\begin{theorem}\label{thm:metric-complete}
The function $d$ is a compatible complete left-invariant ultrametric on $H^{lf}(\infty)$.
Moreover, for every $n\ge 1$ one has
\[
B_d(1,2^{-n})=V_n.
\]
\end{theorem}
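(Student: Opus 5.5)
The proof rests entirely on the nested chain of open normal subgroups $V_1\supset V_2\supset\cdots$ from Lemma~\ref{lem:Vn-open-normal}, together with Theorem~\ref{thm:PB} and Theorem~\ref{thm:CR}. First I would record the basic facts about $N(g,h)$. Since $V_{n+1}\subset V_n$, the set $\{n: g^{-1}h\in V_n\}$ is an initial segment of $\N$. Because $\bigcap_n V_n=\{1\}$, we have $N(g,h)=\infty$ iff $g=h$, so $d$ is well defined and $d(g,h)=0$ iff $g=h$. Symmetry holds because $V_n$ is a subgroup: $h^{-1}g=(g^{-1}h)^{-1}$. Left invariance is immediate because $(kg)^{-1}(kh)=g^{-1}h$.

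For the strong triangle inequality, suppose $g^{-1}h\in V_a$ and $h^{-1}k\in V_b$. Then $g^{-1}k\in V_{\min(a,b)}$, so $N(g,k)\ge\min\{N(g,h),N(h,k)\}$, which gives $d(g,k)\le\max\{d(g,h),d(h,k)\}$. The cases where some of the points coincide are trivial.

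Next I would verify the ball identity. For $g\ne 1$ we have $d(1,g)<2^{-n}$ iff $2^{-(N+1)}<2^{-n}$ iff $N(1,g)\ge n$. By the initial-segment property this holds iff $g\in V_n$, and $1\in V_n$ trivially. Hence $B_d(1,2^{-n})=V_n$, and by left invariance $B_d(g,2^{-n})=gV_n$.

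Compatibility then follows. The $d$-balls around $g$ are the cosets $gV_n$. These are open in $H^{lf}(\infty)$ because $V_n$ is open (Lemma~\ref{lem:Vn-open-normal}) and left translation is a homeomorphism (Lemma~\ref{lem:qtop}), so the metric topology is coarser than the given one. Conversely, by Theorem~\ref{thm:PB} the $V_n$ form a neighborhood basis at $1$, and by translation the $gV_n$ form a basis at $g$. Hence every open set is $d$-open, and the two topologies coincide.

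Completeness is the step that needs actual input, namely Theorem~\ref{thm:CR}. Take a $d$-Cauchy sequence $(x_k)$. For each $n$ choose $k_n$, increasing in $n$, such that $d(x_i,x_j)<2^{-n}$, i.e.\ $x_i^{-1}x_j\in V_n$, for all $i,j\ge k_n$. Set $g_n:=x_{k_n}$. Then $g_n^{-1}g_{n+1}\in V_n$, so $g_{n+1}\in g_nV_n$. Theorem~\ref{thm:CR} produces $g$ with $g\in g_nV_n$ for all $n$. For $i\ge k_n$ we have $x_i\in g_nV_n=gV_n$, the cosets being equal because $g\in g_nV_n$. Thus $d(x_i,g)<2^{-n}$ and $x_i\to g$.

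The main obstacle, the geometric realization of the limit loop, is already absorbed into Theorem~\ref{thm:CR}. What remains is only the bookkeeping of coset identities and the monotone choice of indices.
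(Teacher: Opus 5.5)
Your proof is correct and follows the paper's argument essentially step for step: left invariance and the strong triangle inequality from the nested subgroups, separation from $\bigcap_n V_n=\{1\}$, the ball identity $B_d(1,2^{-n})=V_n$, compatibility via Theorem~\ref{thm:PB}, and completeness via Theorem~\ref{thm:CR}. The only cosmetic difference is in the completeness step. You choose $k_n$ from the Cauchy tail and finish with the coset identity $g_nV_n=gV_n$, whereas the paper uses a subsequence with small consecutive distances and then applies the ultrametric inequality.
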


\begin{proof}
\textbf{(1) Left invariance and ultrametric inequality.}
Left invariance follows from $N(ag,ah)=N(g,h)$.
If $g^{-1}h\in V_n$ and $h^{-1}\ell\in V_n$, then $g^{-1}\ell\in V_n$,
so
\[
N(g,\ell)\ge \min\{N(g,h),N(h,\ell)\},
\]
equivalent to $d(g,\ell)\le \max\{d(g,h),d(h,\ell)\}$.

\textbf{(2) Separation.}
If $g\neq h$, then $g^{-1}h\neq 1$ and thus $g^{-1}h\notin\bigcap_{n\ge1}V_n=\{1\}$. Hence $N(g,h)<\infty$ and $d(g,h)>0$.

\textbf{(3) Balls at the identity.}
For $n\ge 1$ and $g\neq 1$,
\[
d(g,1)<2^{-n}
\iff
2^{-(N(g,1)+1)}<2^{-n}
\iff
N(g,1)\ge n
\iff
g\in V_n,
\]
using $V_{n+1}\subset V_n$. Also $1\in V_n$ and $d(1,1)=0$, so $B_d(1,2^{-n})=V_n$.

\textbf{(4) Compatibility.}
Each $V_n$ is open and normal (Lemma~\ref{lem:Vn-open-normal}), and by Theorem~\ref{thm:PB}
the family $(V_n)$ is a neighborhood basis at $1$ for the given topology. Since the $d$--balls at $1$
are exactly these $V_n$, the $d$--topology coincides with the given topology.

\textbf{(5) Completeness.}
Let $(x_k)$ be $d$--Cauchy. Choose a subsequence $(x_{k_n})$ such that
\[
d(x_{k_{n+1}},x_{k_n})<2^{-(n+1)}\qquad( \mbox{ for all }  n\ge 1).
\]
Then $N(x_{k_n},x_{k_{n+1}})\ge n$, i.e. $x_{k_n}^{-1}x_{k_{n+1}}\in V_n$, so
\[
x_{k_{n+1}}\in x_{k_n}V_n\qquad( \mbox{ for all }  n).
\]
Apply Theorem~\ref{thm:CR}(CR) to $g_n:=x_{k_n}$ to obtain $x\in H^{lf}(\infty)$ with
$x\in x_{k_n}V_n$ for all $n$, hence $d(x,x_{k_n})<2^{-n}$ and $x_{k_n}\to x$.

Finally, since $(x_k)$ is Cauchy, for each $n$ there exists $K$ such that $d(x_k,x_{k_n})<2^{-n}$
for all $k\ge K$ (with $k_n\ge K$). By the ultrametric inequality,
\[
d(x_k,x)\le \max\{d(x_k,x_{k_n}),d(x_{k_n},x)\}<2^{-n}
\]
for all sufficiently large $k$. Hence $x_k\to x$ and $(H^{lf}(\infty),d)$ is complete.
\end{proof}

\section{$B^{lf}(\infty)$ is complete}

Let $G:=\Aut(\N)$ be regarded as a \emph{discrete} group. Consider the Borel construction
\[
\Conf\HoQuot G:=EG\times_G \Conf
\]
and define
\[
B^{lf}(\infty):=\pi_1^{\mathrm{top}}(\Conf\HoQuot G,\,[e_0,\widetilde{\N}]),
\]
where $e_0\in EG$ is a chosen basepoint.

\subsection{An open completely metrizable subgroup}

Recall that there is a locally trivial fiber bundle
\[
\Conf \longrightarrow \Conf\HoQuot G \xrightarrow{\;\pi\;} BG,
\]
By the asphericity result in \cite{Teh25}, we have an exact sequence of abstract groups
\[
1\longrightarrow \pi_1(\Conf,\widetilde{\N})\xrightarrow{\;\iota_*\;}
\pi_1(\Conf\HoQuot G,[e_0,\widetilde{\N}])\xrightarrow{\;\pi_*\;}
\pi_1(BG,[e_0])\cong G\longrightarrow 1.
\]

\medskip
\noindent\textbf{Convention.}
For a based space $(X,x_0)$ we write $\Omega_1(X,x_0)\subset\Omega(X,x_0)$ for the \emph{path component}
of the constant loop $c_{x_0}$.

The proof of the following result is straightforward.
\begin{lemma}\label{lem:OmegaBG-contractible}
Let $G$ be a discrete group, let $BG$ be a CW model of a classifying space, and let $EG\to BG$ be its universal cover.
Then each path component of $\Omega(BG,[e_0])$ is contractible. In particular, $\Omega_1(BG)$ is contractible.
Moreover, $\Omega_1(BG)$ admits a \emph{based contraction} fixing the constant loop.
\end{lemma}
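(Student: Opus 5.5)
We use that $BG$ is aspherical: it is a $K(G,1)$ with universal cover $EG$ contractible. Fix a loop $\omega\in\Omega(BG,[e_0])$. The plan is to show first that the path component of $\omega$ is homeomorphic to $\Omega_1(BG)$, and then that $\Omega_1(BG)$ is contractible by a based contraction.

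\textbf{Reduction to the trivial component.} Concatenation with the reverse loop $\bar\omega$ gives a map $\Omega(BG)\to\Omega(BG)$, $\gamma\mapsto\gamma*\bar\omega$. It is continuous in the compact--open topology and sends the component of $\omega$ into $\Omega_1(BG)$. Concatenation with $\omega$ goes back. The composites are $\gamma\mapsto(\gamma*\bar\omega)*\omega$ and its mirror. These are homotopic to the identity via the standard reparametrization homotopies together with the null-homotopy of $\bar\omega*\omega$, and these homotopies are continuous in $\gamma$. So each component is homotopy equivalent to $\Omega_1(BG)$, and it suffices to prove that $\Omega_1(BG)$ is contractible. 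Since components of a contractible-to-a-point-up-to-homotopy-equivalence space are then contractible, the first assertion follows.

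\textbf{Contractibility via path lifting.} Let $p:EG\to BG$ be the covering and fix $\tilde e_0\in p^{-1}([e_0])$. By unique path lifting, each $\gamma\in\Omega(BG)$ lifts to a path $\tilde\gamma$ in $EG$ starting at $\tilde e_0$. The endpoint $\tilde\gamma(1)$ lies in the discrete fiber, and it is locally constant in $\gamma$. Since $BG$ is aspherical, $\pi_1(BG)\cong G$ acts freely on the fiber, so $\tilde\gamma(1)=\tilde e_0$ exactly when $\gamma\in\Omega_1(BG)$. This gives a map
\[
\Omega_1(BG,[e_0])\to\Omega(EG,\tilde e_0),\qquad \gamma\mapsto\tilde\gamma,
\]
which is a homeomorphism with inverse $\Omega(p)$.

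\textbf{Main obstacle.} The key point is continuity of this lifting map in the compact--open topology. This is the standard fact that a covering map is a Hurewicz fibration with unique lifting, so the lifting function $\gamma\mapsto\tilde\gamma$ is continuous. The CW structure makes $BG$ locally contractible, which ensures the covering is evenly covered in the usual sense.

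\textbf{Contracting $\Omega(EG,\tilde e_0)$.} The CW complex $EG$ is contractible, and a contractible CW complex admits a \emph{pointed} contraction $h_s$ fixing $\tilde e_0$. This holds because $(EG,\tilde e_0)$ is well-pointed, being a CW pair. Then
\[
\Gamma_s(\tilde\gamma)=h_s\circ\tilde\gamma
\]
is a continuous contraction of $\Omega(EG,\tilde e_0)$ onto the constant loop. It fixes the constant loop for all $s$, since $h_s(\tilde e_0)=\tilde e_0$.

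\textbf{Conclusion.} Transporting $\Gamma_s$ through the homeomorphism $\gamma\mapsto\tilde\gamma$ and composing with $\Omega(p)$ yields a based contraction of $\Omega_1(BG)$ fixing $c_{[e_0]}$. This proves both the contractibility of $\Omega_1(BG)$ and the final assertion of the lemma.
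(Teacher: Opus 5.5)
Your proof is correct. The paper gives no argument for this lemma; it only asserts that the proof is straightforward. There is therefore no written route to compare with, and your lifting argument is a sound way to supply one.

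It is worth noting what your route gains over the usual quick argument. The path--loop fibration gives $\pi_k(\Omega_1 BG)\cong\pi_{k+1}(BG)=0$, which yields only \emph{weak} contractibility. Upgrading this to contractibility needs Milnor's theorem that loop spaces of CW complexes have CW homotopy type. Getting a contraction that fixes $c_{[e_0]}$ further needs $(\Omega_1 BG,c_{[e_0]})$ to be well pointed. Your homeomorphism $\Omega_1(BG,[e_0])\cong\Omega(EG,\tilde e_0)$ avoids both. It also produces the based contraction $C(\gamma,s)=p\circ h_s\circ\tilde\gamma$ explicitly, which is exactly the input used in Lemma~\ref{lem:kernel-homeo}.

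A few justifications should be adjusted.
\begin{itemize}
\item The equivalence $\tilde\gamma(1)=\tilde e_0 \iff [\gamma]=1$ comes from $EG$ being simply connected: the monodromy action of $\pi_1(BG,[e_0])$ on the fiber of a universal cover is free and transitive. It does not come from asphericity of $BG$. Asphericity is what you need, via Whitehead's theorem, to know that $EG$ is contractible.
\item You should state that the set of null-homotopic based loops is exactly the path component $\Omega_1(BG)$. This follows from the exponential law for the locally compact space $[0,1]$.
\item The phrase ``$(EG,\tilde e_0)$ is a CW pair'' presupposes that $\tilde e_0$ is a $0$-cell. This holds when $[e_0]$ is a vertex, as for the bar-construction basepoint in Convention~\ref{conv:bar-basepoints}. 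For an arbitrary basepoint, first subdivide the cell structure so that it becomes a vertex.
\end{itemize}
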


\begin{lemma}\label{lem:pi1topBG-discrete}
If $G$ is discrete and $BG$ is a CW model of a classifying space, then
$\pi_1^{\mathrm{top}}(BG,[e_0])\cong G$ is discrete.
Equivalently, $\Omega_1(BG)$ is open in $\Omega(BG)$.
\end{lemma}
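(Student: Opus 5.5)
The plan is to show that $q^{-1}(\{1\})=\Omega_1(BG)$ is open in $\Omega(BG,[e_0])$. Discreteness then follows as in the proof of Proposition~\ref{pro:piltop-discrete-criterion}, by translating with Lemma~\ref{lem:qtop}. The identification $\pi_1(BG,[e_0])\cong G$ is the standard deck-transformation isomorphism for the universal cover $EG\to BG$. The content of the statement is therefore the claim that a uniformly small loop stays in the trivial class.

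The first route is to invoke Proposition~\ref{pro:piltop-discrete-criterion} directly. A CW complex is locally contractible, so it is locally path connected and semilocally simply connected. The criterion then gives discreteness of $\pi_1^{\mathrm{top}}(BG,[e_0])$. Since $q^{-1}(\{1\})=\Omega_1(BG)$ (null-homotopic loops form exactly the path component of the constant loop), openness of $\{1\}$ is equivalent to openness of $\Omega_1(BG)$. That gives the stated equivalence.

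To make this self-contained, I would also spell out why the criterion applies using the covering $p:EG\to BG$. Each point $y\in BG$ has an evenly covered path-connected open neighborhood $U_y$. Every loop in $U_y$ lifts to a loop in a sheet, and the sheet is contractible in $EG$ because $EG$ is contractible. Hence $\pi_1(U_y)\to\pi_1(BG)$ is trivial, which is exactly semilocal simple connectivity. Local path connectedness of CW complexes is standard (Hatcher, Prop.~A.4). Alternatively, one can argue directly by lifting. Given $\alpha\in\Omega_1(BG)$, its lift $\tilde\alpha$ is a closed loop at $e_0$. Take a Lebesgue subdivision $t_0<\dots<t_m$ with $\alpha([t_{i-1},t_i])$ inside evenly covered sets $U_i$. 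For $\beta$ in the compact-open neighborhood $\mathcal N$ defined by these $U_i$, the lift $\tilde\beta$ stays in the same sheets as $\tilde\alpha$ on each subinterval. So $\tilde\beta(1)=\tilde\alpha(1)=e_0$, meaning $\beta$ lifts to a closed loop and is null-homotopic. The sheet-matching has to be checked inductively at the breakpoints, intersecting with path-connected neighborhoods $W_i$ of $\alpha(t_i)$ as in the proof of Proposition~\ref{pro:piltop-discrete-criterion}.

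The only genuine obstacle is this breakpoint bookkeeping: showing the sheet containing $\tilde\beta(t_i)$ agrees with that of $\tilde\alpha(t_i)$. I would handle it by choosing the $W_i$ inside $U_i\cap U_{i+1}$, path connected, and by selecting sheets of $U_i$ and $U_{i+1}$ whose intersection contains $\tilde\alpha(t_i)$. Then uniqueness of lifts propagates the agreement from $i=0$, where both lifts start at $e_0$.
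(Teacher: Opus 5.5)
Your proof is correct and is essentially the paper's. The paper notes that a CW complex is locally path connected and semilocally simply connected and applies Proposition~\ref{pro:piltop-discrete-criterion}, exactly as in your first route. Your identification of $q^{-1}(\{1\})$ with $\Omega_1(BG)$ (path components of the based loop space are based homotopy classes) correctly justifies the ``equivalently'' clause. The covering-space arguments are sound but not needed.
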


\begin{proof}
A CW complex is locally path connected and semilocally simply connected. Hence the claim follows from
Proposition~\ref{pro:piltop-discrete-criterion}.
\end{proof}

\begin{lemma}\label{lem:kernel-homeo}
Let $p:(E,x_0)\to (B,b_0)$ be a based \emph{Hurewicz fibration} between compactly generated weak Hausdorff spaces, and let
$F:=p^{-1}(b_0)$ with basepoint $x_0\in F$.
Assume that $\Omega_1(B,b_0)\subset\Omega(B,b_0)$ is \emph{open} and admits a \emph{based contraction} fixing the constant loop,
i.e.\ there exists a continuous map
\[
C:\Omega_1(B,b_0)\times[0,1]\longrightarrow \Omega_1(B,b_0)
\]
with $C(\beta,0)=\beta$, $C(\beta,1)=c_{b_0}$ for all $\beta$, and $C(c_{b_0},s)=c_{b_0}$ for all $s$.
Then the induced homomorphism
\[
\iota_*:\pi_1^{\mathrm{top}}(F,x_0)\longrightarrow \pi_1^{\mathrm{top}}(E,x_0)
\]
is a homeomorphism onto the open subgroup
$\ker\bigl(p_*:\pi_1^{\mathrm{top}}(E,x_0)\to\pi_1^{\mathrm{top}}(B,b_0)\bigr)$.
\end{lemma}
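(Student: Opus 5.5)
Three things need proof: $\ker p_*$ is open, $\iota_*$ maps $\pi_1(F,x_0)$ bijectively onto $\ker p_*$, and $\iota_*$ is a homeomorphism onto it.

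\emph{Openness.} Since $\Omega_1(B,b_0)$ is open, the set
\[
\mathcal U:=\Omega(p)^{-1}\bigl(\Omega_1(B,b_0)\bigr)\subset\Omega(E,x_0)
\]
is open. It is saturated for $q_E$, because based-homotopic loops have based-homotopic images. By definition $\mathcal U=q_E^{-1}(\ker p_*)$, so $\ker p_*$ is open in the quotient topology.

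\emph{Algebra.} The image of $\iota_*$ lies in $\ker p_*$. For equality and injectivity I would use the fibration sequence $\Omega F\to\Omega E\to\Omega B$. Contractibility of $\Omega_1(B)$ gives that $\pi_1(\Omega_1 B)=\pi_0(\Omega_1 B)$ is trivial. This yields surjectivity onto $\ker p_*$, and injectivity via $\pi_2(B)=\pi_1(\Omega_1 B)=0$. I would actually obtain both from the continuous retraction constructed below, which makes the argument self-contained.

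\emph{Key construction.} I would build a continuous map $r:\mathcal U\to\Omega(F,x_0)$ with three properties:
\begin{enumerate}[label=\textnormal{(\alph*)}]
\item $r(\gamma)$ is based-homotopic to $\gamma$ in $E$;
\item $r(\iota\circ\alpha)$ is based-homotopic to $\alpha$ in $F$;
\item $r$ respects based homotopy in $E$.
\end{enumerate}

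To do this, let $\Lambda$ be a lifting function for $p$, defined on $E\times_B B^{[0,1]}$ and continuous; it exists since $p$ is Hurewicz. For $\gamma\in\mathcal U$, the map $(s,t)\mapsto C(p\gamma,s)(t)$ is a homotopy in $B$ from $p\gamma$ to $c_{b_0}$ through based loops. Lifting it with initial data $\gamma$, parametrically in $\gamma$ and with $s$ as the path variable, gives a homotopy $\widetilde H_\gamma(s,t)$ with $\widetilde H_\gamma(0,\cdot)=\gamma$.

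The catch is that the endpoints $t=0,1$ need not stay at $x_0$. Lifting only gives $p\widetilde H_\gamma(s,0)=b_0$, so the paths $s\mapsto\widetilde H_\gamma(s,0)$ and $s\mapsto\widetilde H_\gamma(s,1)$ lie in $F$. To fix this, I would lift relative to $\partial I$: use the homotopy lifting property for the pair $(I\times\mathcal U,\partial I\times\mathcal U)$, which Hurewicz fibrations enjoy. Alternatively, conjugate by the endpoint paths, which lie in $F$. Either way I set
\[
r(\gamma):=\widetilde H_\gamma(1,\cdot),
\]
suitably conjugated, which is a based loop in $F$. The hypothesis $C(c_{b_0},s)=c_{b_0}$ gives property (b): if $\gamma=\iota\alpha$, the base homotopy is constant. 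I would then choose $\Lambda$ so that lifts of constant paths are constant, which a regular lifting function achieves (or argue up to homotopy). Then $r(\iota\alpha)\simeq\alpha$ in $F$. Property (c) follows by applying the same construction to a homotopy of loops, giving a homotopy of the outputs in $F$. Continuity of $r$ in the compact--open topology follows from continuity of $\Lambda$, $C$, and the exponential law; this is where compactly generated spaces are used.

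\emph{Conclusion.} The induced map $\bar r:\ker p_*\to\pi_1^{\mathrm{top}}(F)$ is continuous, because $q_E|_{\mathcal U}$ is a quotient map onto the open set $\ker p_*$ (restriction of a quotient map to an open saturated set). By (a) and (b), $\bar r$ is a two-sided inverse of $\iota_*$. Since $\iota_*$ is continuous by Lemma~\ref{lem:pi1top-functorial-cont}, it is a homeomorphism onto $\ker p_*$.

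The main obstacle is the endpoint and basepoint control in the lift, together with the well-definedness and continuity of $r$ on homotopy classes. I would first try the relative lifting property together with regularity of $\Lambda$, and fall back on endpoint-path conjugation if that fails.
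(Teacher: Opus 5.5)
Your proposal is correct and follows the paper's proof: lifting the contraction $s\mapsto C(p\gamma,s)$ relative to $\partial I$ is exactly the paper's use of the fact that $\Omega p:\Omega(E,x_0)\to\Omega(B,b_0)$ is a Hurewicz fibration, and both arguments take $r(\gamma)$ to be the end of the lift and descend through the open saturated set $\mathcal U=q_E^{-1}(\ker p_*)$. You do not need a regular lifting function for (b): when $\gamma=\iota\alpha$ the base homotopy is constant at $b_0$, so any lift stays in $\Omega(F,x_0)$ and is itself a based homotopy in $F$ from $\alpha$ to $r(\alpha)$, which is exactly how the paper argues.
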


\begin{proof}
Let $q_E:\Omega(E,x_0)\twoheadrightarrow \pi_1^{\mathrm{top}}(E,x_0)$ and
$q_B:\Omega(B,b_0)\twoheadrightarrow \pi_1^{\mathrm{top}}(B,b_0)$ be the quotient maps, and write
$\Omega p:\Omega(E,x_0)\to\Omega(B,b_0)$ for $\alpha\mapsto p\circ\alpha$.

\smallskip
\noindent\textbf{Step 1: an open saturated preimage and openness of the kernel.}
Set
\[
A:=(\Omega p)^{-1}\bigl(\Omega_1(B,b_0)\bigr)\subset \Omega(E,x_0).
\]
Since $\Omega_1(B,b_0)$ is open, $A$ is open.
Moreover $\Omega_1(B,b_0)=q_B^{-1}(\{1\})$, hence
\[
A=q_E^{-1}\Bigl(\ker\bigl(p_*:\pi_1^{\mathrm{top}}(E,x_0)\to\pi_1^{\mathrm{top}}(B,b_0)\bigr)\Bigr).
\]
Because $\Omega_1(B,b_0)$ is open, $\{1\}$ is open in $\pi_1^{\mathrm{top}}(B,b_0)$, so the kernel subgroup is open in
$\pi_1^{\mathrm{top}}(E,x_0)$.

We also note that $A$ is \emph{saturated} for $q_E$: if $\alpha\in A$ and $\alpha\simeq\alpha'$ as based loops in $E$,
then composing the based homotopy with $p$ gives a path in $\Omega(B,b_0)$ from $p\circ\alpha$ to $p\circ\alpha'$.
Since $\Omega_1(B,b_0)$ is the \emph{path component} of $c_{b_0}$ and contains $p\circ\alpha$, it contains the entire image
of that path; hence $p\circ\alpha'\in\Omega_1(B,b_0)$ and $\alpha'\in A$.

\smallskip
\noindent\textbf{Step 2: construct $r:A\to\Omega(F,x_0)$.}
Since $p$ is a Hurewicz fibration and $[0,1]$ is compact Hausdorff, the induced map
$\Omega p:\Omega(E,x_0)\to\Omega(B,b_0)$ is again a Hurewicz fibration (mapping-space preservation in CGWH).
Define
\[
H:A\times[0,1]\longrightarrow \Omega_1(B,b_0),\qquad H(\alpha,s):=C\bigl((\Omega p)(\alpha),s\bigr).
\]
Then $H(\alpha,0)=(\Omega p)(\alpha)$. By the homotopy lifting property for $\Omega p$, there exists a continuous lift
\[
\widetilde H:A\times[0,1]\longrightarrow \Omega(E,x_0)
\]
with $\widetilde H(\alpha,0)=\alpha$ and $(\Omega p)(\widetilde H(\alpha,s))=H(\alpha,s)$.
Define
\[
r:A\longrightarrow \Omega(F,x_0),\qquad r(\alpha):=\widetilde H(\alpha,1).
\]
Indeed, $(\Omega p)(r(\alpha))=H(\alpha,1)=c_{b_0}$, so $r(\alpha)$ is a loop in the fiber $F$.

Let $j:\Omega(F,x_0)\hookrightarrow A$ be inclusion. The homotopy $\widetilde H$ yields a based homotopy in $\Omega(E,x_0)$
from $\id_A$ to $j\circ r$. If $\alpha\in\Omega(F,x_0)$ then $(\Omega p)(\alpha)=c_{b_0}$ and
$H(\alpha,s)=C(c_{b_0},s)=c_{b_0}$ for all $s$, so $(\Omega p)(\widetilde H(\alpha,s))=c_{b_0}$ and hence
$\widetilde H(\alpha,s)\in\Omega(F,x_0)$ for all $s$. Therefore $\widetilde H|_{\Omega(F,x_0)\times[0,1]}$ is a based
homotopy in $\Omega(F,x_0)$ from $\id_{\Omega(F,x_0)}$ to $r\circ j$.

\smallskip
\noindent\textbf{Step 3: descend to $\pi_1^{\mathrm{top}}$ and obtain inverse homeomorphisms.}
Since $A$ is open and saturated, the restriction $q_E|_A:A\twoheadrightarrow \ker(p_*)$ is a quotient map.
Moreover, if $\alpha\simeq\alpha'$ in $\Omega(E,x_0)$ with $\alpha,\alpha'\in A$, then by saturation the entire based homotopy
lies in $A$; composing that homotopy with $r$ yields a based homotopy in $\Omega(F,x_0)$ from $r(\alpha)$ to $r(\alpha')$.
Hence $r$ is constant on the equivalence classes defining $q_E|_A$ and therefore induces a well-defined continuous map
\[
r_*:\ker(p_*)\longrightarrow \pi_1^{\mathrm{top}}(F,x_0),
\qquad
r_*(q_E(\alpha)):=q_F(r(\alpha)),
\]
where $q_F:\Omega(F,x_0)\twoheadrightarrow \pi_1^{\mathrm{top}}(F,x_0)$ is the quotient map.

Similarly, inclusion $j$ induces the continuous homomorphism
\[
j_*=\iota_*:\pi_1^{\mathrm{top}}(F,x_0)\longrightarrow \ker(p_*),
\]
where $\ker(p_*)$ has the subspace topology of $\pi_1^{\mathrm{top}}(E,x_0)$.

The based homotopy in $\Omega(E,x_0)$ between $\id_A$ and $j\circ r$ implies that for every $\alpha\in A$,
$q_E(\alpha)=q_E(j(r(\alpha)))$, hence $\iota_*\circ r_*=\id_{\ker(p_*)}$.
The based homotopy in $\Omega(F,x_0)$ between $\id_{\Omega(F,x_0)}$ and $r\circ j$ implies
$r_*\circ \iota_*=\id_{\pi_1^{\mathrm{top}}(F,x_0)}$.
Thus $\iota_*$ is a bijection with continuous inverse $r_*$, i.e.\ a homeomorphism onto $\ker(p_*)$.
\end{proof}

\begin{corollary}\label{cor:open-complete-subgroup}
For $E=\Conf\HoQuot G$ and $B=BG$ where $G=\Aut(\N)$, the subgroup
\[
K:=\ker\bigl(\pi_*:B^{lf}(\infty)\to G\bigr)
\]
is open in $B^{lf}(\infty)$. Moreover, the map
$
\iota_*:H^{lf}(\infty)\longrightarrow B^{lf}(\infty)
$
is a topological group isomorphism onto $K$. In particular, $K$ is completely metrizable.
\end{corollary}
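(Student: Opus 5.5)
The plan is to apply Lemma~\ref{lem:kernel-homeo} to the Borel fibration $p=\pi:E=\Conf\HoQuot G\to B=BG$, based at $x_0=[e_0,\widetilde{\N}]$ and $b_0=[e_0]$. The fiber $F=\pi^{-1}(b_0)$ is identified with $\Conf$ through $x\mapsto[e_0,x]$, which sends $\widetilde{\N}$ to $x_0$. Under this identification $\iota_*$ becomes the inclusion-induced map $H^{lf}(\infty)\to B^{lf}(\infty)$.

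The hypotheses of Lemma~\ref{lem:kernel-homeo} are checked in order.
\begin{enumerate}[label=(\roman*)]
\item $\pi$ is a Hurewicz fibration. It is the bundle associated to the principal covering $EG\to BG$, with $G$ discrete, so it is a covering-type locally trivial bundle with fiber $\Conf$ over a CW complex. Such a bundle is numerable (CW complexes are paracompact), hence a Hurewicz fibration.
\item $\Omega_1(BG,b_0)$ is open in $\Omega(BG,b_0)$. This is Lemma~\ref{lem:pi1topBG-discrete}.
\item $\Omega_1(BG,b_0)$ has a based contraction fixing $c_{b_0}$. This is Lemma~\ref{lem:OmegaBG-contractible}.
\end{enumerate}
For the point-set setting, I would note that everything is metrizable or CW, or pass to the $k$-ification; a compact-Hausdorff domain $[0,1]$ does not change loop spaces.

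Lemma~\ref{lem:kernel-homeo} then says that $\iota_*$ is a homeomorphism onto the open subgroup $K=\ker(\pi_*)$. Since $\iota_*$ is a homomorphism, it is a topological group isomorphism onto $K$ once $K$ is given the subspace topology. The fact that $H^{lf}(\infty)$ is a topological group (Corollary~\ref{cor:H-topological-group}) transfers to $K$. Completeness also transfers: by Theorem~\ref{thm:metric-complete}, $H^{lf}(\infty)$ carries a compatible complete metric $d$, and pushing $d$ forward along $\iota_*$ gives a compatible complete metric on $K$. So $K$ is completely metrizable.

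The main obstacle is (i), the fibration property in the right category. Lemma~\ref{lem:kernel-homeo} needs $\Omega p$ to be a Hurewicz fibration. Here $\Conf$ with $d_{\sum}$ is a metric space but not locally compact, so some care is needed. I would argue directly that local triviality over the evenly covered opens of $BG$ (open stars in the CW structure) gives a numerable cover. The Dold theorem then yields the Hurewicz property in $\mathbf{Top}$, and the mapping-space argument holds because $[0,1]$ is compact Hausdorff, so $\Omega$ preserves fibrations without CGWH modifications. A secondary point is that the identification $F\cong\Conf$ is a homeomorphism. This holds because over an evenly covered neighborhood, $EG\times_G\Conf$ restricts to $U\times\Conf$.
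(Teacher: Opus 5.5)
Your proposal is correct and follows the paper's proof. Both apply Lemma~\ref{lem:kernel-homeo} to $\Conf\to\Conf\HoQuot G\to BG$, use Lemma~\ref{lem:pi1topBG-discrete} for openness of $\Omega_1(BG)$ and Lemma~\ref{lem:OmegaBG-contractible} for the based contraction, and transport the complete metric of Theorem~\ref{thm:metric-complete} along $\iota_*$. Your explicit check of the Hurewicz property via numerability is a useful addition that the paper leaves implicit, but drop the parenthetical about open stars: in the one-vertex bar model the open star of the vertex is all of $BG$, so just take the evenly covered neighborhoods of the covering $EG\to BG$, which form a numerable cover because $BG$ is paracompact.
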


\begin{proof}
Apply Lemma~\ref{lem:kernel-homeo} to the fibration $\Conf\to \Conf\HoQuot G\xrightarrow{\,\pi\,}BG$. By Lemma~\ref{lem:pi1topBG-discrete}, $\Omega_1(BG)$ is open in $\Omega(BG)$. By Lemma~\ref{lem:OmegaBG-contractible}, $\Omega_1(BG)$ admits a based contraction fixing the constant loop. Hence the hypotheses of Lemma~\ref{lem:kernel-homeo} hold and we conclude that
\[
\iota_*:\pi_1^{\mathrm{top}}(\Conf,\widetilde{\N})\longrightarrow \pi_1^{\mathrm{top}}(\Conf\HoQuot G,[e_0,\widetilde{\N}])
\]
is a homeomorphism onto $K=\ker(\pi_*)$, and that $K$ is open. Finally, by Theorem~\ref{thm:metric-complete}, $H^{lf}(\infty)$ is completely metrizable, so is $K$.
\end{proof}

\subsection{A general metrizability lemma}

\begin{theorem}\label{thm:extend-metric}
Let $G$ be a group equipped with a topology such that left translations are homeomorphisms.
Let $H\le G$ be an \emph{open} subgroup, and assume $H$ is completely metrizable by a complete left-invariant metric $d_H$
inducing the subspace topology on $H$.
Define $d_G:G\times G\to\R$ by
\[
d_G(g,h)=
\begin{cases}
\min\{1,\,d_H(g^{-1}h,1)\},& g^{-1}h\in H,\\[4pt]
1,& g^{-1}h\notin H.
\end{cases}
\]
Then:
\begin{enumerate}[label=\textup{(\alph*)}]
\item $d_G$ is a left-invariant complete metric on $G$;
\item the topology induced by $d_G$ agrees with the given topology on $G$.
\end{enumerate}
In particular, $G$ is completely metrizable.
\end{theorem}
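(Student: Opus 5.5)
The plan is to check the metric axioms directly, compare balls with the neighborhood structure coming from $H$, and then do completeness by a tail argument. Set $\rho(x):=\min\{1,d_H(x,1)\}$ for $x\in H$. Then $d_G(g,h)=\rho(g^{-1}h)$ if $g^{-1}h\in H$, and $d_G(g,h)=1$ otherwise.

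\textbf{Left invariance and symmetry.} Since $(ag)^{-1}(ah)=g^{-1}h$, left invariance is immediate. For symmetry we need $\rho(x^{-1})=\rho(x)$ for $x\in H$. This follows from left invariance of $d_H$:
\[
d_H(x^{-1},1)=d_H\bigl(x\cdot x^{-1},\,x\cdot 1\bigr)=d_H(1,x).
\]
Also $g^{-1}h\in H$ if and only if $h^{-1}g\in H$.

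\textbf{Separation.} If $d_G(g,h)=0$, then $g^{-1}h\in H$ and $d_H(g^{-1}h,1)=0$, so $g=h$.

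\textbf{Triangle inequality.} Let $x=g^{-1}h$ and $y=h^{-1}k$, so that $g^{-1}k=xy$.
\begin{itemize}
\item If $x\notin H$ or $y\notin H$, then the right-hand side is at least $1\ge d_G(g,k)$.
\item If $x,y\in H$, then $xy\in H$. Left invariance gives $d_H(xy,x)=d_H(y,1)$, so
\[
d_H(xy,1)\le d_H(xy,x)+d_H(x,1)=d_H(y,1)+d_H(x,1).
\]
Truncating at $1$ preserves this inequality.
\end{itemize}

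\textbf{Topology.} For $r\le 1$, the ball is $B_{d_G}(g,r)=g\,B_{d_H}(1,r)$.
\begin{itemize}
\item Since $H$ is open in $G$ and $B_{d_H}(1,r)$ is open in $H$, this set is open in $G$ because left translation by $g$ is a homeomorphism. So $d_G$-open sets are open.
\item Conversely, let $U\ni g$ be open. Then $g^{-1}U\cap H$ is an open neighborhood of $1$ in $H$, so it contains some $B_{d_H}(1,r)$ with $r\le 1$. Hence $B_{d_G}(g,r)\subset U$.
\end{itemize}

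\textbf{Completeness.} Let $(g_k)$ be $d_G$-Cauchy and pick $K$ with $d_G(g_k,g_K)<1$ for $k\ge K$. Then $x_k:=g_K^{-1}g_k\in H$. By left invariance, $d_H(x_k,x_l)=d_H(g_k,g_l)$. Whenever that value is $<1$, we have $g_k^{-1}g_l\in H$ and $d_G(g_k,g_l)=d_H(g_k,g_l)$. Hence $(x_k)$ is $d_H$-Cauchy and converges to some $x\in H$. Then
\[
d_G(g_k,g_Kx)=\min\{1,d_H(x_k,x)\}\to 0 .
\]

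The main point needing care is that the truncation at $1$ does not break the triangle inequality or the Cauchy comparison; both reduce to the observation that distances $<1$ only occur within a single coset $gH$.
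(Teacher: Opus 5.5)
Your proof is correct and follows essentially the same route as the paper: the same case split for the triangle inequality, the same comparison of small balls $B_{d_G}(g,r)=g\,B_{d_H}(1,r)$ with open sets via left translations, and the same coset reduction $g_k=g_Kx_k$ for completeness; you also verify symmetry explicitly, where the paper calls it immediate. The only blemish is notational: $d_H(g_k,g_l)$ is undefined when $g_k\notin H$, and should read $d_H(x_k,x_l)=d_H(g_k^{-1}g_l,1)$, so that $d_G(g_k,g_l)=\min\{1,d_H(x_k,x_l)\}$ for all $k,l\ge K$.
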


\begin{proof}
\noindent\textbf{Step 1: $d_G$ is a metric and is left-invariant.}
Symmetry and definiteness are immediate. Left-invariance follows since $(kg)^{-1}(kh)=g^{-1}h$.

For the triangle inequality, fix $g,h,k\in G$ and set $u=g^{-1}h$, $v=h^{-1}k$, so $g^{-1}k=uv$.
If $u\notin H$ or $v\notin H$, then $d_G(g,h)=1$ or $d_G(h,k)=1$, hence
$d_G(g,k)\le 1\le d_G(g,h)+d_G(h,k)$.
If $u,v\in H$, then $uv\in H$ and by left-invariance of $d_H$,
\[
d_H(uv,1)\le d_H(u,1)+d_H(v,1),
\]
so
\[
d_G(g,k)=\min\{1,d_H(uv,1)\}\le \min\{1,d_H(u,1)\}+\min\{1,d_H(v,1)\}=d_G(g,h)+d_G(h,k),
\]
using $\min\{1,a+b\}\le \min\{1,a\}+\min\{1,b\}$ for $a,b\ge0$.

\smallskip
\noindent\textbf{Step 2: Compatibility with the given topology.}
It suffices to compare neighborhood bases at $1$.
If $0<\varepsilon<1$, then
\[
B_{d_G}(1,\varepsilon)=\{h\in H:\ d_H(h,1)<\varepsilon\},
\]
which is open in $H$ and hence open in $G$ because $H$ is open.
Conversely, if $U\subset G$ is an open neighborhood of $1$, then $U\cap H$ is open in $H$,
so choose $\varepsilon\in(0,1)$ with $\{h\in H:\ d_H(h,1)<\varepsilon\}\subset U\cap H$.
Then $B_{d_G}(1,\varepsilon)\subset U$.

\smallskip
\noindent\textbf{Step 3: Completeness.}
Let $(g_n)$ be $d_G$-Cauchy. Choose $\varepsilon\in(0,1)$ and $N$ such that $d_G(g_m,g_n)<\varepsilon$ for all $m,n\ge N$.
Then $g_m^{-1}g_n\in H$ for all $m,n\ge N$, hence $g_n\in g_NH$ for all $n\ge N$.
Write $g_n=g_Nh_n$ with $h_n\in H$. By left-invariance,
\[
d_G(g_m,g_n)=d_G(h_m,h_n)=\min\{1,d_H(h_m^{-1}h_n,1)\},
\]
so $(h_n)$ is $d_H$-Cauchy in $H$ and hence converges to some $h\in H$ since $d_H$ is complete.
Therefore $g_n=g_Nh_n\to g_Nh$ in $(G,d_G)$.
\end{proof}

\subsection{Completeness and group topology of $B^{lf}(\infty)$}

\begin{lemma}\label{lem:open-normal-top}
Let $G$ be a \emph{quasitopological group} and let
$H\trianglelefteq G$ be an \emph{open} normal subgroup such that $H$, with the subspace topology,
is a \emph{topological group}.
Then $G$ is a topological group.
\end{lemma}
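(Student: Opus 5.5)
The plan is to show that multiplication $m:G\times G\to G$ is jointly continuous, and to reduce this to continuity at $(1,1)$ together with continuity of conjugation. Inversion and translations are already continuous, since $G$ is quasitopological. The key input is that the open set $H$ is itself a topological group. This lets us control products of elements near $1$, because every neighborhood of $1$ in $G$ contains a neighborhood of $1$ in $H$, and conversely.

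\textbf{Step 1: continuity at $(1,1)$.} Let $W\ni 1$ be open in $G$. Then $W\cap H$ is an open neighborhood of $1$ in $H$. Since $H$ is a topological group, there is an open $U\subset H$ with $1\in U$ and $UU\subset W\cap H$. Because $H$ is open in $G$, the set $U$ is open in $G$. Hence $m(U\times U)\subset W$.

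\textbf{Step 2: continuity at an arbitrary $(g_0,h_0)$.} Write $gh=g_0\,(g_0^{-1}g)\,h_0\,(h_0^{-1}h)$. Set $x=g_0^{-1}g$ and $y=h_0^{-1}h$. Then
\[
gh=g_0h_0\cdot\bigl(h_0^{-1}xh_0\bigr)\,y .
\]
Given an open $W\ni g_0h_0$, put $W'=(g_0h_0)^{-1}W$, which is open and contains $1$. Choose $U$ as in Step 1 with $UU\subset W'$. It then suffices to find an open $U'\ni1$ with $h_0^{-1}U'h_0\subset U$.

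The map $c(x)=h_0^{-1}xh_0$ is a composite of a left and a right translation, so it is a homeomorphism of $G$ by the quasitopological hypothesis. Hence $U':=c^{-1}(U)\cap U$ is open and contains $1$. Now take the neighborhoods $g_0U'$ of $g_0$ and $h_0U$ of $h_0$; these are open because translations are homeomorphisms. For $g\in g_0U'$ and $h\in h_0U$ we get $gh\in g_0h_0\,UU\subset W$. This proves joint continuity.

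\textbf{Inversion and the role of normality.} Inversion is continuous by hypothesis, so $G$ is a topological group. Normality is in fact not needed, since conjugation by $h_0$ is handled by translations. It is at most a convenience: it makes $h_0^{-1}Hh_0=H$, so one could also work inside $H$.

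\textbf{Main obstacle.} The only delicate point is Step 1. There one must use that an open subset of the open subspace $H$ is open in $G$, so that the $H$-neighborhoods produced by continuity of multiplication on $H$ are genuine $G$-neighborhoods. Beyond this, the argument is routine.
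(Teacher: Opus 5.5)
Your proof is correct and rests on the same algebraic identity as the paper's, namely $gh=g_0h_0\,(h_0^{-1}xh_0)\,y$; the paper writes it as $(ax)(by)=ab\,(b^{-1}xb)\,y$. The paper restricts to the open set $aH\times bH$ and uses normality to keep $b^{-1}xb$ inside $H$, so that it can invoke continuity of multiplication on all of $H\times H$; you instead work only at the identity and pull $U$ back through the conjugation homeomorphism of $G$, which, as you correctly note, shows that normality of $H$ is not needed.
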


\begin{proof}
Fix $(a,b)\in G\times G$. Since $H$ is open, $aH$ and $bH$ are open, hence
$(aH)\times (bH)$ is an open neighborhood of $(a,b)$ in $G\times G$.
It suffices to show that multiplication $m:G\times G\to G$ is continuous at $(a,b)$ when restricted to
$(aH)\times (bH)$.

Let $L_a:H\to aH$ and $L_b:H\to bH$ denote left translations, which are homeomorphisms in any quasitopological group.
Consider
\[
\mu_{a,b}:H\times H\longrightarrow G,\qquad (x,y)\longmapsto (ax)(by).
\]
Then $m|_{(aH)\times(bH)}=\mu_{a,b}\circ (L_a^{-1}\times L_b^{-1})$.
Hence it suffices to prove $\mu_{a,b}$ is continuous.

Using normality of $H$ we rewrite, for $x,y\in H$,
\[
(ax)(by)=ab\,(b^{-1}xb)\,y.
\]
The conjugation map $c_b:H\to H$, $c_b(x)=b^{-1}xb$, is continuous because
$c_b=L_{b^{-1}}\circ R_b$ is a composition of translations, and translations are continuous in a
quasitopological group. Multiplication $H\times H\to H$ is continuous by hypothesis, and left translation
by $ab$ is a homeomorphism. Therefore $\mu_{a,b}$ is continuous.
\end{proof}

\begin{theorem}\label{thm:B-complete-top-group}
With the topology from Definition~\ref{def:pi1top}, the locally finite braid group
\[
B^{lf}(\infty)=\pi_1^{\mathrm{top}}(\Conf\HoQuot \Aut(\N),\,[e_0,\widetilde{\N}])
\]
is a \emph{topological group}. Moreover it admits a compatible \emph{complete left-invariant metric};
in particular, it is a complete topological group.
\end{theorem}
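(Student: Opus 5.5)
The plan is to assemble the statement from the three ingredients just established: Corollary~\ref{cor:open-complete-subgroup}, Lemma~\ref{lem:open-normal-top}, and Theorem~\ref{thm:extend-metric}. Throughout write $G_B:=B^{lf}(\infty)$ and $K:=\ker(\pi_*:G_B\to \Aut(\N))$.

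\emph{Quasitopological structure and the kernel.} By Lemma~\ref{lem:qtop}, applied to the based space $(\Conf\HoQuot\Aut(\N),[e_0,\widetilde{\N}])$, the group $G_B$ is quasitopological: inversion and left and right translations are homeomorphisms. Corollary~\ref{cor:open-complete-subgroup} then gives two facts. First, $K$ is an open subgroup of $G_B$. It is normal, being the kernel of the homomorphism $\pi_*$. Second, $\iota_*:H^{lf}(\infty)\to K$ is a homeomorphism and a group isomorphism onto $K$ with its subspace topology.

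\emph{Topological group.} By Corollary~\ref{cor:H-topological-group}, $H^{lf}(\infty)$ is a topological group. Transporting along the homeomorphic isomorphism $\iota_*$, multiplication $K\times K\to K$ is jointly continuous in the subspace topology. Lemma~\ref{lem:open-normal-top} now applies with $G=G_B$ and $H=K$, so $G_B$ is a topological group.

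\emph{Complete left-invariant metric.} Let $d$ be the complete left-invariant ultrametric on $H^{lf}(\infty)$ from Theorem~\ref{thm:metric-complete}. Define $d_K(x,y):=d(\iota_*^{-1}x,\iota_*^{-1}y)$ on $K$. Because $\iota_*$ is a group isomorphism, $d_K$ is left-invariant. It is complete because $d$ is. Because $\iota_*$ is a homeomorphism, $d_K$ induces the subspace topology on $K$. Theorem~\ref{thm:extend-metric}, applied to $G_B$ and the open subgroup $K$ with the metric $d_K$, produces the metric $d_{G_B}$. This metric is complete and left-invariant and induces the quotient topology on $G_B$. Since $G_B$ is already a topological group, it is a complete topological group in the sense of Definition~\ref{def:complete-metrizable}.

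\emph{Main obstacle.} All the substantive work is contained in Corollary~\ref{cor:open-complete-subgroup}, and in particular in verifying the hypotheses of Lemma~\ref{lem:kernel-homeo}. Two points need care there. One is that $\pi$ is a Hurewicz fibration in the CGWH setting. The other is that $\Omega_1(BG)$ is open and admits a based contraction; these are Lemmas~\ref{lem:pi1topBG-discrete} and~\ref{lem:OmegaBG-contractible}. Since these are stated earlier and may be assumed, the remaining proof is a formal assembly. The only things to check in that assembly are that the transported metric $d_K$ really induces the subspace topology, which follows from the homeomorphism part of Corollary~\ref{cor:open-complete-subgroup}, and that normality of $K$ is available for Lemma~\ref{lem:open-normal-top}.
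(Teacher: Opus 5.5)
Your proposal is correct and follows the paper's proof essentially verbatim. You use Lemma~\ref{lem:qtop} and Corollary~\ref{cor:open-complete-subgroup} to get an open normal kernel $K\cong H^{lf}(\infty)$, transport the group topology and the complete left-invariant ultrametric to $K$, and then apply Lemma~\ref{lem:open-normal-top} and Theorem~\ref{thm:extend-metric}; your explicit checks (normality of $K$ as a kernel, and that $d_K$ is left-invariant, complete, and induces the subspace topology) are precisely the points the paper leaves implicit.
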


\begin{proof}
Let $K=\ker(\pi_*)\le B^{lf}(\infty)$ be the open normal subgroup from
Corollary~\ref{cor:open-complete-subgroup}. By Lemma~\ref{lem:qtop}, $B^{lf}(\infty)$ is a quasitopological group.

By Corollary~\ref{cor:open-complete-subgroup}, $\iota_*:H^{lf}(\infty)\to K$ is a group isomorphism and a homeomorphism.
Hence $K$ is a completely metrizable \emph{topological group} by transporting the group structure and a complete left-invariant
metric from $H^{lf}(\infty)$ via $\iota_*$. Therefore Lemma~\ref{lem:open-normal-top} applies to $(B^{lf}(\infty),K)$ and implies that
$B^{lf}(\infty)$ is a topological group.

Choose a compatible complete left-invariant metric $d_K$ on $K$ which is transported from $H^{lf}(\infty)$.
Applying Theorem~\ref{thm:extend-metric} to the open subgroup $K\le B^{lf}(\infty)$ yields a compatible complete left-invariant metric
on $B^{lf}(\infty)$.
\end{proof}

\section{Density of the finitary braid subgroup}

\subsection{Canonical basepoints and the bar construction}

\begin{convention}\label{conv:bar-basepoints}
For every discrete group $G$ we fix the \emph{bar construction} model
\[
BG:=|B_\bullet G|,\qquad EG:=|E_\bullet G|,
\]
where $B_kG=G^k$ and $E_kG=G^{k+1}$ with the usual face/degeneracy maps, and $EG\to BG$ is the quotient by the free left
$G$--action. We write $b_G\in BG$ for the vertex represented by the empty word, and $e_G\in EG$ for the vertex represented
by $(1)\in E_0G$; thus $p(e_G)=b_G$.
If $\varphi:G\to H$ is a homomorphism, the induced simplicial maps give continuous maps $B\varphi:BG\to BH$ and
$E\varphi:EG\to EH$ satisfying
\[
B\varphi(b_G)=b_H,\qquad E\varphi(e_G)=e_H.
\]
All homotopy quotients $EG\times_G X$ below are formed using these fixed $EG$, and are based at $[e_G,x_0]$.
\end{convention}

\begin{definition}
Let $\Sym_f(\N)\le \Aut(\N)$ denote the subgroup of \emph{finitary permutations}, i.e.\ those
$\sigma\in\Aut(\N)$ such that $\sigma(j)=j$ for all but finitely many $j\in\N$.
\end{definition}

\subsection{Finite braid groups via homotopy quotients}

\begin{definition}\label{def:Bn-hq-bar}
For $n\ge 1$ let $\Sigma_n$ act on $Conf_n(\C)$ by permuting coordinates.
Using Convention~\ref{conv:bar-basepoints}, define
\[
B_n^{\mathrm{top}}
\ :=\
\pi_1^{\mathrm{top}}\!\bigl(E\Sigma_n\times_{\Sigma_n} Conf_n(\C),\ [e_{\Sigma_n},(1,\dots,n)]\bigr).
\]
\end{definition}

\begin{remark}\label{rem:Bn-discrete}
The space $E\Sigma_n\times_{\Sigma_n}Conf_n(\C)\to B\Sigma_n$ is a fiber bundle with fiber $Conf_n(\C)$, and $B\Sigma_n$
is a CW complex. Hence $E\Sigma_n\times_{\Sigma_n}Conf_n(\C)$ is locally path
connected and semilocally simply connected. Therefore $\pi_1^{\mathrm{top}}$ on this space is discrete by
Proposition~\ref{pro:piltop-discrete-criterion}. In particular, $B_n^{\mathrm{top}}$ is a discrete topological group.
\end{remark}

\begin{lemma}\label{lem:finite-braid-exact-bar}
For each $n\ge 1$ there is a short exact sequence of abstract groups
\[
1\longrightarrow P_n \longrightarrow B_n^{\mathrm{top}} \longrightarrow \Sigma_n \longrightarrow 1,
\]
where $P_n=\pi_1(Conf_n(\C),(1,\dots,n))$ and the map $B_n^{\mathrm{top}}\to\Sigma_n$ is induced by the fibration
$Conf_n(\C)\to E\Sigma_n\times_{\Sigma_n}Conf_n(\C)\to B\Sigma_n$.
\end{lemma}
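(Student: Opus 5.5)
We will derive the sequence from the long exact homotopy sequence of the Borel fibration
\[
Conf_n(\C)\xrightarrow{\ \iota\ } E\Sigma_n\times_{\Sigma_n}Conf_n(\C)\xrightarrow{\ \pi\ } B\Sigma_n,
\]
where $\iota(x)=[e_{\Sigma_n},x]$ and $\pi[e,x]=p(e)$. First we check that $\pi$ is a fiber bundle with fiber $Conf_n(\C)$ over the basepoint $b_{\Sigma_n}$ (Remark~\ref{rem:Bn-discrete}). Indeed, $E\Sigma_n\to B\Sigma_n$ is a principal $\Sigma_n$-covering, since the action of the discrete group $\Sigma_n$ on the CW complex $E\Sigma_n$ is free and properly discontinuous. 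Hence $\pi$ is the associated bundle with fiber $Conf_n(\C)$. Over a paracompact CW base it is numerable, so it is a Hurewicz fibration. The fiber over $b_{\Sigma_n}$ is identified with $Conf_n(\C)$ via $\iota$, and $\iota$ sends $(1,\dots,n)$ to the chosen basepoint.

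The relevant portion of the long exact sequence is
\[
\pi_2(B\Sigma_n)\to \pi_1(Conf_n(\C))\xrightarrow{\iota_*}\pi_1(E\Sigma_n\times_{\Sigma_n}Conf_n(\C))\xrightarrow{\pi_*}\pi_1(B\Sigma_n)\to \pi_0(Conf_n(\C)).
\]
Since $B\Sigma_n$ is a $K(\Sigma_n,1)$, we have $\pi_2(B\Sigma_n)=0$, so $\iota_*$ is injective. Since $Conf_n(\C)$ is path connected, $\pi_0$ is a single point, so $\pi_*$ is surjective. Exactness in the middle gives $\operatorname{im}\iota_*=\ker\pi_*$.

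It remains to identify $\pi_1(B\Sigma_n,b_{\Sigma_n})\cong\Sigma_n$. For the bar construction, $E\Sigma_n$ is contractible and is the universal cover of $B\Sigma_n$, with deck group $\Sigma_n$. Covering space theory therefore gives a canonical isomorphism sending a loop to the group element $g$ with $\widetilde{\gamma}(1)=g\cdot e_{\Sigma_n}$ for the lift $\widetilde{\gamma}$ starting at $e_{\Sigma_n}$. If one wants the map to $\Sigma_n$ to be the usual permutation of endpoints, one also checks the following: lifting a loop of $E\Sigma_n\times_{\Sigma_n}Conf_n(\C)$ to a path in $E\Sigma_n\times Conf_n(\C)$ from $(e,(1,\dots,n))$ gives an endpoint $(ge,g\cdot(1,\dots,n))$. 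The $E$-coordinate records $g$, so the composite homomorphism is the induced permutation. This last identification is only a convention check. The statement only requires exactness of abstract groups, and all of that comes from the exact sequence above.

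The main point needing care is justifying that $\pi$ is genuinely a (Serre or Hurewicz) fibration, so that the long exact sequence applies. We would do this by local triviality over the evenly covered open sets of the covering $E\Sigma_n\to B\Sigma_n$. Over such a set $U$ we have $\pi^{-1}(U)\cong U\times Conf_n(\C)$, by choosing a sheet of the cover and using freeness of the action. Everything else is formal.
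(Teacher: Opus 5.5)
Your proposal is correct and follows the same route as the paper: the long exact homotopy sequence of the numerable Borel fibration $Conf_n(\C)\to E\Sigma_n\times_{\Sigma_n}Conf_n(\C)\to B\Sigma_n$, using $\pi_2(B\Sigma_n)=0$ and $\pi_1(B\Sigma_n)\cong\Sigma_n$. You also make explicit two points the paper leaves implicit: local triviality over evenly covered sets, and surjectivity onto $\Sigma_n$, which follows from the path-connectedness of $Conf_n(\C)$ (the fiber has trivial $\pi_0$).
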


\begin{proof}
The Borel construction yields a (numerable) fibration
\[
Conf_n(\C)\ \longrightarrow\ E\Sigma_n\times_{\Sigma_n} Conf_n(\C)\ \longrightarrow\ B\Sigma_n.
\]
Since $B\Sigma_n$ is a $K(\Sigma_n,1)$, we have $\pi_2(B\Sigma_n)=0$ and $\pi_1(B\Sigma_n,b_{\Sigma_n})\cong \Sigma_n$.
The long exact sequence of homotopy groups therefore contains the exact segment
\[
1=\pi_2(B\Sigma_n)\ \longrightarrow\ \pi_1(Conf_n(\C))\ \longrightarrow\ \pi_1(E\Sigma_n\times_{\Sigma_n}Conf_n(\C))\
\longrightarrow\ \pi_1(B\Sigma_n)\ \longrightarrow\ 1,
\]
which is precisely the stated short exact sequence.
\end{proof}

\subsection{The direct limit subgroup $B_\infty$ inside $B^{lf}(\infty)$}

The following result is clear.
\begin{lemma}\label{lem:jmathn-embed-bar}
For $n\ge 1$ define, for $z=(z_1,\dots,z_n)\in Conf_n(\C)$,
\[
R_n(z):=\max\Bigl\{0,\ \max_{1\le i\le n}|z_i|-n\Bigr\}\in[0,\infty),
\]
and set
\[
\jmath_n(z_1,\dots,z_n):=\bigl(z_1,\dots,z_n,\ n+1+R_n(z),\ n+2+R_n(z),\ \dots \bigr)\ \in\ \C^\N.
\]
Then:
\begin{enumerate}[label=\textup{(\alph*)}]
\item $\jmath_n(z)\in \Conf$ for every $z\in Conf_n(\C)$, and $\jmath_n(1,\dots,n)=\widetilde{\N}$;
\item $\jmath_n$ is continuous, injective, and satisfies $p_n\circ \jmath_n=\id_{Conf_n(\C)}$; in particular,
      $\jmath_n$ is a topological embedding;
\item $\jmath_n$ is $\Sigma_n$--equivariant if $\Sigma_n$ acts by permuting the first $n$ coordinates and fixes the tail.
\end{enumerate}
\end{lemma}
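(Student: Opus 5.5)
The plan is to check the three items directly from the definition of $\jmath_n$, using only the explicit formula and the structure of $d_{\sum}=d_{\mathrm{prod}}+d_{\mathcal V}\circ(P\times P)$. Throughout, write $R=R_n(z)$ and note that $R_n$ is continuous on $Conf_n(\C)$, being a maximum of finitely many continuous functions.

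\textbf{Step 1: item (a).} Every $z_i$ satisfies $|z_i|\le n+R$. The tail entries $n+k+R$ ($k\ge1$) are real and satisfy $n+k+R>n+R$, so they are distinct from each other and from every $z_i$. For local finiteness, a disk $K_r$ contains only finitely many tail points, namely those with $n+k+R\le r$, together with at most $n$ initial coordinates. If $z=(1,\dots,n)$ then $R=\max\{0,n-n\}=0$, so the tail is $(n+1,n+2,\dots)$ and $\jmath_n(z)=\widetilde{\N}$.

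\textbf{Step 2: item (b), continuity.} I will show continuity of each term of $d_{\sum}$ separately.
\begin{itemize}
\item \emph{Product term.} Each coordinate of $\jmath_n$ is continuous in $z$. Since the weighted series for $d_{\mathrm{prod}}$ has uniformly small tails, continuity for $d_{\mathrm{prod}}$ follows.
\item \emph{Vague term.} Fix $j$. By Lemma~\ref{lem:testfamily-strong}(c), $\supp\varphi_j\subset K_j$. Every tail point has modulus greater than $n+k$, so only the indices $k<j$ can contribute. Hence
\[
\sum_{a\in P(\jmath_n(z))}\varphi_j(a)=\sum_{i=1}^n\varphi_j(z_i)+\sum_{k<j}\varphi_j(n+k+R_n(z)),
\]
which is a finite sum of continuous functions of $z$, and therefore continuous.
\end{itemize}
Each summand of $d_{\mathcal V}$ is bounded by $2^{-j}$, so the Weierstrass $M$-test gives continuity of $z\mapsto d_{\mathcal V}(P\jmath_n(z),P\jmath_n(z_0))$. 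This is the same argument as Claim 2 in the proof of Theorem~\ref{thm:Theta-cont-surj}.

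\textbf{Step 3: item (b), injectivity and embedding.} We have $\pr_n\circ\jmath_n=\id$ directly from the formula; the statement writes $p_n$ for this projection. This gives injectivity. It also shows that $\pr_n$, which is continuous, is a continuous left inverse of $\jmath_n$. Its restriction to the image is therefore a continuous inverse $\jmath_n(Conf_n(\C))\to Conf_n(\C)$, so $\jmath_n$ is a homeomorphism onto its image.

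\textbf{Step 4: item (c), equivariance.} $R_n$ depends only on the multiset $\{|z_i|\}$, so $R_n(\sigma z)=R_n(z)$ for $\sigma\in\Sigma_n$. Consequently $\jmath_n(\sigma z)$ has the permuted first $n$ coordinates and the identical tail, which is exactly $\sigma\cdot\jmath_n(z)$ for the stated action.

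The only mildly delicate point is the vague-term continuity in Step 2. The issue is that the tail moves with $R_n(z)$, so one might worry about infinitely many points contributing to a test function. The support bound $\supp\varphi_j\subset K_j$ combined with $|n+k+R|>n+k$ leaves only finitely many contributing terms, which resolves it.
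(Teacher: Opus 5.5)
Your verification is correct. The paper gives no proof here; it simply calls the lemma clear. Your direct check is the intended argument: coordinatewise continuity for $d_{\mathrm{prod}}$, finitely many contributing tail points for each $\varphi_j$ via $\supp\varphi_j\subset K_j$, and $\pr_n$ as a continuous left inverse for the embedding.

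One trivial correction: a tail point has modulus $n+k+R_n(z)\ge n+k$, with equality when $R_n(z)=0$, so it is not strictly greater. Since $n\ge 1$, this still forces $k<j$ for any tail point lying in $K_j$, so your argument is unaffected.
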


The proof of the following result is a direct verification.
\begin{lemma}\label{lem:snm-embed}
For $1\le n\le m$ define $s_{n,m}:Conf_n(\C)\to Conf_m(\C)$ by
\[
s_{n,m}(z_1,\dots,z_n)
:=
\bigl(z_1,\dots,z_n,\ n+1+R_n(z),\ n+2+R_n(z),\ \dots,\ m+R_n(z)\bigr).
\]
Then:
\begin{enumerate}[label=\textup{(\alph*)}]
\item $s_{n,m}$ is continuous, injective, and satisfies $\pr_{m, n}\circ s_{n,m}=\id_{Conf_n(\C)}$, hence $s_{n,m}$ is a topological embedding;
\item $s_{n,m}(1,\dots,n)=(1,\dots,m)$, so $s_{n,m}$ preserves the standard basepoints;
\item $s_{n,m}$ is $\Sigma_n$--equivariant if $\Sigma_n$ permutes the first $n$ coordinates and fixes the last $m-n$;
\item $\jmath_m\circ s_{n,m}=\jmath_n$ as maps $Conf_n(\C)\to\Conf$.
\end{enumerate}
\end{lemma}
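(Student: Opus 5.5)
This is a direct verification of (a)--(d) from the formula for $s_{n,m}$; the only real content is that the tail never collides with the first $n$ coordinates.

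\textbf{Well-definedness.} For $z\in Conf_n(\C)$, write $R:=R_n(z)$. The appended entries are $n+k+R$ for $1\le k\le m-n$.
\begin{itemize}
\item They are pairwise distinct reals, since $k\mapsto n+k+R$ is injective.
\item Each satisfies $|n+k+R|\ge n+1+R$.
\item By definition of $R_n$, every $i\le n$ has $|z_i|\le n+R<n+1+R$.
\end{itemize}
Hence no appended entry equals any $z_i$, and $s_{n,m}(z)\in Conf_m(\C)$.

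\textbf{Part (a).} Continuity holds because $z\mapsto R_n(z)$ is a max of continuous functions, so every coordinate of $s_{n,m}$ is continuous. The identity $\pr_{m,n}\circ s_{n,m}=\id$ is immediate, since the first $n$ coordinates are $z$. This left inverse gives injectivity. It also gives the embedding property: the inverse on the image is the restriction of the continuous map $\pr_{m,n}$.

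\textbf{Part (b).} At $z=(1,\dots,n)$ we have $\max_i|z_i|=n$, so $R_n=0$. The tail is then $n+1,\dots,m$, so $s_{n,m}(1,\dots,n)=(1,\dots,m)$.

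\textbf{Part (c).} $R_n$ depends on $z$ only through $\max_i|z_i|$, which is invariant under permuting coordinates. So the tail is unchanged by $\sigma\in\Sigma_n$, and $s_{n,m}(\sigma z)=\sigma\, s_{n,m}(z)$ with $\sigma$ acting on the first $n$ coordinates.

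\textbf{Part (d).} This is the one point needing care: one must check that $R_m(s_{n,m}(z))=R_n(z)$. Put $w=s_{n,m}(z)$ and $R=R_n(z)$.
\begin{itemize}
\item If $m=n$ there is nothing to prove, so assume $m>n$.
\item Then $\max_j|w_j|=\max\{\max_{i\le n}|z_i|,\ m+R\}$.
\item Since $\max_{i\le n}|z_i|\le n+R\le m+R$, this maximum equals $m+R$.
\item Hence $R_m(w)=\max\{0,(m+R)-m\}=R$, using $R\ge0$.
\end{itemize}
Therefore
\[
\jmath_m(w)=(z_1,\dots,z_n,\ n+1+R,\dots,m+R,\ m+1+R,\dots)=\jmath_n(z).
\]
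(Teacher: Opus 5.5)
Your verification is correct and is exactly the direct check the paper intends (the paper omits the proof, calling it ``a direct verification''). In particular, you correctly isolate the one nontrivial point for (d): the bound $\max_i|z_i|\le n+R_n(z)\le m+R_n(z)$ forces $R_m(s_{n,m}(z))=R_n(z)$.
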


\begin{definition}\label{def:Binfty-bar}
Let $\iota_{n,m}:\Sigma_n\hookrightarrow \Sigma_m$ be the standard inclusion (permute $\{1,\dots,n\}$ and fix $n+1,\dots,m$),
and let $\iota_n:\Sigma_n\hookrightarrow \Aut(\N)$ be the inclusion fixing all $j>n$.
Using Convention~\ref{conv:bar-basepoints} and Lemmas~\ref{lem:snm-embed}(c) and \ref{lem:jmathn-embed-bar}(c),
the pairs $(\iota_{n,m},s_{n,m})$ and $(\iota_n,\jmath_n)$ induce \emph{based} maps of homotopy quotients
\[
S_{n,m}:\ E\Sigma_n\times_{\Sigma_n}Conf_n(\C)\ \longrightarrow\ E\Sigma_m\times_{\Sigma_m}Conf_m(\C),
\]
\[
J_n:\ E\Sigma_n\times_{\Sigma_n}Conf_n(\C)\ \longrightarrow\ E\Aut(\N)\times_{\Aut(\N)}\Conf.
\]
Define the induced homomorphisms on topological fundamental groups
\[
(S_{n,m})_*:B_n^{\mathrm{top}}\to B_m^{\mathrm{top}},
\qquad
(J_n)_*:B_n^{\mathrm{top}}\to B^{lf}(\infty):=\pi_1^{\mathrm{top}}\!\bigl(E\Aut(\N)\times_{\Aut(\N)}\Conf,\,
[e_{\Aut(\N)},\widetilde{\N}]\bigr).
\]
We write
\[
B_n := (J_n)_*\bigl(B_n^{\mathrm{top}}\bigr)\ \le\ B^{lf}(\infty),
\qquad
B_\infty := \bigcup_{n\ge1} B_n\ \le\ B^{lf}(\infty).
\]
Similarly, define $P_n^{\mathrm{img}}:=(\jmath_n)_*(P_n)\le H^{lf}(\infty):=\pi_1^{\mathrm{top}}(\Conf,\widetilde{\N})$ and
\[
P_\infty := \bigcup_{n\ge1} P_n^{\mathrm{img}}\ \le\ H^{lf}(\infty).
\]
\end{definition}

\begin{lemma}\label{lem:directed-compatibility-bar}
For $1\le n\le m$ one has $J_n = J_m\circ S_{n,m}$ as based maps. Consequently,
\[
(J_n)_*=(J_m)_*\circ (S_{n,m})_*,
\qquad
B_n\subseteq B_m,
\qquad
P_n^{\mathrm{img}}\subseteq P_m^{\mathrm{img}}.
\]
\end{lemma}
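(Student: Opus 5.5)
The plan is to prove $J_n=J_m\circ S_{n,m}$ pointwise on $E\Sigma_n\times_{\Sigma_n}Conf_n(\C)$ by writing out how each map acts on representatives. The group-theoretic consequences then follow by functoriality.

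By Convention~\ref{conv:bar-basepoints} and Definition~\ref{def:Binfty-bar}, $S_{n,m}$ is induced by the pair $(E\iota_{n,m},s_{n,m})$. On a representative it is
\[
S_{n,m}[e,z]=[E\iota_{n,m}(e),\,s_{n,m}(z)].
\]
Likewise $J_n[e,z]=[E\iota_n(e),\jmath_n(z)]$. These formulas are well defined on orbit classes by the equivariance statements in Lemma~\ref{lem:snm-embed}(c) and Lemma~\ref{lem:jmathn-embed-bar}(c). Composing gives
\[
J_m\circ S_{n,m}[e,z]=\bigl[E\iota_m(E\iota_{n,m}(e)),\,\jmath_m(s_{n,m}(z))\bigr].
\]
We then check the two components separately.

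\textbf{Second component.} Lemma~\ref{lem:snm-embed}(d) gives directly that $\jmath_m\circ s_{n,m}=\jmath_n$.

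\textbf{First component.} Since $\iota_m\circ\iota_{n,m}=\iota_n$ as homomorphisms $\Sigma_n\to\Aut(\N)$, it suffices to know that the bar construction is strictly functorial, i.e. $E(\iota_m\circ\iota_{n,m})=E\iota_m\circ E\iota_{n,m}$. This holds because $E\varphi$ is the geometric realization of the simplicial map that applies $\varphi$ entrywise to tuples in $E_kG=G^{k+1}$, and entrywise application composes strictly.

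Hence $J_n=J_m\circ S_{n,m}$. Both maps are based, because $E\varphi(e_G)=e_H$ and $s_{n,m}$, $\jmath_n$ preserve the standard basepoints.

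\textbf{Consequences.} Applying $\pi_1^{\mathrm{top}}$, which is functorial by Lemma~\ref{lem:pi1top-functorial-cont}, yields $(J_n)_*=(J_m)_*\circ(S_{n,m})_*$. Taking images gives
\[
B_n=(J_m)_*\bigl((S_{n,m})_*(B_n^{\mathrm{top}})\bigr)\subseteq (J_m)_*(B_m^{\mathrm{top}})=B_m.
\]
For the pure subgroups, Lemma~\ref{lem:snm-embed}(d) gives $(\jmath_n)_*=(\jmath_m)_*\circ(s_{n,m})_*$ on $P_n$. Since $(s_{n,m})_*(P_n)\subseteq P_m$, this yields $P_n^{\mathrm{img}}\subseteq P_m^{\mathrm{img}}$.

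The only non-formal point is the strict (not merely up-to-homotopy) functoriality of $E(-)$ and its compatibility with the fixed basepoints $e_G$. This follows because the simplicial formulas compose exactly; no other obstacle is expected.
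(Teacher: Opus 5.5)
Your proposal is correct and follows the same route as the paper. Both arguments combine $\iota_m\circ\iota_{n,m}=\iota_n$ with Lemma~\ref{lem:snm-embed}(d) and the strict functoriality of the bar and Borel constructions to get $J_n=J_m\circ S_{n,m}$ as based maps, then apply $\pi_1$ and take images; your version simply writes out the representative-level formulas and the identity $E(\iota_m\circ\iota_{n,m})=E\iota_m\circ E\iota_{n,m}$ that the paper leaves implicit.
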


\begin{proof}
The inclusions $\iota_n=\iota_m\circ \iota_{n,m}$ hold in $\Aut(\N)$, and Lemma~\ref{lem:snm-embed}(d) gives
$\jmath_n=\jmath_m\circ s_{n,m}$. By the functoriality of the bar construction and the Borel construction (with canonical
basepoints fixed in Convention~\ref{conv:bar-basepoints}), these identities imply $J_n=J_m\circ S_{n,m}$ as based maps, and
hence the equalities on $\pi_1^{\mathrm{top}}$. The inclusions $B_n\subseteq B_m$ and $P_n^{\mathrm{img}}\subseteq P_m^{\mathrm{img}}$
follow immediately.
\end{proof}

\begin{lemma}\label{lem:Bn-injects-bar}
For each $n\ge1$, the homomorphism $(J_n)_*:B_n^{\mathrm{top}}\to B^{lf}(\infty)$ is injective.
Moreover, the restriction of $\pi_*:B^{lf}(\infty)\to\Aut(\N)$ to $B_n$ coincides with the extension-by-identity
of the usual map $B_n^{\mathrm{top}}\to\Sigma_n$.
\end{lemma}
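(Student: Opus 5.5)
The plan is to compare two short exact sequences via the based map $J_n$ and finish with the five lemma, or rather its injectivity half. Since $J_n$ is induced by the equivariant pair $(\iota_n,\jmath_n)$, it is a map of Borel fibrations. It restricts on fibers to $\jmath_n:Conf_n(\C)\to\Conf$ and covers $B\iota_n:B\Sigma_n\to B\Aut(\N)$, and everything is based thanks to Convention~\ref{conv:bar-basepoints}. Naturality of the long exact homotopy sequences therefore gives a commutative diagram. Its top row is the sequence $1\to P_n\to B_n^{\mathrm{top}}\to\Sigma_n\to1$ of Lemma~\ref{lem:finite-braid-exact-bar}. Its bottom row is $1\to H^{lf}(\infty)\xrightarrow{\iota_*}B^{lf}(\infty)\xrightarrow{\pi_*}\Aut(\N)\to1$. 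The vertical maps are $(\jmath_n)_*$, $(J_n)_*$ and $(B\iota_n)_*$.

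The first step is to identify the vertical maps. Under $\pi_1(BG,b_G)\cong G$, the map $(B\iota_n)_*$ should be $\iota_n$ itself, which is extension by the identity. I will check this directly on the bar construction. A $1$-simplex labelled $\sigma\in\Sigma_n$ is sent by $B\iota_n$ to the $1$-simplex labelled $\iota_n(\sigma)$. The isomorphism $\pi_1(BG)\cong G$ can be taken to send the loop given by the $1$-simplex $[\sigma]$ to $\sigma$, and with this normalization used on both sides the square commutes. This argument also proves the second assertion of the lemma: for $b\in B_n^{\mathrm{top}}$, commutativity gives $\pi_*((J_n)_*b)=\iota_n(\text{image of }b\text{ in }\Sigma_n)$.

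The second step is the injectivity of the outer vertical maps. The map $\iota_n$ is clearly injective. For $(\jmath_n)_*$, Lemma~\ref{lem:jmathn-embed-bar}(b) gives $\pr_n\circ\jmath_n=\id$. Hence $p_n\circ(\jmath_n)_*=\id_{P_n}$, so $(\jmath_n)_*$ is injective.

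The third step is a diagram chase. Let $b\in\ker(J_n)_*$. Then $\iota_n(\bar b)=\pi_*(J_n)_*b=1$, so $\bar b=1$ and $b$ lies in the image of $P_n$, say $b=i(a)$. Commutativity then gives $\iota_*(\jmath_n)_*a=(J_n)_*b=1$. Since $\iota_*$ is injective by the exact sequence, $(\jmath_n)_*a=1$, so $a=1$ and therefore $b=1$.

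I expect the main obstacle to be the bookkeeping in the first step rather than the chase itself. The point is to make sure that the identifications $\pi_1(B\Sigma_n)\cong\Sigma_n$ and $\pi_1(B\Aut(\N))\cong\Aut(\N)$, used in the two exact sequences, are compatible with $B\iota_n$. The fiber inclusions also have to be compatible with the basepoints $[e_G,x_0]$. Both facts follow from the naturality of the bar construction with respect to homomorphisms, together with the fact that $J_n$ maps the fiber over $b_{\Sigma_n}$ into the fiber over $b_{\Aut(\N)}$ via $\jmath_n$.
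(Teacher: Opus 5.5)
Your proposal is correct and follows essentially the same route as the paper. Both arguments use the map of Borel fibrations induced by $(\iota_n,\jmath_n)$ and naturality of the long exact sequences to get the permutation compatibility. Both then run the same diagram chase, using injectivity of the fiber inclusion on $\pi_1$ (from $\pi_2(B\Aut(\N))=0$) and the retraction $p_n\circ(\jmath_n)_*=\id_{P_n}$. Your explicit check on the bar construction, that $(B\iota_n)_*$ corresponds to $\iota_n$ under the identifications $\pi_1(BG,b_G)\cong G$, is a detail the paper leaves implicit.
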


Hence the notation
$B_n\le B^{lf}(\infty)$ and $B_\infty=\bigcup_n B_n$ is justified.

\begin{proof}
Consider the commutative diagram of fibrations induced by $(\iota_n,\jmath_n)$:
\[
\begin{CD}
Conf_n(\C) @>>> E\Sigma_n\times_{\Sigma_n}Conf_n(\C) @>>> B\Sigma_n\\
@V\jmath_nVV @VVJ_nV @VV B\iota_n V\\
\Conf @>>> E\Aut(\N)\times_{\Aut(\N)}\Conf @>>> B\Aut(\N).
\end{CD}
\]
Naturality of the associated long exact sequences gives commutativity on $\pi_1$.

\smallskip\noindent
\emph{Permutation compatibility.}
Composing the middle vertical map with $\pi_*:\pi_1^{\mathrm{top}}(E\Aut(\N)\times_{\Aut(\N)}\Conf)\to \pi_1(B\Aut(\N))\cong \Aut(\N)$
agrees with $\iota_n$ composed with $B_n^{\mathrm{top}}\to \Sigma_n\cong \pi_1(B\Sigma_n)$, hence restricts to the
extension-by-identity map on $B_n$.

\smallskip\noindent
\emph{Injectivity.}
Let $\beta\in B_n^{\mathrm{top}}$ and suppose $(J_n)_*(\beta)=1$ in $B^{lf}(\infty)$. Applying $\pi_*$ gives
$\iota_n(\overline\beta)=1$ in $\Aut(\N)$, hence $\overline\beta=1$ in $\Sigma_n$, so $\beta$ lies in the kernel of
$B_n^{\mathrm{top}}\to\Sigma_n$. By Lemma~\ref{lem:finite-braid-exact-bar}, $\beta$ is the image of some
$\alpha\in P_n=\pi_1(Conf_n(\C),(1,\dots,n))$ under the fiber inclusion
\[
(i_n)_*:P_n\longrightarrow B_n^{\mathrm{top}}.
\]

Let $i:\Conf\hookrightarrow E\Aut(\N)\times_{\Aut(\N)}\Conf$ denote the fiber inclusion over $b_{\Aut(\N)}$.
Naturality gives
\[
(J_n)_*\bigl((i_n)_*(\alpha)\bigr)= i_*\bigl((\jmath_n)_*(\alpha)\bigr).
\]
Thus $1=(J_n)_*(\beta)=i_*\bigl((\jmath_n)_*(\alpha)\bigr)$.

Since $B\Aut(\N)$ is a $K(\Aut(\N),1)$, we have $\pi_2(B\Aut(\N))=0$, hence the long exact sequence of the fibration
$\Conf\to E\Aut(\N)\times_{\Aut(\N)}\Conf\to B\Aut(\N)$ shows that
\[
i_*:\pi_1(\Conf,\widetilde{\N})\longrightarrow \pi_1(E\Aut(\N)\times_{\Aut(\N)}\Conf,[e_{\Aut(\N)},\widetilde{\N}])
\]
is injective. Therefore $(\jmath_n)_*(\alpha)=1$ in $H^{lf}(\infty)=\pi_1^{\mathrm{top}}(\Conf,\widetilde{\N})$.

Now apply $ p_n:H^{lf}(\infty)\to P_n$. Since $\pr_n\circ \jmath_n=\id_{Conf_n(\C)}$, we have
\[
 p_n\circ (\jmath_n)_* = \id_{P_n},
\]
so $\alpha= p_n((\jmath_n)_*(\alpha))=1$. Hence $\beta=(i_n)_*(\alpha)=1$, proving $(J_n)_*$ injective.
\end{proof}

\begin{lemma}\label{lem:image-Binfty-bar}
Under the surjective homomorphism $\pi_*:B^{lf}(\infty)\to \Aut(\N)$, one has
\[
\pi_*(B_\infty)=\Sym_f(\N).
\]
\end{lemma}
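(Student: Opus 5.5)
We prove the two inclusions $\pi_*(B_\infty)\subseteq\Sym_f(\N)$ and $\Sym_f(\N)\subseteq\pi_*(B_\infty)$ separately, using only the permutation compatibility in Lemma~\ref{lem:Bn-injects-bar} and the finite exact sequences of Lemma~\ref{lem:finite-braid-exact-bar}.

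\emph{Inclusion $\subseteq$.} Since $B_\infty=\bigcup_n B_n$, we have $\pi_*(B_\infty)=\bigcup_n\pi_*(B_n)$. By Lemma~\ref{lem:Bn-injects-bar}, $\pi_*$ restricted to $B_n=(J_n)_*(B_n^{\mathrm{top}})$ equals $\iota_n$ composed with the canonical map $B_n^{\mathrm{top}}\to\Sigma_n$. Every element in the image therefore lies in $\iota_n(\Sigma_n)$. Such a permutation fixes every $j>n$, so it is finitary.

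\emph{Inclusion $\supseteq$.} Let $\sigma\in\Sym_f(\N)$. Choose $n$ with $\sigma(j)=j$ for all $j>n$; then $\sigma=\iota_n(\tau)$ for a unique $\tau\in\Sigma_n$. Lemma~\ref{lem:finite-braid-exact-bar} says $B_n^{\mathrm{top}}\to\Sigma_n$ is surjective, so we may pick $\beta\in B_n^{\mathrm{top}}$ mapping to $\tau$. By the same compatibility, $\pi_*\bigl((J_n)_*\beta\bigr)=\iota_n(\tau)=\sigma$, and $(J_n)_*\beta\in B_n\subseteq B_\infty$. Thus $\sigma\in\pi_*(B_\infty)$.

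The step that needs care is the compatibility identification $\pi_1(B\Aut(\N))\cong\Aut(\N)$ and $\pi_1(B\Sigma_n)\cong\Sigma_n$. We must check that $(B\iota_n)_*$ corresponds to $\iota_n$ under these bar-construction isomorphisms, with consistent conventions (left action, loops given by $1$-simplices $[g]$). This holds because $B\iota_n$ sends the $1$-simplex $[g]$ to $[\iota_n(g)]$. Granting this, as recorded in Lemma~\ref{lem:Bn-injects-bar}, the rest is formal.
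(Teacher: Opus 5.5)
Your proposal is correct and follows the paper's proof essentially step for step. The inclusion $\subseteq$ uses the permutation compatibility of Lemma~\ref{lem:Bn-injects-bar}, and the inclusion $\supseteq$ lifts $\sigma_n\in\Sigma_n$ through the surjection $B_n^{\mathrm{top}}\to\Sigma_n$ from Lemma~\ref{lem:finite-braid-exact-bar} and pushes the lift forward via $(J_n)_*$.
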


\begin{proof}
If $\beta\in B_n$, then $\pi_*(\beta)$ acts as an element of $\Sigma_n$ on $\{1,\dots,n\}$ and fixes all $j>n$,
hence $\pi_*(\beta)\in \Sym_f(\N)$. Thus $\pi_*(B_\infty)\subseteq \Sym_f(\N)$.

Conversely, let $\sigma\in \Sym_f(\N)$. Choose $n$ such that $\sigma(j)=j$ for all $j>n$; then $\sigma$ is the
extension-by-identity of a unique $\sigma_n\in \Sigma_n$.
By Lemma~\ref{lem:finite-braid-exact-bar}, $B_n^{\mathrm{top}}\to\Sigma_n$ is surjective, so pick $\beta\in B_n^{\mathrm{top}}$
mapping to $\sigma_n$. Then $(J_n)_*(\beta)\in B_n\subseteq B_\infty$ and Lemma~\ref{lem:Bn-injects-bar} yields
$\pi_*((J_n)_*(\beta))=\sigma$.
\end{proof}

\subsection{Density of $P_\infty$ in $H^{lf}(\infty)$}

\begin{theorem}\label{thm:P-infty-dense-bar}
Let $P_\infty\le H^{lf}(\infty)$ be as in Definition~\ref{def:Binfty-bar}. Then $P_\infty$ is dense in $H^{lf}(\infty)$.
\end{theorem}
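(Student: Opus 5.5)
Since $\{V_n\}_{n\ge1}$ is a neighborhood basis at $1$ (Theorem~\ref{thm:PB}) and translations are homeomorphisms (Lemma~\ref{lem:qtop}), the sets $gV_n$ form a neighborhood basis at any $g\in H^{lf}(\infty)$. Density of $P_\infty$ therefore reduces to the following: for every $g\in H^{lf}(\infty)$ and every $n\ge1$, produce an element $h\in P_n^{\mathrm{img}}\subseteq P_\infty$ with $h^{-1}g\in V_n$. Equivalently, we need $p_n(h)=p_n(g)$ in $P_n$.

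The natural candidate is $h:=(\jmath_n)_*(p_n(g))$. By Lemma~\ref{lem:jmathn-embed-bar}(b) we have $\pr_n\circ\jmath_n=\id_{Conf_n(\C)}$. Passing to $\pi_1$, this gives $p_n\circ(\jmath_n)_*=\id_{P_n}$ (the same identity already used in the proof of Lemma~\ref{lem:Bn-injects-bar}). Hence $p_n(h)=p_n(g)$. Since $p_n$ is a homomorphism, $p_n(h^{-1}g)=1$, that is, $h^{-1}g\in V_n$, and so $h\in gV_n$ because $V_n$ is a subgroup. Thus every basic neighborhood $gV_n$ meets $P_\infty$, and $P_\infty$ is dense.

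The only point that needs care is that $\jmath_n$ is a genuine based continuous map into $(\Conf,d_{\sum})$. This is what makes $(\jmath_n)_*$ a well-defined homomorphism on $\pi_1$, and it is asserted in Lemma~\ref{lem:jmathn-embed-bar}. For continuity of the vague term, the point is that the tail points $n+k+R_n(z)$ have modulus at least $\max_i|z_i|+k$, so the tail stays outside any fixed compact set uniformly on compact subsets of $Conf_n(\C)$. Taking that lemma as given, the argument has no real obstacle. One could also phrase it via Corollary~\ref{cor:Theta-top-iso}, since $\Theta(h)$ and $\Theta(g)$ agree in the first $n$ coordinates, which is exactly membership in the cylinder $\Theta(g)U_n$.
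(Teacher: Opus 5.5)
Your proposal is correct and is essentially the paper's proof. Both use that $\{gV_n\}$ is a neighborhood basis at $g$ and take $(\jmath_n)_*(p_n(g))\in P_n^{\mathrm{img}}$ as the approximant, which agrees with $g$ under $p_n$ because $\pr_n\circ\jmath_n=\id$. Your form $h^{-1}g\in V_n$ even makes explicit the step where the paper implicitly uses normality of $V_n$.
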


\begin{proof}
For each $n\ge 1$, by Theorem~\ref{thm:PB}, $\{V_n\}_{n\ge1}$ is a neighborhood basis at $1$ in $H^{lf}(\infty)$ where
$V_n:=\ker\bigl(p_n:H^{lf}(\infty)\to P_n\bigr)$.

Fix $g\in H^{lf}(\infty)$. We claim there exists $b\in P_\infty$ with
$p_n(b)= p_n(g)$, which then implies $ p_n(gb^{-1})=1$, i.e.\ $b\in gV_n$.

Let $s_n:P_n\to H^{lf}(\infty)$ denote the inclusion $P_n\xrightarrow{(\jmath_n)_*}P_n^{\mathrm{img}}\le H^{lf}(\infty)$.
Then $ p_n\circ s_n=\id_{P_n}$ because $p_n\circ (\jmath_n)_*=\id$.
Set
\[
b:=s_n\bigl( p_n(g)\bigr)\in P_n^{\mathrm{img}}\subseteq P_\infty.
\]
Then $ p_n(b)= p_n(g)$ as required. Thus every neighborhood of $g$ meets $P_\infty$, proving density.
\end{proof}

\subsection{Closure of $B_\infty$}

\begin{theorem}\label{thm:Binfty-closure-bar}
The subgroup $B_\infty$ is \emph{not} dense in $B^{lf}(\infty)$. More precisely,
\[
\overline{B_\infty}=\pi_*^{-1}\bigl(\Sym_f(\N)\bigr),
\]
and $\pi_*^{-1}(\Sym_f(\N))$ is a clopen subgroup of $B^{lf}(\infty)$.
\end{theorem}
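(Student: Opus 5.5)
We prove the theorem by combining the open kernel $K=\ker(\pi_*)$ from Corollary~\ref{cor:open-complete-subgroup} with the density of $P_\infty$ from Theorem~\ref{thm:P-infty-dense-bar}.

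\textbf{Step 1: $F:=\pi_*^{-1}(\Sym_f(\N))$ is a clopen subgroup.} It is a subgroup because it is the preimage of a subgroup under the homomorphism $\pi_*$. It is a union of cosets $gK$ with $\pi_*(g)\in\Sym_f(\N)$, since every fibre of $\pi_*$ is a single coset of $K$. By Corollary~\ref{cor:open-complete-subgroup} and Lemma~\ref{lem:qtop}, each coset $gK$ is open, so $F$ is open. An open subgroup of a topological group is also closed, because its complement is a union of open cosets. Hence $F$ is clopen.

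\textbf{Step 2: $\overline{B_\infty}\subseteq F$.} Lemma~\ref{lem:image-Binfty-bar} gives $B_\infty\subseteq F$. Since $F$ is closed, $\overline{B_\infty}\subseteq F$.

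\textbf{Step 3: $F\subseteq\overline{B_\infty}$.} Let $g\in F$ and let $U$ be an open neighbourhood of $g$.

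\begin{itemize}
\item By Lemma~\ref{lem:image-Binfty-bar}, choose $\beta\in B_\infty$ with $\pi_*(\beta)=\pi_*(g)$. Then $\beta^{-1}g\in K$.
\item By Corollary~\ref{cor:open-complete-subgroup}, $\iota_*:H^{lf}(\infty)\to K$ is a homeomorphism. Theorem~\ref{thm:P-infty-dense-bar} therefore makes $\iota_*(P_\infty)$ dense in $K$.
\item Since $K$ is open, $\beta^{-1}U\cap K$ is a nonempty open subset of $K$; it contains $\beta^{-1}g$. So it meets $\iota_*(P_\infty)$ at some point $\iota_*(p)$.
\item Then $\beta\,\iota_*(p)\in U$.
\end{itemize}

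It remains to check that $\beta\,\iota_*(p)\in B_\infty$, which needs $\iota_*(P_\infty)\subseteq B_\infty$. Take $p\in P_n^{\mathrm{img}}$, say $p=(\jmath_n)_*(\alpha)$. Naturality in the diagram of Lemma~\ref{lem:Bn-injects-bar} gives
\[
i_*\bigl((\jmath_n)_*(\alpha)\bigr)=(J_n)_*\bigl((i_n)_*(\alpha)\bigr)\in B_n .
\]
Here $i=\iota$ is the fibre inclusion over the canonical basepoint. This yields $\iota_*(P_\infty)\subseteq B_\infty$, and since $B_\infty$ is a subgroup, $\beta\,\iota_*(p)\in B_\infty\cap U$. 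Hence $g\in\overline{B_\infty}$, and so $\overline{B_\infty}=F$.

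\textbf{Step 4: $B_\infty$ is not dense.} We need $F\neq B^{lf}(\infty)$. The map $\pi_*$ is surjective onto $\Aut(\N)$, and $\Sym_f(\N)\subsetneq\Aut(\N)$; for example, the permutation swapping $2k-1$ and $2k$ for every $k$ is not finitary. So $F$ is a proper closed subset of $B^{lf}(\infty)$, and $\overline{B_\infty}=F$ shows $B_\infty$ is not dense.

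The only delicate point is the basepoint compatibility used in Step 3, which identifies the fibre inclusion $\iota$ of Corollary~\ref{cor:open-complete-subgroup} with the map $i$ of Lemma~\ref{lem:Bn-injects-bar}. This is handled by the fixed bar-construction basepoints of Convention~\ref{conv:bar-basepoints}.
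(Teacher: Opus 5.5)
Your proof is correct and follows essentially the same route as the paper. Closedness of $\pi_*^{-1}(\Sym_f(\N))$ gives one inclusion; for the other, density of $P_\infty$ in the open kernel $K\cong H^{lf}(\infty)$, translated by a lift in $B_\infty$ of each finitary permutation, does the work. The only differences are minor: you obtain clopenness from the openness of $K$ rather than from continuity of $\pi_*$ into the discrete group $\Aut(\N)$, and you explicitly check $\iota_*(P_\infty)\subseteq B_\infty$ via the naturality $i\circ\jmath_n=J_n\circ i_n$, a step the paper only asserts in the form $P_m^{\mathrm{img}}\le B_m$.
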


\begin{proof}
Since $\Aut(\N)$ is discrete and $\pi_*$ is continuous, the preimage of any subset of $\Aut(\N)$ is clopen.
In particular, $\pi_*^{-1}(\Sym_f(\N))$ is clopen.

\smallskip\noindent
\emph{Step 1: $\overline{B_\infty}\subseteq \pi_*^{-1}(\Sym_f(\N))$.}
By Lemma~\ref{lem:image-Binfty-bar}, $B_\infty\subseteq \pi_*^{-1}(\Sym_f(\N))$, and the latter is closed, so it contains
$\overline{B_\infty}$.

\smallskip\noindent
\emph{Step 2: $\pi_*^{-1}(\Sym_f(\N))\subseteq \overline{B_\infty}$.}
Let $K:=\ker(\pi_*)\le B^{lf}(\infty)$.
By Corollary~\ref{cor:open-complete-subgroup}, $K$ is (as a topological group) identified with $H^{lf}(\infty)$, and under
this identification the subgroup $P_\infty\le H^{lf}(\infty)$ corresponds to the subgroup $P_\infty\le K$ defined above.
By Theorem~\ref{thm:P-infty-dense-bar}, $P_\infty$ is dense in $K$.

Now fix $\sigma\in\Sym_f(\N)$. Choose $b_\sigma\in B_\infty$ with $\pi_*(b_\sigma)=\sigma$
(Lemma~\ref{lem:image-Binfty-bar}). Then the fiber over $\sigma$ is the coset $\pi_*^{-1}(\sigma)=b_\sigma K$.
Left translation by $b_\sigma$ is a homeomorphism (Theorem~\ref{thm:B-complete-top-group}), hence $b_\sigma P_\infty$ is
dense in $b_\sigma K$.

Moreover, $b_\sigma P_\infty\subseteq B_\infty$: choose $n$ with $b_\sigma\in B_n$, and choose $m\ge n$ with
$x\in P_m^{\mathrm{img}}\subseteq K$; by Lemma~\ref{lem:directed-compatibility-bar} we have $B_n\subseteq B_m$, and
$P_m^{\mathrm{img}}\le B_m$, so $b_\sigma x\in B_m\subseteq B_\infty$.

Hence $\pi_*^{-1}(\sigma)=b_\sigma K\subseteq \overline{B_\infty}$ for every $\sigma\in\Sym_f(\N)$, and taking the union over
$\sigma\in\Sym_f(\N)$ yields $\pi_*^{-1}(\Sym_f(\N))\subseteq \overline{B_\infty}$.

\smallskip
Combining Steps 1 and 2 gives $\overline{B_\infty}=\pi_*^{-1}(\Sym_f(\N))$. Non-density follows because
$\Sym_f(\N)\neq \Aut(\N)$.
\end{proof}

\section{Ra\u{\i}kov completions and Polishness}\label{sec:universal}

\subsection{Ra\u{\i}kov completeness and a completion criterion}

\begin{definition}\label{def:raikov-complete}
Let $G$ be a Hausdorff topological group. The \emph{two-sided uniformity} on $G$ is the uniformity generated by
entourages
\[
E_U:=\{(g,h)\in G\times G:\ g^{-1}h\in U\ \text{and}\ gh^{-1}\in U\},
\]
where $U$ ranges over symmetric neighborhoods of the identity.
The group $G$ is \emph{Ra\u{\i}kov complete} if it is complete for the two-sided uniformity.
Its \emph{Ra\u{\i}kov completion} $\widehat G$ is the completion for this uniformity, characterized by the usual
universal property.
\end{definition}

There are some nice properties of Ra\u{i}kov complete groups. We list some of them.

\begin{itemize}
\item \textbf{Existence and universality of completion.}
Every topological group $G$ admits a Ra\u{\i}kov completion $\widehat G$
and the canonical homomorphism $i:G\to\widehat G$ is a topological embedding with dense image.
Moreover, for any continuous homomorphism $f:G\to H$ into a Ra\u{\i}kov complete group $H$,
there exists a unique continuous homomorphism $\widehat f:\widehat G\to H$ with
$f=\widehat f\circ i$.

\item \textbf{Closed subgroups inherit completeness.}
If $G$ is Ra\u{\i}kov complete and $H\le G$ is a closed subgroup, then $H$ is Ra\u{\i}kov complete.

\item \textbf{Products preserve completeness.}
A product $\prod_{i\in I} G_i$ is Ra\u{\i}kov complete if and only if each $G_i$ is Ra\u{\i}kov complete.

\item \textbf{Intrinsic characterization via completion.}
$G$ is Ra\u{\i}kov complete if and only if the canonical embedding $G\hookrightarrow\widehat G$
identifies $G$ as a closed subgroup of its Ra\u{\i}kov completion (equivalently, $G=\widehat G$).
\end{itemize}

By \cite[Theorem 4.3.26]{EngelkingGT}, a topological space is completely metrizable if and only
if it is a \v{C}ech-complete metrizable space. By \cite[Theorem 4.3.15]{ArhangelskiiTkachenko08},
a feathered group is \v{C}ech-complete if and only if it is Ra\u{i}kov complete. Since
metrizable topological groups are feathered, we have the following result.

\begin{theorem}\label{thm:AT-cor4327}
If $G$ is a \emph{metrizable} topological group, then $G$ is Ra\u{\i}kov complete if and only if $G$ is
\emph{completely metrizable}.
\end{theorem}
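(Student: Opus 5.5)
The plan is to prove both directions by passing through \v{C}ech-completeness, using the two cited results \cite[Theorem 4.3.26]{EngelkingGT} and \cite[Theorem 4.3.15]{ArhangelskiiTkachenko08}. Throughout, $G$ is a Hausdorff metrizable topological group.

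\textbf{Step 1: $G$ is feathered.} By definition, a topological group is feathered if it contains a nonempty compact subset $C$ of countable character, meaning $C$ admits a countable base of open neighborhoods. For metrizable $G$, take $C=\{1\}$. Any compatible metric gives the countable base $B(1,1/k)$ for $k\ge1$, so $G$ is feathered. Hence \cite[Theorem 4.3.15]{ArhangelskiiTkachenko08} applies: $G$ is \v{C}ech-complete if and only if $G$ is Ra\u{\i}kov complete.

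\textbf{Step 2: \v{C}ech-completeness versus complete metrizability.} By \cite[Theorem 4.3.26]{EngelkingGT}, a metrizable space is \v{C}ech-complete if and only if it is completely metrizable. Since $G$ is metrizable by hypothesis, this gives: $G$ is \v{C}ech-complete if and only if $G$ is completely metrizable.

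\textbf{Step 3: combine the equivalences.} Chaining Steps 1 and 2 yields
\[
G \text{ Ra\u{\i}kov complete} \iff G \text{ \v{C}ech-complete} \iff G \text{ completely metrizable}.
\]
Note that completely metrizable here refers to some complete metric on the underlying space, which need not be left-invariant; this matches Definition~\ref{def:complete-metrizable}.

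\textbf{Main obstacle.} The only delicate point is the hypotheses of the cited theorems. \cite[Theorem 4.3.15]{ArhangelskiiTkachenko08} requires a Hausdorff group, which metrizability guarantees. I would also note that Definition~\ref{def:raikov-complete} uses the two-sided uniformity, the same uniformity used in Arhangel'skii--Tkachenko, so the two notions of Ra\u{\i}kov completeness agree.

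A direct argument is also available if we want to avoid citations.

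\emph{($\Leftarrow$)} A completely metrizable group is a $G_\delta$ in its Ra\u{\i}kov completion $\widehat G$. A dense $G_\delta$ subgroup of a metrizable group is closed, because two disjoint dense $G_\delta$ cosets would contradict the Baire category theorem. Hence $G=\widehat G$.

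\emph{($\Rightarrow$)} By the Birkhoff--Kakutani theorem, $G$ has a compatible left-invariant metric $d_L$. Set $\rho(g,h)=d_L(g,h)+d_L(g^{-1},h^{-1})$. This metric induces the two-sided uniformity, so Ra\u{\i}kov completeness makes $\rho$ a complete compatible metric.
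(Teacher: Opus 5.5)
Your main argument is correct and is exactly the paper's: metrizability makes $\{1\}$ a compact set of countable character, so $G$ is feathered, and \cite[Theorem 4.3.15]{ArhangelskiiTkachenko08} and \cite[Theorem 4.3.26]{EngelkingGT} then chain together through \v{C}ech-completeness. Your citation-free alternative is also sound, since $\rho$ does generate the two-sided uniformity and the Baire step works because $\widehat G$ is itself completely metrizable; however, your general claim that a dense $G_\delta$ subgroup of a metrizable group is closed is false without a Baire hypothesis on the ambient group (e.g.\ $\Z[1/2]\le\Q$).
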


\subsection{The pure case: $H^{lf}(\infty)$ as the completion of $P_\infty$}

\begin{theorem}\label{thm:raikov-H}
The topological group $H^{lf}(\infty)$ is Ra\u{\i}kov complete. In particular, $H^{lf}(\infty)$ is the Ra\u{\i}kov completion of
$P_\infty$.
\end{theorem}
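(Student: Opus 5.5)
The plan is to combine Theorem~\ref{thm:metric-complete}, Theorem~\ref{thm:AT-cor4327}, and the density Theorem~\ref{thm:P-infty-dense-bar}, together with the universal property of the Ra\u{\i}kov completion. First, $H^{lf}(\infty)$ is a Hausdorff topological group: it is a topological group by Corollary~\ref{cor:H-topological-group}, and it is Hausdorff (indeed metrizable) because the ultrametric $d$ of Theorem~\ref{thm:metric-complete} is compatible with its topology. That theorem also says $d$ is complete, so $H^{lf}(\infty)$ is a completely metrizable, metrizable topological group. Theorem~\ref{thm:AT-cor4327} then gives Ra\u{\i}kov completeness at once.

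A more self-contained alternative is to check two-sided completeness directly. The sets $V_n$ are \emph{normal} open subgroups forming a neighborhood basis at $1$ (Lemma~\ref{lem:Vn-open-normal} and Theorem~\ref{thm:PB}). Hence $g^{-1}h\in V_n$ if and only if $gh^{-1}\in V_n$, so the entourages $E_{V_n}$ coincide with the left entourages $\{(g,h):g^{-1}h\in V_n\}$. The two-sided uniformity therefore equals the left uniformity, which is metrized by $d$ since $B_d(1,2^{-n})=V_n$. Completeness of $d$ then gives Ra\u{\i}kov completeness. I would record this as a remark, because it avoids relying on the feathered-group citation.

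For the second assertion, let $i:P_\infty\hookrightarrow H^{lf}(\infty)$ be the inclusion, with $P_\infty$ carrying the subspace topology. I would use the standard fact, obtainable from the universal property, that if $G$ is a dense subgroup of a Ra\u{\i}kov complete group $H$, then $H$ is the Ra\u{\i}kov completion of $G$. The argument runs as follows. The inclusion extends uniquely to a continuous homomorphism $\widehat i:\widehat{P_\infty}\to H^{lf}(\infty)$. The subspace two-sided uniformity on $P_\infty$ is the restriction of that of $H^{lf}(\infty)$. Hence $P_\infty$, embedded in the complete uniform space $H^{lf}(\infty)$, has closure equal to its uniform completion. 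By Theorem~\ref{thm:P-infty-dense-bar} that closure is all of $H^{lf}(\infty)$, so $\widehat i$ is a topological isomorphism. Equivalently, $\overline{P_\infty}=H^{lf}(\infty)$ is a closed, hence complete, subgroup containing $P_\infty$ densely, and uniqueness of completions up to isomorphism over $P_\infty$ finishes the argument.

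The only step requiring care is this identification of the completion with the closure in a complete ambient group. One needs the uniformity on $P_\infty$ to be the restricted one and the completion to be unique up to a unique isomorphism fixing $P_\infty$. Both are standard facts about uniform completions. Otherwise the proof is bookkeeping on top of the earlier results.
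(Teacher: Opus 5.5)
Your proposal is correct and follows the paper's proof. Ra\u{\i}kov completeness comes from Theorem~\ref{thm:metric-complete} together with Theorem~\ref{thm:AT-cor4327}. The completion statement comes from density (Theorem~\ref{thm:P-infty-dense-bar}) plus the standard fact that a Ra\u{\i}kov complete group is the completion of any dense subgroup; the paper cites this from Arhangel'skii--Tkachenko (Corollary 3.6.16), while you sketch it via closures in a complete uniform space.

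Your side remark is also sound and slightly more elementary. Since the $V_n$ are normal, $E_{V_n}=\{(g,h):g^{-1}h\in V_n\}$, so the two-sided uniformity is the one metrized by the complete ultrametric $d$, and this bypasses the feathered-group citation. In fact any compatible complete left-invariant metric would already suffice, because two-sided Cauchy filters are left Cauchy.
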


\begin{proof}
By Theorem~\ref{thm:metric-complete}, $H^{lf}(\infty)$ is completely metrizable and by Theorem~\ref{thm:AT-cor4327}, it is
Ra\u{\i}kov complete. Density of $P_\infty$ is from Theorem~\ref{thm:P-infty-dense-bar}. By \cite[Corollary 3.6.16]{ArhangelskiiTkachenko08}, $H^{lf}(\infty)$ is the Ra\u{\i}kov completion of $P_\infty$.
\end{proof}

\subsection{The finitary full braid case: completion of $B_\infty$}

\begin{definition}\label{def:Bfin}
Set
\[
B^{lf}_{\mathrm{fin}}(\infty)\ :=\ \pi_*^{-1}\bigl(\Sym_f(\N)\bigr)\ \le\ B^{lf}(\infty).
\]
This is clopen by Theorem~\ref{thm:Binfty-closure-bar}.
\end{definition}

\begin{theorem}\label{thm:raikov-Bfin}
The inclusion $B_\infty\hookrightarrow B^{lf}_{\mathrm{fin}}(\infty)$ is a topological embedding with dense image.
Moreover $B^{lf}_{\mathrm{fin}}(\infty)$ is Ra\u{\i}kov complete. Hence $B^{lf}_{\mathrm{fin}}(\infty)$ is the
Ra\u{\i}kov completion of $B_\infty$.
\end{theorem}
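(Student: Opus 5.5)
The argument has three parts, mirroring the proof of Theorem~\ref{thm:raikov-H}, with $H^{lf}(\infty)$ replaced by the clopen subgroup $B^{lf}_{\mathrm{fin}}(\infty)$.

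\textbf{Embedding and density.} By Theorem~\ref{thm:B-complete-top-group}, $B^{lf}(\infty)$ is a topological group. Hence $B^{lf}_{\mathrm{fin}}(\infty)$, as a subgroup with the subspace topology, is a topological group. We give $B_\infty$ the subspace topology it inherits from $B^{lf}(\infty)$. With this topology the inclusion $B_\infty\hookrightarrow B^{lf}_{\mathrm{fin}}(\infty)$ is tautologically a topological embedding, and it is well defined by Lemma~\ref{lem:image-Binfty-bar}. Density follows from Theorem~\ref{thm:Binfty-closure-bar}: the closure of $B_\infty$ in $B^{lf}(\infty)$ is exactly $B^{lf}_{\mathrm{fin}}(\infty)$, so $B_\infty$ is dense in it.

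\textbf{Ra\u{\i}kov completeness.} Since $B^{lf}_{\mathrm{fin}}(\infty)$ is clopen, it is in particular closed in $B^{lf}(\infty)$. I would verify completeness directly through complete metrizability, so that Theorem~\ref{thm:AT-cor4327} applies.
\begin{enumerate}
\item Take the complete left-invariant metric $d_G$ on $B^{lf}(\infty)$ obtained from Theorem~\ref{thm:extend-metric} (as in the proof of Theorem~\ref{thm:B-complete-top-group}).
\item Restrict $d_G$ to $B^{lf}_{\mathrm{fin}}(\infty)$. A closed subset of a complete metric space is complete, so this restriction is a complete metric inducing the subspace topology.
\item Thus $B^{lf}_{\mathrm{fin}}(\infty)$ is a metrizable, completely metrizable topological group, and Theorem~\ref{thm:AT-cor4327} shows it is Ra\u{\i}kov complete.
\end{enumerate}
Alternatively, one can invoke the listed fact that closed subgroups of Ra\u{\i}kov complete groups are Ra\u{\i}kov complete. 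This route also applies Theorem~\ref{thm:AT-cor4327} first, to $B^{lf}(\infty)$ itself.

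\textbf{Identification as the completion.} A Ra\u{\i}kov complete group containing a dense subgroup $D$, with the induced topology, is the Ra\u{\i}kov completion of $D$. This is exactly \cite[Corollary 3.6.16]{ArhangelskiiTkachenko08}, which was already used in Theorem~\ref{thm:raikov-H}. Applying it with $D=B_\infty$ gives the conclusion.

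\textbf{Main obstacle.} The only delicate point is the topology on $B_\infty$. If one intended $B_\infty$ to carry the direct-limit topology of the discrete groups $B_n^{\mathrm{top}}$, that topology is discrete. The inclusion would then not be an embedding, since $B^{lf}_{\mathrm{fin}}(\infty)\supseteq K\cong H^{lf}(\infty)$ is non-discrete by Theorem~\ref{thm:H-not-discrete2}. So I would state explicitly that $B_\infty$ carries the subspace topology. Under that convention, which is the one used for $P_\infty$ in Theorem~\ref{thm:raikov-H}, the remaining steps are formal.
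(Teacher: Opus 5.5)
Your proposal is correct and matches the paper's proof: density comes from Theorem~\ref{thm:Binfty-closure-bar}, complete metrizability of the clopen (hence closed) subgroup is inherited from the complete metric of Theorem~\ref{thm:B-complete-top-group}, Ra\u{\i}kov completeness follows from Theorem~\ref{thm:AT-cor4327}, and the identification with the completion uses the same Arhangel'skii--Tkachenko corollary applied earlier to $P_\infty$. Your explicit stipulation that $B_\infty$ carries the subspace topology, rather than the discrete direct-limit topology (which would not embed), is a correct clarification that the paper leaves implicit.
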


\begin{proof}
Density follows from Theorem~\ref{thm:Binfty-closure-bar}. By Theorem~\ref{thm:B-complete-top-group}, $B^{lf}(\infty)$ is completely metrizable; hence its clopen subgroup $B^{lf}_{\mathrm{fin}}(\infty)$ is completely metrizable as well. It is Ra\u{\i}kov complete by
Theorem~\ref{thm:AT-cor4327}. Again by \cite[Corollary 3.6.16]{ArhangelskiiTkachenko08}, $B^{lf}_{\mathrm{fin}}(\infty)$ is the
Ra\u{\i}kov completion of $B_\infty$.
\end{proof}

\begin{proposition}\label{prop:raikov-B}
The locally finite braid group $B^{lf}(\infty)$ is Ra\u{\i}kov complete.
\end{proposition}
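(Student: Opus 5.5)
The plan is to deduce Ra\u{\i}kov completeness of $B^{lf}(\infty)$ from the complete metrizability already established. Theorem~\ref{thm:B-complete-top-group} says that $B^{lf}(\infty)$ is a topological group carrying a compatible complete left-invariant metric. Theorem~\ref{thm:AT-cor4327} says that a metrizable topological group is Ra\u{\i}kov complete if and only if it is completely metrizable. Combining the two gives the proposition directly. So the proof proper is a two-line citation, and what remains is to check that the hypotheses apply.

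The first point is that $B^{lf}(\infty)$ must be Hausdorff and metrizable as a topological group. This holds because the metric $d_G$ of Theorem~\ref{thm:extend-metric} induces its topology.

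The second point is that Theorem~\ref{thm:AT-cor4327} needs only complete metrizability of the underlying space. It does not require the complete metric to be two-sided invariant, so the left-invariant metric from Theorem~\ref{thm:B-complete-top-group} is enough.

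As a sanity check, and as a backup route that avoids the \v{C}ech-completeness machinery, I would also give a direct argument. Let $(g_\alpha)$ be a Cauchy net for the two-sided uniformity. Since $K=\ker\pi_*$ is an open subgroup (Corollary~\ref{cor:open-complete-subgroup}), taking $U\subseteq K$ shows that eventually $g_\alpha^{-1}g_\beta\in K$. Hence the net eventually lies in a single coset $g_{\alpha_0}K$. Left-translating by $g_{\alpha_0}^{-1}$ gives a net $k_\alpha\in K$ that is left-Cauchy in $K$.

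The one delicate point is right-Cauchyness. The translated net satisfies $k_\alpha k_\beta^{-1} = g_{\alpha_0}^{-1}(g_\alpha g_\beta^{-1})g_{\alpha_0}$. This is handled by continuity of conjugation by $g_{\alpha_0}$ on the open normal subgroup $K$, established in the proof of Lemma~\ref{lem:open-normal-top}. It follows that $(k_\alpha)$ is two-sided Cauchy in $K\cong H^{lf}(\infty)$.

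Theorem~\ref{thm:raikov-H} then gives convergence of $(k_\alpha)$ in $K$. Left translation by $g_{\alpha_0}$ is a homeomorphism, so $g_\alpha = g_{\alpha_0}k_\alpha$ converges in $B^{lf}(\infty)$.
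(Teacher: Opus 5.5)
Your main argument is exactly the paper's: Theorem~\ref{thm:B-complete-top-group} gives complete metrizability of $B^{lf}(\infty)$, and Theorem~\ref{thm:AT-cor4327} converts that into Ra\u{\i}kov completeness. Your coset-and-conjugation backup is also sound, but it does not actually avoid the \v{C}ech-completeness input, because Theorem~\ref{thm:raikov-H} itself rests on Theorem~\ref{thm:AT-cor4327}. To bypass that input genuinely, observe that the ultrametric of Theorem~\ref{thm:metric-complete} is two-sided invariant (the $V_n$ are normal), so on $H^{lf}(\infty)$ the two-sided uniformity is just the complete $d$-uniformity.
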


\begin{proof}
By Theorem~\ref{thm:B-complete-top-group}, $B^{lf}(\infty)$ is completely metrizable, hence Ra\u{\i}kov complete by
Theorem~\ref{thm:AT-cor4327}.
\end{proof}

\subsection{Polishness}

\begin{definition}\label{def:polish}
A topological space is \emph{Polish} if it is separable and completely metrizable.
\end{definition}

\begin{theorem}\label{thm:H-polish}
The pure locally finite braid group $H^{lf}(\infty)$ is a Polish topological group.
\end{theorem}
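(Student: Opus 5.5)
We need separability plus complete metrizability. Complete metrizability is Theorem~\ref{thm:metric-complete}, and we already know $H^{lf}(\infty)$ is a topological group (Corollary~\ref{cor:H-topological-group}). So the plan is to show that $H^{lf}(\infty)$ is separable by exhibiting a countable dense subset.

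The natural candidate is the dense subgroup $P_\infty=\bigcup_n P_n^{\mathrm{img}}$ from Theorem~\ref{thm:P-infty-dense-bar}. Each finite pure braid group $P_n$ is finitely generated, since it is generated by the standard elements $A_{ij}$, $1\le i<j\le n$. Hence each $P_n$ is countable. Its image $P_n^{\mathrm{img}}=(\jmath_n)_*(P_n)$ is therefore countable, and a countable union of countable sets is countable. So $P_\infty$ is a countable dense subset, and $H^{lf}(\infty)$ is separable.

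Combining separability with the compatible complete ultrametric of Theorem~\ref{thm:metric-complete} shows that $H^{lf}(\infty)$ is Polish. Since it is also a topological group, it is a Polish topological group.

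An independent check is also available. Via Corollary~\ref{cor:Theta-top-iso}, $H^{lf}(\infty)$ is homeomorphic to a subspace of $\prod_n P_n$. This is a countable product of countable discrete spaces, hence second countable. Subspaces of second countable spaces are second countable, and therefore separable. This route avoids density altogether.

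The only step needing a real input is the countability of $P_n$. That input is classical: it follows from Artin's presentation, or from the Fadell--Neuwirth iterated extensions by free groups of finite rank. So I expect no genuine obstacle.
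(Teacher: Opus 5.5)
Your proof is correct and is essentially the paper's own argument: complete metrizability comes from Theorem~\ref{thm:metric-complete}, and separability comes from the countable dense subgroup $P_\infty$ of Theorem~\ref{thm:P-infty-dense-bar}, where your observation that each $P_n$ is finitely generated (hence countable) supplies a detail the paper leaves implicit. Your alternative check via Corollary~\ref{cor:Theta-top-iso}, which embeds $H^{lf}(\infty)$ homeomorphically into the second countable space $\prod_n P_n$, is also valid and gives separability without using density.
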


\begin{proof}
By Theorem~\ref{thm:metric-complete}, $H^{lf}(\infty)$ is completely metrizable.
By Theorem~\ref{thm:P-infty-dense-bar}, it has a countable dense subgroup $P_\infty$, hence it is separable.
\end{proof}

\begin{theorem}\label{thm:B-not-polish}
The locally finite braid group $B^{lf}(\infty)$ is completely metrizable as a topological group,
but it is not Polish. Its subgroup $B^{lf}_{\mathrm{fin}}(\infty)$ is a Polish topological group.
\end{theorem}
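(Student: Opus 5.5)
Complete metrizability of $B^{lf}(\infty)$ is already Theorem~\ref{thm:B-complete-top-group}, so only the two Polishness claims remain. The plan is to use the kernel $K=\ker(\pi_*)$, which by Corollary~\ref{cor:open-complete-subgroup} is an open subgroup homeomorphic to $H^{lf}(\infty)$ and hence Polish by Theorem~\ref{thm:H-polish}, together with the fact that $\pi_*:B^{lf}(\infty)\to\Aut(\N)$ is continuous onto the discrete group $\Aut(\N)$.

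\textbf{Non-separability of $B^{lf}(\infty)$.} The fibers $\pi_*^{-1}(\sigma)=b_\sigma K$ are open, since $K$ is open and left translation is a homeomorphism by Lemma~\ref{lem:qtop}. They are pairwise disjoint and nonempty for every $\sigma\in\Aut(\N)$, because $\pi_*$ is surjective by the exact sequence of Section~7. So $B^{lf}(\infty)$ contains a family of pairwise disjoint nonempty open sets indexed by $\Aut(\N)$, which has cardinality $2^{\aleph_0}$. Every countable subset meets only countably many of these cosets, so it cannot be dense. Hence $B^{lf}(\infty)$ is not separable, and therefore not Polish. The one point to record is the uncountability of $\Aut(\N)$: for each subset $S\subseteq\N$, swap $2k-1$ and $2k$ exactly for $k\in S$; this gives an injection of $2^{\N}$ into $\Aut(\N)$.

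\textbf{Polishness of $B^{lf}_{\mathrm{fin}}(\infty)$.} This group is clopen in $B^{lf}(\infty)$ (Definition~\ref{def:Bfin}), so it is completely metrizable by restricting the complete metric of Theorem~\ref{thm:B-complete-top-group}; a closed subset of a complete metric space is complete. It is a topological group because it is a subgroup of the topological group $B^{lf}(\infty)$. For separability there are two routes, and I would use the simplest. First route: $B_\infty$ is dense in $B^{lf}_{\mathrm{fin}}(\infty)$ by Theorem~\ref{thm:raikov-Bfin}, and $B_\infty=\bigcup_n B_n$ with each $B_n$ a quotient of the finitely generated group $B_n^{\mathrm{top}}$, so $B_\infty$ is countable. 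Second route, as a check: $B^{lf}_{\mathrm{fin}}(\infty)=\bigcup_{\sigma\in\Sym_f(\N)}b_\sigma K$ is a countable union of open cosets, each homeomorphic to the separable space $K$.

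\textbf{Main obstacle.} The only step needing any care is the countability of $B_n^{\mathrm{top}}$. This follows from Lemma~\ref{lem:finite-braid-exact-bar}: $B_n^{\mathrm{top}}$ is an extension of the finite group $\Sigma_n$ by $P_n$, and $P_n$ is countable since it is the fundamental group of a manifold (indeed it is finitely generated). Everything else is formal.
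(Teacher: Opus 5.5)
Your proposal is correct and follows the paper's proof. The shared steps are non-separability via the uncountably many pairwise disjoint open fibers $\pi_*^{-1}(\sigma)$, complete metrizability of $B^{lf}_{\mathrm{fin}}(\infty)$ via clopenness, and separability via your second route (a countable union of open cosets of the Polish group $K\cong H^{lf}(\infty)$), which is exactly the paper's argument. Your preferred first route, using the countable dense subgroup $B_\infty$ from Theorem~\ref{thm:Binfty-closure-bar}, is equally valid and amounts to the same thing, since the paper proves that density by translating the dense $P_\infty\subseteq K$ across the cosets $b_\sigma K$.
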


\begin{proof}
\noindent\emph{Non-separability of $B^{lf}(\infty)$.}
The homomorphism $\pi_*:B^{lf}(\infty)\twoheadrightarrow \Aut(\N)$ is continuous and $\Aut(\N)$ is discrete, hence each fiber
$\pi_*^{-1}(\sigma)$ is clopen.
Thus $B^{lf}(\infty)$ contains the family of pairwise disjoint nonempty open subsets $\{\pi_*^{-1}(\sigma)\}_{\sigma\in\Aut(\N)}$.
Since $\Aut(\N)$ is uncountable, there are uncountably many such sets.

A separable space cannot contain uncountably many pairwise disjoint nonempty open subsets: if $D$ is countable dense, then each
nonempty open set meets $D$, giving an injection of the family into $D$.
Hence $B^{lf}(\infty)$ is not separable and therefore not Polish.

\smallskip
\noindent\emph{Polishness of $B^{lf}_{\mathrm{fin}}(\infty)$.}
By Theorem~\ref{thm:Binfty-closure-bar}, $B^{lf}_{\mathrm{fin}}(\infty)=\pi_*^{-1}(\Sym_f(\N))$ is clopen in $B^{lf}(\infty)$,
hence completely metrizable.
Also $\Sym_f(\N)$ is countable and each fiber $\pi_*^{-1}(\sigma)$ ($\sigma\in\Sym_f(\N)$) is clopen and homeomorphic to
$K=\ker(\pi_*)\cong H^{lf}(\infty)$, which is Polish by Theorem~\ref{thm:H-polish}.
Choose for each $\sigma\in\Sym_f(\N)$ a countable dense subset $D_\sigma\subset \pi_*^{-1}(\sigma)$; then
$\bigcup_{\sigma\in\Sym_f(\N)}D_\sigma$ is countable and dense in $B^{lf}_{\mathrm{fin}}(\infty)$ (because each fiber is open),
so $B^{lf}_{\mathrm{fin}}(\infty)$ is separable.
Therefore $B^{lf}_{\mathrm{fin}}(\infty)$ is Polish.
\end{proof}


\end{document}